\documentclass[11pt]{article}
\setlength{\topmargin}{-.6in}
\setlength{\oddsidemargin}{-0cm}
\setlength{\evensidemargin}{-1cm}
\setlength{\textwidth}{16.5cm}
\setlength{\textheight}{23cm}


\usepackage{latexsym,amssymb,amsmath,amsthm, amsfonts, float}
\usepackage[pdftex]{graphicx}
\usepackage{algorithmicx}
\usepackage{algpseudocode}
\usepackage{tikz}

\usepackage{bm}
\usepackage{graphicx}
\usepackage{stmaryrd}

\usepackage{enumerate}


%
%

\def\CC{{\mathbb C}}

\def\RR{{\mathbb R}}

\def\ZZ{{\mathbb Z}}
%
%
\def\cal{\mathcal}

\def\cA{{\cal A}}

\def\cE{{\cal E}}

\def\cG{{\cal G}}
\def\cH{{\cal H}}

\def\cN{{\cal N}}
\def\cO{{\cal O}}

\newcommand{\R}{\ensuremath{\mathbb{R}}}
\newtheorem{algorithm}{Algorithm}
\newtheorem{lemma}{Lemma}
\newtheorem{theorem}{Theorem}
\newtheorem{remark}{Remark}
\newtheorem{proposition}{Proposition}
\newtheorem{definition}{Definition}

\newcommand{\pd}{\partial}

\newcommand{\y}{\mathbf{y}}
\renewcommand{\v}{\mathbf{v}}
\newcommand{\I}{\mathbf{I}}
\renewcommand{\H}{\mathbf{H}}
\renewcommand{\u}{\mathbf{u}}
\newcommand{\f}{\mathbf{f}}

\newcommand{\x}{\mathbf{x}}

\newcommand{\beq}{\begin{equation}}
\newcommand{\eeq}{\end{equation}}
\renewcommand{\tilde}{\widetilde}

\newcommand{\bit}{\begin{itemize}}
\newcommand{\eit}{\end{itemize}}
\newcommand{\ben}{\begin{enumerate}}
\newcommand{\een}{\end{enumerate}}
\bibliographystyle{plain}



\title{The method of polarized traces for the 2D Helmholtz equation}

\author{Leonardo Zepeda-N\'u\~nez and Laurent Demanet \\
Department of Mathematics and Earth Resources Laboratory, \\
Massachusetts Institute of Technology }

\date{October 2014}


\begin{document}

\maketitle

\begin{abstract}

We present a solver for the 2D high-frequency Helmholtz equation in heterogeneous acoustic media, with online parallel complexity that scales optimally as $\cO(\frac{N}{L})$, where $N$ is the number of volume unknowns, and $L$ is the number of processors, as long as $L$ grows at most like a small fractional power of $N$. The solver decomposes the domain into layers, and uses transmission conditions in boundary integral form to explicitly define ``polarized traces", i.e., up- and down-going waves sampled at interfaces. Local direct solvers are used in each layer to precompute traces of local Green's functions in an embarrassingly parallel way (the offline part), and incomplete  Green's formulas are used to propagate interface data in a sweeping fashion, as a preconditioner inside a GMRES loop (the online part). Adaptive low-rank partitioning of the integral kernels is used to speed up their application to interface data. The method uses second-order finite differences. The complexity scalings are empirical but motivated by an analysis of ranks of off-diagonal blocks of oscillatory integrals. They continue to hold in the context of standard geophysical community models such as BP and Marmousi 2, where convergence occurs in 5 to 10 GMRES iterations. While the parallelism in this paper stems from decomposing the domain, we do not explore the alternative of parallelizing the systems solves with distributed linear algebra routines.

\end{abstract}

\section{Introduction}\label{sec:introintro}

Many recent papers have shown that domain decomposition with accurate transmission boundary conditions is the right mix of ideas for simulating propagating high-frequency waves in a heterogeneous medium. To a great extent, the approach can be traced back to the AILU method of Gander and Nataf in 2001 \cite{GanderNataf:ailu_for_hemholtz_problems_a_new_preconditioner_based_on_an_analytic_factorization,Gander_Nataf:AILU_for_helmholtz_problems_a_new_preconditioner_based_on_the_analytic_parabolic_factorization}. The first linear complexity claim was perhaps made in the work of Engquist and Ying on sweeping preconditioners in 2011 \cite{EngquistYing:Sweeping_H, EngquistYing:Sweeping_PML} -- a special kind of domain decomposition into grid-spacing-thin layers. In 2013, Stolk \cite{CStolk_rapidily_converging_domain_decomposition} restored the flexibilty to consider coarser layerings, with a domain decomposition method that realizes interface transmission via an ingenious forcing term, resulting in linear complexity scalings very similar to those of the sweeping preconditioners.
Other authors have since then proposed related methods, including \cite{Chen_Xiang:a_source_transfer_ddm_for_helmholtz_equations_in_unbounded_domain,GeuzaineVion:double_sweep}, which we review in section \ref{sec:related}. Many of these references present isolated instances of what should eventually become a systematic understanding of how to couple absorption/transmission conditions with domain decomposition.

In a different direction, much progress has been made on making direct methods efficient for the Helmholtz equation. Such is the case of Wang et al.'s method \cite{Wang:H_multifrontal}, which couples multi-frontal elimination with $\cH$-matrices. Another example is the work of Gillman, Barnett and Martinsson on computing impedance-to-impedance maps in a multiscale fashion \cite{Gillman_Barnett_Martinsson:A_spectrally_accurate_solution_technique_for_frequency_domain_scattering_problems_with_variable_media}.  It is not yet clear whether offline linear complexity scalings can be achieved this way, though good direct methods are often faster in practice than the iterative methods mentioned above. The main issue with direct methods is the lack of scalability to very large-scale problems due to the memory requirements.

The method presented in this paper is a hybrid: it uses legacy direct solvers locally on large subdomains, and shows how to properly couple those subdomains with accurate transmission in the form of an incomplete Green's representation formula acting on polarized (i.e., one-way) waves. The novelty of this paper is twofold:
\bit
\item We show how to reduce the discrete Helmholtz equation to an integral system at interfaces in a way that does not involve Schur complements, but which allows for polarization into one-way components;
\item We show that when using fast algorithms for the application of the integral kernels, the online time complexity of the numerical method can become \emph{sublinear in $N$} in a parallel environment, i.e., $\cO(N/L)$ over $L \ll N$ nodes\footnote{The upper bound on $L$ is typically of the form $\cO(N^{1/8})$, with some caveats. This discussion is covered in sections \ref{sec:intro_complexity} and \ref{sec:complexity}.} using a simple parallelization model \footnote{It is possible to improve the parallelization using distributed linear solvers such as SuperluDIST \cite{Xia:multifrontal}, Pardiso \cite{Schenk:pardiso1}, MUMPS \cite{Amestoy:MUMPS} among others.}.
\eit

The proposed numerical method uses a layering of the computational domain, and also lets $L$ be the number of layers. It morally reduces to a sweeping preconditioner when there are as many layers as grid points in one direction ($L = n \sim N^{1/2}$); and it reduces to an efficient direct method when there is no layering ($L = 1$). In both those limits, the online complexity reaches $\cO(N)$ up to log factors.

But it is only when the number of layers $L$ obeys $1 \ll L \ll N$ that the method's online asymptotic complexity scaling $\cO(N/L)$ is strictly better than $\cO(N)$, even without relying on distributed linear algebra solvers.

Finally, the method was designed to be modular; it can seamlessly integrate new advances in direct solvers such as \cite{Gillman_Barnett_Martinsson:A_spectrally_accurate_solution_technique_for_frequency_domain_scattering_problems_with_variable_media,Rouet_Li_Ghysels:A_distributed-memory_package_for_dense_Hierarchically_Semi-Separable_matrix_computations_using_randomization,Wang_Li_Sia_Situ_Hoop:Efficient_Scalable_Algorithms_for_Solving_Dense_Linear_Systems_with_Hierarchically_Semiseparable_Structures}, and in fast summation techniques \cite{Bebendorf:2008,Li_Yang_Martin_Ho_Ying:Butterfly_Factorization}. Moreover, it can be easily modified for different discretizations; as explained in the the sequel, the algorithm hinges on a discrete Green's representation formula, which can be computed, in theory, for any sparse linear system issued from a discretization of a linear PDE, via summation by parts.

\subsection{Polarization} \label{section:polarization}
Let $\x = (x,z)$, the equation that we consider is in this paper is
\begin{equation}
    -\triangle u(\x) - \omega^2 m(\x) u(\x) = f(\x), \qquad \text{for } \x \in \RR^2,
\end{equation}
with $m(\x) = 1$ outside a bounded neighborhood $\Omega$ of the origin, with Sommerfeld radiation condition (SRC)
\begin{equation}
    \left | \frac{\partial u}{\partial r} - i \omega u \right | = \cO(r), \qquad \text{as } r \rightarrow \infty,
\end{equation}
where $r = |\x|$. Equivalently, we may consider $\x \in \Omega$, and replace the SRC by arbitrarily accurate absorbing boundary conditions in the form of a perfectly matched layer (PML) \cite{Berenger:PML,Johnson:PML} outside $\Omega$.\footnote{In practice, we do not restrict the PML to uniform media, i.e., $m(\x) \neq 1$ in $\RR^2 \backslash \Omega$.}

In the context of scalar waves in an unbounded domain, we say that a wave is \emph{polarized} at an interface when it is generated by sources supported only on one side of that interface.

Polarization is key to localizing propagating waves. If, for instance, the Helmholtz equation is posed in a large domain $\Omega$, but with sources supported in some small subdomain $\Omega_1 \subset \Omega$, then the exterior unknowns can be eliminated to yield a local problem in $\Omega_1$ by an adequate choice of ``polarizing" boundary condition. Such a boundary condition can take the form $\frac{\pd u}{\pd n} = D u$ in $\partial \Omega_1$, where $D$ is the Dirichlet-to-Neumann map exterior to $\Omega_1$. In a finite difference framework, $D$ can be represented via a Schur complement of the unknowns in the exterior region $\Omega \, \backslash \, \Omega_1$. See figure \ref{fig:polarization_sketch} for a sample geometry where $\Omega_1 = \Omega^{\scriptsize \mbox{down}}$, the bottom half-plane.

A polarizing boundary condition at $\pd \Omega_1$ may be absorbing in some special cases (such as a uniform medium), but is otherwise more accurately described as the \emph{image} of the absorbing boundary condition at $\pd \Omega$ by reduction as outlined above. Polarized waves at $\pd \Omega_1$ -- as we defined them -- are not in general strictly speaking outgoing, because of the presence of possible scatterers in the exterior zone $\Omega \, \backslash \, \Omega_1$. These scatterers will generate reflected/incoming waves that need to be accounted for in the polarization condition at $\pd \Omega_1$.

We say that a polarization condition is exact when the reduction of unknowns is done in the entire exterior domain,
without approximation. In a domain decomposition framework, the use of such exact polarizing conditions would enable solving the Helmholtz equation in two sweeps of the domain -- typically a top-down sweep followed by a bottom-up sweep \cite{Gander_Kwok:optimal_interface_conditiones_for_an_arbitrary_decomposition_into_subdomains}.

However, constructing exact polarizing conditions is a difficult task. No matter the ordering, elimination of the exterior unknowns by direct linear algebra is costly and difficult to parallelize. Probing from random vectors has been proposed to alleviate this problem \cite{RosalieLaurent:compressed_PML}, but requires significant user intervention. Note that elimination of the \emph{interior} unknowns by Schur complements, or equivalently computation of an interior Dirichlet-to-Neumann map, may be interesting \cite{Gillman_Barnett_Martinsson:A_spectrally_accurate_solution_technique_for_frequency_domain_scattering_problems_with_variable_media}, but does not result in a polarized boundary condition. Instead, this paper explores a local formulation of approximate polarized conditions in integral form, as incomplete Green's identities involving local Green's functions.


To see how to express a polarizing condition in boundary integral form, consider Fig. \ref{fig:polarization_sketch}. Let $\mathbf{x} = (x,z)$ with $z$ pointing down. Consider a linear interface $\Gamma$ partitioning $\R^2$ as $\Omega^{\scriptsize \mbox{down}} \cup \Omega^{\scriptsize \mbox{up}} \cup \Gamma$, with $f$ compactly supported in $\Omega^{\scriptsize \mbox{down}}$. Suppose that $m$ is heteregeneous, but constant outside a large compact neighborhod of the origin.

 Let $\x \in \Omega^{\scriptsize \mbox{up}}$, and consider a contour made up of the boundary $\partial D$ (the gray contour in Fig. \ref{fig:polarization_sketch}) of a semi-disk $D$ of radius $R$ in the upper half-plane $\Omega^{\scriptsize \mbox{up}}$. We can then apply the  Green's representation formula (GRF) \cite{Kress:Linear_integral_equations,McLean:Strongly_elliptic_systems_and_boundary_integral_equations} on $D$ to obtain
 \beq\label{eq:GRF}
u(\x) = \int_{\partial D} \left( \frac{\pd G}{\pd z_y}(\x,\y) u(\y) - G(\x,\y) \frac{\pd u}{\pd z_y}(\y)  \right) dS_{\y}, \qquad \x \in \mathring{D},
\eeq
and zero if $\x$ lies in the semi-circle of radius $R$. Moreover, Eq. \ref{eq:GRF} can decomposed in
\begin{align} \label{eq:GRF_decomposed}
u(\x) &= \int_{ \Gamma_R} \left( \frac{\pd G}{\pd z_y}(\x,\y) u(\y) - G(\x,\y) \frac{\pd u}{\pd z_y}(\y)  \right) dS_{\y} \\
      &+ \int_{ \partial D_R} \left( \frac{\pd G}{\pd z_y}(\x,\y) u(\y) - G(\x,\y) \frac{\pd u}{\pd z_y}(\y)  \right) dS_{\y}, \qquad \x \in \mathring{D} ,
\end{align}
where $\Gamma_R = \Gamma \cap \partial D$, and  $\partial D_R$ is the portion of $\partial D$ in the interior of $\Omega^{\text{up}}$.

Letting $R \rightarrow \infty$, the integral in $\partial D_R$ vanishes due to the SRC, which results in the \emph{incomplete Green's formula}
\beq\label{eq:incomplete-GRF}
u(\x) = \int_{\Gamma} \left( \frac{\pd G}{\pd z_y}(\x,\y) u(\y) - G(\x,\y) \frac{\pd u}{\pd z_y}(\y)  \right) dS_{\y}, \qquad \x \in \Omega^{\scriptsize \mbox{up}}.
\eeq
On the contrary, if $\x$ approaches $\Gamma$ from below, then we obtain the \emph{annihilation formula}
\beq \label{eq:annihilation}
0 = \int_{\Gamma} \left( \frac{\pd G}{\pd z_y}(\x,\y) u(\y) - G(\x,\y) \frac{\pd u}{\pd z_y}(\y) \right) dS_{\y} , \qquad \x \to \Gamma,  \,\, \x \text{ in } \Omega^{\text{down}}.
\eeq
Eqs. \ref{eq:incomplete-GRF} and \ref{eq:annihilation} are equivalent; either one can be used as the definition of a polarizing boundary condition on $\Gamma$. This boundary condition is exactly polarizing if $G$ is taken as the global Green's function for the Helmholtz equation in $\R^2$ with SRC.

\begin{figure}[H]
	\begin{center}
	 	\includegraphics[trim = 8mm 80mm 8mm 10mm, clip, width = 125mm]{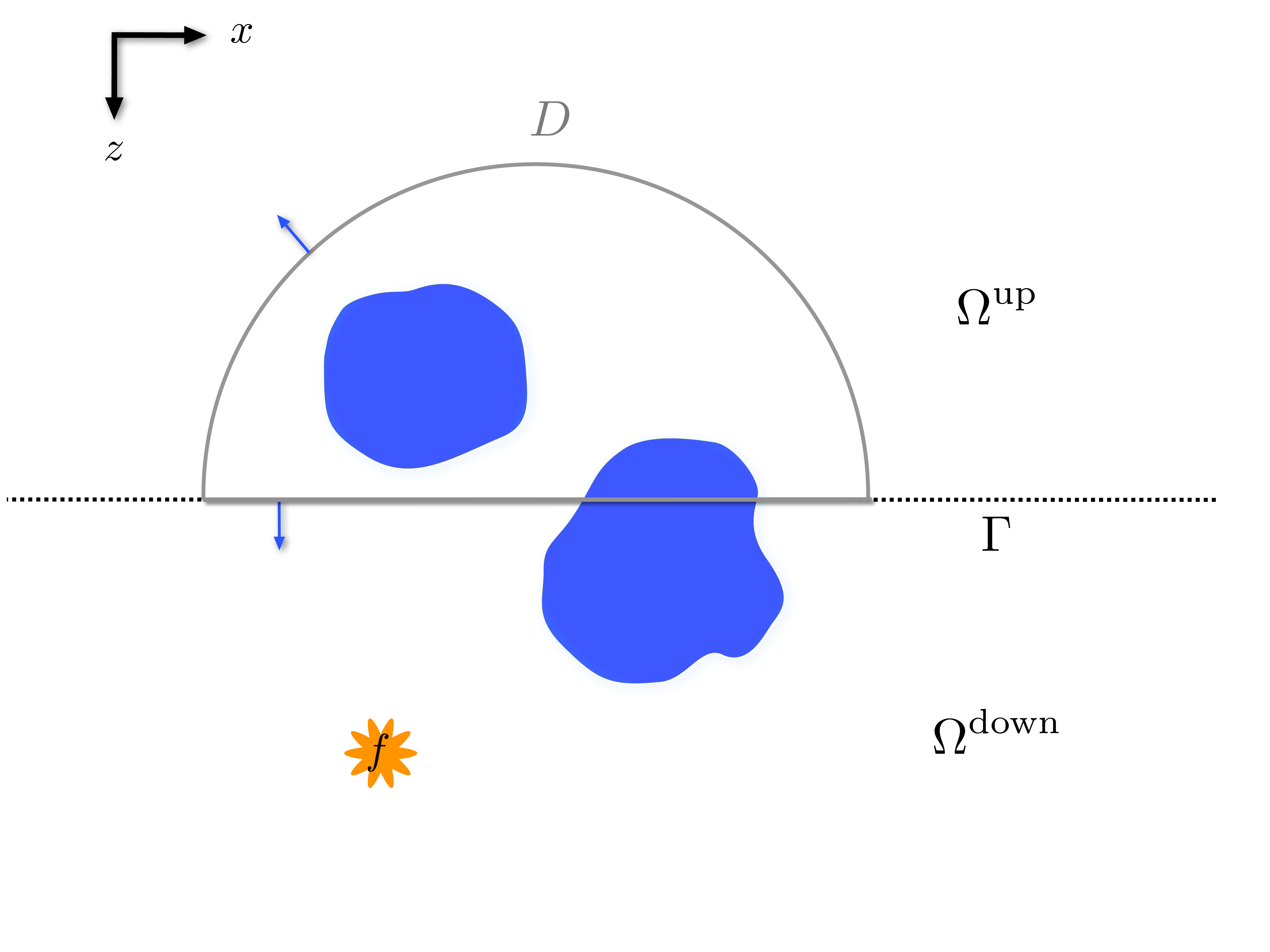}
	 	 \caption{Illustration of Eqs. \ref{eq:incomplete-GRF} and \ref{eq:annihilation}.}
	 	 \label{fig:polarization_sketch}
	 \end{center}
\end{figure}

Instead, this paper proposes to use these conditions with a \emph{local} Green's function $G$ that arises from a problem posed in a slab around $\Gamma$, with absorbing boundary conditions at the edges of the slab. With a local $G$, the conditions (Eq. \ref{eq:incomplete-GRF} or \ref{eq:annihilation}) are only approximately polarizing.

Other approximations of absorbing conditions, such as square-root operators, can be good in certain applications \cite{deHoop:generalization_of_the_phase_screen_approximation_for_the_scattering_of_acoustic_waves,Beylkin:oneway,Stoffa:split_step_fourier_migration, FishmandeHoop:exact_constructions_of_square_root_helmholtz_operator_symbols_the_focusing_quadratic_profile,StolkdeHoop:modeling_of_seismic_data_in_the_downward_continuation_approach}, but are often too coarse to be useful in the context of a fast domain decomposition solver. In particular, it should be noted that square-root operators do \emph{not} approximate polarizing conditions to arbitrary accuracy in media with heterogeneity normal to the boundary.

Finally, we point out that an analogous approach can be found for the homogeneous case under the name of Rayleigh integrals in Chapter 5 of \cite{Berkhout:Seismic_Migration_imaging_of_acoustic_energy_by_wavefield_extrapolation}, in which the polarizing condition is called causality condition; moreover, the approach presented above is equivalent to the upwards-propagating radiation
condition (UPRC) in \cite{Chandler-Wilde:Boundary_value_problems_for_the_Helmholtz_equation_in_a_half-plane}.

%
%
%

\subsection{Algorithm}
%
%
Let $\Omega$ be a rectangle in $\RR^2$,  and consider a layered partition of $\Omega$ into slabs, or layers $\{ \Omega^{\ell} \}_{\ell =1}^{L}$. The squared slowness $m(\x) = 1/c( \x)^2$, $\x = (x,z)$, is the only physical parameter we consider in this paper.
Define the global Helmholtz operator at frequency $\omega$ as
\begin{equation} \label{eq:Helmholtz}
\cH u  = \left( -\triangle  - m \omega^2  \right)  u \qquad  \text{in } \Omega,
\end{equation}
with an absorbing boundary condition on $\pd \Omega$, realized to good approximation by a perfectly matched layer surrounding $\Omega$. Let us define $f^{\ell}$ as the restriction of $f$ to $\Omega^{\ell}$, i.e., $f^{\ell} = f\chi_{\Omega^{\ell}}$. Define the local Helmholtz operators as
\begin{equation} \label{eq:local_Helmholtz}
\cH^{\ell} u  =  \left( -\triangle  - m \omega^2  \right)  u   \qquad  \text{in } \Omega^{\ell},
\end{equation}
with an absorbing boundary condition on $\pd \Omega^{\ell}$. Let $u$ be the solution to  $\cH u = f$. Using the local GRF, the solution can be written without approximation in each layer as
\begin{equation} \label{GRF}
u(\x) = G^{\ell}  f^{\ell}(\x) + \int_{\partial \Omega^{\ell}} \left( G^{\ell} (\x, \y) \partial_{\nu_y} u(\y) - \partial_{\nu_y}  G^{\ell} (\x, \y)  u(\y) \right) dS_{\y}
\end{equation}
for $\x \in \Omega^{\ell}$, where $G^{\ell}  f^{\ell}(\x) = \int_{\Omega^{\ell}} G^{\ell}(\x,\y) f^{\ell}(\y) d\y$ and $G^{\ell}(\x,\y) $ is the solution of  $\cH^{\ell} G^{\ell}(\x,\y) = \delta(\x-\y)$.

Denote $\Gamma_{\ell,\ell+1} = \partial \Omega^{\ell} \cap \partial \Omega^{\ell+1}$. Supposing that $\Omega^{\ell}$ are thin slabs either extending to infinity, or surrounded by a damping layer on the lateral sides, we can rewrite Eq. \ref{GRF} as
\begin{align}\notag \label{eq:GRF_slab}
	u(\x)  	&=  G^{\ell}  f^{\ell} (\x)  \\  \notag
 			&- \int_{\Gamma_{\ell-1,\ell}} G^{\ell}(\x, \x') \partial_z u(\x') dx'  + \int_{\Gamma_{\ell,\ell+1}} G^{\ell}(\x, \x') \partial_z u(\x') dx' \\
			&+  \int_{\Gamma_{\ell-1,\ell}}\partial_z G^{\ell}(\x, \x')  u(\x') dx'  - \int_{\Gamma_{\ell,\ell+1}}\partial_z G^{\ell}(\x, \x')  u(\x') dx'.
\end{align}

The knowledge of $u$ and $\pd_z u$ on the interfaces $\Gamma_{\ell, \ell+1}$ therefore suffices to recover the solution everywhere in $\Omega$.

We further split $u = u^{\uparrow} + u^{\downarrow}$ and $\pd_z u = \pd_z u^{\uparrow} + \pd_z u^{\downarrow}$ on $\Gamma_{\ell, \ell+1}$, by letting $( u^{\uparrow}, \pd_z u^{\uparrow})$ be polarized up in $\Omega^{\ell}$ (according to Eq. \ref{eq:annihilation}), and $( u^{\downarrow}, \pd_z u^{\downarrow})$ polarized down in $\Omega^{\ell+1}$. Together, the interface fields $u^{\uparrow}, u^{\downarrow}, \pd_z u^{\uparrow}, \pd_z u^{\downarrow}$ are the ``polarized traces" that serve as computational unknowns for the numerical method.

\begin{figure}[H]
	\begin{center}
	 	\includegraphics[trim = 8mm 10mm 8mm 15mm, clip, width = 65mm]{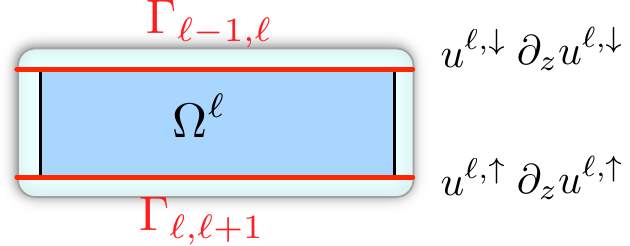}
	 	 \caption{Illustration of Eq. \ref{eq:GRF_slab}. The light-shaded layer around $\Omega^{\ell}$ represents the absorbing layer.}
	 	 \label{fig:PLR_compression}
	 \end{center}
\end{figure}

The discrete system is then set up from algebraic reformulations of the local GRF (Eq. \ref{eq:GRF_slab}) with the polarizing conditions (Eqs. \ref{eq:incomplete-GRF} and \ref{eq:annihilation}), in a manner that will be made explicit below. The reason for considering this sytem is twofold:
\begin{enumerate}
\item It has a 2-by-2 structure with block-triangular submatrices on the diagonal, and comparably small off-diagonal submatrices. A very good preconditioner consists in inverting the block-triangular submatrices by back- and forward-substitution. One application of this preconditioner can be seen as a sweep of the domain to compute transmitted (as well as locally reflected) waves using Eq. \ref{eq:incomplete-GRF}.
\item Each of these submatrices decomposes into blocks that are somewhat compressible in adaptive low-rank-partitioned format. This property stems from the polarization conditions, and would not hold for the Schur complements of the interior of the slab.
\end{enumerate}
Point 1 ensures that the number of GMRES iterations stays small and essentially bounded as a function of frequency and number of slabs. Point 2 enables the sublinear-time computation of each iteration.

\subsection{Complexity scalings}\label{sec:intro_complexity}


Time complexities that grow more slowly than the total number $N$ of volume unknowns are possible in a $L$-node cluster, in the following sense: the input consists of a right-hand-side $f$ distributed to $L$ nodes corresponding to different layers, while the problem is considered solved when the solution is locally known for each of the $L$ nodes/layers.

The Helmholtz equation is discretized as a linear system of the form $\H \u = \f$. There is an important distinction between
\bit
\item the offline stage, which consists of any precomputation involving $\H$, but not $\f$; and
\item the online stage, which involves solving $\H \u = \f$ for possibly many right-hand sides $\f$.
\eit
By online time complexity, we mean the runtime for solving the system once in the online stage. The distinction is important in situations like geophysical wave propagation, where offline precomputations are often amortized over the large number of system solves with the same matrix $\H$.


We let $n = N^{1/2}$ for the number of points per dimension (up to a constant). Since most of our complexity claims are empirical, our $O$ notation may include some log factors. All the numerical experiments in this paper assume the simplest second-order finite difference scheme for constant-density, heterogeneous-wave-speed, fixed-frequency acoustic waves in 2D -- a choice that we make for simplicity of the exposition. High frequency either means $\omega \sim n$ (constant number of points per wavelength), or $\omega \sim n^{1/2}$ (the scaling for which second-order FD are expected to be accurate), with different numerical experiments covering both cases.

The table below gathers the observed complexity scalings. We assume that the time it takes to transmit a message of length $m$ on the network is of the form $\alpha + \beta m$, with $\alpha$ the latency and $\beta$ the inverse bandwidth.

\begin{table}[H]
\begin{center}
\begin{tabular}{|c|c|c|c|c|}
\hline
Step 		&  Sequential  & Zero-comm parallel    & Number of processors & Communication cost \\
\hline
offline 	& - 		& $\cO(\left( N/L \right)^{3/2} )$  		& $ N^{1/2}L$  & $ \cO(\alpha L + \beta NL)$  \\
\hline
online			& $\cO(N^{\gamma} L) $  	& $\cO\left( N\log(N) /L \right)$ & $L$   &  $ \cO(\alpha L + \beta N^{1/2}L)$   \\
\hline
\end{tabular}
\caption{Time complexities for the different stages of the solver. The parameter $\gamma$ is striclty less than one; its value depends on the scaling of $\omega$ vs. $N$. A representative value is $\gamma = 3/4$ (more in the text).
}\label{table:complexity}
\end{center}
\end{table}

These totals are decomposed over the different tasks according to the following two tables.

\begin{table}[H]
\begin{center}
\begin{tabular}{|c|c|c|c|}
\hline
Step 		&  Sequential  & Zero-comm parallel    & Number of processors  \\
\hline
Factorization	& - 		& $\cO((N/L)^{3/2} )$	& $L$    \\
\hline
Computation of the Green's functions		& -   	& $\cO\left( N\log(N) /L \right)$ & $ L$  \\
\hline
Compression of the Green's functions        &  -    &  $ \cO(R_{\text{max}}N \log N)$ & $ N^{1/2}L$       \\
\hline
\end{tabular}
\caption{Complexity of the different steps within the offline computation. }\label{table:complexity_off_line}
\end{center}
\end{table}

\begin{table}[H]
\begin{center}
\begin{tabular}{|c|c|c|c|}
\hline
Step 		&  Sequential  & Zero-comm parallel    & Number of processors  \\
\hline
Backsubstitutions for the local solves	& - 		& $\cO\left( N\log(N) /L \right)$ 	& $ L$    \\
\hline
Solve for the polarized traces	&  $\cO(N^{\gamma} L)$ 	& - & $ 1$  \\
\hline
Reconstruction in the volume   &  -    &  $\cO\left( N\log(N) /L \right)$  & $L$       \\
\hline
\end{tabular}
\caption{Complexity of the different steps within the online computation. }\label{table:complexity_on_line}
\end{center}
\end{table}

The ``sweet spot" for $L$ is when $N^{\gamma} L \sim \frac{N}{L}$, i.e., when $L \sim N^{\frac{1-\gamma}{2}}$. The value of $\gamma$ depends on how the frequency $\omega$ scales with $n$:
\begin{itemize}
\item When $\omega \sim n^{1/2}$ ($\cO(n^{1/2})$ points per wavelength), the second-order finite difference scheme is expected to be accurate in a smooth medium. In that case, we observe $\gamma = \frac{5}{8}$ in our experiments, even when the medium is not smooth. Some theoretical arguments indicate, however, that this scaling may not continue to hold as $N \to \infty$, and would become $\gamma = \frac{3}{4}$ for values of $N$ that we were not able to access computationally. Hence we prefer to claim $\gamma = \frac{3}{4}$.
\item When $\omega \sim n$ ($\cO(1)$ points per wavelength), the second-order finite difference scheme is not pointwise accurate. Still, it is a preferred regime in exploration geophysics. In that case, we observe $\gamma = \frac{7}{8}$. This relatively high value of $\gamma$ is due to the poor quality of the discretization. For theoretical reasons, we anticipate that the scaling would again become $\gamma = \frac{3}{4}$ as $\omega \sim n$, were the quality of the discretization not an issue.
\end{itemize}

As the discussion above indicates, some of the scalings have heuristic justifications, but we have no mathematical proof of their validity in the case of heterogeneous media of limited smoothness. Counter-examples may exist. For instance, if the contrast increases between the smallest and the largest values taken on by $m$ -- the case of cavities and resonances -- it is clear that the constants in the $\cO$ notation degrade. See section \ref{sec:complexity} for a discussion of the heuristics mentioned above.

Further improvements in the complexity can, in principle, be achieved by using distributed linear algebra for each local solve using a number of nodes that grows with the number of degrees of freedom. Using the same hypothesis, the sweeping preconditioner \cite{EngquistYing:Sweeping_PML} can achieve sublinear runtime; however, it would require distributed linear algebra especially tailored to this particular problem such as \cite{Poulson_Engquist:a_parallel_sweeping_preconditioner_for_heteregeneous_3d_helmholtz}.

Although the present paper is concerned with the 2D Helmholtz equation, the algorithm introduced can be easily be extended for the 3D case. However, the complexity scalings would be completely different. The complexity for the local solves is known to be $\cO(N^{4/3}/L^2)$ (using a multifrontal method \cite{Duff_Reid:The_Multifrontal_Solution_of_Indefinite_Sparse_Symmetric_Linear,GeorgeNested_dissection} in a slab of dimensions $n\times n \times n/L$) and the same theoretical arguments in Section \ref{section:PLR} can be extended for the 3D case, which will provide a $\cO(N^{5/6})$ complexity for the application of the Green's integral compressed in partitioned low rank (PLR) form. A crude estimate would yield an overall online runtime $\cO(N^{4/3}/L^2 + LN^{5/6})$, which would result in a linear runtime algorithm provided that $L \sim N^{1/6}$. Notice however that the application of the Green's integrals using a butterfly algorithm \cite{Li_Yang_Martin_Ho_Ying:Butterfly_Factorization,Neil_Rokhlin:butterfly} can, in principle, be performed in $\cO(N^{2/3})$ time (up to logarithmic factors), which would, in theory, provide sublinear online runtimes.

\subsection{Additional related work} \label{sec:related}

In addition to the references mentioned on page 1, much work has recently appeared on domain decomposition and sweeping-style methods for the Helmholtz equation:
\begin{itemize}

\item The earliest suggestion to perform domain decomposition with transmission boundary conditions is perhaps to be found in the work of Despres \cite{ Despres:domain_decomposition_hemholtz}, which led, in joint work with Cessenat and Benamou, to the Ultra Weak Variational Formulation \cite{BenamouDespres:domain_decomposition,Cessenat:Application_dune_nouvelle_formulation_variationelle_aux_equations_dondes_harmonique,Cessenat_Despres:application_of_an_ultra_weak_variational_formulation_of_elliptic_pdes_to_the_2_d_helmholtz_problem,Cessenat_Despres:Using_Plane_Waves_as_Base_Functions_for_Solving_Time_Harmonic_Equations_with_the_Ultra_Weak_Variational_Formulation};

\item Their work spawned a series of new formulations within the framework of Discontinous Galerkin methods, such as the Trefftz formulation of Perugia et al. \cite{Perugia:trefft,Gittelson_Hipmair_Perugia:Trefftz}, and the discontinuous enrichment method of Farhat et al. \cite{Farhat:The_discontinuous_enrichment_method};

\item Simultaneously to the ultra weak formulation, the partition of unity method (PUM) by Babuska and Melenk \cite{babuska_melenk:partition_of_unity_method}, and the least-squares method by Monk and Wang \cite{Monk_Wang:A_least-squares_method_for_the_Helmholtz_equation:}, were developed.

\item Hiptmair et al. proposed the multi-trace formulation \cite{Hiptmair:multiple_traces_boundary_integral_formulation_for_hemholtz_transmission_problem}, which involves solving an integral equation posed on the interfaces of a decomposed domain, which is naturally well suited for operator preconditionning in the case of piece-wise constant medium;

\item Plessix and Mulder proposed a method similar to AILU in 2003 \cite{Plessix_Mulder:Separation_of_variable_preconditioner_for_iterativa_Helmholtz_solver};

\item More recently, Geuzaine and Vion explored approximate transmission boundary conditions \cite{GeuzaineVion:double_sweep,Vion_Rosalie_Demanet:A_DDM_double_sweep_preconditioner_for_the_Helmholtz_equation_with_matrix_probing_of_the_DtN_map} coupled with a multiplicative Schwartz iteration, to improve on traditional domain decomposition methods;

\item Chen and Xiang proposed another instance of efficient domain decomposition where the emphasis is on transferring sources from one subdomain to another \cite{Chen_Xiang:a_source_transfer_ddm_for_helmholtz_equations_in_unbounded_domain,Cheng_Xiang:A_Source_Transfer_Domain_Decomposition_Method_For_Helmholtz_Equations_in_Unbounded_Domain_Part_II_Extensions};


\item Luo et al. proposed a large-subdomain sweeping method, based on an approximate Green's function by geometric optics and the butterfly algorithm, which can handle transmitted waves in very favorable complexity \cite{Luo:Fast_Huygens_sweeping_methods_for_Helmholtz_equations_in_inhomogeneous_media_in_the_high_frequency_regime}; and a variant of this approach that can handle caustics in the geometric optics ansatz \cite{Qian_Luo_Burridge:Fast_Huygens_sweeping_methods_for_multiarrival_Greens_functions_of_Helmholtz_equations_in_the_high-frequency_regime};

\item Conen et al, developped an additive Schwarz iteration coupled with coarse grid preconditioner based on an eigenvalue decomposition of the DtN maps inside each subdomain \cite{Nataf:A_coarse_space_for_heterogeneous_Helmholtz_problems_based_on_the_Dirichlet-to-Neumann_operator};

\item Poulson et al. parallelized the sweeping preconditioners in 3D to deal with very large scale problems in geophysics \cite{Poulson_Engquist:a_parallel_sweeping_preconditioner_for_heteregeneous_3d_helmholtz}.  Tsuji et al. designed a spectrally accurate sweeping preconditioner for time-harmonic elastic waves \cite{Tsuji_Poulson:sweeping_preconditioners_for_elastic_wave_propagation}, and time-harmonic Maxwell equations \cite{Tsuji_engquist_Ying:A_sweeping_preconditioner_for_time-harmonic_Maxwells_equations_with_finite_elements,Tsuji_Ying:A_sweeping_preconditioner_for_Yees_finite_difference_approximation_of_time-harmonic_Maxwells_equations}.

\item Liu and Ying, while this paper was in review, developed a recursive version of the sweeping preconditioner in 3D that decreases the off-line cost to linear complexity \cite{Liu_Ying:Recursive_sweeping_preconditioner_for_the_3d_helmholtz_equation}, and another variant of the sweeping preconditioner based on domain decomposition coupled with an additive Schwarz preconditioner \cite{Liu_Ying:Additive_Sweeping_Preconditioner_for_the_Helmholtz_Equation};

\item Finally, an earlier version of the work presented in this paper can be found in \cite{Zepeda_Hewett_Demanet:Preconditioning_the_2D_Helmholtz_equation_with_polarized_traces}, in which the notion of polarized traces using Green's representation formula was first introduced, obtaining favorable scalings;

\end{itemize}


Some progress has also been achieved with multigrid methods, though the complexity scalings do not appear to be optimal in two and higher dimensions:

\begin{itemize}
\item Bhowmik and Stolk recently proposed new rules of optimized coarse grid corrections for a two level multigrid method \cite{Bhowmik_Stolk:A_multigrid_method_for_the_helmholtz_equation_with_optimized_coarse_grid_corrections};
\item Brandt and Livshits developed the wave-ray method \cite{Brandt_Livshits:multi_ray_multigrid_standing_wave_equations}, in which the oscillatory error components are eliminated by a ray-cycle, that exploits a geometric optics approximation of the Green's function;
\item Elman and Ernst use a relaxed multigrid method as a preconditioner for an outer Krylov iteration in \cite{Elman_Ernst:a_multigrid_method_discrete_helmholtz_equation};
\item Haber and McLachlan proposed an alternative formulation for the Hemholtz equation \cite{Haber:A_fast_method_for_the_solution_of_the_Helmholtz_equation}, which reduces the problem to solve an eikonal equation and an advection-diffusion-reaction equation, which can be solved efficiently by a Krylov method using a multigrid method as a preconditioner; and
\item Grote and Schenk \cite{Grote_Schenk:algebraic_multilever_preconditioner_Helmholtz_equation} proposed an algebraic multi-level preconditioner for the Helmholtz equation.
\end{itemize}

Erlangga et al. \cite{Erlangga:shifted_laplacian} showed how to implement a simple, although suboptimal, complex-shifted Laplace preconditioner with multigrid, which shines for its simplicity, and whose performance depends on the choice of the complex-shift. A suboptimal multigrid method with multilevel complex shifts was implemented in 2D and 3D by Cools, Reps and Vanroose \cite{Cools_Reps_Vanroose:A_new_level-dependent_coarse_grid_correction_scheme_for_indefinite_Helmholtz_problems}. Another variant of complex-shifted Laplacian method with deflation was studied by Sheikh et al.\cite{Sheikh_Lahaye_Vuik:On_the_convergence_of_shifted_Laplace_preconditioner_combined_with_multilevel_deflation}. The choice of the optimal complex-shift has been studied by Cools and Vanroose \cite{Cools_Vanroose:Local_Fourier_analysis_of_the_complex_shifted_Laplacian_preconditioner_for_Helmholtz_problems} and by Gander et al. \cite{Gander_Graham_Spence:largest_shift_for_complex_shitfted_Laplacian}. Finally, some extensions of the complex-shifted Laplacian preconditioner have been proposed recently by Cools and Vanroose \cite{Cools_Vanroose:Generalization_of_the_complex_shifted_Laplacian:_on_the_class_of_expansion_preconditioners_for_Helmholtz_problems}.

Chen et al. implemented a 3D solver using the complex-shifted Laplacian with optimal grids to minimize pollution effects in \cite{Chen:A_dispersion_minimizing_finite_difference_scheme_and_preconditioned_solver_for_the_3D_Helmholtz_equation}. Multigrid methods applied to large 3D examples have been implemented by Plessix \cite{Plessix:A_Helmholtz_iterative_solver_for_3D_seismic_imaging_problems}, Riyanti et al. \cite{Riyanti_Plessix:A_parallel_multigrid-based_preconditioner_for_the_3D_heterogeneous_high-frequency_Helmholtz_equation:} and, more recently, Calandra et al.  \cite{Calandra_Grattonn:an_improved_two_grid_preconditioner_for_the_solution_of_3d_Helmholtz}. A good review of iterative methods for the Helmholtz equation, including complex-shifted Laplace preconditioners, is in  \cite{Erlangga:Helmholtz}. Another review paper that discussed the difficulties generated by the high-frequency limit is \cite{Gander:why_is_difficult_to_solve_helmholtz_problems_with_classical_iterative_methods}.  Finally,  beautiful mathematical expositions of the Helmholtz equation and a good reviews are  \cite{Chandler_Graham_Langdon_Spence:Numerical_asymptotic_boundary_integral_methods_in_high-frequency_acoustic_scattering} and \cite{Moiola_Spence:Is_the_Helmholtz_Equation_Really_Sign_Indefinite}.

\subsection{Organization}

The discrete integral reformulation of the Helmholtz equation is presented in section 2. The polarization trick, which generates the triangular block structure at the expense of doubling the number of unknowns, is presented in section 3. The definition of the corresponding preconditioner is in section 4. Compression of the blocks by adaptive partitioned-low-rank (PLR) compression is in section 5. An analysis of the computational complexity of the resulting scheme is in section 6. Finally, section 7 presents various numerical experiments that validate the complexity scalings as $L$ and $\omega$ vary.

\section{Discrete Formulation}

In this section we show how to reformulate the discretized 2D Helmholtz equation, in an exact fashion, as a system for interface unknowns stemming from local Green's representation formulas.


\subsection{Discretization}

Let $ \Omega = (0,L_x) \times (0, L_z )$ be a rectangular domain. Throughout this paper, we pose Eq. \ref{eq:Helmholtz} with absorbing boundary conditions on $\pd \Omega$, realized as a perfectly matched layer (PML) \cite{Berenger:PML,Johnson:PML}. This approach consists of extending the computational domain with an absorbing layer, in which the differential operator is modified to efficiently damp the outgoing waves and reduce the reflections due to the truncation of the domain.

Let $\Omega^{ \text{ext}}  = (-\delta_{\text{pml}},L_x+\delta_{\text{pml}}) \times (-\delta_{\text{pml}}, L_z +\delta_{\text{pml}}) $ be the extended rectangular domain containing $\Omega$ and its absorbing layer. The Helmholtz operator in Eq. \ref{eq:Helmholtz} then takes the form
\begin{equation}
	\cH = -\partial_{xx} - \partial_{zz} - m\omega^2, \qquad  \text{ in } \Omega^{\text{ext}},
\end{equation}
in which the differential operators are redefined following
\begin{equation}
	\partial_x  \rightarrow \frac{1}{1+ i \frac{\sigma_x(\x)}{ \omega }} \partial_x, \qquad \partial_z  \rightarrow \frac{1}{1+ i \frac{\sigma_z(\x)}{ \omega } } \partial_z,
\end{equation}
and where $m$ is an extension\footnote{We assume that $m(x)$ is given to us in $\Omega^{\text{ext}}$. If it isn't, normal extension of the values on $\pd \Omega$ is a reasonable alternative.} of the squared slowness.
Moreover, $\sigma_x(\x)$ is defined as
\begin{equation}
	\sigma_x(\x) = 	\left \{\begin{array}{rl}
						\frac{C}{\delta_{\text{pml}}} \left (\frac{x } {\delta_{\text{pml}}} \right)^2, 		& \text{if } x \in (-\delta_{\text{pml}}, 0 ),\\
						0 				,																			& \text{if } x \in [ 0, L_x ],  \\
						\frac{C}{\delta_{\text{pml}}} \left (\frac{x - L_x }{\delta_{\text{pml}}}\right)^2, 	& \text{if } x \in ( L_x , L_x + \delta_{\text{pml}} ),\\
							\end{array} \right .
\end{equation}
and similarly for $\sigma_z(\x)$. We remark that $\delta_{\text{pml}}$ and $C$ can be seen as tuning parameters. In general, $\delta_{\text{pml}}$ goes from a couple of wavelengths in a uniform medium, to a large number independent of $\omega$ in a highly heterogeneous media; and $C$ is chosen to provide enough absorption.

With this notation we rewrite Eq. \ref{eq:Helmholtz} as
\begin{equation} \label{eq:Helmholtz_pml}
\cH u = f, \qquad \text{in } \Omega^{\text{ext}},
\end{equation}
with homogeneous Dirichlet boundary conditions ($f$ is the zero extended version of $f$ to $\Omega^{\text{ext}}$).

We discretize $\Omega$ as an equispaced regular grid of stepsize $h$, and of dimensions $n_x \times n_z$. For the extended domain $\Omega^{\text{ext}}$, we extend this grid by $ n_{\text{pml}}=\delta_{\text{pml}}/h$ points in each direction, obtaining a grid of size $ (2n_{\text{pml}}+n_x) \times (2n_{\text{pml}}+n_z$). Define $\x_{p,q} = (x_p, z_q) = (ph,qh)$.



We use the 5-point stencil Laplacian to discretize Eq. \ref{eq:Helmholtz_pml}. For the interior points $\x_{i,j} \in \Omega$, we have
\begin{equation} \label{eq:discrete_helmholtz_equation_by_point}
	\left( \H \u \right)_{p,q} = \frac{1}{h^2} \left ( -\u_{p-1,q}+2\u_{p,q} -\u_{p+1,q} \right) +  \frac{1}{h^2} \left ( -\u_{p,q-1}+2\u_{p,q} -\u_{p,q+1} \right) - \omega^2 m(\x_{p,q}).
\end{equation}
In the PML, we discretize
\begin{align}
	\alpha_x\partial_x (\alpha_x \partial_x u ) \qquad &\mbox{as} \qquad  \alpha_x(\x_{p,q})\frac{ \alpha(\x_{p+1/2,q})(\u_{p+1,q} - \u_{p,q}) - \alpha_x(\x_{p-1/2,q})( \u_{p,q} - \u_{p-1,q} )   }{h^2},\\
	\alpha_z\partial_z (\alpha_z \partial_z u ) \qquad &\mbox{as} \qquad  \alpha_z(\x_{p,q})\frac{ \alpha(\x_{p,q+1/2})(\u_{p,q+1} - \u_{p,q}) - \alpha_z(\x_{p,q-1/2})( \u_{p,q} - \u_{p,q-1} )   }{h^2}, \label{eq:pml_partial_z}
\end{align}
where
\begin{equation}
	\alpha_x(\x) =  \frac{1}{1+ i \frac{\sigma_x(\x)}{ \omega } }, \qquad \alpha_z(\x) =  \frac{1}{1+ i \frac{\sigma_z(\x)}{\omega } }.
\end{equation}

Finally, we solve
\begin{equation} \label{eq:discrete_Helmholtz_equation}
	\left( \H \u \right)_{p,q} = \mathbf{f}_{p,q}, \qquad (p,q) \in \llbracket -n_{\text{pml}}+1, n_x +  n_{\text{pml}} \rrbracket \times  \llbracket -n_{\text{pml}} + 1, n_z +  n_{\text{pml}} \rrbracket,
\end{equation}
for the global solution $\u$. We note that the matrix $\H$ in Eq. \ref{eq:discrete_Helmholtz_equation} is not symmetric, though there exists an equivalent symmetric (non-Hermitian) formulation of the PML \cite{EngquistYing:Sweeping_H,EngquistYing:Sweeping_PML}. While the symmetric formulation presents theoretical advantages\footnote{In Appendix \ref{appendix:quadratures} we use the symmetric formulation as a proof method, and provide the relationship between the Green's functions in the symmetric and unsymmetric cases. }, working with the non-symmetric form gives rise to less cumbersome discrete Green's formulas in the sequel.


\subsection{Domain Decomposition}

We partition $\Omega^{\text{ext}}$ in a layered fashion into $L$ subdomains $\{\Omega^{\ell} \}_{\ell=1}^L$, as follows. For convenience, we overload $\Omega^{\ell}$ for the physical layer, and for its corresponding grid.

We continue to let $\x_{p,q} = (x_p, z_q) = (ph,qh)$ in the global indices $p$ and $q$. The $\ell$-th layer is defined via a partition of the $z$ axis, by gathering all $q$ indices such that
\[
n_c^{\ell} < q \leq n_c^{\ell+1},
\]
for some increasing sequence $n_c^{\ell}$. The number of grid points in the $z$ direction in layer $\ell$ is
\[
n^{\ell} = n_c^{\ell+1} - n_c^{\ell}.
\]
(The subscript $c$ stands for cumulative.) Many of the expressions in this paper involve local indexing: we let $i = p$ in the $x$ direction, and $j$ such that
\[
q = n_c^{\ell} + j, \qquad 1 \leq j \leq n^{\ell},
\]
in the $z$ direction.

We hope that there is little risk of confusion in overloading $z_{j}$ (local indexing) for $z_{n_c^{\ell} + j}$ (global indexing). Similarly, a quantity like the squared slowness $m$ will be indexed locally in $\Omega^\ell$ via $m^{\ell}_{i,j} = m_{i,n^{\ell}_{\text{c}} + j}$. In the sequel, every instance of the superscript $\ell$ refers to restriction to the $\ell$-th layer.

\begin{figure}[H]
    \begin{center}
        \includegraphics[trim = 0mm 18mm 0mm 18mm, clip, width = 130mm]{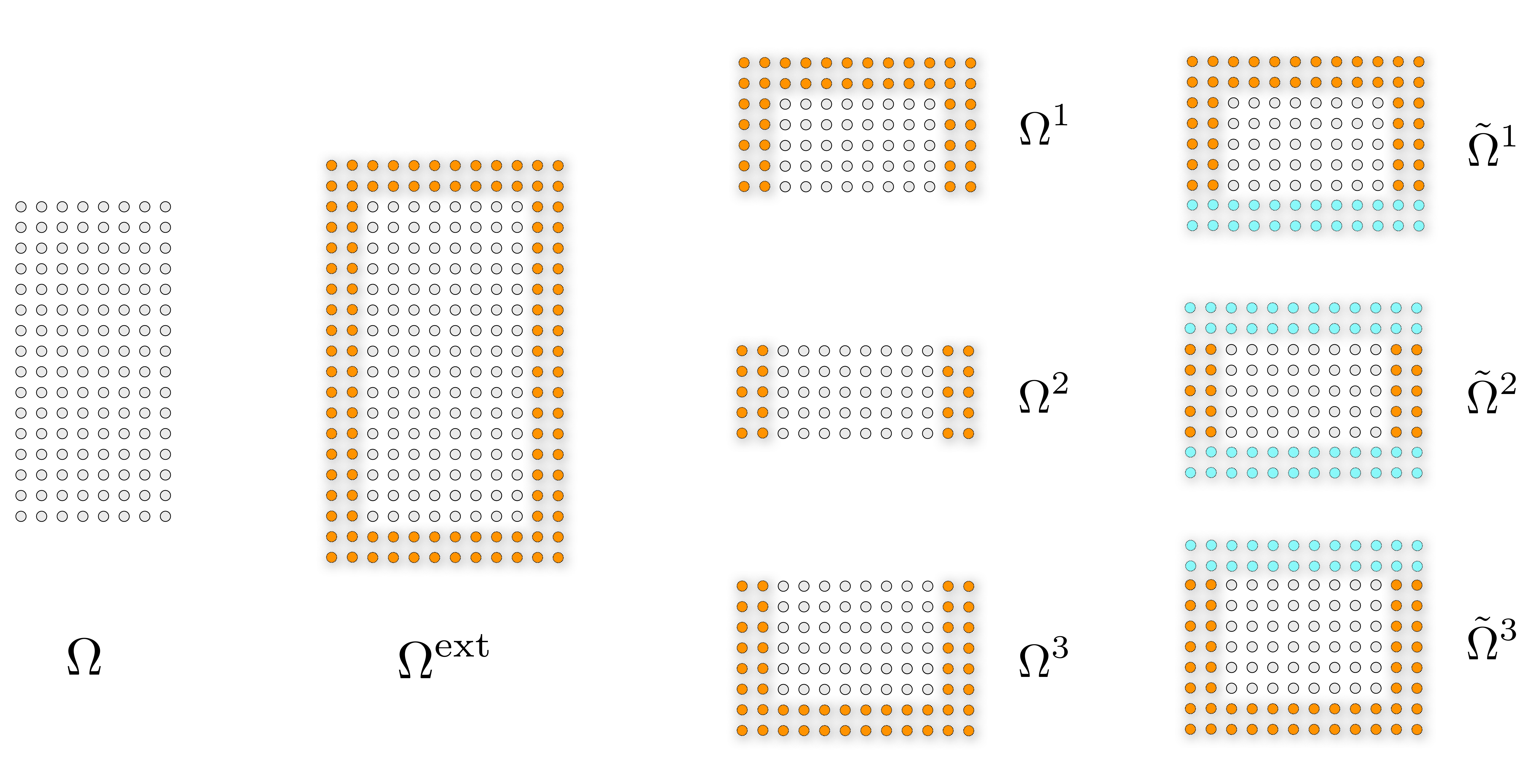}
         \caption{ The domain $\Omega$ is extended to $\Omega^{\text{ext}}$ by adding the PML nodes (orange). After decomposition into subdomains, the internal boundaries are padded with extra PML nodes (light blue). }
         \label{fig:DDM_sketch}
     \end{center}
\end{figure}

Absorbing boundary conditions in $z$ are then used to complete the definition of the Helmholtz subproblems in each layer $\Omega^{\ell}$. Define $\tilde{\Omega}^{\ell}$ as the extension of $\Omega^{\ell}$  in the $z$ direction by $n_{\text{pml}}$ points (see Fig. \ref{fig:DDM_sketch}), and define  $\tilde{m}^{\ell}$ as the normal extension of $m^{\ell}$ given by :
\[ \tilde{m}^{\ell}_{i,j} = \left \{ \begin{array}{rl}
							m^{\ell}_{i,1},   & j < 0, \\
							m^{\ell}_{i,j},   & 1 \leq j \leq n^{\ell},  \\
							m^{\ell}_{i,n^{\ell}}, & j>n^{\ell}.
						\end{array}
				\right. \]

The discretization inside $\Omega^\ell$ is otherwise inherited exactly from that of the global problem, via Eq. \ref{eq:discrete_helmholtz_equation_by_point}. Furthermore, on each PML the discrete differential operator is modified following Eq. \ref{eq:pml_partial_z}, using the extended local model $\tilde{m}$. The local problem is denoted as
\begin{equation} \label{eq:def_local_Green_function}
	\left( \H^{\ell} \u^{\ell} \right)_{i,j} = \mathbf{f}^{\ell}_{i,j}, \qquad (i,j) \in \llbracket -n_{\text{pml}}+1, n_x +  n_{\text{pml}} \rrbracket \times  \llbracket -n_{\text{pml}}+1, n^{\ell} +  n_{\text{pml}} \rrbracket,
\end{equation}
where $\mathbf{f}^{\ell}$ is extended by zero in the PML. The PML's profile is chosen so that the equations for the local and the global problem coincide exactly inside $ \llbracket -n_{\text{pml}}+1, n_x +  n_{\text{pml}} \rrbracket \times  \llbracket 1, n^{\ell} \rrbracket$.


\subsection{Discrete Green's Representation Formula}


In this section we show that solving the discretized PDE in the volume is equivalent to solving a discrete boundary integral equation from the Green's representation formula, with \emph{local} Green's functions that stem from the local Helmholtz equation. The observation is not surprising in view of very standard results in boundary integral equation theory.



We only gather the results, and refer the reader to Appendix  \ref{appendix:quadratures} for the proofs and details of the discrete Green's identities.

Define the numerical local Green's function in layer $\ell$ by
\begin{equation} \label{eq:definition_local_helmholtz_problem}
	 \H^{\ell} \mathbf{G}^{\ell}(\x_{i,j},\x_{i',j'}) =
	 \H^{\ell} \mathbf{G}^{\ell}_{i,j,i',j'}  = \delta(\x_{i,j}- \x_{i',j'} ), \qquad (i,j) \in \llbracket -n_{\text{pml}}+1, n_x +  n_{\text{pml}} \rrbracket \times  \llbracket -n_{\text{pml}}+1, n^{\ell} +  n_{\text{pml}} \rrbracket,
\end{equation}
where
\begin{equation}
\delta(\x_{i,j}- \x_{i',j'} ) =  \left \{ \begin{array}{rl}
											 \frac{1}{h^2}, &  \text{ if } \x_{i,j} = \x_{i',j'},\\
											  			0,  &  \text{ if } \x_{i,j} \neq \x_{i',j'},
										\end{array}
										\right .
\end{equation}
and where the operator $\mathbf{H}^\ell$ acts on the $(i,j)$ indices.

It is notationally more convenient to consider $\mathbf{G}^{\ell}$ as an operator acting on unknowns at horizontal interfaces, as follows. Again, notations are slighty overloaded.


\begin{definition} \label{def:layer_2_layer_Green_fucntion} We consider $\mathbf{G}^{\ell}(z_j, z_k)$ as the linear operator defined from $\llbracket -n_{\text{pml}}+1, n_x+n_{\text{pml}} \rrbracket \times \{ z_k \}$ to $\llbracket -n_{\text{pml}} + 1, n_x+n_{\text{pml}} \rrbracket \times \{z_j \}$ given by
	\begin{equation}
		\left ( \mathbf{G}^{\ell}(z_j, z_k) \v \right )_i = \, h \sum_{i'=-n_{\text{pml}+1} }^{n_x+n_{\text{pml}}} \mathbf{G}^{\ell}((x_i, z_j),(x_{i'}, z_k )) \v_{i'}  ,
	\end{equation}
	where $\v$ is a vector in $\CC^{n_x + 2n_{\text{pml}}}$, and $\mathbf{G}^{\ell}(z_j, z_k) $ are matrices in $\CC^{(n_x + 2n_{\text{pml}}) \times (n_x + 2n_{\text{pml}})}$.
\end{definition}

	Within this context we define the interface identity operator as
	\begin{equation}
		\left ( \mathbf{I} \v \right )_i =  h \v_i .
	\end{equation}

As in geophysics, we may refer to $z$ as depth. The layer to layer operator $\mathbf{G}^{\ell}(z_j, z_k)$ is indexed by two depths -- following the jargon from fast methods we call them source depth ($z_k$) and target depth ($z_j$).

\begin{definition} \label{def:incomplete_green} We consider $ \cG^{\uparrow, \ell}_j(\mathbf{v}_{n^{\ell}}, \mathbf{v}_{n^{\ell}+1} )$, the up-going local incomplete Green's integral; and $\cG^{\downarrow, \ell}_j(\mathbf{v}_{0}, \mathbf{v}_{1} )$, the down-going local incomplete Green's integral, as defined by:
	\begin{eqnarray}
		\cG^{\uparrow, \ell}_j(\mathbf{v}_{n^{\ell}}, \mathbf{v}_{n^{\ell}+1} ) &=& \mathbf{G}^{\ell}(z_j, z_{n^{\ell}+1})\left ( \frac{\mathbf{v}_{n^{\ell} + 1} -  \mathbf{v}_{n^{\ell}}}{h } \right)  - \left( \frac{\mathbf{G}^{\ell}(z_j, z_{n^{\ell}+1}) - \mathbf{G}^{\ell}(z_j, z_{n^{\ell}})}{h} \right) \mathbf{v}_{n^{\ell}+1}, \\
		\cG^{\downarrow, \ell}_j(\mathbf{v}_{0}, \mathbf{v}_{1} ) &=& -\mathbf{G}^{\ell}(z_j, z_{0})\left ( \frac{\mathbf{v}_{1} -  \mathbf{v}_{0}}{h } \right)  + \left( \frac{\mathbf{G}^{\ell}(z_j, z_{1}) - \mathbf{G}^{\ell}(z_j, z_{0})}{h} \right) \mathbf{v}_{0}.
	\end{eqnarray}
	In the sequel we use the shorthand notation $  \mathbf{G}^{\ell}(z_j, z_{k}) =  \mathbf{G}^{\ell}_{j,k}$ when explicitly building the matrix form of the integral systems.
\end{definition}

The incomplete Green's integrals in Def. \ref{def:incomplete_green} use the discrete counterparts of the single and double layer potentials. After some simplifications it is possible to express the incomplete Green's integrals in matrix form:
\begin{eqnarray}
	\cG^{\downarrow, \ell}_j(\mathbf{v}_{0}, \mathbf{v}_{1} )	 &=& \frac{1}{h} \left [  \begin{array}{cc}
		 \mathbf{G}^{\ell} (z_j,z_{1}) & - \mathbf{G}^{\ell}(z_j, z_{0})
	\end{array}
  \right]
  \left ( \begin{array}{c}
  			\v_{0} \\
  			\v_{1}
     	\end{array}
  \right), \\
\cG^{\uparrow, \ell}_j(\mathbf{v}_{n^{\ell}}, \mathbf{v}_{n^{\ell}+1} )  &=& \frac{1}{h}\left [  \begin{array}{cc}
				-\mathbf{G}^{\ell} (z_j,z_{n^{\ell}+1})  &   \mathbf{G}^{\ell}(z_j, z_{n^{\ell}})
			\end{array}
  \right]
  \left ( \begin{array}{c}
  			\v_{n^{\ell}} \\
  			\v_{n^{\ell}+1}
     	\end{array}
  \right).\label{eq:matrix_form_cG} \\
\end{eqnarray}

\begin{definition}  \label{def:Newton_potential}
	Consider the local Newton potential $\cN^{\ell}_k$ applied to a local source $\mathbf{f}^{\ell}$ as
	\begin{equation}
		\cN^{\ell}_k \mathbf{f}^{\ell} = \sum_{j=1}^{n^{\ell}} \mathbf{G}^{\ell}(z_k,z_j) \mathbf{f}^{\ell}_j.
	\end{equation}
	By construction $\cN^{\ell} \mathbf{f}^{\ell}$ satisfies the equation $\left ( \H^{\ell} \cN^{\ell} \mathbf{f}^{\ell}\right )_{i,j} = \mathbf{f}_{i,j}$ for  $-n_{\text{pml}} + 1 \leq i \leq n_x + n_{\text{pml}}$ and  $1 \leq j \leq n^{\ell}$.
\end{definition}

We can form local solutions to the discrete Helmholtz equation using the local discrete Green's representation formula, as
\begin{equation}  \label{eq:discrete_GRF}
	\v^{\ell}_{j} =	\cG^{\uparrow, \ell}_{j}(\v^{\ell}_{n^{\ell}}, \v^{\ell}_{n^{\ell}+1} )  + \cG^{\downarrow, \ell}_{j}(\v^{\ell}_{0}, \v^{\ell}_{1} ) + \cN^{\ell}_{j} \mathbf{f}^{\ell}, \qquad 1<j<n^{\ell}.
\end{equation}
This equation is, by construction, a solution to the local problem as long as $1<j<n^{\ell}$. It is easy to see that $\v^{\ell}_{j}$ is otherwise solution to
\begin{equation} \label{eq:PDE_version_green_formula}
	\mathbf{H}^{\ell} \v^{\ell} = \mathbf{f}^{\ell} + \delta(z_1-z) \v^{\ell}_0 - \delta(z_0-z)\v^{\ell}_1  - \delta(z_{n^{\ell}+1}-z) \v^{\ell}_{n^{\ell}} + \delta(z_{n^{\ell}}-z)\v^{\ell}_{n^{\ell}+1}.
\end{equation}
Notice that writing local solutions via Eq. \ref{eq:discrete_GRF} can be advantageous, since $\mathbf{G}^{\ell}$ only needs to be stored at interfaces. If a sparse LU factorization of $\mathbf{H}^{\ell}$ is available, and if the boundary data $\v^{\ell}_0, \v^{\ell}_1, \v^{\ell}_{n^{\ell}}, \v^{\ell}_{n^{\ell}+1}$ are available, then the computation of $\v^{\ell}$ is reduced to a local sparse solve with the appropriate forcing terms given in Eq. \ref{eq:PDE_version_green_formula}.

Eq. \ref{eq:discrete_GRF} is of particular interest when the data used to build the local solution are the traces of the global solution, as stated in the Lemma below. In other words, Eq. \ref{eq:discrete_GRF} continues to hold even when $j = 1$ and $j = n^{\ell}$.

\begin{lemma} \label{lemma:discrete_GRF} If  $\u^{\ell}_{0}, \u^{\ell}_{1},\u^{\ell}_{n^{\ell}},$ and $\u^{\ell}_{n^{\ell}+1}$  are the traces of the global solution at the interfaces, then
	\begin{equation}   \label{eq:discrete_GRF_inside}
		\u^{\ell}_{j} =	\cG^{\uparrow, \ell}_{j}(\u^{\ell}_{n^{\ell}}, \u^{\ell}_{n^{\ell}+1} )  + \cG^{\downarrow, \ell}_{j}(\u^{\ell}_{0}, \u^{\ell}_{1} ) + \cN^{\ell}_{j}  \mathbf{f}^{\ell},
	\end{equation}
	if $1\leq j \leq n^{\ell}$, where $\u^{\ell}$ is the global solution restricted to $\Omega^{\ell}$. Moreover,
	\begin{equation} \label{eq:discrete_GRF_outside}
		0 =	\cG^{\uparrow, \ell}_{j}(\u^{\ell}_{n^{\ell}}, \u^{\ell}_{n^{\ell}+1} )  + \cG^{\downarrow, \ell}_{j}(\u^{\ell}_{0}, \u^{\ell}_{1} ) + \cN^{\ell}_{j}  \mathbf{f}^{\ell},
	\end{equation}
	if $j\leq 0$ and $j\geq n^{\ell}+1$.
\end{lemma}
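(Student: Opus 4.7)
My plan is to reduce the statement to invertibility of $\mathbf{H}^\ell$ by producing two functions on the extended local domain $\tilde{\Omega}^\ell$ that satisfy the same local discrete Helmholtz problem with an explicit ``boundary source'' right-hand side. Let $\tilde{\u}^\ell$ denote the right-hand side of Eq.\ \ref{eq:discrete_GRF_inside} defined for all $j \in \llbracket -n_{\text{pml}}+1, n^\ell+n_{\text{pml}} \rrbracket$, with the trace arguments set equal to the global traces $\u^\ell_0,\u^\ell_1,\u^\ell_{n^\ell},\u^\ell_{n^\ell+1}$. By the calculation leading to Eq.\ \ref{eq:PDE_version_green_formula}, this $\tilde{\u}^\ell$ solves
\[
\mathbf{H}^\ell \tilde{\u}^\ell = \f^\ell + \delta(z_1-z)\u^\ell_0 - \delta(z_0-z)\u^\ell_1 - \delta(z_{n^\ell+1}-z)\u^\ell_{n^\ell} + \delta(z_{n^\ell}-z)\u^\ell_{n^\ell+1}. \quad (\star)
\]
The task then reduces to exhibiting a \emph{second} solution of $(\star)$ which equals $\u^\ell_j$ for $1 \le j \le n^\ell$ and vanishes for $j \le 0$ or $j \ge n^\ell+1$; invertibility of $\mathbf{H}^\ell$ immediately forces the two solutions to coincide, proving Eqs.\ \ref{eq:discrete_GRF_inside} and \ref{eq:discrete_GRF_outside} simultaneously.

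The obvious candidate is the piecewise function $\mathbf{w}^\ell$ equal to the global restriction $\u^\ell$ on $1 \le j \le n^\ell$ and extended by zero to the rest of $\tilde{\Omega}^\ell$. I would verify $(\star)$ for $\mathbf{w}^\ell$ row by row. Three regimes appear. For $(i,j)$ whose five-point stencil sits strictly inside the physical layer, $\mathbf{H}^\ell$ coincides with the global $\mathbf{H}$ (no PML modification) and $\mathbf{w}^\ell$ coincides with $\u$, so $(\mathbf{H}^\ell \mathbf{w}^\ell)_{i,j}=(\mathbf{H}\u)_{i,j}=\f^\ell_{i,j}$. For $(i,j)$ deep enough in the local PML that neither it nor its $z$-neighbours touch the physical layer, $\mathbf{w}^\ell$ and its stencil values all vanish, so both sides of $(\star)$ are zero. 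The nontrivial contribution comes from the four boundary rows $j\in\{0,1,n^\ell,n^\ell+1\}$: on these rows the five-point stencil couples a zero cell to a nonzero trace cell (or vice versa), and a short calculation will show that the discrepancy between $(\mathbf{H}^\ell \mathbf{w}^\ell)_{i,j}$ and $\f^\ell_{i,j}$ reproduces precisely the four delta source terms on the right-hand side of $(\star)$.

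The main obstacle is the bookkeeping for those four boundary rows. The correction weights in $(\star)$ are the plain $1/h^2$ with no PML absorption factors, so the argument requires the local PML to satisfy $\sigma_z=0$ at the grid points $z_0, z_1, z_{n^\ell}, z_{n^\ell+1}$ and at the half-integer abscissae used by the second-order stencil; equivalently, the local operator must coincide with the unmodified five-point Helmholtz stencil on these four interface rows. This is a natural extension of the paper's stipulation that the local and global equations agree on $\llbracket 1, n^\ell\rrbracket$. Once this is granted, matching the stencil discrepancies to the delta sources is a routine algebraic verification, and a final appeal to invertibility of $\mathbf{H}^\ell$ (standard for a properly tuned PML) closes the argument.
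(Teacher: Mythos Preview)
Your argument is correct and takes a genuinely different route from the paper's. The paper proves the lemma by a discrete Green's second identity: it computes $\langle \mathbf{H}^{\ell} \mathbf{G}^{\ell,k}, \u \rangle_{\Omega^\ell} - \langle \mathbf{G}^{\ell,k}, \mathbf{H}^{\ell} \u \rangle_{\Omega^\ell}$ via summation by parts, obtains exactly the boundary terms $\cG^{\uparrow,\ell}_k + \cG^{\downarrow,\ell}_k$, and then reads off the two cases according to whether $\delta^k$ is supported inside or outside $\Omega^\ell$. Your approach instead leans on Eq.~\ref{eq:PDE_version_green_formula} as a black box, constructs the zero-extended restriction $\mathbf{w}^\ell$, verifies by direct stencil inspection that $\mathbf{w}^\ell$ satisfies the same forced local problem $(\star)$, and concludes by invertibility of $\mathbf{H}^\ell$.

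The trade-offs are as follows. The paper's summation-by-parts derivation explains \emph{why} the particular upwind differences in Def.~\ref{def:incomplete_green} arise and handles the PML in $x$ cleanly through the symmetric formulation of Appendix~\ref{appendix:quadratures}; it is also self-contained. Your route is shorter and more elementary once Eq.~\ref{eq:PDE_version_green_formula} is granted (that equation follows immediately from $\mathbf{H}^\ell \mathbf{G}^\ell(\cdot,z_k)=\delta(\cdot-z_k)$, so there is no circularity), and it makes the equivalence with a transmission-source problem explicit. The PML caveat you raise---that the local $\sigma_z$ must vanish at $z_0,z_{1/2},z_{n^\ell+1/2},z_{n^\ell+1}$ so that the boundary-row coefficients are plain $1/h^2$---is exactly the assumption the paper uses when it writes ``this expression is never evaluated inside the PML for $z$, hence $\alpha_z(\x)=1$'' in Appendix~\ref{appendix:quadratures}; so both proofs rest on the same hypothesis, and you have identified it correctly.
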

The proof of Lemma \ref{lemma:discrete_GRF} is given in Appendix \ref{appendix:green_representation}.
Thus, the problem of solving the global, discrete Helmholtz equation is reduced in an algebraically exact manner to the problem of finding the traces of the solution at the interfaces. Yet, the incomplete Green's operators are not the traditional Schur complements for $\Omega^{\ell}$.

What is perhaps most remarkable about those equations is the flexibility that we have in letting $\mathbf{G}^{\ell}$ be \emph{arbitrary} outside $\Omega^{\ell}$ -- flexiblity that we use by placing a PML.

\subsection{Discrete Integral Equation}



Evaluating the local GRF (Eq. \ref{eq:discrete_GRF_inside}) at the interfaces $j=1$ and $j=n^\ell$ of each layer $\Omega^\ell$ results in what we can call a \emph{self-consistency} relation for the solution. Together with a \emph{continuity} condition that the interface data must match at interfaces between layers, we obtain an algebraically equivalent reformulation of the discrete Helmholtz equation.

\begin{definition} \label{def:integral_formulation} Consider the discrete integral formulation for the Helmholtz equation as the system given by
	\begin{eqnarray}
		\cG^{\downarrow, \ell}_{1}(\u^{\ell}_{0}, \u^{\ell}_{1} ) 	+ \cG^{\uparrow, \ell}_{1}(\u^{\ell}_{n^{\ell}}, \u^{\ell}_{n^{\ell}+1} )  	+  \cN^{\ell}_{1}  \mathbf{f}^{\ell}  	&=& \u^{\ell}_{1} ,\\
		\cG^{\downarrow, \ell}_{n^{\ell}}(\u^{\ell}_{0}, \u^{\ell}_{1} ) 	+ \cG^{\uparrow, \ell}_{n^{\ell}}(\u^{\ell}_{n^{\ell}}, \u^{\ell}_{n^{\ell}+1})  	+  \cN^{\ell}_{n^{\ell}}  \mathbf{f}^{\ell}  	&=& \u^{\ell}_{n^{\ell}},\\
		\u^{\ell}_{n^{\ell}} 		&=& \u^{\ell+1}_{0}, \\
		\u^{\ell}_{n^{\ell}+1} 	&=& \u^{\ell+1}_{1},
	\end{eqnarray}
	if $1<\ell<L$, with
	\begin{eqnarray}
		\cG^{\uparrow, 1}_{n^1}(\u^{1}_{n^1}, \u^{1}_{n^{\ell}+1} ) + \cN^1_{n^1}  \mathbf{f}^1  &=& \u^{1}_{n^1},\\
		\u^{1}_{n^{1}} &=& \u^{2}_{0}, \\
		\u^{1}_{n^{1}+1} &=& \u^{2}_{1},
	\end{eqnarray}
	and
	\begin{eqnarray}
		\cG^{\downarrow, L}_{1}(\u^L_{0}, \u^L_{1} ) + \cN^L_{n^L}  \mathbf{f}^L  &=& \u^L_{1}, \\
		\u^{L-1}_{n^{L-1}} &=& \u^{L}_{0}, \\
		\u^{L-1}_{n^{L-1}+1} &=& \u^{L}_{1}.
	\end{eqnarray}
\end{definition}

	Following Def. \ref{def:incomplete_green} we can define
	\begin{equation}
		\scriptsize
		\mathbf{\underline{M}} = \frac{1}{h} \left [
									\begin{array}{ccccccc}
										-\mathbf{G}^1_{n,n+1} - \I	&  \mathbf{G}^1_{n,n} 		& 	0 							& 0 					& 0							& 0  \\
										\mathbf{G}^2_{1,1} 			& - \mathbf{G}^2_{1,0} - \I		& - \mathbf{G}^1_{1,n+1} 		& \mathbf{G}^1_{1,n}	& 0 					& 0  \\
										\mathbf{G}^2_{n,1} 			& - \mathbf{G}^2_{n,0}  		& - \mathbf{G}^2_{n,n+1} - \I	& \mathbf{G}^2_{n,n}	& 0 					& 0\\
										0 							&\ddots    						& \ddots				& \ddots 		 			 	& 0& 0 \\
										0 							& 0								& \mathbf{G}^{L-1}_{1,1} 	& - \mathbf{G}^{L-1}_{1,0}-\I	& - \mathbf{G}^{L-1}_{1,n+1} & \mathbf{G}^{L-1}_{1,n}	 	\\
										0 							& 0 							& \mathbf{G}^{L-1}_{n,1}  	& - \mathbf{G}^{L-1}_{n,0}  	& - \mathbf{G}^{L-1}_{n,n+1} - \I	& \mathbf{G}^{L-1}_{n,n}	\\
										0 							& 0 							&	0						&		0						& \mathbf{G}^{L}_{1,1} 	& - \mathbf{G}^{L}_{1,0} - \I
										\end{array}
								\right ],
	\end{equation}
	such that the discrete integral system can be written as
 	\begin{equation} \label{eq:integral_formulation}
 	\underline{\mathbf{M}} \; \underline{\u} = -	\left (
 														\begin{array}{c}
 															\cN^1_{n^1}\mathbf{f}^1 \\
 															\cN^2_1\mathbf{f}^2 \\
 															\cN^2_{n^2}\mathbf{f}^2 \\
 															\vdots \\
 															\cN^L_1\mathbf{f}^L
 														\end{array}
 													\right )  = - \underline{\mathbf{f}}.
 	\end{equation}


By $\underline{\u}$, we mean the collection of all the interface traces. As a lemma, we recover the required jump condition one grid point past each interface.

\begin{lemma}  \label{lemma:jump_condition}
	If $\u$ is the solution to the discrete integral system given by Eq. \ref{eq:integral_formulation} then
	\begin{equation} \label{eq:lemma_jump}
		0 = \cG^{\uparrow, \ell}_{j}(\u^{\ell}_{n^{\ell}}, \u^{\ell}_{n^{\ell}+1} )  + \cG^{\downarrow, \ell}_{j}( \u^{\ell}_{0}, \u^{\ell}_{1} ) + \cN^{\ell}_{j}  \mathbf{f}^{\ell} ,
	\end{equation}
	if $j=0$ or $j=n^{\ell}+1$.
\end{lemma}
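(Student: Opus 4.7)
My plan is to show that any integral-system solution $\u$, when used as boundary data in the discrete GRF, reconstructs the unique global Helmholtz solution; the annihilation identity then drops out of Eq. \ref{eq:discrete_GRF_outside} of Lemma \ref{lemma:discrete_GRF}.

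First, for each layer I would define in the extended slab $\tilde\Omega^\ell$ the auxiliary field
$$\v^\ell_j := \cG^{\downarrow,\ell}_j(\u^\ell_0, \u^\ell_1) + \cG^{\uparrow,\ell}_j(\u^\ell_{n^\ell}, \u^\ell_{n^\ell+1}) + \cN^\ell_j \mathbf{f}^\ell.$$
By Eq. \ref{eq:PDE_version_green_formula} this field satisfies $\mathbf{H}^\ell \v^\ell = \mathbf{f}^\ell$ up to four Dirac sources concentrated at the rows $j \in \{0,1,n^\ell,n^\ell+1\}$. The self-consistency equations of Def. \ref{def:integral_formulation} at $j=1$ and $j=n^\ell$ then identify $\v^\ell_1 = \u^\ell_1$ and $\v^\ell_{n^\ell} = \u^\ell_{n^\ell}$, so that the two identities the lemma asserts (Eq. \ref{eq:lemma_jump}) are precisely equivalent to the two remaining assertions $\v^\ell_0 = 0$ and $\v^\ell_{n^\ell+1} = 0$.

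Next I would patch the fields $\{\v^\ell\}_{\ell=1}^L$, restricted to $1 \leq j \leq n^\ell$, into a global candidate $\v$ on the PML-padded grid and perform a row-by-row 5-point-stencil comparison of the global operator $\mathbf{H}$ against each local operator $\mathbf{H}^\ell$. Away from the interior interface rows the two stencils coincide on $\v$ and $\mathbf{H}\v - \mathbf{f}$ vanishes. At the interface row between layers $\ell$ and $\ell+1$ the continuity equations $\u^\ell_{n^\ell} = \u^{\ell+1}_0$ and $\u^\ell_{n^\ell+1} = \u^{\ell+1}_1$, combined with the self-consistency of layer $\ell+1$ at $j=1$, let me rewrite the global stencil in terms of layer $\ell$'s extended field; after cancelling the Dirac source term from Eq. \ref{eq:PDE_version_green_formula} the residual collapses to exactly $\v^\ell_{n^\ell+1}/h^2$. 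A symmetric computation at the top interface of layer $\ell$ gives $\v^\ell_0/h^2$. Thus $\mathbf{H}\v = \mathbf{f}$ is logically equivalent to the annihilation we want to prove.

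The main obstacle, and the final step, is closing this equivalence in the correct direction. My plan is to exploit invertibility of $\mathbf{H}$ under the absorbing boundary condition: the unique global solution $\u_{\text{global}}$ exists, and by Eq. \ref{eq:discrete_GRF_inside} of Lemma \ref{lemma:discrete_GRF} together with trivial continuity its interface traces themselves satisfy the integral system. Applying the preceding bookkeeping to a null element of the homogeneous integral system shows that any such null solution reconstructs a field $\v$ with $\mathbf{H}\v$ supported only on interior interface rows; pushing this through $\mathbf{H}^{-1}$ and the explicit GRF representation of $\v$ forces the residuals, and hence $\u$ itself, to vanish. Once uniqueness of the integral system is in hand, $\u$ coincides with the traces of $\u_\text{global}$, and Eq. \ref{eq:discrete_GRF_outside} delivers the annihilation at $j=0$ and $j=n^\ell+1$.
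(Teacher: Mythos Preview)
Your route is genuinely different from the paper's, and the gap is in the uniqueness step. You have correctly reduced the lemma to the identity $\mathbf{H}\v=\mathbf{f}$ for the patched field, and you correctly observe that the residual at each interface row is exactly $\v^\ell_0/h^2$ or $\v^\ell_{n^\ell+1}/h^2$. But the sentence ``pushing this through $\mathbf{H}^{-1}$ and the explicit GRF representation of $\v$ forces the residuals \ldots\ to vanish'' is where the argument stalls. Concretely, for a null solution you obtain (via Lemma~\ref{lemma:discrete_GRF} applied to $\v$ with source $r$) the per-layer constraints $\cN^\ell_1 r^\ell=0$ and $\cN^\ell_{n^\ell} r^\ell=0$; since $r^\ell$ is supported at depths $1$ and $n^\ell$, this is a $2\times 2$ block system in the residuals with coefficient matrix
\[
\begin{pmatrix}\mathbf{G}^\ell(z_1,z_1)&\mathbf{G}^\ell(z_1,z_{n^\ell})\\ \mathbf{G}^\ell(z_{n^\ell},z_1)&\mathbf{G}^\ell(z_{n^\ell},z_{n^\ell})\end{pmatrix}.
\]
You would need this block to be invertible. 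By the nullity theorem that is equivalent to invertibility of $\mathbf{H}^\ell$ with rows and columns at depths $1,n^\ell$ deleted; this matrix decouples into three pieces, one of which is the interior Helmholtz slab with Dirichlet walls at $j=1,n^\ell$, and nothing in your plan rules out a resonance there. More structurally, what you are attempting is the hard direction of Theorem~\ref{thm:equivalence_integral_formulation}, and in the paper that theorem is proved \emph{using} Lemma~\ref{lemma:jump_condition}; so your route is circular within the paper's development unless you supply an independent argument for this block invertibility.

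The paper's proof avoids all global reasoning. It is a purely local algebraic manipulation: take the self-consistency equation at $j=1$, left-multiply by the extrapolator $\cE^\uparrow_{\ell-1,\ell}=(\mathbf{G}^\ell(z_1,z_1))^{-1}\mathbf{G}^\ell(z_1,z_0)$, and use the identities of Lemma~\ref{lemma:extrapolator_def} (themselves consequences of the block-tridiagonal structure, Propositions~\ref{proposition:extrapolation_matrix} and~\ref{proposition:jump_condition}) to rewrite each term. The upshot is that $\cE^\uparrow$ shifts $\cG^{\uparrow,\ell}_1\to\cG^{\uparrow,\ell}_0$ and $\cN^\ell_1\to\cN^\ell_0$ verbatim, while on the remaining term one picks up exactly $\cG^{\downarrow,\ell}_0+\cE^\uparrow\u^\ell_1$; the extra piece cancels against $\cE^\uparrow\u^\ell_1$ on the right, yielding the $j=0$ annihilation directly. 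No appeal to uniqueness, no patched global field, and Theorem~\ref{thm:equivalence_integral_formulation} then follows from this lemma rather than the other way around.
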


All the algorithms hinge on the following result. We refer the reader to Appendix \ref{appendix:green_representation} for both proofs.

\begin{theorem} \label{thm:equivalence_integral_formulation}
The solution of the discrete integral system given by Eq. \ref{eq:integral_formulation} equals the restriction of the solution of the discretized Helmholtz equation given by Eq. \ref{eq:discrete_Helmholtz_equation} to the interfaces between subdomains.
\end{theorem}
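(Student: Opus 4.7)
I would prove equivalence in both directions. For the forward direction (necessity), if $\u$ solves $\H \u = \f$ globally, then its interface traces satisfy the integral system of Definition \ref{def:integral_formulation}. This is essentially immediate from Lemma \ref{lemma:discrete_GRF}: evaluating Eq. \ref{eq:discrete_GRF_inside} at $j=1$ and at $j=n^\ell$ in each layer produces the self-consistency equations, while the continuity identities $\u^\ell_{n^\ell} = \u^{\ell+1}_0$ and $\u^\ell_{n^\ell+1} = \u^{\ell+1}_1$ are tautologies under the local-to-global coordinate map $q = n_c^\ell + j$, since both sides reference the same physical grid point. The extremal cases $\ell = 1$ and $\ell = L$ require only dropping the $\cG^\downarrow$ or $\cG^\uparrow$ contribution in the absence of a neighboring layer.

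For the reverse direction (sufficiency), let $\underline{\u}$ satisfy $\underline{\mathbf{M}}\,\underline{\u} = -\underline{\mathbf{f}}$. I would define a volume reconstruction $\v^\ell$ in each layer by Eq. \ref{eq:discrete_GRF} applied with the prescribed interface data, and glue the $\v^\ell$ into a single global field $\v$ by assigning to the global grid point indexed by $q = n_c^\ell + j$ (with $1 \leq j \leq n^\ell$) the value $\v^\ell_j$. Since the global discrete Helmholtz operator $\H$ is invertible once the PML is in place, the claim reduces to verifying that this glued $\v$ satisfies $\H \v = \f$ pointwise: once this is shown, $\v$ equals the unique global solution $\u$, and the traces read off from $\v$ (which are, by self-consistency, exactly the input $\underline{\u}$) coincide with the traces of $\u$.

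The heart of the sufficiency argument is a stencil-by-stencil check of $(\H \v)_{i,q} = \f_{i,q}$. When $q$ is strictly interior to a layer (corresponding to $2 \leq j \leq n^\ell - 1$), the five-point stencil stays inside the layer, the local and global operators coincide there by construction of the PML profile, and Eq. \ref{eq:PDE_version_green_formula} gives the identity immediately since the four Dirac sources are supported only at $j \in \{0,1,n^\ell,n^\ell+1\}$. The subtle cases are the interface rows $j=1$ and $j=n^\ell$, where the global stencil pulls in a value from the adjacent layer. Here three ingredients must be combined: the self-consistency equations, which force $\v^\ell_1 = \u^\ell_1$ and $\v^\ell_{n^\ell} = \u^\ell_{n^\ell}$; Lemma \ref{lemma:jump_condition}, which forces the reconstruction extended by Eq. \ref{eq:discrete_GRF} to vanish at the ghost rows $j = 0$ and $j = n^\ell+1$; and the continuity conditions, which equate $\u^\ell_0$ with $\u^{\ell-1}_{n^{\ell-1}}$ (and analogously at the top interface). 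Evaluating Eq. \ref{eq:PDE_version_green_formula} at $j=1$ yields $(\H^\ell \v^\ell)_{i,1} = \f^\ell_{i,1} + \u^\ell_0/h^2$ when computed with the zero ghost value produced by Lemma \ref{lemma:jump_condition}; replacing that ghost value by the true neighbor trace $\u^\ell_0$ in the global stencil introduces a compensating $-\u^\ell_0/h^2$ contribution, leaving exactly $(\H \v)_{i,q} = \f_{i,q}$. The case $j = n^\ell$ is symmetric. This pointwise bookkeeping at the interfaces, together with a careful tracking of the signs of the four Dirac sources in Eq. \ref{eq:PDE_version_green_formula}, is the main technical obstacle and the step where I expect the proof to require the most care.
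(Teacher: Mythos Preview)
Your proposal is correct and follows essentially the same approach as the paper: Lemma~\ref{lemma:discrete_GRF} for the forward direction, and for the reverse direction a pointwise verification that the glued GRF reconstruction satisfies the global stencil, using Lemma~\ref{lemma:jump_condition} to zero out the ghost rows and the continuity conditions to supply the neighbor value, then invoking uniqueness. The only cosmetic difference is that the paper carries out the bookkeeping first in 1D and then repeats it in 2D with explicit attention to the PML in the $x$ direction, whereas you treat the argument dimension-agnostically; your identification of the compensating $\pm \u^{\ell}_0/h^2$ terms at $j=1$ (and symmetrically at $j=n^\ell$) is exactly the computation the paper performs.
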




The computational procedure suggested by the reduction to Eq. \ref{eq:integral_formulation} can be decomposed in an offline part given by Alg. \ref{alg:Integral_precomputation} and an online part given by Alg. \ref{alg:Integral_formulation}.

	\begin{algorithm} Offline computation for the discrete integral formulation \label{alg:Integral_precomputation}
	\begin{algorithmic}[1]
		\Function{ $H$ = Precomputation}{ $\mathbf{m}$, $\omega$ }
			\For{  $\ell = 1: L$ }
				\State $ \mathbf{m}^{\ell} = \mathbf{m}\chi_{\Omega^{\ell}} $   				\Comment{partition the model}
				\State $ \mathbf{H}^{\ell}  = - \triangle - \mathbf{m}^{\ell} \omega^2  $  	\Comment{set local problems}
				\State $[L^{\ell}, U^{\ell}] = \mathtt{lu} \left ( \mathbf{H}^{\ell} \right )  $  							\Comment{factorize local problems}
			\EndFor
			\For{  $\ell = 1: L$ } 																		\Comment{extract Green's functions}
				\State $\mathbf{G}^{\ell}(z_j,z_{j'}) = (U^{\ell})^{-1}(L^{\ell})^{-1} \delta(z_{j'})$    \Comment{ $z_{j'}$ and $z_{j}$ are in the interfaces }
			\EndFor
			\State Form  $\mathbf{\underline{M}}$ 														\Comment{set up the integral system}
		\EndFunction
	\end{algorithmic}
  \end{algorithm}
In the offline computation, the domain is decomposed in layers, the local LU factorizations are computed and stored, the local Green's functions are computed on the interfaces by backsubstitution, and the interface-to-interface operators are used to form the discrete integral system. Alg. \ref{alg:Integral_precomputation} sets up the data structure and machinery to solve Eq. \ref{eq:discrete_Helmholtz_equation} for different right-hand-sides using Alg. \ref{alg:Integral_formulation}.
	\begin{algorithm} Online computation for the discrete integral formulation \label{alg:Integral_formulation}
	\begin{algorithmic}[1]
		\Function{ $\u$ = Helmholtz solver}{ $\mathbf{f}$ }
			\For{  $\ell = 1: L$ }
				\State $ \mathbf{f}^{\ell} = \mathbf{f}\chi_{\Omega^{\ell}} $ 							\Comment{partition the source}
			\EndFor
			\For{  $\ell = 1: L$ }
				\State $\cN^{\ell} \mathbf{f}^{\ell} = (\mathbf{H}^{\ell})^{-1} \mathbf{f}^{\ell}  $ 				\Comment{solve local problems}
			\EndFor
			\State $ \underline{\mathbf{f}} =  \left ( \cN^1_{n^1}\mathbf{f}^1, \cN^2_1\mathbf{f}^2 ,\cN^2_{n^2} \mathbf{f}^2 ,\hdots ,\cN^L_1\mathbf{f}^L \right )^{t} $			\Comment{form r.h.s. for the integral system}
			\State $ \underline{\mathbf{u}} = \left( \underline{\mathbf{M}} \right)^{-1} ( - \mathbf{\underline{f}}) $ 																\Comment{solve for the traces (Eq. \ref{eq:integral_formulation})}
			\For{  $\ell = 1: L$ }
				\State $\u^{\ell}_{j} =	\cG^{\uparrow, \ell}_{j}(\u^{\ell}_{n^{\ell}}, \u^{\ell}_{n^{\ell}+1} )  + \cG^{\downarrow, \ell}_{j}(\u^{\ell}_{0}, \u^{\ell}_{1} ) + \cN^{\ell}_{j}  \mathbf{f}^{\ell}$  					\Comment{reconstruct local solutions (Eq. \ref{eq:discrete_GRF_inside})}
			\EndFor
			\State $\u = (\u^1 , \u^2, \hdots, \u^{L-1}, \u^{L})^t $ 																												\Comment{concatenate the local solutions}
		\EndFunction
	\end{algorithmic}
  \end{algorithm}

The matrix $\underline{\mathbf{M}}$ that results from the discrete integral equations is block sparse and tightly block banded, so the discrete integral system can in principle be factorized by a block LU algorithm. However, and even though the integral system (after pre-computation of the local Green's functions) represents a reduction of one dimension in the Helmholtz problem, solving the integral formulation directly can be prohibitively expensive for large systems, even using distributed linear algebra. The only feasible option in such cases is to use iterative methods; however, the observed condition number\footnote{We do not have a complete understanding of the numerically observed scaling of this condition number. We point out that the problem of solving the discretized Helmholtz equation remains the same, it has only been rewritten in a equivalent form, analogous to a Schur complement reduction. One then would expect to inherit the spectral properties of the original discretized problem. We observe that the discrete Helmholtz equation (Eq. \ref{eq:Helmholtz}) has a condition number that scales as $\cO(h^{-2})$ for fixed frequency (driven primarily by the discretized Laplacian), provided that the absorbing boundary conditions are accurate.

When the frequency increases with the number of degrees of freedom, we observe numerically that the condition number of the discretized Helmholtz equation exhibits the same scaling, provided that the absorbing boundary conditions are accurate. There is no theory that fully explains this behavior; to the authors' knowledge the best partial answer can be found in Thm 3.4 in \cite{Moiola_Spence:Is_the_Helmholtz_Equation_Really_Sign_Indefinite}, in which the authors find a variational formulation of a Helmholtz equation with impedance boundary conditions (an approximation of ABC) that has a coercivity constant with a lower bound independent of the frequency. This result implies that the condition number of the discrete Helmholtz equation would be dominated by the largest eigenvalue of the discretized operators which in the case of the Laplacian is $\cO(h^{-2})$.} of $\underline{\mathbf{M}}$ is $\cO(h^{-2})$, resulting in a high number of iterations for convergence.

The rest of this paper is devoted to the question of designing a good preconditioner for $\mathbf{\underline{M}}$ in regimes of propagating waves, which makes an iterative solver competitive for the discrete integral system.

\begin{remark} \label{remark:M_0}
The idea behind the discrete integral system in Def. \ref{def:integral_formulation} is to take the limit to the interfaces by approaching them from the interior of each layer. It is possible to write an equivalent system by taking the limit from outside. By Lemma \ref{lemma:jump_condition} the wavefield is zero outside the slab, hence by taking this limit we can write the equation for $\u$ as
\begin{equation} \label{eq:M_0_system}
	\mathbf{\underline{M}_0} \u = -	\left ( \begin{array}{c}
 												\cN^1_{n^1+1}\mathbf{f}^1 \\
 												\cN^2_0\mathbf{f}^2 \\
 												\cN^2_{n^2+1}\mathbf{f}^2 \\
 												\vdots \\
 												\cN^L_0\mathbf{f}^L
 											\end{array}
 									\right )  = - \mathbf{\underline{f}_0},
\end{equation}
where
\begin{equation} \label{eq:matrix_M_0}
		\scriptsize
		\mathbf{\underline{M}_0} = \frac{1}{h} \left [
									\begin{array}{ccccccc}
										-\mathbf{G}^1_{n+1,n+1} 	&  \mathbf{G}^1_{n+1,n} 		& 	0 							& 0 					& 0							& 0  \\
										\mathbf{G}^2_{0,1} 			& - \mathbf{G}^2_{0,0}    		& - \mathbf{G}^1_{0,n+1} 		& \mathbf{G}^1_{0,n}	& 0 					& 0  \\
										\mathbf{G}^2_{n+1,1} 		& - \mathbf{G}^2_{n+1,0}  		& - \mathbf{G}^2_{n+1,n+1} 		& \mathbf{G}^2_{n+1,n}	& 0 					& 0\\
										0 							&\ddots    						& \ddots						& \ddots 		 			 	& 0& 0 \\
										0 							& 0								& \mathbf{G}^{L-1}_{0,1} 		& - \mathbf{G}^{L-1}_{0,0} 		& - \mathbf{G}^{L-1}_{0,n+1} 	& \mathbf{G}^{L-1}_{0,n}	 	\\
										0 							& 0 							& \mathbf{G}^{L-1}_{n+1,1}  	& - \mathbf{G}^{L-1}_{n+1,0}  	& - \mathbf{G}^{L-1}_{n+1,n+1} 	& \mathbf{G}^{L-1}_{n+1,n}	\\
										0 							& 0 							&	0							&		0						& \mathbf{G}^{L}_{0,1} 			& - \mathbf{G}^{L}_{0,0}
										\end{array}
								\right ].
	\end{equation}
The systems given by Eq. \ref{eq:integral_formulation} and Eq. \ref{eq:M_0_system} are equivalent; however, the first one has more intuitive implications hence it is the one we deal with it in the immediate sequel.

\end{remark}

\begin{remark} \label{remark:different_partition}
The reduction to a boundary integral equation can be extended to more general partitions than slabs. In particular, it can be extended to layered partitions with irregular interfaces and partitions with different topologies such as grids. However, we need a layered partition to enforce the polarizing conditions, which are the cornerstone of the preconditioner.
\end{remark}

\section{Polarization}



Interesting structure is revealed when we reformulate the system given by Eq. \ref{eq:integral_formulation}, by splitting each interface field into two ``polarized traces". We start by describing the proper discrete expression of the concept seen in the introduction.

\subsection{Polarized Wavefields}

\begin{definition} \label{def:annihilator} (Polarization via annihilation relations.)
	A wavefield $\u$ is said to be up-going at the interface $\Gamma_{\ell,\ell+1}$ if it satisfies the annihilation relation given by
	\begin{equation}
			\cG^{\downarrow, \ell+1}_{1}(\u_0, \u_1 ) = 0.
	\end{equation}
	We denote it as $\u^{\uparrow}$.
	Analogously, a wavefield $\u$ is said to be down-going at the interface $\Gamma_{\ell,\ell+1}$ if it satisfies the annihilation relation given by
	\begin{equation}
			 \cG^{\uparrow, \ell}_{n^{\ell}}(\u_{n^{\ell}}, \u_{n^{\ell}+1} ) = 0.
	\end{equation}
	We denote it as $\u^{\downarrow}$.
\end{definition}

A pair $(\u^{\uparrow}_0, \u^{\uparrow}_1)$ satisfies the up-going annihilation relation at $\Gamma_{\ell,\ell+1}$ when it is a wavefield radiated from below $\Gamma_{\ell,\ell+1}$.

Up and down arrows are convenient notations, but it should be remembered that the polarized fields contain locally reflected waves in addition to transmitted waves, hence are not purely directional as in a uniform medium. The quality of polarization is directly inherited from the choice of local Green's function in the incomplete Green's operators $\cG^{\downarrow, \ell+1}$ and $\cG^{\uparrow, \ell}$. In the polarization context, we may refer to these operators as \emph{annihilators}.


The main feature of a polarized wave is that it can be extrapolated outside the domain using only the Dirichlet data at the boundary. In particular, we can extrapolate one grid point using the extrapolator in Def. \ref{def:extrapolator}.

\begin{definition}  \label{def:extrapolator}
	Let $\v_1$ be the trace of a wavefield at $j = 1$ in local coordinates. Then define the up-going one-sided extrapolator as
	\begin{equation}
		\cE^{\uparrow}_{\ell,\ell+1} \v_1 = \left ( \mathbf{G}^{\ell+1}(z_1,z_1) \right )^{-1} \mathbf{G}^{\ell+1}(z_1,z_0)  \v_1.
	\end{equation}
	Analogously, for a trace $\v_{n^\ell}$ at $j = n^\ell$, define the down-going one-sided extrapolator as
	\begin{equation}
		\cE^{\downarrow}_{\ell,\ell+1} \v_{n^\ell} = \left ( \mathbf{G}^{\ell}(z_{n^{\ell}},z_{n^{\ell}}) \right )^{-1} \mathbf{G}^{\ell}(z_{n^{\ell}},z_{n^{\ell} +1}) \v_{n^\ell}.
	\end{equation}

\end{definition}

Extrapolators reproduce the polarized waves.

\begin{lemma}\label{lemma:extrapolator}
	Let $\u^{\uparrow}$ be an up-going wavefield. Then  $\u^{\uparrow}_0$ is completely determined by $\u^{\uparrow}_1$, as
	\begin{equation}
		\u^{\uparrow}_0 = \cE^{\uparrow}_{\ell,\ell+1}\u^{\uparrow}_1.
	\end{equation}
	Analogously for a down-going wave $\u^{\downarrow}$, we have
	\begin{equation}
		\u^{\downarrow}_{n^{\ell}+1} = \cE^{\downarrow}_{\ell,\ell+1}\u^{\downarrow}_{n^\ell}.
	\end{equation}
\end{lemma}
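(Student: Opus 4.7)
The plan is to prove the lemma by direct algebraic manipulation of the annihilation relation, using the explicit matrix form of $\cG^{\downarrow,\ell+1}_1$ from Definition \ref{def:incomplete_green}. For the up-going case, I would start from the defining condition $\cG^{\downarrow,\ell+1}_1(\u^{\uparrow}_0,\u^{\uparrow}_1)=0$ (Definition \ref{def:annihilator}), substitute
\[
\cG^{\downarrow,\ell+1}_1(\u^{\uparrow}_0,\u^{\uparrow}_1)
= -\mathbf{G}^{\ell+1}(z_1,z_0)\left(\tfrac{\u^{\uparrow}_1-\u^{\uparrow}_0}{h}\right)
+ \left(\tfrac{\mathbf{G}^{\ell+1}(z_1,z_1)-\mathbf{G}^{\ell+1}(z_1,z_0)}{h}\right)\u^{\uparrow}_0,
\]
clear the factor $1/h$, and expand the brackets.

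The two $\mathbf{G}^{\ell+1}(z_1,z_0)\,\u^{\uparrow}_0$ terms cancel, collapsing the relation to the clean linear system
\[
\mathbf{G}^{\ell+1}(z_1,z_1)\,\u^{\uparrow}_0 \;=\; \mathbf{G}^{\ell+1}(z_1,z_0)\,\u^{\uparrow}_1.
\]
Left-multiplying by $\bigl(\mathbf{G}^{\ell+1}(z_1,z_1)\bigr)^{-1}$ gives exactly $\u^{\uparrow}_0 = \cE^{\uparrow}_{\ell,\ell+1}\u^{\uparrow}_1$, matching the formula in Definition \ref{def:extrapolator}. The down-going statement is proved in completely symmetric fashion: write out $\cG^{\uparrow,\ell}_{n^\ell}(\u^{\downarrow}_{n^\ell},\u^{\downarrow}_{n^\ell+1})=0$ using Definition \ref{def:incomplete_green}, observe the analogous cancellation, and solve for $\u^{\downarrow}_{n^\ell+1}$ in terms of $\u^{\downarrow}_{n^\ell}$.

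The only real subtlety I would flag is the invertibility of the diagonal block $\mathbf{G}^{\ell+1}(z_1,z_1)$ (and its down-going counterpart $\mathbf{G}^{\ell}(z_{n^\ell},z_{n^\ell})$), which is implicit in the definition of the extrapolator. These are interface-to-interface restrictions of the local Green's function associated with a Helmholtz problem equipped with PML absorbing layers, so they are generically nonsingular, but a careful write-up should state this assumption explicitly (or refer to the discrete Green's identities worked out in Appendix \ref{appendix:green_representation}, where the existence and uniqueness of the local solves is used). Apart from that caveat, the proof is essentially a two-line calculation from the definitions, with no obstacle requiring new machinery.
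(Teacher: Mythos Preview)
Your algebraic reduction is correct and is precisely the step the paper takes for granted: expanding the annihilation relation and cancelling the cross terms does collapse it to $\mathbf{G}^{\ell+1}(z_1,z_1)\,\u^{\uparrow}_0 = \mathbf{G}^{\ell+1}(z_1,z_0)\,\u^{\uparrow}_1$, after which the extrapolator formula follows immediately. The paper's own proof even phrases it as ``the proof is reduced to showing that $\mathbf{G}^{\ell+1}(z_1,z_1)$ is invertible,'' so on that part you and the paper agree.

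The difference is that what you flag as a side assumption is exactly the content the paper chooses to prove. Rather than leaving invertibility of $\mathbf{G}^{\ell+1}(z_1,z_1)$ as a genericity caveat, the paper establishes it via the nullity theorem: after reordering, $\mathbf{G}^{\ell+1}(z_1,z_1)$ is a diagonal block of $(\mathbf{H}^{\ell+1})^{-1}$, so its nullity equals the nullity of the complementary off-diagonal block of $\mathbf{H}^{\ell+1}$ itself; since that block of the discrete Helmholtz operator is trivially full rank (it contains the off-diagonal coupling of the five-point stencil), the Green's block is invertible. Your write-up is therefore correct but stops one step short of the paper's: the paper's proof \emph{is} the invertibility argument, and you have identified but not supplied it.
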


	\begin{lemma} \label{lemma:extrapolator_def}
	The extrapolator satisfies the following properties:
	\begin{equation}
		\mathbf{G}^{\ell+1}(z_0,z_j) = \cE^{\uparrow}_{\ell,\ell+1} \mathbf{G}^{\ell+1}(z_1,z_j), \qquad \text{for } j \geq 1,
	\end{equation}
	and
	\begin{equation}
		\mathbf{G}^{\ell}(z_{n^{\ell}+1},z_j) = \cE^{\downarrow}_{\ell,\ell+1} \mathbf{G}^{\ell}(z_{n^{\ell}},z_j), \qquad \text{for }  j \leq n^{\ell}.
	\end{equation}

	Moreover, we have the jump conditions
	\begin{equation}
		\mathbf{G}^{\ell+1}(z_0,z_0) -  h \cE^{\uparrow}_{\ell,\ell+1}  =  \cE^{\uparrow}_{\ell,\ell+1}   \mathbf{G}^{\ell+1}(z_1,z_0), \label{eq:extrapolator_jump_0}
	\end{equation}
	and
	\begin{equation}
		\mathbf{G}^{\ell}(z_{n^{\ell}+1},z_{n^{\ell}+1}) - h \cE^{\downarrow}_{\ell,\ell+1}  = \cE^{\downarrow}_{\ell,\ell+1}  \mathbf{G}^{\ell}(z_{n^{\ell}},z_{n^{\ell}+1}).
	\end{equation}
\end{lemma}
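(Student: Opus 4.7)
The plan is to recognize that the four identities are structural algebraic properties of the local Green's function $\mathbf{G}^{\ell+1}$ reflecting the absorbing top and bottom PML boundaries of the slab $\Omega^{\ell+1}$. For the first identity, I would observe that for each source depth $z_j$ with $j \geq 1$, the column $\mathbf{G}^{\ell+1}(\cdot, z_j)$ is an up-going wavefield at the top interface $\Gamma_{\ell,\ell+1}$ in the sense of Definition \ref{def:annihilator}; Lemma \ref{lemma:extrapolator} then immediately supplies $\mathbf{G}^{\ell+1}(z_0, z_j) = \cE^{\uparrow}_{\ell,\ell+1}\mathbf{G}^{\ell+1}(z_1, z_j)$. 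The second identity follows by reflecting the argument across the bottom of the slab, and the jump conditions correspond to the degenerate case where the source sits on the boundary row itself.

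To verify the annihilation $\cG^{\downarrow, \ell+1}_1(\mathbf{v}_0, \mathbf{v}_1) = 0$ for $\mathbf{v}_k = \mathbf{G}^{\ell+1}(z_k, z_j)$, I would substitute into Definition \ref{def:incomplete_green} and simplify (the $\mathbf{G}^{\ell+1}(z_1,z_0)\mathbf{G}^{\ell+1}(z_0,z_j)$ cross terms cancel) to reduce the claim to the algebraic identity
\[
\mathbf{G}^{\ell+1}(z_1, z_1)\,\mathbf{G}^{\ell+1}(z_0, z_j) \;=\; \mathbf{G}^{\ell+1}(z_1, z_0)\,\mathbf{G}^{\ell+1}(z_1, z_j), \qquad j \geq 1.
\]
This identity I would prove from the block-tridiagonal structure of $\mathbf{H}^{\ell+1}$ in the depth variable. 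Since $\mathbf{G}^{\ell+1}(\cdot, z_j)$ solves the \emph{homogeneous} discrete Helmholtz equation for all rows $k \leq 0$ (the source sits at $j \geq 1$) together with the fixed top PML boundary condition, the standard low-rank factorization of off-diagonal blocks of an inverse block-tridiagonal matrix gives $\mathbf{G}^{\ell+1}(z_k, z_j) = \Phi_k \Psi_j$ for $k \leq j$, with $\Phi_k$ independent of $j$. Eliminating $\Psi_j$ at $k \in \{0, 1\}$ produces $\mathbf{G}^{\ell+1}(z_0, z_j) = \Phi_0 \Phi_1^{-1}\,\mathbf{G}^{\ell+1}(z_1, z_j)$, and specializing $j = 1$ together with reciprocity from the symmetric PML formulation (Appendix \ref{appendix:quadratures}) identifies $\Phi_0 \Phi_1^{-1}$ with $\mathbf{G}^{\ell+1}(z_1, z_1)^{-1}\mathbf{G}^{\ell+1}(z_1, z_0) = \cE^{\uparrow}_{\ell,\ell+1}$.

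For the jump relations, I would rerun the same elimination for $\mathbf{G}^{\ell+1}(\cdot, z_0)$, whose source now lies exactly on the boundary row $k = 0$. The discrete Helmholtz equation at $k = 0$ picks up a $\delta$-forcing of weight $1/h^2$, and propagating this forcing through the top-PML elimination contributes an additional inhomogeneous term $h\,\cE^{\uparrow}_{\ell,\ell+1}$ in the extrapolation formula, yielding $\mathbf{G}^{\ell+1}(z_0, z_0) - h\,\cE^{\uparrow}_{\ell,\ell+1} = \cE^{\uparrow}_{\ell,\ell+1}\,\mathbf{G}^{\ell+1}(z_1, z_0)$; the down-going jump is analogous. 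The hardest step will be reconciling the two closed forms of the extrapolator that naturally emerge — $\mathbf{G}^{\ell+1}(z_0,z_1)\mathbf{G}^{\ell+1}(z_1,z_1)^{-1}$ from the block-tridiagonal factorization and $\mathbf{G}^{\ell+1}(z_1,z_1)^{-1}\mathbf{G}^{\ell+1}(z_1,z_0)$ from the annihilation — since their equality requires the symmetry of $\Phi_0\Phi_1^{-1}$ inherited from the self-adjointness of the upper half-line problem in the symmetric PML formulation, rather than from reciprocity alone.
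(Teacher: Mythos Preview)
Your proposal is correct and lands on the same core mechanism as the paper: the block-tridiagonal structure of $\mathbf{H}^{\ell+1}$ in depth forces the off-diagonal blocks of $(\mathbf{H}^{\ell+1})^{-1}$ to factor as $\mathbf{U}_j\mathbf{V}_k^t$ (your $\Phi_k\Psi_j$), which immediately gives the first two identities; the jump conditions then record what happens on the diagonal. The paper does exactly this, but packages the algebra into standalone statements in Appendix~\ref{appendix:general_facts} (Propositions~\ref{proposition:extrapolation_matrix} and~\ref{proposition:jump_condition}) and simply cites them, together with the observation that $\mathbf{C}_k=-I/h^2$ inside the physical domain to convert $\mathbf{C}_k^{-1}$ into the factor $h$.

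Two remarks on packaging. First, your opening detour through Lemma~\ref{lemma:extrapolator} is harmless but redundant: to verify that $\mathbf{G}^{\ell+1}(\cdot,z_j)$ satisfies the annihilation relation you are forced to prove $\mathbf{G}^{\ell+1}(z_1,z_1)\mathbf{G}^{\ell+1}(z_0,z_j)=\mathbf{G}^{\ell+1}(z_1,z_0)\mathbf{G}^{\ell+1}(z_1,z_j)$, which is already the first identity after multiplying by $\mathbf{G}^{\ell+1}(z_1,z_1)^{-1}$; the paper skips the annihilation framing and states the rank-one property directly. Second, for the jump you argue by tracking the $\delta$-forcing through the elimination, whereas the paper does the equivalent computation via the generalized Casorati determinant $\mathbf{R}_j=\mathbf{C}_{j-1}(\mathbf{D}_j\mathbf{E}_{j-1}-\mathbf{D}_{j-1}\mathbf{E}_j)$ being constant in $j$; both yield the same extra $h\,\cE^{\uparrow}$ term, and your ``hardest step'' (symmetry of the extrapolator) is exactly what the paper isolates in Proposition~\ref{proposition:extrapolation_matrix} via the symmetry of the Schur complements $\Delta_j$ in the symmetric PML formulation.
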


In one dimension, the proof of Lemma \ref{lemma:extrapolator_def} (in Appendix \ref{appendix:green_representation}) is a direct application of the nullity theorem \cite{Strang:the_interplay_of_ranks_of_submatrices} and the cofactor formula, in which  $\cE^{\uparrow}_{j,j+1}$ is the ratio between two co-linear vectors. In two dimensions, the proof is slightly more complex but follows the same reasoning.

\begin{lemma} \label{lemma:annihilator_relations_volume}
If $\u^{\uparrow}$ is an up-going wave-field, then the annihilation relation holds inside the layer, i.e.
	 \begin{equation}
	 	 \cG^{\downarrow, \ell+1}_{j}(\u^{\uparrow}_0, \u^{\uparrow}_1 ) = 0 , \qquad \text{for } j \geq 1.
	  \end{equation}
Analogously, if $\u^{\uparrow}$ is a down-going wave-field, then the annihilation relation holds inside the layer, i.e.
		\begin{equation}
	 	 \cG^{\uparrow, \ell}_{j}(\u^{\downarrow}_{n^{\ell}}, \u^{\downarrow}_{n^{\ell}} ) = 0 , \qquad \text{for } j \leq n^{\ell}.
	  \end{equation}

\end{lemma}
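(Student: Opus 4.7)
My strategy is to reduce the claim to a source-side operator identity on the local Green's function, and to derive that identity from the same block semi-separable structure that underlies Lemma \ref{lemma:extrapolator_def}.

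By Lemma \ref{lemma:extrapolator}, the hypothesis that $\u^{\uparrow}$ is up-going at $\Gamma_{\ell,\ell+1}$ is equivalent to the extrapolation relation $\u^{\uparrow}_0 = \cE^{\uparrow}_{\ell,\ell+1}\, \u^{\uparrow}_1$. Substituting this into the matrix form of the down-going incomplete Green's integral (Eq. \ref{eq:matrix_form_cG}) yields
\[
h\,\cG^{\downarrow, \ell+1}_{j}(\u^{\uparrow}_0, \u^{\uparrow}_1) \,=\, \bigl[\mathbf{G}^{\ell+1}(z_j, z_1)\, \cE^{\uparrow}_{\ell,\ell+1} \,-\, \mathbf{G}^{\ell+1}(z_j, z_0)\bigr]\, \u^{\uparrow}_1 .
\]
Since any trace $\u^{\uparrow}_1$ arises as the trace of some up-going field (pair it with $\cE^{\uparrow}_{\ell,\ell+1}\,\u^{\uparrow}_1$ at $j=0$), it suffices to prove the source-side operator identity
\[
\mathbf{G}^{\ell+1}(z_j, z_0) \,=\, \mathbf{G}^{\ell+1}(z_j, z_1)\, \cE^{\uparrow}_{\ell,\ell+1} \qquad \text{for all } j \geq 1. \qquad (\star)
\]

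To prove $(\star)$, I would invoke the block semi-separable factorization of the local Green's function, which stems from the block-tridiagonal structure of $\H^{\ell+1}$ with respect to horizontal slices (the 5-point stencil only couples consecutive depths). By the nullity theorem / cofactor argument already used in the proof of Lemma \ref{lemma:extrapolator_def}, there exist interface-valued operator families $\mathbf{A}(z_j)$ and $\mathbf{B}(z_k)$ such that $\mathbf{G}^{\ell+1}(z_j, z_k) = \mathbf{A}(z_j)\,\mathbf{B}(z_k)$ whenever $j \geq k$. Reading off the factorization at $(j,k) = (1,1)$ and $(1,0)$ gives $\cE^{\uparrow}_{\ell,\ell+1} = \mathbf{G}^{\ell+1}(z_1,z_1)^{-1}\mathbf{G}^{\ell+1}(z_1,z_0) = \mathbf{B}(z_1)^{-1}\mathbf{B}(z_0)$, and then for every $j \geq 1$,
\[
\mathbf{G}^{\ell+1}(z_j, z_1)\, \cE^{\uparrow}_{\ell,\ell+1} \,=\, \mathbf{A}(z_j)\,\mathbf{B}(z_1)\cdot \mathbf{B}(z_1)^{-1}\mathbf{B}(z_0) \,=\, \mathbf{A}(z_j)\,\mathbf{B}(z_0) \,=\, \mathbf{G}^{\ell+1}(z_j, z_0),
\]
which is $(\star)$. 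The down-going case follows by the symmetric argument, with the pair $\{z_0,z_1\}$ replaced by $\{z_{n^{\ell}}, z_{n^{\ell}+1}\}$ and the analogous factorization $\mathbf{G}^{\ell}(z_j, z_k) = \tilde{\mathbf{A}}(z_j)\tilde{\mathbf{B}}(z_k)$ valid for $j \leq k$.

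The main obstacle is not the brief algebraic manipulation above, but the semi-separable factorization itself. In one dimension this is the classical Wronskian representation of the resolvent of a Sturm-Liouville operator; in two dimensions with heterogeneous media and PML truncation one must verify that the corresponding block-rank structure persists for $\H^{\ell+1}$. This is precisely the technical ingredient the paper already extracts for Lemma \ref{lemma:extrapolator_def}, so once that earlier lemma is in hand, Lemma \ref{lemma:annihilator_relations_volume} introduces no new rank-structural difficulty and reduces to the three-line calculation exhibited above.
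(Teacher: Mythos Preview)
Your argument is correct and rests on exactly the same structural ingredient the paper uses: the block semi-separable factorization of the local Green's function from Proposition~\ref{prop:H_inverted_UV_construction}. The paper's proof packages this as an induction on $j$, left-multiplying at each step by the interior extrapolator of Remark~\ref{remark:extrapolator} to shift the target depth from $z_j$ to $z_{j+1}$; you instead invoke the factorization $\mathbf{G}^{\ell+1}(z_j,z_k)=\mathbf{A}(z_j)\mathbf{B}(z_k)$ directly and obtain the source-side identity $(\star)$ for all $j\geq 1$ at once, which is a slightly cleaner but essentially equivalent route.
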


\begin{remark} \label{remark:extrapolator}
We gather from the proof of Lemma \ref{lemma:extrapolator_def} that we can define extrapolators inside the domain as well. In fact, from Proposition \ref{proposition:extrapolation_matrix} in Appendix \ref{appendix:general_facts}, we can easily show that
\begin{equation}
	  \left [ \mathbf{G}^{\ell}(z_j,z_j) \right ]^{-1} \mathbf{G}^{\ell}(z_j,z_{j+1}) \mathbf{G}^{\ell}(z_j,z_{k}) =  \mathbf{G}^{\ell}(z_{j+1},z_{k}) \qquad \text{for } k\leq j,
\end{equation}
and
\begin{equation}
	  \left [ \mathbf{G}^{\ell}(z_{j+1},z_{j+1}) \right ]^{-1} \mathbf{G}^{\ell}(z_{j+1},z_{j}) \mathbf{G}^{\ell}(z_{j+1},z_{k}) =  \mathbf{G}^{\ell}(z_{j},z_{k}) \qquad \text{for } j\leq k.
\end{equation}
\end{remark}

\subsection{Polarized Traces}



Some advantageous cancellations occur when doubling the number of unknowns, and formulating a system on traces of polarized wavefields at interfaces. In this approach, each trace is written as the sum of two polarized components. We define the collection of \emph{polarized traces} as
\begin{equation}
	\underline{\underline{\u}} = \left ( 	\begin{array}{c}
								\underline{\u}^{\downarrow}\\
								\underline{\u}^{\uparrow}
								\end{array}
								\right ),
\end{equation}
such that $\underline{\u} = \underline{\u}^{\uparrow} + \underline{\u}^{\downarrow}$. As previously, we underline symbols to denote collections of traces. We now underline twice to denote polarized traces. The original system gives rise to the equations
\begin{equation} \label{eq:open_system}
	\left [ \begin{array} {cc} \underline{ \mathbf{M} } & \underline{ \mathbf{M} } \end{array}  \right ] \underline{\underline{\u}} = -\mathbf{\underline{f}}.
\end{equation}

This polarized formulation increases the number of unknowns; however, the number of equations remains the same. Some further knowledge should be used to obtain a square system. We present three mathematically equivalent, though algorithmically different approaches to closing the system:
\ben
\item a first approach is to impose the annihilation polarization conditions;
\item a second approach is to impose the extrapolator relations in tandem with the annihilation conditions; and
\item a third approach uses the jump conditions to further simplify expressions.
\een

The second formulation yields an efficient preconditioner consisting of decoupled block-triangular matrices. However, its inversion relies on inverting some of the dense diagonal blocks, which is problematic without extra knowledge about these blocks. The third formulation is analogous to the second one, but altogether alleviates the need for dense block inversion. It is this last formulation that we benchmark in the numerical section. We prefer to present all three versions in a sequence, for clarity of the presentation.



\subsection{Annihilation relations}

We can easily close the system given by Eq. \ref{eq:open_system} by explicitly imposing that the out-going traces at each interface satisfies the annihilation conditions (Eq. \ref{eq:annihilator_cond_up} and Eq. \ref{eq:annihilator_cond_down}).  We then obtain a system of equations given by :
\begin{align}
		\cG^{\downarrow, \ell}_{1}(\u^{\ell,\uparrow}_{0}, \u^{\ell, \uparrow}_{1} ) 	+ \cG^{\uparrow, \ell}_{1}(\u^{\ell, \uparrow}_{n^{\ell}}, \u^{\ell, \uparrow}_{n^{\ell}+1} ) + \cG^{\downarrow, \ell}_{1}(\u^{\ell,\downarrow}_{0}, \u^{\ell, \downarrow}_{1} ) 	+ \cG^{\uparrow, \ell}_{1}(\u^{\ell, \downarrow}_{n^{\ell}}, \u^{\ell, \downarrow}_{n^{\ell}+1} ) 	+  \cN^{\ell}_{1}  \mathbf{f}^{\ell}  &= \u^{\ell, \uparrow}_{1} + \u^{\ell, \downarrow}_{1} , \\
		\cG^{\downarrow, \ell}_{n^{\ell}}(\u^{\ell,\uparrow}_{0}, \u^{\ell, \uparrow}_{1} ) 	+ \cG^{\uparrow, \ell}_{n^{\ell}}(\u^{\ell,\uparrow}_{n^{\ell}}, \u^{\ell, \uparrow}_{n^{\ell}+1}) + \cG^{\downarrow, \ell}_{n^{\ell}}(\u^{\ell,\downarrow}_{0}, \u^{\ell,\downarrow}_{1} ) 	+ \cG^{\uparrow, \ell}_{n^{\ell}}(\u^{\ell,\downarrow}_{n^{\ell}}, \u^{\ell,\downarrow}_{n^{\ell}+1})  	+  \cN^{\ell}_{n^{\ell}}  \mathbf{f}^{\ell}  	&= \u^{\ell, \uparrow}_{n^{\ell}} + \u^{\ell, \downarrow}_{n^{\ell}},\\
		\cG^{\uparrow, \ell}_{n^{\ell}}(\u^{\ell,\downarrow}_{n^{\ell}}, \u^{\ell,\downarrow}_{n^{\ell}+1})  		&= 0 , \label{eq:annihilator_cond_up} \\
		\cG^{\downarrow, \ell}_{1}(\u^{\ell,\uparrow}_{0}, \u^{\ell, \uparrow}_{1} ) 								&= 0. \label{eq:annihilator_cond_down}
\end{align}
plus the continuity conditions.
To obtain the matrix form of this system, we define the global annihilator matrices by
\begin{equation}
		\scriptsize
		\mathbf{\underline{A}}^{\downarrow} = \frac{1}{h} \left [
									\begin{array}{ccccccc}
										-\mathbf{G}^1_{n,n+1}    	& \mathbf{G}^1_{n,n} 		& 	0 							& 0 					& 0							& 0  \\
										0 				 			& 0 							& 0 							& 0 					& 0 					& 0  \\
										0 							& 0 					  		& - \mathbf{G}^2_{n,n+1}     & \mathbf{G}^2_{n,n}	& 0 					& 0\\
										0 							& 0								& 0    						& \ddots				& \ddots 		 			 	& 0 \\
										0 							& 0								& 0	& 0 & 0 & 0	 	\\
										0 							& 0 							& 0 &  0	& - \mathbf{G}^{L-1}_{n,n+1} & \mathbf{G}^{L-1}_{n,n}	\\
										0 							& 0 									&	0						&		0						& 0 	& 0
										\end{array}
								\right ],
	\end{equation}
and
\begin{equation}
		\scriptsize
		\mathbf{\underline{A}}^{\uparrow} = \frac{1}{h}  \left [
									\begin{array}{ccccccc}
										0	& 0		& 	0 							& 0 					& 0							& 0  \\
										\mathbf{G}^2_{1,1} 			& - \mathbf{G}^2_{1,0} 	& 0 		& 0	& 0 					& 0  \\
										0 		& 0		& 0	& 0	& 0 					& 0\\
										0 														&\ddots    						& \ddots				& \ddots 		 			 	& 0& 0 \\
										0 							& 0								& \mathbf{G}^{L-1}_{1,1} 	& - \mathbf{G}^{L-1}_{1,0}	& 0 & 0	 	\\
										0 							& 0 							& 0  & 0	& 0	&0	\\
										0 							& 0 									&	0						&		0						& \mathbf{G}^{L}_{1,1} 	& - \mathbf{G}^{L}_{1,0}
										\end{array}
								\right ].
	\end{equation}

\begin{definition} \label{def:polarized_annihilator}
	We define the polarized system completed with annihilator conditions as
\begin{equation} \label{eq:polarized_out_going_1}
		\left [ \begin{array}{cc}
					\mathbf{\underline{M}} 				& \mathbf{\underline{M}} \\
					\mathbf{\underline{A}}^{\downarrow} & \mathbf{\underline{A}}^{\uparrow}
				\end{array}
		\right]
		\underline{\underline{\u}} =
		\left (
			\begin{array}{c}
				-\mathbf{\underline{f}} \\
				0
			\end{array}
		\right).
	\end{equation}
\end{definition}
By construction, if $\underline{\underline{\u}}$ is solution to Eq. \ref{eq:polarized_out_going_1}, then $\underline{\u} = \underline{\u}^{\uparrow} + \underline{\u}^{\downarrow}$ is solution to Eq. \ref{eq:integral_formulation}. Moreover, the non-zero blocks of $\mathbf{\underline{A}}^{\uparrow}$ and $\mathbf{\underline{A}}^{\downarrow}$ are full rank, and given their nested structure it follows that  $[ \mathbf{\underline{A}}^{\downarrow}  \;\;\mathbf{\underline{A}}^{\uparrow}]$ is full row rank as well.

\subsection{Extrapolation conditions}

One standard procedure for preconditioning a system such as Eq. \ref{eq:integral_formulation} is to use a block Jacobi preconditioner, or a block Gauss-Seidel preconditioner. Several solvers based on domain decomposition use the latter as a preconditioner, and typically call it a multiplicative Schwarz iteration. In our case however, once the system is augmented from $\mathbf{\underline{M}}$ to $[ \mathbf{\underline{M}} \;\; \mathbf{\underline{M}} ]$, and completed, it loses its banded structure. The proper form can be restored in the system of Def. \ref{def:polarized_annihilator} by a sequence of steps that we now outline.

It is clear from Def. \ref{def:annihilator} that some of the terms of $\underline{\mathbf{M}}$ contain the annihilation relations. Those terms should be subtracted from the relevant rows of $\underline{\mathbf{M}}$, resulting in new submatrices $\mathbf{\underline{M}}^{\downarrow}$ and $\mathbf{\underline{M}}^{\uparrow}$. Completion of the system is advantageously done in a different way, by encoding polarization via the extrapolator conditions from Def. \ref{def:extrapolator}, rather than the annihiliation conditions. The system given by Eq. \ref{eq:polarized_out_going_1} is then equivalent to
\begin{align}
		\cG^{\uparrow, \ell}_{1}(\u^{\ell, \uparrow}_{n^{\ell}}, \u^{\ell, \uparrow}_{n^{\ell}+1} ) + \cG^{\downarrow, \ell}_{1}(\u^{\ell,\downarrow}_{0}, \u^{\ell, \downarrow}_{1} ) 	+ \cG^{\uparrow, \ell}_{1}(\u^{\ell, \downarrow}_{n^{\ell}}, \u^{\ell, \downarrow}_{n^{\ell}+1} ) 	+  \cN^{\ell}_{1}  \mathbf{f}^{\ell}  &= \u^{\ell, \uparrow}_{1} + \u^{\ell, \downarrow}_{1} , \label{eq:regularized_up_1} \\
		\cG^{\downarrow, \ell}_{n^{\ell}}(\u^{\ell,\uparrow}_{0}, \u^{\ell, \uparrow}_{1} ) 	+ \cG^{\uparrow, \ell}_{n^{\ell}}(\u^{\ell,\uparrow}_{n^{\ell}}, \u^{\ell, \uparrow}_{n^{\ell}+1}) + \cG^{\downarrow, \ell}_{n^{\ell}}(\u^{\ell,\downarrow}_{0}, \u^{\ell,\downarrow}_{1} ) 	+  \cN^{\ell}_{n^{\ell}}  \mathbf{f}^{\ell}  	&= \u^{\ell, \uparrow}_{n^{\ell}} + \u^{\ell, \downarrow}_{n^{\ell}},\\
		\u^{\ell,\downarrow}_{n^{\ell}+1}	& = \cE^{\downarrow}_{\ell,\ell+1}( \u^{\ell,\downarrow}_{n^{\ell}}), \\
		\u^{\ell+1,\uparrow}_{0} 		& = \cE^{\uparrow}_{\ell,\ell+1}(\u^{\ell+1,\uparrow}_{1}) 	.
\end{align}

We can switch to a matrix form of these equations, by letting
\begin{equation}
		\scriptsize
		\mathbf{\underline{M}}^{\downarrow} = \frac{1}{h} \left [
									\begin{array}{ccccccc}
										 - \I						& 0 					 		& 	0 							& 0 					& 0							& 0  \\
										\mathbf{G}^2_{1,1} 			& - \mathbf{G}^2_{1,0} - \I		& - \mathbf{G}^1_{1,n+1} 		& \mathbf{G}^1_{1,n}	& 0 					& 0  \\
										\mathbf{G}^2_{n,1} 			& - \mathbf{G}^2_{n,0}  		&  - \I	& 0	& 0 					& 0\\
										0 							&\ddots    						& \ddots				& \ddots 		 			 	& 0& 0 \\
										0 							& 0								& \mathbf{G}^{L-1}_{1,1} 	& - \mathbf{G}^{L-1}_{1,0}-\I	& - \mathbf{G}^{L-1}_{1,n+1} & \mathbf{G}^{L-1}_{1,n}	 	\\
										0 							& 0 							& \mathbf{G}^{L-1}_{n,1}  	& - \mathbf{G}^{L-1}_{n,0}  	&  - \I	& 0	\\
										0 							& 0 							&	0						&		0						& \mathbf{G}^{L}_{1,1} 	& - \mathbf{G}^{L}_{1,0} - \I
										\end{array}
								\right ],
	\end{equation}

\begin{equation}
		\scriptsize
		\mathbf{\underline{M}}^{\uparrow} = \frac{1}{h} \left [
									\begin{array}{ccccccc}
										-\mathbf{G}^1_{n,n+1} - \I	& \mathbf{G}^1_{n,n} 		& 	0 							& 0 						& 0									& 0  \\
										0 							& -\I						& -\mathbf{G}^1_{1,n+1} 		& \mathbf{G}^1_{1,n}		& 0 								& 0  \\
										\mathbf{G}^2_{n,1} 			& -\mathbf{G}^2_{n,0}  		& -\mathbf{G}^2_{n,n+1} - \I	& \mathbf{G}^2_{n,n}		& 0 								& 0\\
										0 							& \ddots    				& \ddots						& \ddots 		 			& 0 								& 0 \\
										0 							& 0							&  0							& -\I						& - \mathbf{G}^{L-1}_{1,n+1} 		& \mathbf{G}^{L-1}_{1,n}	 	\\
										0 							& 0 						& \mathbf{G}^{L-1}_{n,1}  		& - \mathbf{G}^{L-1}_{n,0}  & - \mathbf{G}^{L-1}_{n,n+1} - \I	& \mathbf{G}^{L-1}_{n,n}	\\
										0 							& 0 						&	0							& 0							& 0									&  - \I
										\end{array}
								\right ],
	\end{equation}

	\begin{equation}
		\scriptsize
		\mathbf{\underline{E}}^{\downarrow} = \frac{1}{h} \left [
									\begin{array}{ccccccc}
										h\left (\mathbf{G}^1_{n,n}\right)^{-1}\mathbf{G}^1_{n,n+1}  	&  -\I	& 	0 														& 0 	 & 0																	& 0  \\
										0 				 											& 0		& 0 														& 0 	 & 0 																	& 0  \\
										0 															& 0  	& h\left (\mathbf{G}^2_{n,n}\right)^{-1}\mathbf{G}^2_{n,n+1} &  -\I	 & 0 																	& 0 \\
										0 															& 0		& 0    														& \ddots & \ddots 		 			 											& 0 \\
										0 															& 0		& 0															& 0 	 & 0 						 											& 0	 	\\
										0 															& 0		& 0  														& 0		 & h\left (\mathbf{G}^{L-1}_{n,n}\right)^{-1}\mathbf{G}^{L-1}_{n,n+1}	& -\I	\\
										0 															& 0		& 0															& 0      & 0 																	& 0
										\end{array}
								\right ],
	\end{equation}

	\begin{equation}
		\scriptsize
		\mathbf{\underline{E}}^{\uparrow} = \frac{1}{h}  \left [
									\begin{array}{ccccccc}
										0	& 0		& 	0 							& 0 					& 0							& 0  \\
										-\I 			&  h\left(\mathbf{G}^2_{1,1} \right)^{-1} \mathbf{G}^2_{1,0} 	& 0 		& 0	& 0 					& 0  \\
										0 		& 0		& 0	& 0	& 0 					& 0\\
										0 														&\ddots    						& \ddots				& \ddots 		 			 	& 0& 0 \\
										0 							& 0								& -\I 			&  h\left(\mathbf{G}^{L-1}_{1,1} \right)^{-1} \mathbf{G}^{L-1}_{1,0} 	& 0 & 0	 	\\
										0 							& 0 							& 0  & 0	& 0	&0	\\
										0 							& 0 									&	0						&		0						& -\I 			&  h\left(\mathbf{G}^L_{1,1} \right)^{-1} \mathbf{G}^L_{1,0}
										\end{array}
								\right ].
	\end{equation}

The sparsity pattern of the system formed by these block matrices is given by Fig. \ref{fig:spy_matrix_extrapolation} {\it(left)}. We arrive at the following definition.


\begin{definition} \label{def:polarized_out_going}
	We define the polarized system completed with the extrapolation relations as
	\begin{equation} \label{eq:polarized_out_going}
	\left [ \begin{array}{cc}
					\mathbf{\underline{M}}^{\downarrow}			& \mathbf{\underline{M}}^{\uparrow} \\
					\mathbf{\underline{E}}^{\downarrow} 		& \mathbf{\underline{E}}^{\uparrow}
			\end{array}
		\right]
		\underline{\underline{\u}} =
		\left (
			\begin{array}{c}
				-\mathbf{\underline{f}} \\
				0
			\end{array}
		\right ).
\end{equation}
\end{definition}

\begin{figure}[H]
    \begin{center}
        \includegraphics[trim = 0mm 0mm 0mm 0mm, clip, width = 60mm]{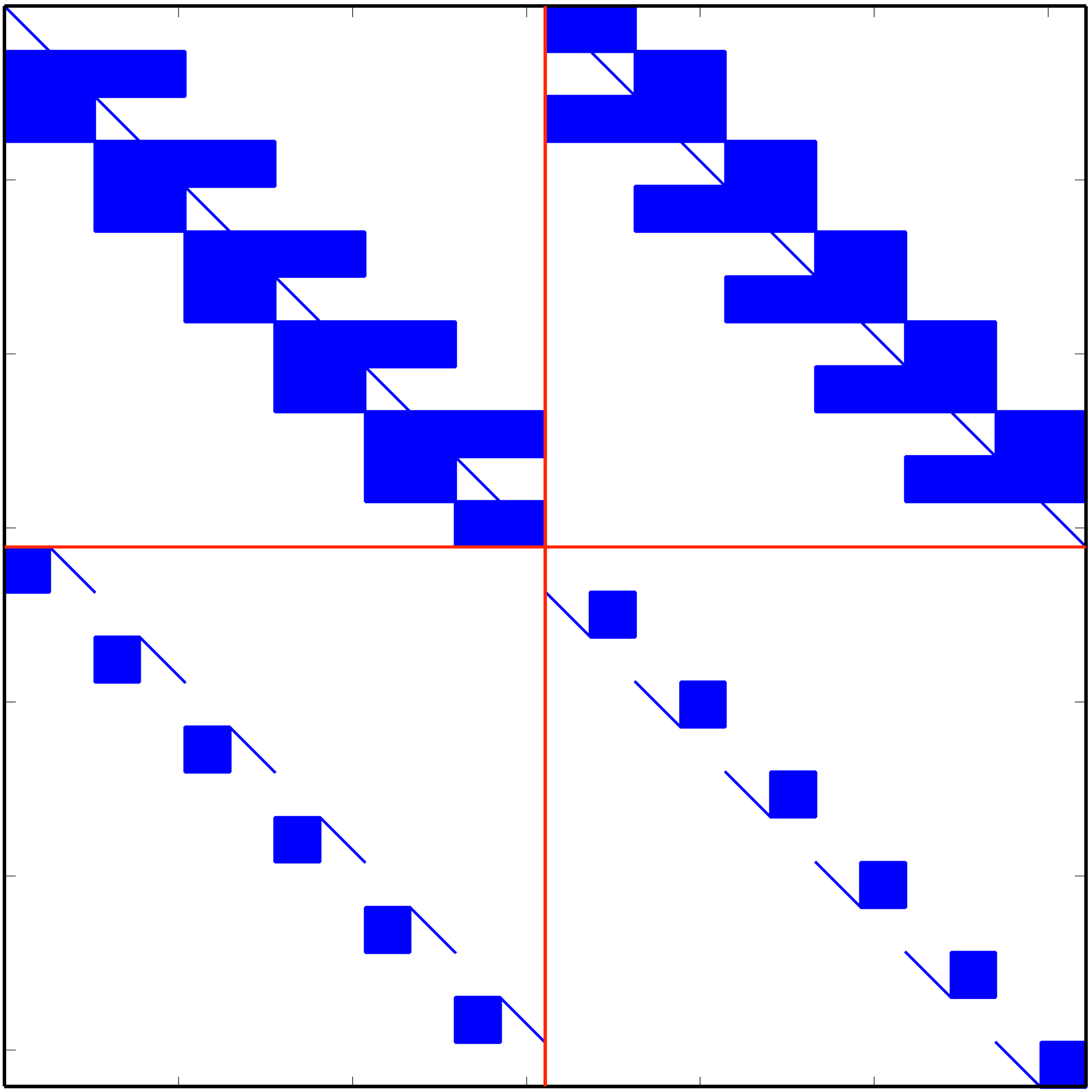} \hspace{0.5cm} \includegraphics[trim = 0mm 0mm 0mm 0mm, clip, width = 60mm]{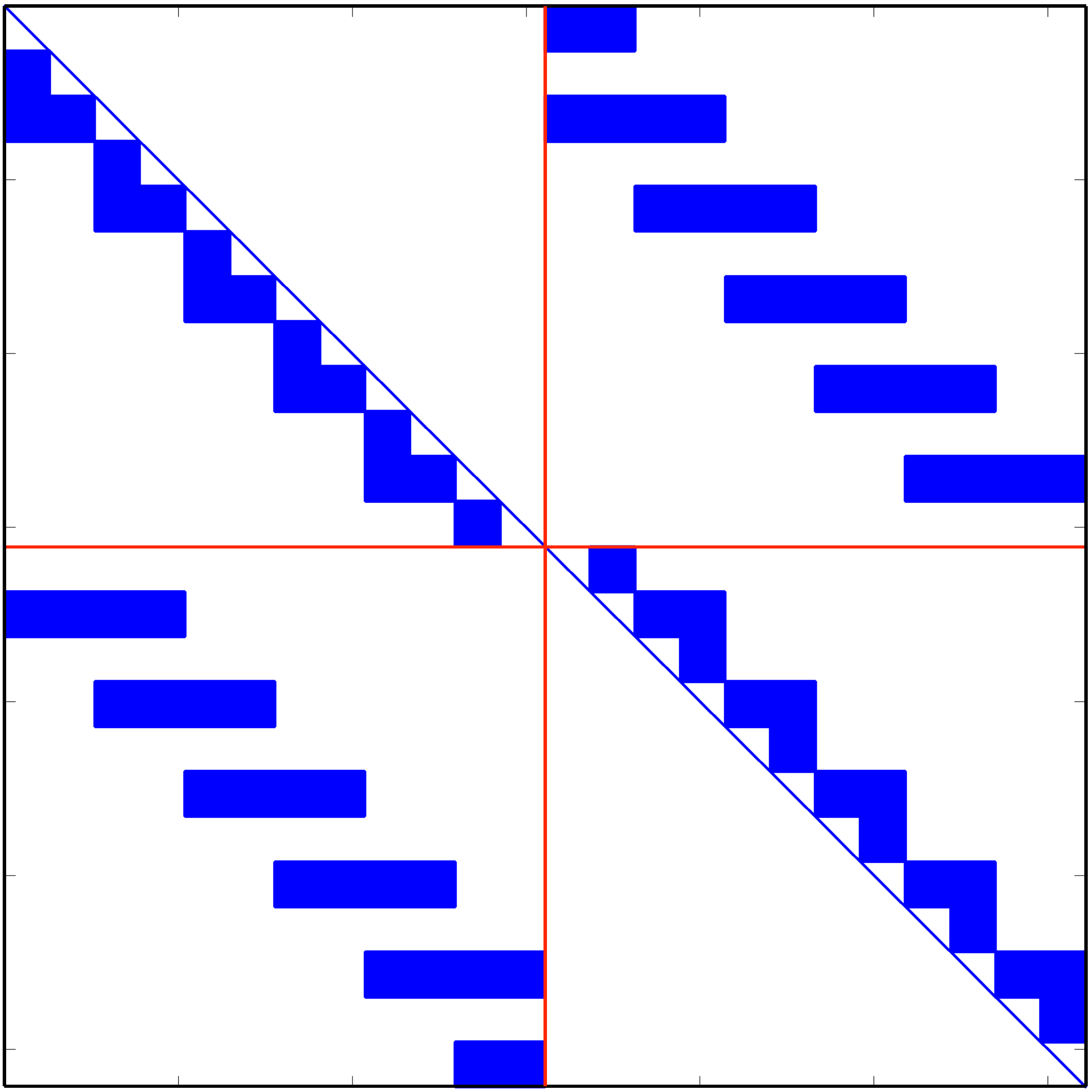}
         \caption{ Left: Sparsity pattern of the system in Eq. \ref{eq:polarized_out_going}. Right: Sparsity pattern of $\mathbf{\underline{D}}^{\text{extrap}} + \mathbf{\underline{R}}^{\text{extrap}}$.}
         \label{fig:spy_matrix_extrapolation}
     \end{center}
\end{figure}

The interesting feature of this system, in contrast to the previous formulation, is that $\u^{\ell,\downarrow}_{n^{\ell}}$ is undisturbed (multiplied by an identity block) both in $\mathbf{\underline{M}}^{\downarrow}$ and in $\mathbf{\underline{E}}^{\downarrow}$. Similarly, $\u^{\ell,\uparrow}_1$ is left undisturbed by $\mathbf{\underline{M}}^{\uparrow}$ and $\mathbf{\underline{E}}^{\uparrow}$. This is apparent from Fig. \ref{fig:spy_matrix_extrapolation} {\it(left)}. Following this observation we can permute the rows of the matrix to obtain

\begin{equation} \label{eq:splitting_out_going}
	\left( 	\left [ \begin{array}{cc}
					\mathbf{\underline{D}}^{\downarrow}			& 0 \\
					0 									 		& \mathbf{\underline{D}}^{\uparrow}
				\end{array}
			\right] +
			\left [ \begin{array}{cc}
					0 							& \mathbf{\underline{U}}	 \\
					\mathbf{\underline{L}}		&  0
				\end{array}
			\right]
	\right )
		\underline{\underline{\u}} = \mathbf{\underline{P}}
		\left (
			\begin{array}{c}
				-\mathbf{\underline{f}} \\
				0
			\end{array}
		\right ),
\end{equation}
where the diagonal blocks $ \mathbf{\underline{D}}^{\downarrow}$ and $ \mathbf{\underline{D}}^{\downarrow}$ are respectively upper triangular and lower triangular, with identity blocks on the diagonal; $ \mathbf{\underline{P}}$ is an appropriate `permutation' matrix; and $\mathbf{\underline{U}}$ and $\mathbf{\underline{L}}$ are block sparse matrices.
We define the matrices
\begin{equation}
	\mathbf{\underline{D}}^{\text{extrap}}	=
	 	\left [ \begin{array}{cc}
					\mathbf{\underline{D}}^{\downarrow}			& 0 \\
					0 									 		& \mathbf{\underline{D}}^{\uparrow}
				\end{array}
			\right ], \qquad
	\mathbf{\underline{R}}^{\text{extrap}}	=
			\left [ \begin{array}{cc}
					0 							& \mathbf{\underline{U}}	 \\
					\mathbf{\underline{L}}		&  0
				\end{array}
			\right].
\end{equation}

We can observe the sparsity pattern of the permuted system in Fig. \ref{fig:spy_matrix_extrapolation} {\it (right)}.

\subsection{Jump condition}

The polarized system in Def. \ref{def:polarized_out_going} can be easily preconditioned using a block Jacobi iteration for the $2\times 2$ block matrix, which yields a procedure to solve the Helmholtz equation. Moreover, using $\mathbf{\underline{D}}^{\text{extrap}} $ as a pre-conditioner within GMRES, yields remarkable results. However, in order to obtain the desired structure, we need to use the extrapolator, whose construction involves inverting small dense blocks. This can be costly and inefficient when dealing with large interfaces (or surfaces in 3D). In order to avoid the inversion of any local operator we exploit the properties of the discrete GRF to obtain an equivalent system that avoids any unnecesary dense linear algebra operation.

Following Remark \ref{remark:M_0}, we can complete the polarized system by imposing,
\begin{equation}
	\left[ \begin{array}{cc} \mathbf{\underline{M}_0} & \mathbf{\underline{M}_0} \end{array} \right] \underline{\underline{\u}} = - \mathbf{\underline{f}_0},
\end{equation}
where $\mathbf{\underline{M}_0}$ encodes the jump condition for the GRF. However, these blocks do not preserve $\u^{\uparrow}$ and $\u^{\downarrow}$ like $\mathbf{\underline{D}}^{\text{extrap}}$ did, because $\mathbf{\underline{M}_0}$ does not have identities on the diagonal. Fortunately, it is possible to include the information contained in the annihilation relations directly into $\mathbf{\underline{M}_0}$, exactly as we did with $\mathbf{\underline{M}}$. Lemma  \ref{lemma:extrapolator_M_0} summarizes the expression resulting from incorporating the extrapolation conditions into $\mathbf{\underline{M}_0}$.
\begin{lemma} \label{lemma:extrapolator_M_0}
If $\underline{\underline{\u}}$ is solution to the system given by Def. \ref{def:polarized_out_going} then
\begin{align} \label{eq:stepper_equivalence_1}
	\u^{\uparrow, \ell}_0  = & \cE^{\uparrow}_{\ell,\ell+1} \u^{\uparrow,\ell}_1   = \cG^{\uparrow, \ell}_{0}(\u^{\ell, \uparrow}_{n^{\ell}}, \u^{\ell, \uparrow}_{n^{\ell}+1} ) + \cG^{\downarrow, \ell}_{0}(\u^{\ell,\downarrow}_{0}, \u^{\ell, \downarrow}_{1} ) 	+ \cG^{\uparrow, \ell}_{0}(\u^{\ell, \downarrow}_{n^{\ell}}, \u^{\ell, \downarrow}_{n^{\ell}+1} ) 	+  \cN^{\ell}_{0}  \mathbf{f}^{\ell}, \\  \label{eq:stepper_equivalence_2}
	\u^{\downarrow, \ell}_{n^{\ell}+1} = & \cE^{\downarrow}_{\ell,\ell+1} \u^{\downarrow,\ell}_{n^{\ell}}   = \cG^{\downarrow, \ell}_{n^{\ell}+1}(\u^{\ell,\uparrow}_{0}, \u^{\ell, \uparrow}_{1} ) 	+ \cG^{\uparrow, \ell}_{n^{\ell}+1}(\u^{\ell,\uparrow}_{n^{\ell}}, \u^{\ell, \uparrow}_{n^{\ell}+1}) + \cG^{\downarrow, \ell}_{n^{\ell}+1}(\u^{\ell,\downarrow}_{0}, \u^{\ell,\downarrow}_{1} ) +  \cN^{\ell}_{n^{\ell}+1}  \mathbf{f}^{\ell}.
\end{align}
\end{lemma}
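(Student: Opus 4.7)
The plan is to apply the discrete jump identity (Lemma \ref{lemma:jump_condition}) to the combined trace $\underline{\u} = \underline{\u}^{\downarrow} + \underline{\u}^{\uparrow}$ at $j=0$ for Eq. \ref{eq:stepper_equivalence_1} and at $j=n^\ell+1$ for Eq. \ref{eq:stepper_equivalence_2}, to split by linearity into up- and down-going contributions, and to use the extrapolator relations enforced by $\mathbf{\underline{E}}^{\uparrow}$ and $\mathbf{\underline{E}}^{\downarrow}$ to absorb the one leftover term into $\u^{\uparrow,\ell}_0$ (resp. $\u^{\downarrow,\ell}_{n^\ell+1}$). Both claims thus reduce to an identity of the form $\cG^{\downarrow,\ell}_0(\u^{\uparrow,\ell}_0,\u^{\uparrow,\ell}_1) = -\u^{\uparrow,\ell}_0$ (resp. $\cG^{\uparrow,\ell}_{n^\ell+1}(\u^{\downarrow,\ell}_{n^\ell},\u^{\downarrow,\ell}_{n^\ell+1}) = -\u^{\downarrow,\ell}_{n^\ell+1}$).

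Before invoking Lemma \ref{lemma:jump_condition}, I first justify that it applies. By construction $\mathbf{\underline{M}}^{\downarrow}$ differs from $\mathbf{\underline{M}}$ in the $\underline{\u}^{\uparrow}$ block columns by exactly $\cG^{\downarrow,\ell}_{1}(\u^{\uparrow,\ell}_0,\u^{\uparrow,\ell}_1)$ at the top of each layer, and symmetrically for $\mathbf{\underline{M}}^{\uparrow}$ with the down-going annihilator at the bottom. Writing out the matrix form of $\cG^{\downarrow,\ell}_{1}$ from Def. \ref{def:incomplete_green} shows it equals $h^{-1}[\mathbf{G}^{\ell}_{1,1}\u^{\uparrow,\ell}_0 - \mathbf{G}^{\ell}_{1,0}\u^{\uparrow,\ell}_1]$, whose vanishing is equivalent to the extrapolator relation $\u^{\uparrow,\ell}_0 = (\mathbf{G}^{\ell}_{1,1})^{-1}\mathbf{G}^{\ell}_{1,0}\u^{\uparrow,\ell}_1 = \cE^{\uparrow}\u^{\uparrow,\ell}_1$ encoded in $\mathbf{\underline{E}}^{\uparrow}$; the same correspondence holds for $\cG^{\uparrow,\ell}_{n^\ell}$ against $\mathbf{\underline{E}}^{\downarrow}$. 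Consequently the top block-row of the polarized system of Def. \ref{def:polarized_out_going} is equivalent to $\mathbf{\underline{M}}\,\underline{\u} = -\mathbf{\underline{f}}$, so $\underline{\u}$ solves the integral formulation of Def. \ref{def:integral_formulation}, and Lemma \ref{lemma:jump_condition} becomes available at $j\leq 0$ and $j\geq n^\ell+1$.

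Applying Lemma \ref{lemma:jump_condition} at $j=0$ and decomposing every argument as $\u^{\ell}_\bullet = \u^{\uparrow,\ell}_\bullet + \u^{\downarrow,\ell}_\bullet$ yields
\begin{equation*}
0 = \cG^{\uparrow,\ell}_0(\u^{\uparrow,\ell}_{n^{\ell}},\u^{\uparrow,\ell}_{n^{\ell}+1}) + \cG^{\uparrow,\ell}_0(\u^{\downarrow,\ell}_{n^{\ell}},\u^{\downarrow,\ell}_{n^{\ell}+1}) + \cG^{\downarrow,\ell}_0(\u^{\uparrow,\ell}_0,\u^{\uparrow,\ell}_1) + \cG^{\downarrow,\ell}_0(\u^{\downarrow,\ell}_0,\u^{\downarrow,\ell}_1) + \cN^{\ell}_0\mathbf{f}^{\ell}.
\end{equation*}
Three of these five terms, together with the Newton potential, already match the right-hand side of Eq. \ref{eq:stepper_equivalence_1}; it thus suffices to prove $\cG^{\downarrow,\ell}_0(\u^{\uparrow,\ell}_0,\u^{\uparrow,\ell}_1) = -\u^{\uparrow,\ell}_0$. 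Expanding via Def. \ref{def:incomplete_green} produces $h^{-1}[\mathbf{G}^{\ell}_{0,1}\u^{\uparrow,\ell}_0 - \mathbf{G}^{\ell}_{0,0}\u^{\uparrow,\ell}_1]$; substituting $\mathbf{G}^{\ell}_{0,1} = \cE^{\uparrow}\mathbf{G}^{\ell}_{1,1}$ and the jump relation $\mathbf{G}^{\ell}_{0,0} = h\cE^{\uparrow} + \cE^{\uparrow}\mathbf{G}^{\ell}_{1,0}$ from Lemma \ref{lemma:extrapolator_def}, factoring out $\cE^{\uparrow}$, and using the extrapolator condition rewritten as $\mathbf{G}^{\ell}_{1,1}\u^{\uparrow,\ell}_0 = \mathbf{G}^{\ell}_{1,0}\u^{\uparrow,\ell}_1$, the bracketed expression collapses to zero and only $-\cE^{\uparrow}\u^{\uparrow,\ell}_1 = -\u^{\uparrow,\ell}_0$ survives. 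Rearranging delivers Eq. \ref{eq:stepper_equivalence_1}.

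Equation \ref{eq:stepper_equivalence_2} follows by an entirely symmetric argument at $j=n^\ell+1$, using the down-going analogues of the identities in Lemma \ref{lemma:extrapolator_def} together with $\u^{\downarrow,\ell}_{n^{\ell}+1} = \cE^{\downarrow}\u^{\downarrow,\ell}_{n^{\ell}}$ to reduce $\cG^{\uparrow,\ell}_{n^{\ell}+1}(\u^{\downarrow,\ell}_{n^{\ell}},\u^{\downarrow,\ell}_{n^{\ell}+1})$ to $-\u^{\downarrow,\ell}_{n^{\ell}+1}$. The main obstacle is bookkeeping: one must check that the extrapolator appearing in the rows of $\mathbf{\underline{E}}^{\uparrow}$ (resp. $\mathbf{\underline{E}}^{\downarrow}$) is built from the same local Green's function $\mathbf{G}^\ell$ that appears in the jump identity of Lemma \ref{lemma:extrapolator_def} being invoked, so that the $\cE^{\uparrow}$ (resp. $\cE^{\downarrow}$) factors cancel cleanly against the extrapolation relation. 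Once the indices are aligned, the final algebraic cancellation is mechanical.
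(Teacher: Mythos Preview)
Your proof is correct and uses the same key ingredients as the paper's: the jump relation $\mathbf{G}^{\ell}_{0,0} = h\cE^{\uparrow} + \cE^{\uparrow}\mathbf{G}^{\ell}_{1,0}$ from Lemma~\ref{lemma:extrapolator_def}, together with the annihilation/extrapolation condition on $\u^{\uparrow}$. The organization differs slightly: the paper left-multiplies the $j=1$ row of the polarized system (Eq.~\ref{eq:regularized_up_1}) by $\cE^{\uparrow}$ and shifts every term from depth $1$ to depth $0$ via Lemma~\ref{lemma:extrapolator_def}, whereas you first reduce to the unpolarized system, invoke Lemma~\ref{lemma:jump_condition} at $j=0$ as a packaged result, and then isolate the single leftover term $\cG^{\downarrow,\ell}_{0}(\u^{\uparrow,\ell}_{0},\u^{\uparrow,\ell}_{1})=-\u^{\uparrow,\ell}_{0}$. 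Since Lemma~\ref{lemma:jump_condition} is itself proved by exactly that left-multiplication by $\cE^{\uparrow}$, the two routes perform the same computation; yours simply reuses Lemma~\ref{lemma:jump_condition} rather than re-deriving it, at the cost of having to first argue that $\underline{\u}=\underline{\u}^{\uparrow}+\underline{\u}^{\downarrow}$ solves Def.~\ref{def:integral_formulation}. Your bookkeeping caveat about matching the extrapolator index to the Green's function $\mathbf{G}^{\ell}$ is well-placed; the paper's notation is indeed loose there (compare $\cE^{\uparrow}_{\ell,\ell+1}$ in the statement with $\cE^{\uparrow}_{\ell-1,\ell}$ in the proof of Lemma~\ref{lemma:jump_condition}).
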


The jump conditions in Lemma \ref{lemma:jump_condition} are heavily used in the proof, see the Appendix.  We now replace the extrapolation relations by Eq. \ref{eq:stepper_equivalence_1} and Eq. \ref{eq:stepper_equivalence_2}, which leads to the next system :
\begin{align}
		 \cG^{\uparrow, \ell}_{1}(\u^{\ell, \uparrow}_{n^{\ell}}, \u^{\ell, \uparrow}_{n^{\ell}+1} ) + \cG^{\downarrow, \ell}_{1}(\u^{\ell,\downarrow}_{0}, \u^{\ell, \downarrow}_{1} ) 	+ \cG^{\uparrow, \ell}_{1}(\u^{\ell, \downarrow}_{n^{\ell}}, \u^{\ell, \downarrow}_{n^{\ell}+1} ) 	+  \cN^{\ell}_{1}  \mathbf{f}^{\ell}  &= \u^{\ell, \uparrow}_{1} + \u^{\ell, \downarrow}_{1} , \label{eq:jump_1} \\
		\cG^{\downarrow, \ell}_{n^{\ell}}(\u^{\ell,\uparrow}_{0}, \u^{\ell, \uparrow}_{1} ) 	+ \cG^{\uparrow, \ell}_{n^{\ell}}(\u^{\ell,\uparrow}_{n^{\ell}}, \u^{\ell, \uparrow}_{n^{\ell}+1}) + \cG^{\downarrow, \ell}_{n^{\ell}}(\u^{\ell,\downarrow}_{0}, \u^{\ell,\downarrow}_{1} ) 	+ \cN^{\ell}_{n^{\ell}}  \mathbf{f}^{\ell}  	&= \u^{\ell, \uparrow}_{n^{\ell}} + \u^{\ell, \downarrow}_{n^{\ell}},\label{eq:jump_2}  \\
		\cG^{\uparrow, \ell}_{0}(\u^{\ell, \uparrow}_{n^{\ell}}, \u^{\ell, \uparrow}_{n^{\ell}+1} ) + \cG^{\downarrow, \ell}_{0}(\u^{\ell,\downarrow}_{0}, \u^{\ell, \downarrow}_{1} ) 	+ \cG^{\uparrow, \ell}_{0}(\u^{\ell, \downarrow}_{n^{\ell}}, \u^{\ell, \downarrow}_{n^{\ell}+1} ) 	+  \cN^{\ell}_{0}  \mathbf{f}^{\ell}  &= \u^{\ell, \uparrow}_{0}, \label{eq:jump_3}  \\
		\cG^{\downarrow, \ell}_{n^{\ell}+1}(\u^{\ell,\uparrow}_{0}, \u^{\ell, \uparrow}_{1} ) 	+ \cG^{\uparrow, \ell}_{n^{\ell}+1}(\u^{\ell,\uparrow}_{n^{\ell}}, \u^{\ell, \uparrow}_{n^{\ell}+1}) + \cG^{\downarrow, \ell}_{n^{\ell}+1}(\u^{\ell,\downarrow}_{0}, \u^{\ell,\downarrow}_{1} ) +  \cN^{\ell}_{n^{\ell}+1}  \mathbf{f}^{\ell}  &= \u^{\ell, \downarrow}_{n^{\ell}}.	\label{eq:jump_4}
\end{align}

We define the matrix form of Eq. \ref{eq:jump_3} and Eq. \ref{eq:jump_4} by
\begin{equation}
\scriptsize
\mathbf{\underline{M}}_0^{\uparrow} = 	 \frac{1}{h}  \left [
												\begin{array}{cccccccc}
													- \mathbf{G}^1_{n+1,n+1} 	& \mathbf{G}^1_{n+1,n}	& 0 						& 0 					& 0 & 0  &  0 					& 0	\\
													- \I 					& 	0 						& - \mathbf{G}^2_{0,n+1} 	& \mathbf{G}^2_{0,n}	& 0 						& 0 					& 0 & 0  \\
													\mathbf{G}^2_{n+1,1} 	& - \mathbf{G}^2_{n+1,0} 	& - \mathbf{G}^2_{n+1,n+1} 	& \mathbf{G}^2_{n+1,n}	& 0 						& 0 					& 0 & 0  \\
													0 							&  0 					& - \I 					& 	0 						& - \mathbf{G}^3_{0,n+1} 	& \mathbf{G}^3_{0,n} 	& 0 & 0\\
													0 							&  0 					& \mathbf{G}^3_{n+1,1}  	& - \mathbf{G}^3_{n+1,0}& - \mathbf{G}^3_{n+1,n+1}	& \mathbf{G}^3_{n+1,n}	& 0 & 0 \\
													0 							&  0 					& 0							& \ddots 				& \ddots 					& \ddots				& \ddots & 0 \\
													0 							&  0					& 0							& 0						&- \I						& 	0 					& - \mathbf{G}^{L-1}_{0,n+1} 	& \mathbf{G}^{L-1}_{0,n} 	\\
													0 							&  0 					& 0 						& 0						& \mathbf{G}^{L-1}_{n+1,1}  & - \mathbf{G}^{L-1}_{n+1,0} & - \mathbf{G}^{L-1}_{n+1,n+1} & \mathbf{G}^{L-1}_{n+1,n} \\
													0 							&  0 					& 0 						& 0		  				&	0						&		0				& 	- \I							& 0
												\end{array}
										\right ],
\end{equation}
\begin{equation}
\scriptsize
\mathbf{\underline{M}}_0^{\downarrow} = 	 \frac{1}{h} \left [
												\begin{array}{cccccccc}
													0	& -\I	& 0 						& 0 					& 0 & 0  &  0 					& 0	\\
													\mathbf{G}^2_{0,1} 	& -\mathbf{G}^2_{0,0}	& - \mathbf{G}^2_{0,n+1} 	& \mathbf{G}^2_{0,n}	& 0 						& 0 					& 0 & 0  \\
													\mathbf{G}^2_{n+1,1} 	& - \mathbf{G}^2_{n+1,0} 	&0 							& -\I	& 0 						& 0 					& 0 & 0  \\
													0 							&  0 					& \mathbf{G}^3_{0,1} 		& - \mathbf{G}^3_{0,0}	& - \mathbf{G}^3_{0,n+1} 	& \mathbf{G}^3_{0,n} 	& 0 & 0\\
													0 							&  0 					& \mathbf{G}^3_{n+1,1}  	& - \mathbf{G}^3_{n+1,0} &0 	& -\I	& 0 & 0 \\
													0 							&  0 					& 0							& \ddots 				& \ddots 					& \ddots				& \ddots & 0 \\
													0 							&  0					& 0							& 0						& \mathbf{G}^{L-1}_{0,1}	& -	\mathbf{G}^{L-1}_{0,0}	 - \mathbf{G}^{L-1}_{0,n+1} 	& \mathbf{G}^{L-1}_{0,n} 	\\
													0 							&  0 					& 0 						& 0						& \mathbf{G}^{L-1}_{n+1,1}  & - \mathbf{G}^{L-1}_{n+1,0} & 0 & -\I\\
													0 							&  0 					& 0 						& 0		  				&	0						&		0				& 	 \mathbf{G}^{L}_{0,1}	& -\mathbf{G}^{L}_{0,0}
												\end{array}
										\right ],
\end{equation}

	and

	\begin{equation}
	 	\underline{\mathbf{f}}_0 =	\left (
 										\begin{array}{c}
 											\cN^1_{n^1+1}\mathbf{f}^1 \\
 											\cN^2_0\mathbf{f}^2 \\
 											\cN^2_{n^2+1}\mathbf{f}^2 \\
 											\vdots \\
 											\cN^L_0\mathbf{f}^L
 										\end{array}
 									\right ).
	\end{equation}

The resulting matrix equations are as follows.

\begin{definition} \label{def:polarized_jump_condition}
	We define the polarized system completed with jump conditions as
	\begin{equation} \label{eq:polarized_jump_condition}
		\underline{\underline{\mathbf{M}}} \, \underline{\underline{\u}} =  \left [ \begin{array}{cc}
					\mathbf{\underline{M}}^{\downarrow} 	& \mathbf{\underline{M}}^{\uparrow} \\
					\mathbf{\underline{M}_0}^{\downarrow} 	& \mathbf{\underline{M}_0}^{\uparrow}
				\end{array}
		\right ]
		\underline{\underline{\u}} =
		\left (
			\begin{array}{c}
				-\mathbf{\underline{f}} \\
				-\mathbf{\underline{f}_0}
			\end{array}
		\right).
	\end{equation}
\end{definition}

By construction, the system given by Eq. \ref{eq:polarized_jump_condition} has identities at the same locations as
Eq. \ref{eq:polarized_out_going}. The same row permutation $\mathbf{\underline{P}}$ as before will result in triangular diagonal blocks with identity blocks on the diagonal:
\begin{equation} \label{eq:splitting_jump}
    \mathbf{\underline{P}}  \left [ \begin{array}{cc}
					\mathbf{\underline{M}}^{\downarrow}		& \mathbf{\underline{M}}^{\uparrow} \\
					\mathbf{\underline{M}_0}^{\downarrow} 	& \mathbf{\underline{M}_0}^{\uparrow}
				\end{array}
		\right] \underline{\underline{\u}}  =
	\left( \mathbf{\underline{D}}^{\text{jump}} + \mathbf{\underline{R}}^{\text{jump}} \right )
		\underline{\underline{\u}} = \mathbf{\underline{P}}
		\left (
			\begin{array}{c}
				-\mathbf{\underline{f}} \\
				-\mathbf{\underline{f}_0}
			\end{array}
		\right ),
\end{equation}
where,

	\begin{equation} \label{eq:def_permuted_system_jump}
			 \mathbf{\underline{D}}^{\text{jump}} =	\left [ \begin{array}{cc}
															\mathbf{\underline{D}}^{\downarrow, \text{jump}}			& 0 \\
															0 									 		& \mathbf{\underline{D}}^{\uparrow,\text{jump}}
														\end{array}
													\right] , \qquad
			 \mathbf{\underline{R}}^{\text{jump}} =	\left [ \begin{array}{cc}
															0 							& \mathbf{\underline{U}}^{\text{jump}}	 \\
															\mathbf{\underline{L}}^{\text{jump}}		&  0
														\end{array}
													\right].
	\end{equation}
	We can observe the sparsity pattern in Fig. \ref{fig:spy_matrix_jump} {\it(right)}.

\begin{figure}[H]
	\begin{center}
	 	\includegraphics[trim = 0mm 0mm 0mm 0mm, clip, width = 60mm]{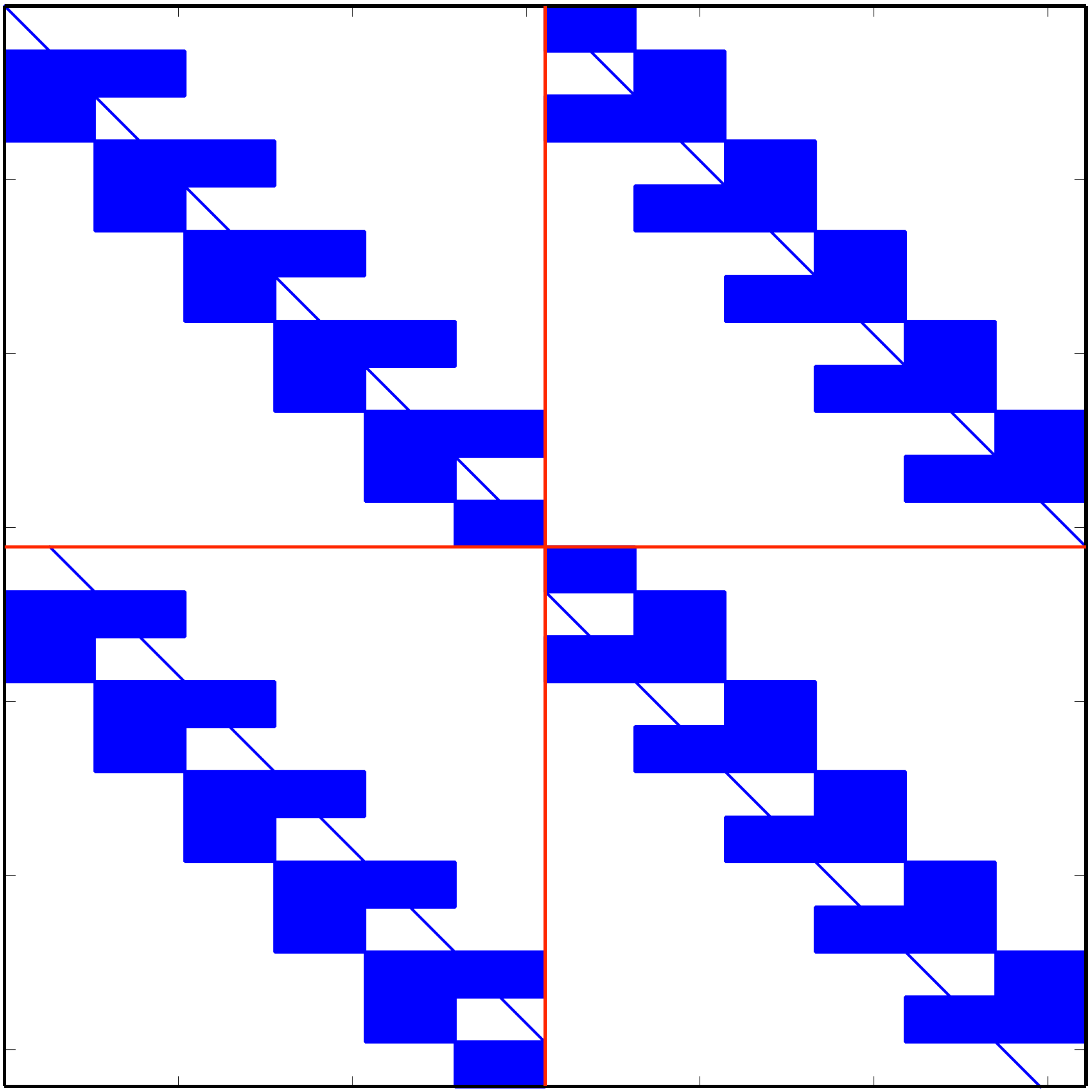} \hspace{0.5cm} \includegraphics[trim = 0mm 0mm 0mm 0mm, clip, width = 60mm]{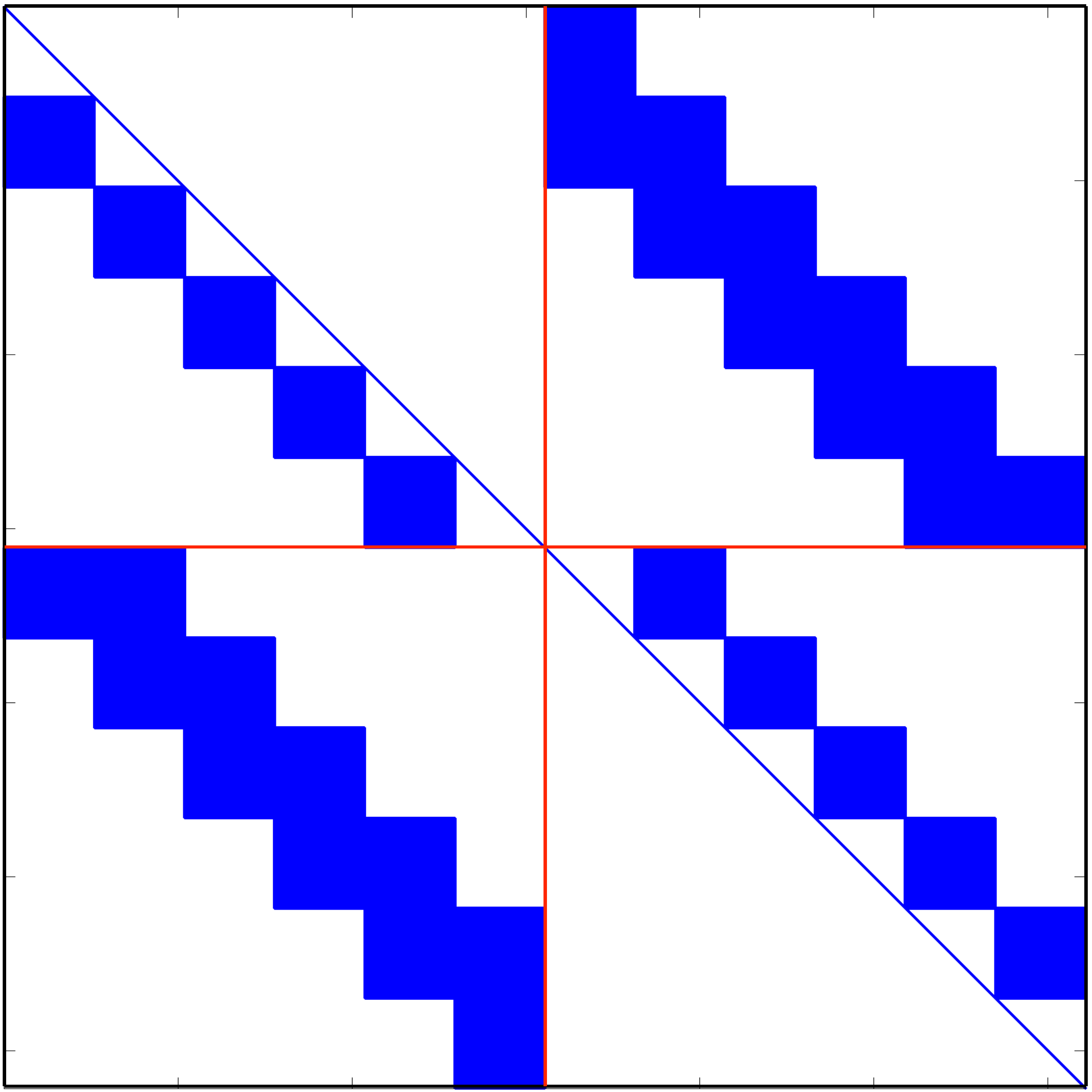}
	 	 \caption{ Left: Sparsity pattern of the system in Eq. \ref{eq:polarized_jump_condition}. Right: Sparsity pattern of $\mathbf{\underline{D}}^{\text{jump}} + \mathbf{\underline{R}}^{\text{jump}}$.}
	 	 \label{fig:spy_matrix_jump}
	 \end{center}
\end{figure}

For reference, here is the explicit form of the blocks of $\mathbf{\underline{D}}^{\text{jump}}$.

	\begin{equation}
		\scriptsize
		\mathbf{\underline{D}^{\downarrow, \text{jump}}} =  	\frac{1}{h} \left [
												\begin{array}{ccccccc}
													-\I 					& 0 						& 	0 							& 0 							& 0								& 0 				& 0			\\
													0 						& -\I 						& 	0 							& 0 							& 0								& 0 				& 0			\\
													\mathbf{G}^2_{n,1} 		& - \mathbf{G}^2_{n,0} 		& - \I 							& 0 							& 0 							& 0 				& 0			\\
													\mathbf{G}^2_{n+1,1} 	& - \mathbf{G}^2_{n+1,0} 	& 0 							& - \I 							& 0 							& 0 				& 0			\\
													0 						&\ddots    					& \ddots						& \ddots 		 			 	& \ddots 						& 0 				& 0			\\
													0 						& 0							& 0								& \mathbf{G}^{L-1}_{n,1} 		& - \mathbf{G}^{L-1}_{n,0}		& -\I 				& 0		 	\\
													0 						& 0 						& 0 							& \mathbf{G}^{L-1}_{n+1,1}  	& - \mathbf{G}^{L-1}_{n+1,0}  	& 0 				& -\I
													\end{array}
												\right ],
	\end{equation}
	\begin{equation}
		\scriptsize
		\mathbf{\underline{D}}^{\uparrow, \text{jump}} =  	\frac{1}{h} \left [
												\begin{array}{ccccccc}

													- \I 					& 0					& -\mathbf{G}^2_{0,n+1} 	&  \mathbf{G}^2_{0,n} 	& 0 			& 0 							& 0							\\
													0						& - \I				& -\mathbf{G}^2_{1,n+1} 	&  \mathbf{G}^2_{1,n} 	& 0 			& 0 							& 0							\\
													0 						& 0 				&\ddots 	    			& \ddots 				& \ddots  		& \ddots 						& 0 						\\
													0 						& 0					& 0							& -\I 					& 0				& -\mathbf{G}^{L-1}_{0,n+1} 	& \mathbf{G}^{L-1}_{0,n}	\\
													0 						& 0 				& 0 						& 0 					& -\I			& -\mathbf{G}^{L-1}_{1,n+1}  	& \mathbf{G}^{L-1}_{1,n}  	\\
													0 						& 0					& 0							& 0 					& 0				& -\I							& 0 						\\
													0 						& 0					& 0							& 0 					& 0				& 0 							& -\I
													\end{array}
												\right ],
	\end{equation}

	\begin{equation}
		\scriptsize
		\mathbf{\underline{L}}^{\text{jump}} = \frac{1}{h} \left [
									\begin{array}{ccccccc}
										\mathbf{G}^2_{0,1} 		& - \mathbf{G}^2_{0,0} 		& - \mathbf{G}^2_{0,n+1} 	& \mathbf{G}^2_{0,n}			& 0 							& 0  						\\
										\mathbf{G}^2_{1,1} 		& - \mathbf{G}^2_{1,0} -\I 	& - \mathbf{G}^2_{1,n+1} 	& \mathbf{G}^2_{1,n}			& 0 							& 0 						\\
										0 						&\ddots 					& \ddots					& \ddots 		 			 	& 0 							& 0 						\\
										0 						& 0							& \mathbf{G}^{L-1}_{0,1} 	& - \mathbf{G}^{L-1}_{0,0} 		& - \mathbf{G}^{L-1}_{0,n+1} 	& \mathbf{G}^{L-1}_{0,n}	\\
										0 						& 0 						& \mathbf{G}^{L-1}_{1,1} 	& - \mathbf{G}^{L-1}_{1,0} -\I 	& - \mathbf{G}^{L-1}_{1,n+1} 	& \mathbf{G}^{L-1}_{1,n} 	\\
										0 						& 0 						&	0						& 0 							&  \mathbf{G}^{L}_{0,1} 		& -\mathbf{G}^{L}_{0,0}		\\
										0 						& 0 						&	0						& 0 							&  \mathbf{G}^{L}_{1,1}			& - \mathbf{G}^{L}_{1,0} -\I
										\end{array}
								\right ],
	\end{equation}
and
		\begin{equation}
		\scriptsize
		\mathbf{\underline{U}}^{\text{jump}} = \frac{1}{h} \left [
									\begin{array}{ccccccc}
										 - \mathbf{G}^1_{n,n+1} -\I	& \mathbf{G}^1_{n,n}    	& 0 							& 0 							& 0 							& 0  \\
										 - \mathbf{G}^1_{n+1,n+1} 	& \mathbf{G}^1_{n+1,n}    	& 0 							& 0 							& 0 							& 0  \\
										\mathbf{G}^2_{n,1} 			& - \mathbf{G}^2_{n,0} -\I 	& - \mathbf{G}^2_{n,n+1} 		& \mathbf{G}^2_{n,n}			& 0 							& 0\\
										\mathbf{G}^2_{n+1,1} 		& - \mathbf{G}^2_{n+1,0} 	& - \mathbf{G}^2_{n+1,n+1} 		& \mathbf{G}^2_{n+1,n}			& 0 							& 0  							\\
										0 							&\ddots    					& \ddots						& \ddots 		 			 	& 0  							& 0 \\
										0 							& 0 						& \mathbf{G}^{L-1}_{n,1} 		& - \mathbf{G}^{L-1}_{n,0} -\I 	& - \mathbf{G}^{L-1}_{n,n+1} 	& \mathbf{G}^{L-1}_{n+1,n} \\
										0 							& 0							& \mathbf{G}^{L-1}_{n+1,1} 		& - \mathbf{G}^{L-1}_{n+1,0} 	& - \mathbf{G}^{L-1}_{n+1,n+1} 	& \mathbf{G}^{L-1}_{n+1,n}			\\
										\end{array}
								\right ].
	\end{equation}


The formulations given by Eq. \ref{eq:polarized_out_going} and Eq. \ref{eq:polarized_jump_condition} are equivalent. The following lemma can be proved from Lemma \ref{lemma:jump_condition}  and Lemma \ref{lemma:extrapolator_M_0}.

\begin{proposition} \label{lemma:equivalence_formulations}
$\underline{\underline{\u}}$ is solution to the system in Def. \ref{def:polarized_out_going} if and only if $\underline{\underline{\u}}$ is solution to the system in Def. \ref{def:polarized_jump_condition}.
\end{proposition}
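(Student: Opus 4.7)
The key observation is that both Def.~\ref{def:polarized_out_going} and Def.~\ref{def:polarized_jump_condition} share an identical top block $[\mathbf{\underline{M}}^{\downarrow}\;\;\mathbf{\underline{M}}^{\uparrow}]\,\underline{\underline{\u}} = -\mathbf{\underline{f}}$ (Eqs.~\ref{eq:regularized_up_1}--\ref{eq:jump_2}). The task therefore reduces to showing that, whenever this top block holds, the extrapolation rows of Def.~\ref{def:polarized_out_going} (i.e., $\u^{\ell,\downarrow}_{n^{\ell}+1} = \cE^{\downarrow}_{\ell,\ell+1}\u^{\ell,\downarrow}_{n^{\ell}}$ and $\u^{\ell,\uparrow}_0 = \cE^{\uparrow}_{\ell,\ell+1}\u^{\ell,\uparrow}_1$) are equivalent to the jump-condition rows of Def.~\ref{def:polarized_jump_condition} (Eqs.~\ref{eq:jump_3}--\ref{eq:jump_4}).

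\textbf{Forward direction $(\Rightarrow)$.} Assume $\underline{\underline{\u}}$ solves the system in Def.~\ref{def:polarized_out_going}. Then by hypothesis the extrapolation relations hold, and Lemma~\ref{lemma:extrapolator_M_0} is precisely the statement that, under these relations, $\cE^{\uparrow}_{\ell,\ell+1}\u^{\ell,\uparrow}_1$ and $\cE^{\downarrow}_{\ell,\ell+1}\u^{\ell,\downarrow}_{n^{\ell}}$ can be rewritten as the incomplete Green's-function expressions appearing in the right-hand side of Eqs.~\ref{eq:stepper_equivalence_1}--\ref{eq:stepper_equivalence_2}. These expressions are exactly Eqs.~\ref{eq:jump_3}--\ref{eq:jump_4}, which are the bottom rows of Def.~\ref{def:polarized_jump_condition}. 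Hence $\underline{\underline{\u}}$ solves that system too. This direction is essentially a quotation of Lemma~\ref{lemma:extrapolator_M_0}.

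\textbf{Reverse direction $(\Leftarrow)$.} Assume $\underline{\underline{\u}}$ solves Def.~\ref{def:polarized_jump_condition}. Adding the two rows of the top block shows that $\underline{\u}:=\underline{\u}^{\uparrow}+\underline{\u}^{\downarrow}$ satisfies the original unpolarized integral system in Eq.~\ref{eq:integral_formulation}. Lemma~\ref{lemma:jump_condition} therefore applies: for $j=0$ and $j=n^{\ell}+1$ we have
\begin{equation}
0 \;=\; \cG^{\uparrow,\ell}_{j}(\u^{\ell}_{n^{\ell}},\u^{\ell}_{n^{\ell}+1}) + \cG^{\downarrow,\ell}_{j}(\u^{\ell}_{0},\u^{\ell}_{1}) + \cN^{\ell}_{j}\mathbf{f}^{\ell}.
\end{equation}
Expanding $\u^{\ell}_{\bullet} = \u^{\ell,\uparrow}_{\bullet}+\u^{\ell,\downarrow}_{\bullet}$ by linearity of $\cG^{\uparrow,\ell}_j$ and $\cG^{\downarrow,\ell}_j$, and subtracting Eq.~\ref{eq:jump_3} (respectively Eq.~\ref{eq:jump_4}), the shared $\uparrow\downarrow$ cross-terms cancel and we are left with an identity of the form
\begin{equation}
\u^{\ell,\uparrow}_{0} \;=\; -\,\cG^{\downarrow,\ell}_{0}(\u^{\ell,\uparrow}_{0},\u^{\ell,\uparrow}_{1}),
\end{equation}
and analogously for $\u^{\ell,\downarrow}_{n^{\ell}+1}$. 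Writing $\cG^{\downarrow,\ell}_0$ in its matrix form (Eq.~\ref{eq:matrix_form_cG}) and solving algebraically for $\u^{\ell,\uparrow}_0$ in terms of $\u^{\ell,\uparrow}_1$ yields exactly the extrapolator formula of Def.~\ref{def:extrapolator}, i.e.\ $\u^{\ell,\uparrow}_0 = \cE^{\uparrow}_{\ell,\ell+1}\u^{\ell,\uparrow}_1$, and symmetrically for the down-going trace. These are the bottom rows of Def.~\ref{def:polarized_out_going}.

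\textbf{Anticipated obstacle.} The mechanical part of the argument is clean, but care is needed in the algebraic step of the reverse direction, where one must (i) verify that the right combination of entries in $\mathbf{\underline{M}}^{\uparrow/\downarrow}$ and $\mathbf{\underline{M}}_0^{\uparrow/\downarrow}$ produces the needed cancellation of cross-terms, and (ii) check the boundary layers $\ell=1$ and $\ell=L$ separately, since the system in Def.~\ref{def:integral_formulation} has distinct equations there. The former reduces to recognizing that both $\mathbf{\underline{M}}$ and $\mathbf{\underline{M}_0}$ are built from the same single- and double-layer pieces but evaluated at $j=1,n^{\ell}$ vs.\ $j=0,n^{\ell}+1$, and that Lemma~\ref{lemma:extrapolator_def} ensures the required identity between $\mathbf{G}^{\ell}(z_0,\cdot)$ and $\cE^{\uparrow}\mathbf{G}^{\ell}(z_1,\cdot)$. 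For the boundary layers, the absence of an interface on one side removes the relevant row of $\mathbf{\underline{M}}_0^{\uparrow/\downarrow}$, so no extrapolation condition needs to be recovered there, and the argument goes through unchanged.
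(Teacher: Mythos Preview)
Your forward direction is fine and matches the paper: it is a direct quotation of Lemma~\ref{lemma:extrapolator_M_0}.

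Your reverse direction, however, has a genuine circularity. You claim that ``adding the two rows of the top block shows that $\underline{\u}:=\underline{\u}^{\uparrow}+\underline{\u}^{\downarrow}$ satisfies the original unpolarized integral system in Eq.~\ref{eq:integral_formulation}'', and then invoke Lemma~\ref{lemma:jump_condition}. But the top block of Def.~\ref{def:polarized_jump_condition} is $[\mathbf{\underline{M}}^{\downarrow}\;\;\mathbf{\underline{M}}^{\uparrow}]$, \emph{not} $[\mathbf{\underline{M}}\;\;\mathbf{\underline{M}}]$. Concretely, Eq.~\ref{eq:jump_1} is missing the term $\cG^{\downarrow,\ell}_{1}(\u^{\ell,\uparrow}_{0},\u^{\ell,\uparrow}_{1})$ compared to the row of $\mathbf{\underline{M}}$ at $j=1$, and Eq.~\ref{eq:jump_2} is missing $\cG^{\uparrow,\ell}_{n^{\ell}}(\u^{\ell,\downarrow}_{n^{\ell}},\u^{\ell,\downarrow}_{n^{\ell}+1})$. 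Summing $\u^{\uparrow}+\u^{\downarrow}$ therefore recovers $\mathbf{\underline{M}}\,\underline{\u}=-\mathbf{\underline{f}}$ only if those missing terms vanish --- which is exactly the annihilation (equivalently, extrapolation) condition you are trying to establish. So Lemma~\ref{lemma:jump_condition} is not yet available, and the subtraction you perform afterwards is unjustified.

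The paper sidesteps this trap by never passing through the unpolarized system. Instead it works directly with the two polarized rows Eq.~\ref{eq:jump_1} and Eq.~\ref{eq:jump_3}: it left-multiplies Eq.~\ref{eq:jump_3} by $(\cE^{\uparrow}_{\ell-1,\ell})^{-1}$ (invertibility coming from Proposition~\ref{proposition:jump_condition} and Remark~\ref{remark:invertibility}), uses the same computation as in Lemma~\ref{lemma:jump_condition} (specifically Eq.~\ref{eq:jump_condition_in_u}) to turn the $j=0$ Green's terms into $j=1$ Green's terms, and obtains an equation identical to Eq.~\ref{eq:jump_1} except with $(\cE^{\uparrow}_{\ell-1,\ell})^{-1}\u^{\ell,\uparrow}_{0}$ in place of $\u^{\ell,\uparrow}_{1}$. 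Subtracting the two yields $\u^{\ell,\uparrow}_{1}=(\cE^{\uparrow}_{\ell-1,\ell})^{-1}\u^{\ell,\uparrow}_{0}$ directly. You should restructure your reverse direction along these lines.
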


The numerical claims of this paper concern the system in Eq. \ref{eq:splitting_jump}, and preconditioned with the direct inversion of $ \mathbf{\underline{D}}^{\text{jump}} $ defined by Eq. \ref{eq:def_permuted_system_jump}.

\section{Preconditioners}

\subsection{Gauss-Seidel Iteration}

In this section we let  $\mathbf{\underline{D}}$ for either $\mathbf{\underline{D}}^{\text{jump}}$ or $\mathbf{\underline{D}}^{\text{extrap}}$. While we mostly use the jump formulation in practice, the structure of the preconditioner is common to both formulations.

Inverting any such $\mathbf{\underline{D}}$ is trivial using block back-substitution for each block $\mathbf{\underline{D}}^{\downarrow}$, $\mathbf{\underline{D}}^{\uparrow}$, because they have a triangular structure, and their diagonals consist of  identity blocks. Physically, the inversion of $\mathbf{\underline{D}}$ results in two sweeps of the domain (top-down and bottom-up) for computing transmitted waves from incomplete Green's formulas. This procedure is close enough to Gauss-Seidel to be referred to as such\footnote{Another possibility would be to call it a multiplicative Schwarz iteration.}.

\begin{algorithm} Gauss-Seidel iteration \label{alg:Gauss_Seidel}
	\begin{algorithmic}[1]
		\Function{ $\u$ = Gauss Seidel}{ $\mathbf{f}, \epsilon_{\text{tol}}$}
			\State $\underline{\u}^0 = (\u^{\downarrow} , \u^{\uparrow} )^{t} =  0 $
			\While{  $ \|\underline{\u}^{n+1}  - \underline{\u}^{n}  \| /\| \underline{\u}^{n} \| >  \epsilon_{\text{tol}} $ }
				\State $\underline{\u}^{n+1} = (\mathbf{\underline{D}}\,)^{-1}( \mathbf{\underline{P}} \;\underline{\mathbf{\tilde{f}}}  - \mathbf{\underline{R}} \;  \underline{\u}^{n} ) $
			\EndWhile

			\State $\u = \u^{\uparrow, n} + \u^{\downarrow,n} $

		\EndFunction
	\end{algorithmic}
  \end{algorithm}
Alg. \ref{alg:Gauss_Seidel} is generic: the matrices $\mathbf{\underline{D}}$ and $\mathbf{\underline{R}}$ can either arise from Def. \ref{def:polarized_out_going} or Def. \ref{def:polarized_jump_condition}; and $\underline{\mathbf{\tilde{f}}}$ can either be $( \underline{\mathbf{f}},0)^t$ or $(\underline{\mathbf{f}}, \underline{\mathbf{f}}_0)^t$ depending on the system being solved.

\subsection{GMRES} \label{section:gmres}

Alg. \ref{alg:Gauss_Seidel} is primarily an iterative solver for the discrete integral system given by Def. \ref{def:integral_formulation}, and can be seen as only using the polarized system given by Eq. \ref{eq:polarized_jump_condition} in an auxiliary fashion. Unfortunately, the number of iterations needed for Alg. \ref{alg:Gauss_Seidel} to converge to a given tolerance often increases as a fractional power of the number of sub-domains. We address this problem by solving Eq. \ref{eq:polarized_jump_condition} in its own right, using GMRES combined with Alg. \ref{alg:Gauss_Seidel} as a preconditioner. As we illustrate in the sequel, the resulting number of iterations is now roughly constant in the number of subdomains. The preconditioner is defined as follows.

\begin{algorithm} Gauss-Seidel Preconditioner \label{alg:preconditioner_Gauss_Seidel}
	\begin{algorithmic}[1]
		\Function{ $\underline{\u}$ = Preconditioner}{ $\underline{\mathbf{\tilde{f}}}, n_{\text{it}}$}
			\State $\underline{\u}^0 = (\u^{\downarrow} , \u^{\uparrow} )^{t} =  0 $
			\For{  $ n = 0$, $n< n_{\text{it}}$, $n++$ }
				\State $\underline{\u}^{n+1} = (\mathbf{\underline{D}})^{-1}( \mathbf{\underline{P}}\;\underline{\mathbf{\tilde{f}}}  - \mathbf{\underline{R}} \;  \underline{\u}^{n} ) $
			\EndFor
			\State $\underline{\u} = \underline{\u}^{n_{\text{it}}}$
		\EndFunction
	\end{algorithmic}
\end{algorithm}

If we suppose that $n_{\text{it}} = 1$ (good choices are $n_{\text{it}} = 1$ or $2$), then the convergence of preconditioned GMRES will depend on the clustering of the eigenvalues of
\begin{equation}
	(\mathbf{\underline{D}}\,)^{-1} \; \mathbf{\underline{P}} \; \underline{\underline{\mathbf{M}}} = I + (\mathbf{\underline{D}}\,)^{-1} \mathbf{\underline{R}}  = \;
	 \left [   \begin{array}{cc}
																					I 																	&   (\mathbf{\underline{D}}^{\downarrow})^{-1}\mathbf{\underline{U}}  \\
																					 (\mathbf{\underline{D}}^{\uparrow})^{-1} \mathbf{\underline{L}}	& 	I
																					\end{array} \right ].
\end{equation}
We can compute these eigenvalues from the zeros of the characteristic polynomial. Using a well-known property of Schur complements, we get
\begin{equation}
	\det((\mathbf{\underline{D}}\,)^{-1}  \; \mathbf{\underline{P}} \;  \underline{\underline{\mathbf{M}}}  - \lambda \underline{I}) = \det(I - \lambda I) \det \left ( I - \lambda I - \left (  (\mathbf{\underline{D}}^{\uparrow})^{-1} \mathbf{\underline{L}}   (I - \lambda I)^{-1} (\mathbf{\underline{D}}^{\downarrow})^{-1}\mathbf{\underline{U}} \right )  \right ).
\end{equation}
This factorization means that half of the eigenvalues are exactly one. For the remaining half, we write the characteristic polynomial as
\begin{equation}
	 \det \left ( (I - \lambda I)^2 - \left (  (\mathbf{\underline{D}}^{\uparrow})^{-1} \mathbf{\underline{L}} (\mathbf{\underline{D}}^{\downarrow})^{-1}\mathbf{\underline{U}} \right )  \right ).
\end{equation}
Let $\mu = (1-\lambda)^2$, and consider the new eigenvalue problem
\begin{equation}
	 \det \left ( \mu I - \left (  (\mathbf{\underline{D}}^{\uparrow})^{-1} \mathbf{\underline{L}} \, (\mathbf{\underline{D}}^{\downarrow})^{-1}\mathbf{\underline{U}} \right )  \right ) = 0.
\end{equation}
Hence the eigenvalues of $(\mathbf{\underline{D}}\,)^{-1}  \; \mathbf{\underline{P}} \;  \underline{\underline{\mathbf{M}}}$ are given by $1$ and $1 \pm \sqrt{\mu_i}$, where $\mu_i$ are the eigenvalues of   $ (\mathbf{\underline{D}}^{\uparrow})^{-1} \mathbf{\underline{L}} \, (\mathbf{\underline{D}}^{\downarrow})^{-1}\mathbf{\underline{U}}$, and $\pm \sqrt{\mu_i}$ is either complex square root of $\mu_i$. Notice that $\pm \sqrt{\mu_i}$ coincide with the eigenvalues of the iteration matrix $ (\mathbf{\underline{D}}\,)^{-1} \underline{\mathbf{R}}$ of the Gauss-Seidel method.

The smaller the bulk of the $|\mu_i|$, the more clustered $1 \pm \sqrt{\mu_i}$ around 1, the faster the convergence of preconditioned GMRES. This intuitive notion of clustering is robust to the presence of a small number of outliers with large $|\mu_i|$. The spectral radius $\rho \left( (\mathbf{\underline{D}}\,)^{-1} \underline{\mathbf{R}} \right) = \max_i \sqrt{|\mu_i|}$, however, is not robust to outlying $\mu_i$, hence our remark about preconditioned GMRES being superior to Gauss-Seidel. These outliers do occur in practice in heterogeneous media; see Fig. \ref{fig:eigen_marmousi}.

To give a physical interpretation to the eigenvalues $\mu_i$ of $(\mathbf{\underline{D}}^{\uparrow})^{-1} \mathbf{\underline{L}} \, (\mathbf{\underline{D}}^{\downarrow})^{-1}\mathbf{\underline{U}}$, consider the role of each block:
\bit
\item $\mathbf{\underline{U}}$ maps $\mathbf{u}^{\uparrow}$ to $\mathbf{u}^{\downarrow}$ traces within a layer; it takes into account all the \emph{reflections} and other scattering phenomena that turn waves from up-going to down-going inside the layer. Vice-versa for $\mathbf{\underline{L}}$, which maps down-going to up-going traces.
\item $(\mathbf{\underline{D}}^{\downarrow})^{-1}$ maps down-going traces at interfaces to down-going traces at all the other layers below it; it is a \emph{transmission} of down-going waves in a top-down sweep of the domain. Vice-versa for $(\mathbf{\underline{D}}^{\uparrow})^{-1} $, which transmits up-going waves in a bottom-up sweep. It is easy to check that transmission is done via the computation of incomplete discrete GRF.
\eit
Hence the combination $(\mathbf{\underline{D}}^{\uparrow})^{-1} \mathbf{\underline{L}} \, (\mathbf{\underline{D}}^{\downarrow})^{-1}\mathbf{\underline{U}}$ generates reflected waves from the heterogeneities within a layer, propagates them down to every other layer below it, reflects them again from the heterogeneities within each of those layers, and propagates the result back up through the domain. The magnitude of the succession of these operations is akin to a coefficient of ``double reflection" accounting for scattering through the whole domain.  We therefore expect the size of the eigenvalues $|\mu_i|$ to be proportional to the strength of the medium heterogeneities, including how far the PML are to implementing absorbing boundary conditions. The numerical exeriments support this interpretation.

As an example, Fig. \ref{fig:eigen_homogeneous} shows the eigenvalues of $(\mathbf{\underline{D}}\,)^{-1} \; \mathbf{\underline{P}} \; \underline{\underline{\mathbf{M}}}$ when the media is homogeneous, but with a PML of varying quality. Fig \ref{fig:eigen_marmousi} shows the eigenvalues of  $(\mathbf{\underline{D}}\,)^{-1} \; \mathbf{\underline{P}} \; \underline{\underline{\mathbf{M}}}$ in  a rough medium, and again with PML of varying quality.

We can expect that, as long as the medium is free of resonant cavities, the eigenvalues of the preconditioned matrix will cluster around $1$, implying that GMRES will converge fast. Assuming that the reflection coefficients depend weakly on the frequency, we can expect that the performance of the preconditioner will deteriorate no more than marginally as the frequency increases.

\begin{figure}[H]
\begin{center}
 \includegraphics[trim = 27mm 27mm 28mm 12mm, clip, width = 50mm]{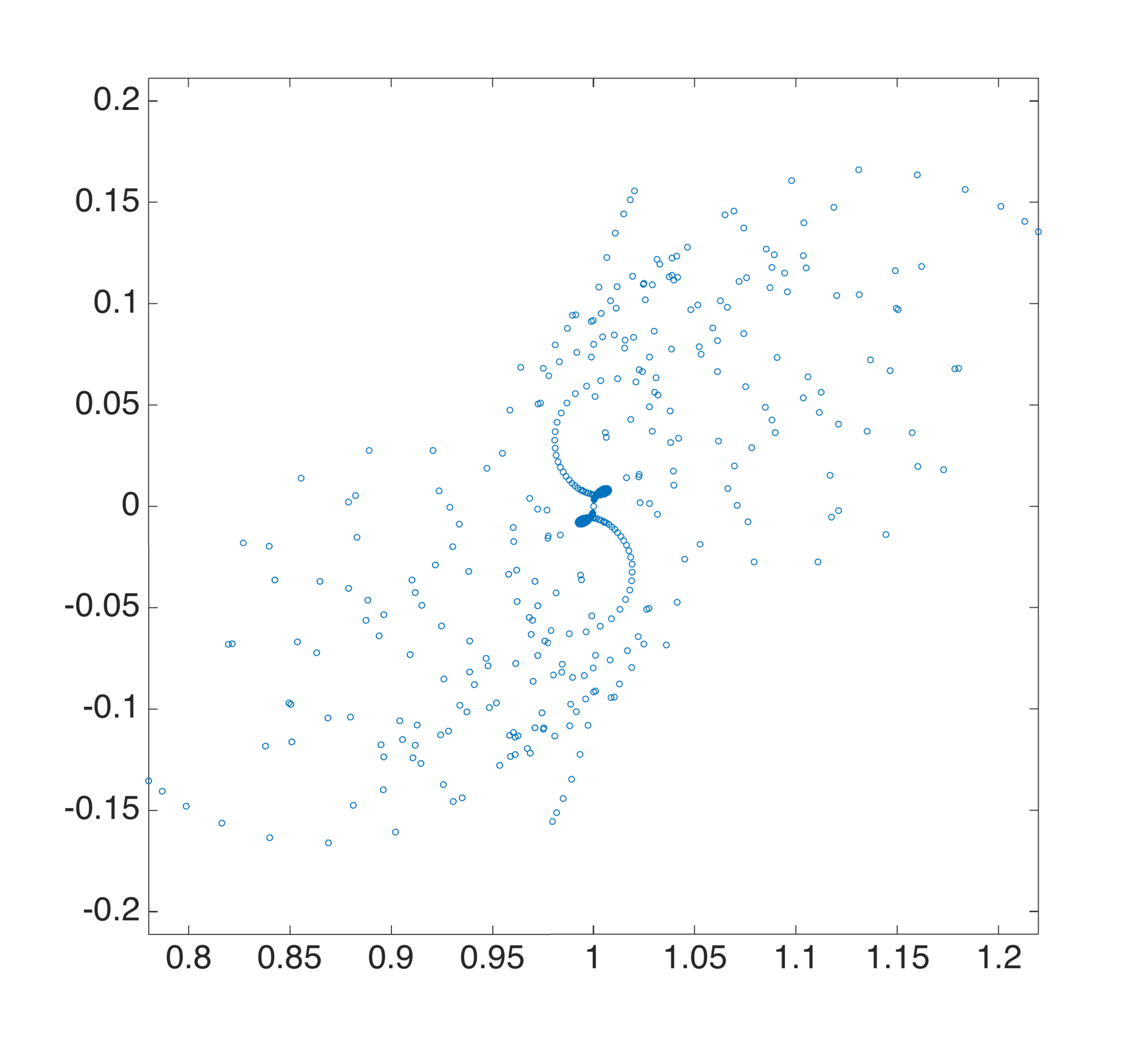}  \includegraphics[trim = 27mm 27mm 28mm 12mm, clip, width = 50mm]{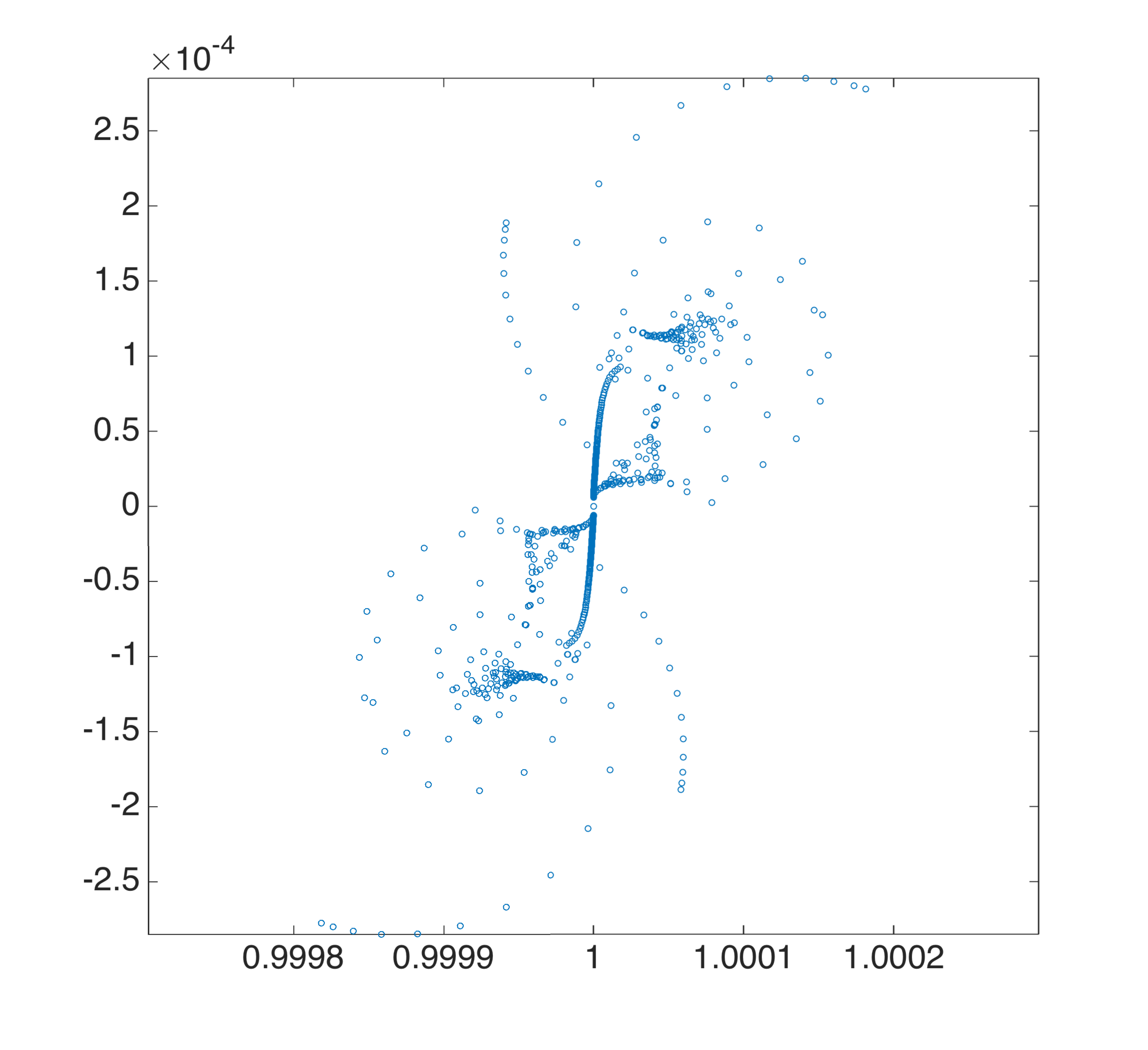} \includegraphics[trim = 27mm 27mm 28mm 12mm, clip, width = 50mm]{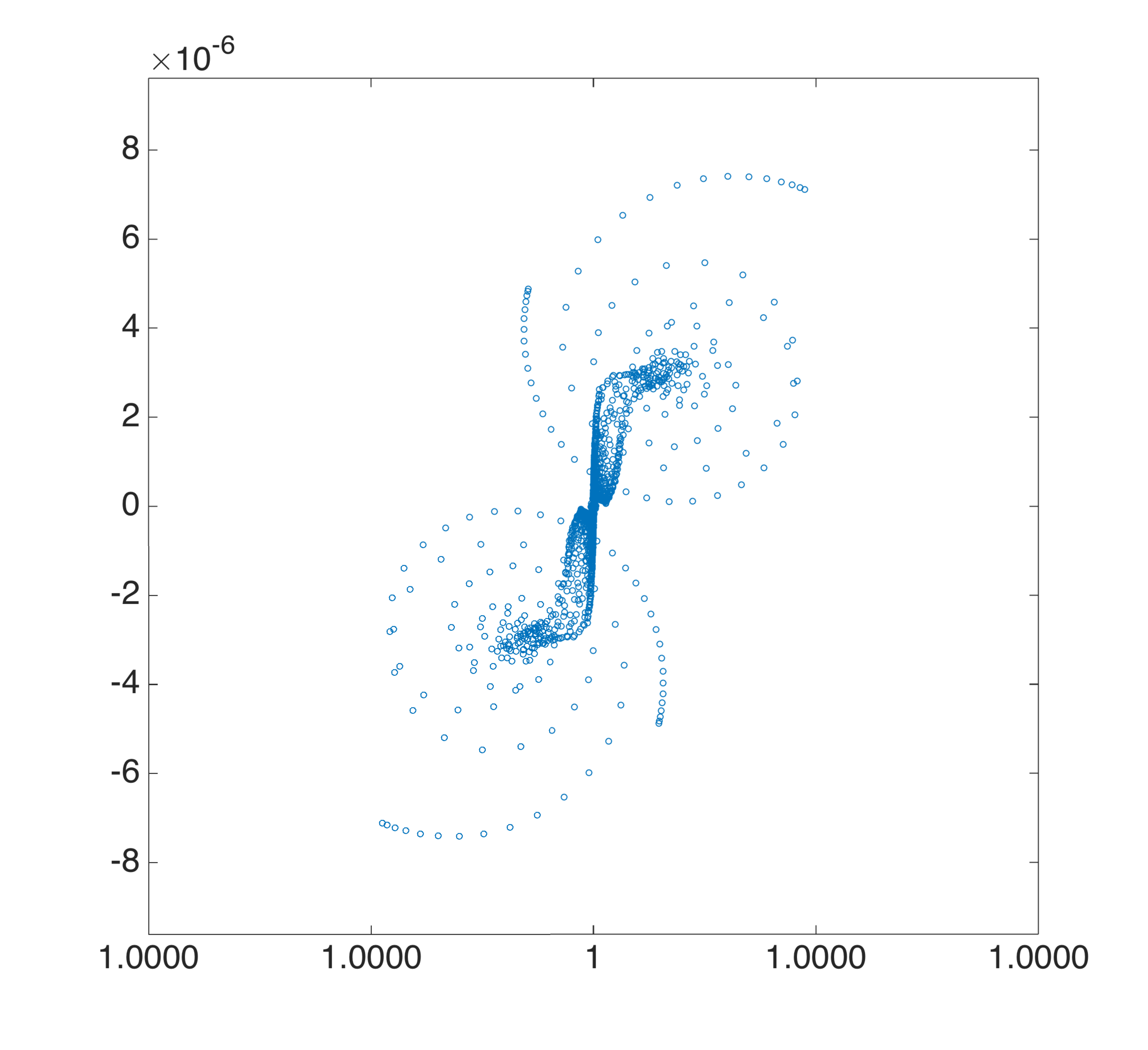}
  \caption{ Eigenvalues of $(\mathbf{\underline{D}}\,)^{-1} \; \mathbf{\underline{P}} \; \underline{\underline{\mathbf{M}}}$, for a homogeneous media , $L=3$, $n = $, $\omega = 30$; and $5$ ({\it left}), $30$ ({\it center}) and $100$ ({\it right}) PML points. Notice the scale of the axes. }
  \label{fig:eigen_homogeneous}
 \end{center}
\end{figure}

\begin{figure}[H]
\begin{center}
 \includegraphics[trim = 27mm 27mm 28mm 12mm, clip, width = 50mm]{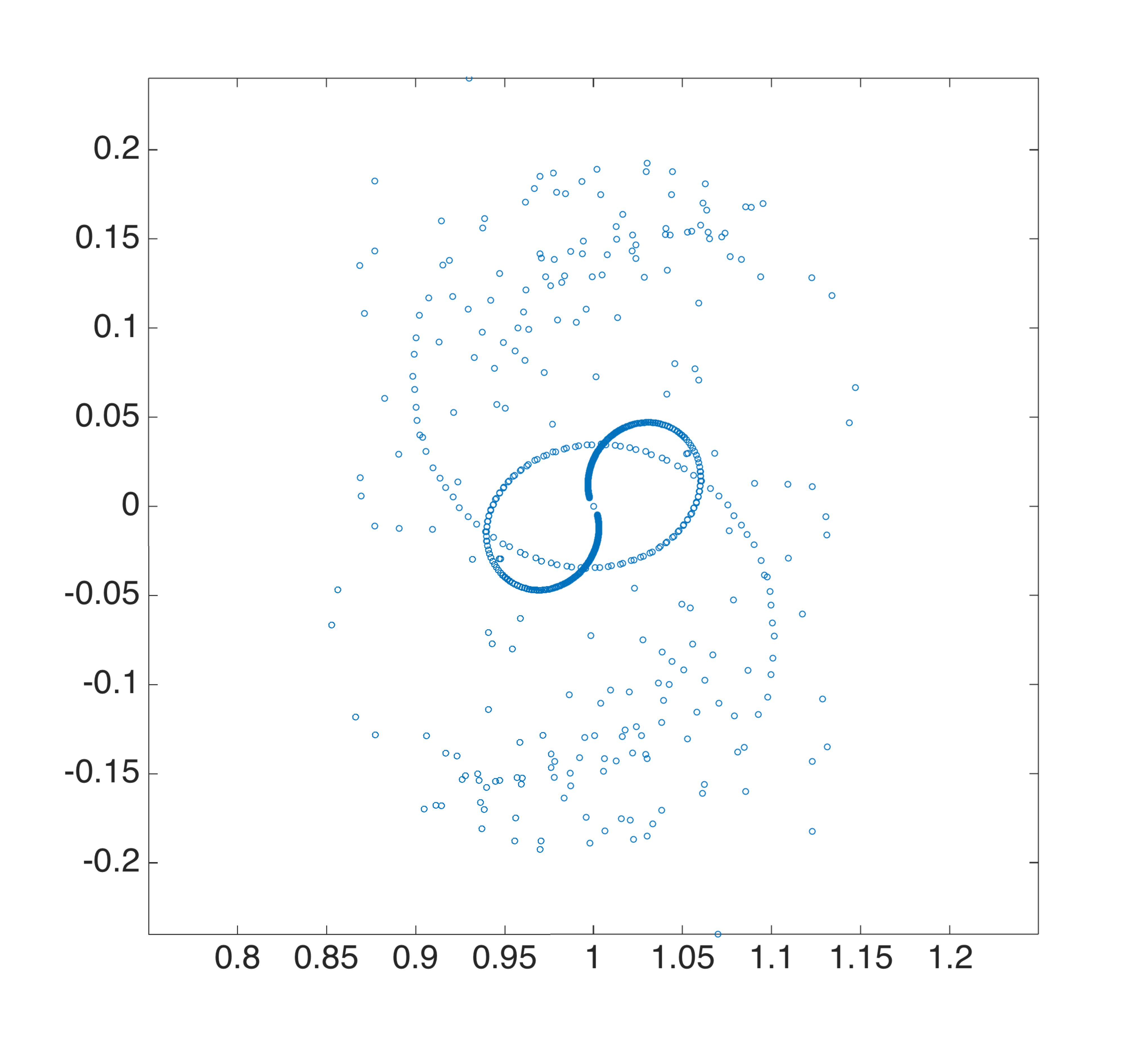}  \includegraphics[trim = 27mm 27mm 28mm 12mm, clip, width = 50mm]{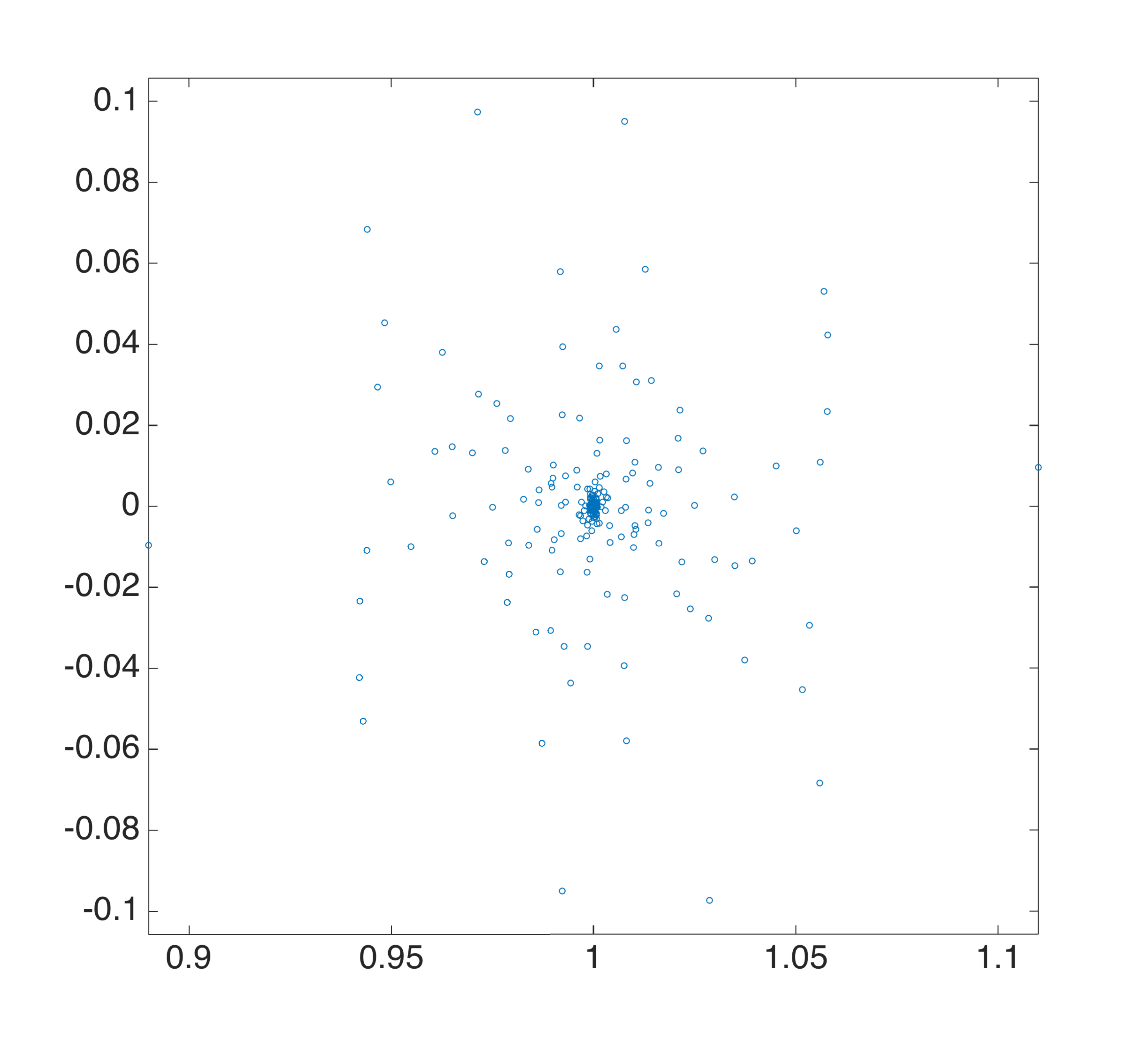}   \includegraphics[trim = 27mm 27mm 28mm 12mm, clip, width = 50mm]{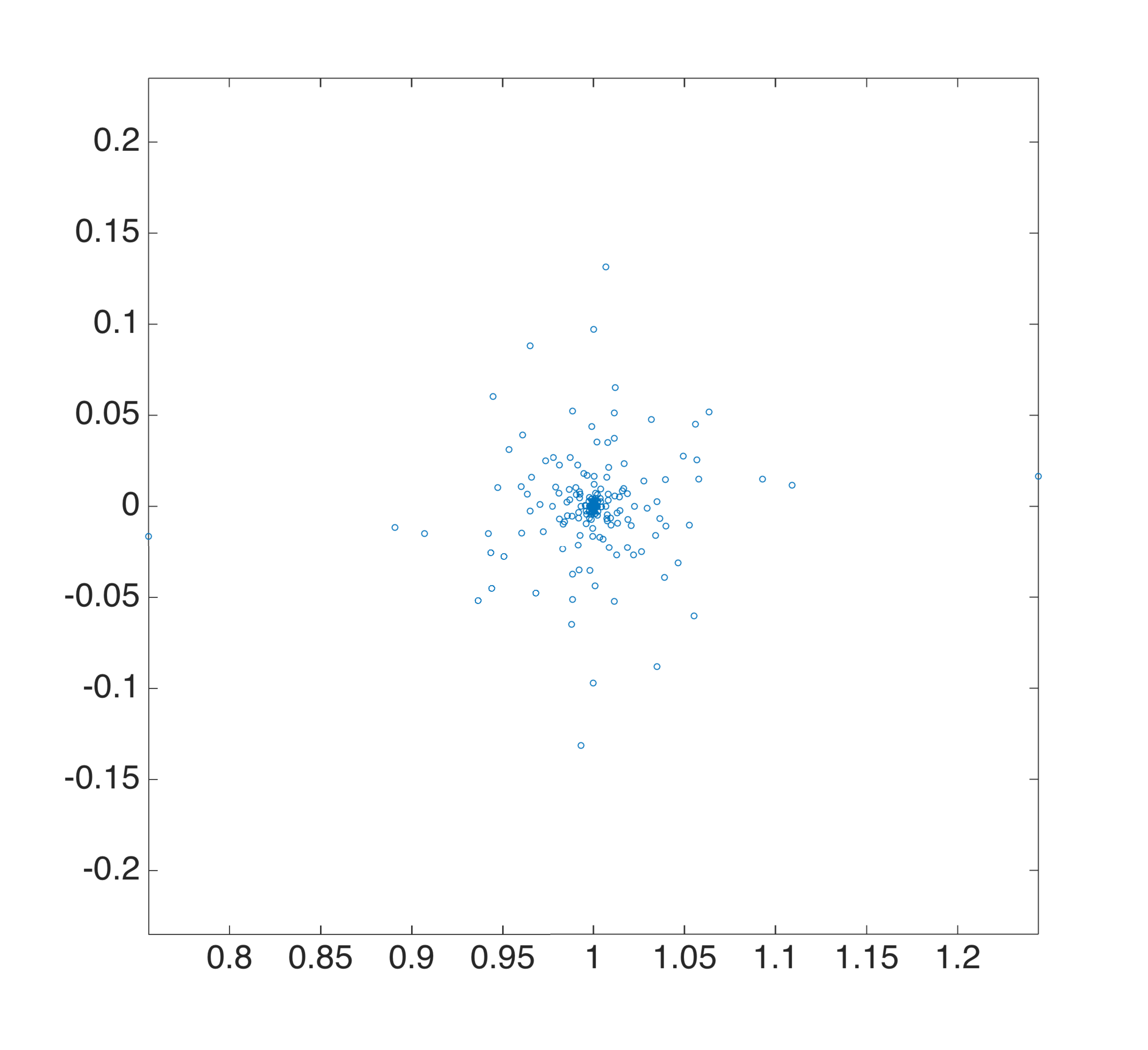}
  \caption{ Eigenvalues of $(\mathbf{\underline{D}}\,)^{-1} \; \mathbf{\underline{P}} \; \underline{\underline{\mathbf{M}}}$, for the Marmousi2 model (Fig. \ref{fig:Marmousi_2}), $L=3$, $n = 300$, $\omega = 30$; and $5$ ({\it left}), $30$ ({\it center}) and $100$ ({\it right}) PML points.  }
  \label{fig:eigen_marmousi}
 \end{center}
\end{figure}

\section{Partitioned low-rank matrices} \label{section:PLR}

Let $n \sim \sqrt{N}$ for the number of points per dimension, and $L$ be the number of subdomains. So far, the complexity of solving the discrete integral system in polarized form, assuming a constant number of preconditioned GMRES iterations, is dominated by the application of $\mathbf{\underline{D}}^{-1}$ and $\mathbf{\underline{R}}$ in the update step $\underline{\u}^{n+1} = (\mathbf{\underline{D}})^{-1}( \mathbf{\underline{P}}\;\underline{\mathbf{\tilde{f}}}  - \mathbf{\underline{R}} \;  \underline{\u}^{n} ) $. Each of the $\cO(L)$ nonzero blocks of $\mathbf{\underline{D}}^{-1}$ and $ \mathbf{\underline{R}}$ is $n$-by-$n$. A constant number of applications of these matrices therefore results in a complexity that scales as $\cO(n^2 L)$. This scaling is at best linear in the number $N$ of volume unknowns.

It is the availability of fast algorithms for $\mathbf{\underline{D}}^{-1}$ and $\underline{\mathbf{R}}$, i.e., for the blocks of $\underline{\underline{\mathbf{M}}}$, that can potentially lower the complexity of solving Eq. \ref{eq:integral_formulation} down to sublinear in $N$. In this setting, the best achievable complexity would be $\cO(n L)$ -- the complexity of specifying $\cO(L)$ traces of size $\cO(n)$. As mentioned earlier, the overall online complexity can be sublinear in $N$ if we parallelize the operations of reading off $\f$ in the volume, and forming the volume unknowns $\u$ from the traces.


We opt for what is perhaps the simplest and best-known algorithm for fast application of arbitrary kernels in discrete form: an adaptive low-rank partitioning of the matrix. This choice is neither original nor optimal in the high-frequency regime, but it gives rise to elementary code. More sophisticated approaches have been proposed elsewhere, including by one of us in \cite{Candes_Demanet_Ying:A_Fast_Butterfly_Algorithm_for_the_Computation_of_Fourier_Integral_Operators}, but the extension of those ideas to the kernel-independent framework is not immediate. This section is therefore added for the sake of algorithmic completeness, and for clarification of the ranks and complexity scalings that arise from low-rank partitioning in the high-frequency regime.



\subsection{Compression}

The blocks of $\underline{\underline{\mathbf{M}}}$, which stem from the discretization of interface-to-interface operators, are compressed using the recursive Alg. \ref{alg:PLR_matrix}. The result of this algorithm is a quadtree structure on the original matrix, where the leaves are maximally large square submatrices with fixed $\epsilon$-rank. We follow \cite{Beylkin:wave_propagation_using_bases_for_bandlimited_functions} in calling this structure partitioned low-rank (PLR). An early reference for PLR matrices is the work of Jones, Ma, and Rokhlin in 1994 \cite{Jones_Ma_Rokhlin:A_Fast_Direct_Algorithm_for_the_Solution_of_the_Laplace_Equation_on_Regions_with_Fractal_Boundaries}. PLR matrices are a special case of $\cH$-matrices\footnote{We reserve the term $\cH$-matrix for hierarchical structures preserved by algebraic operations like multiplication and inversion, like in \cite{Bebendorf:2008}.}.

It is known \cite{Bebendorf:2008}, that the blocks of matrices such as $\underline{\underline{\mathbf{M}}}$ can have low rank, provided they obey an admissibility condition that takes into account the distance between blocks. In regimes of high frequencies and rough heterogeneous media, this admissibility condition becomes more stringent in ways that are not entirely understood yet. See \cite{Candes_Demanet_Ying:A_Fast_Butterfly_Algorithm_for_the_Computation_of_Fourier_Integral_Operators, DemanetYing:FIO} for partial progress.

Neither of the usual non-adaptive admissibility criteria seems adequate in our case. The ``nearby interaction" blocks (from one interface to itself) have a singularity along the diagonal, but tend to otherwise have large low-rank blocks in media close to uniform \cite{MartinssonRohklin:a_fast_direct_solver_for_scattering_problems_involving_elongated_structures}. The ``remote interaction" blocks (from one interface to the next) do not have the diagonal problem, but have a wave vector diversity that generates higher ranks. Adaptivity is therefore a very natural choice, and is not a problem in situations where the only operation of interest is the matrix-vector product.


For a fixed accuracy $\epsilon$ and a fixed rank $r_{\text{max}}$, we say that a partition is admissible if every block has $\epsilon$-rank less or equal than $r_{\text{max}}$. Alg. \ref{alg:PLR_matrix} finds the smallest admissible partition within the quadtree generated by recursive dyadic partitioning of the indices.

\begin{algorithm} Partitioned Low Rank matrix \label{alg:PLR_matrix}
\begin{algorithmic}[1]
\Function{H = PLR}{$M$, $r_{\text{max}}$, $\epsilon$}
\State  $[U, \Sigma ,V] = \texttt{svds}(M, r_{\text{max}}+1)$
\If{ $\Sigma(r_{\text{max}}+1,r_{\text{max}}+1) < \epsilon$}
\State \texttt{H.data = \{$U\cdot \Sigma$, $V^{t}$\} }
\State \texttt{H.id = `c' }    \Comment{leaf node}
\Else
\State $M = \left[ \begin{array}{cc}
										M_{1,1} & M_{1,2}  \\
										M_{2,1} & M_{2,2}
					\end{array}
			 \right ] $  		\Comment{block partitioning}
\For{i = 1:2}
\For{j = 1:2}
\State \texttt{H.data\{i,j\} = } PLR($M_{i,j}$, $r_{\text{max}}$, $\epsilon$ )
\EndFor
\EndFor
\State \texttt{H.id =  `h' }     \Comment{branch node}
\EndIf
\EndFunction
\end{algorithmic}
  \end{algorithm}


  Fig. \ref{fig:H_matrices_representation} depicts the hierarchical representation of a PLR matrix of the compressed matrix for the nearby interactions {\it (left)} and the remote interactions {\it (right)}.
  Once the matrices are compressed in PLR form, we can easily define a fast matrix-vector multiplication using Alg. \ref{alg:PLR_matvec}.

  \begin{algorithm} PLR-vector Multiplication\label{alg:PLR_matvec}

\begin{algorithmic}[1]
\Function{y = matvec}{x}

\If{ \texttt{H.id == `c' }  } 																				\Comment{If leaf node}
\State y =  \texttt{H.data\{1\}$\cdot$(H.data\{2\}$\cdot$x) }												\Comment{perform mat-vec using SVD factors}
\Else 																										\Comment{If branch node}
\For{i = 1:2}																								\Comment{recurse over children}
\State $\texttt{y}_{1}\texttt{+=}\text{matvec}\texttt{(H.data\{i,1\},}\texttt{x}_{\texttt{1:end/2}})$
\State $\texttt{y}_{2}\texttt{+=}\text{matvec}\texttt{(H.data\{i,2\},}\texttt{x}_{\texttt{end/2:end}})$
\EndFor
\State \texttt{y = $ \left[ \begin{array}{c}
										\texttt{y}_1\\
										\texttt{y}_2
										\end{array}
							  \right ]$ } 																	\Comment{concatenate solution from recursion}
\EndIf
\EndFunction
\end{algorithmic}
  \end{algorithm}

\begin{figure}[H]
\begin{center}
 \includegraphics[trim = 20mm 25mm 20mm 20mm, clip, width = 80mm]{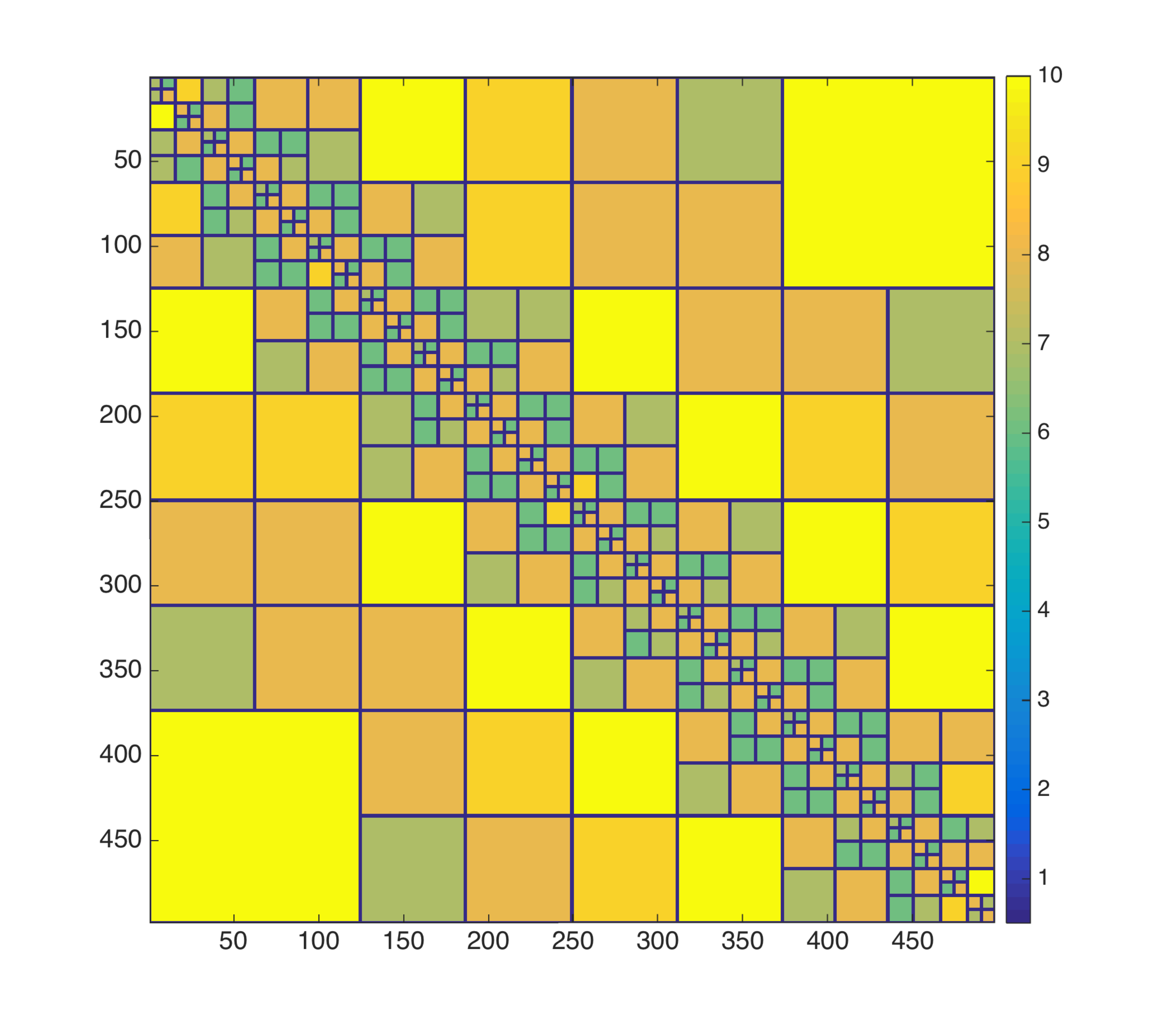}  \includegraphics[trim = 20mm 25mm 20mm 20mm, clip, width = 80mm]{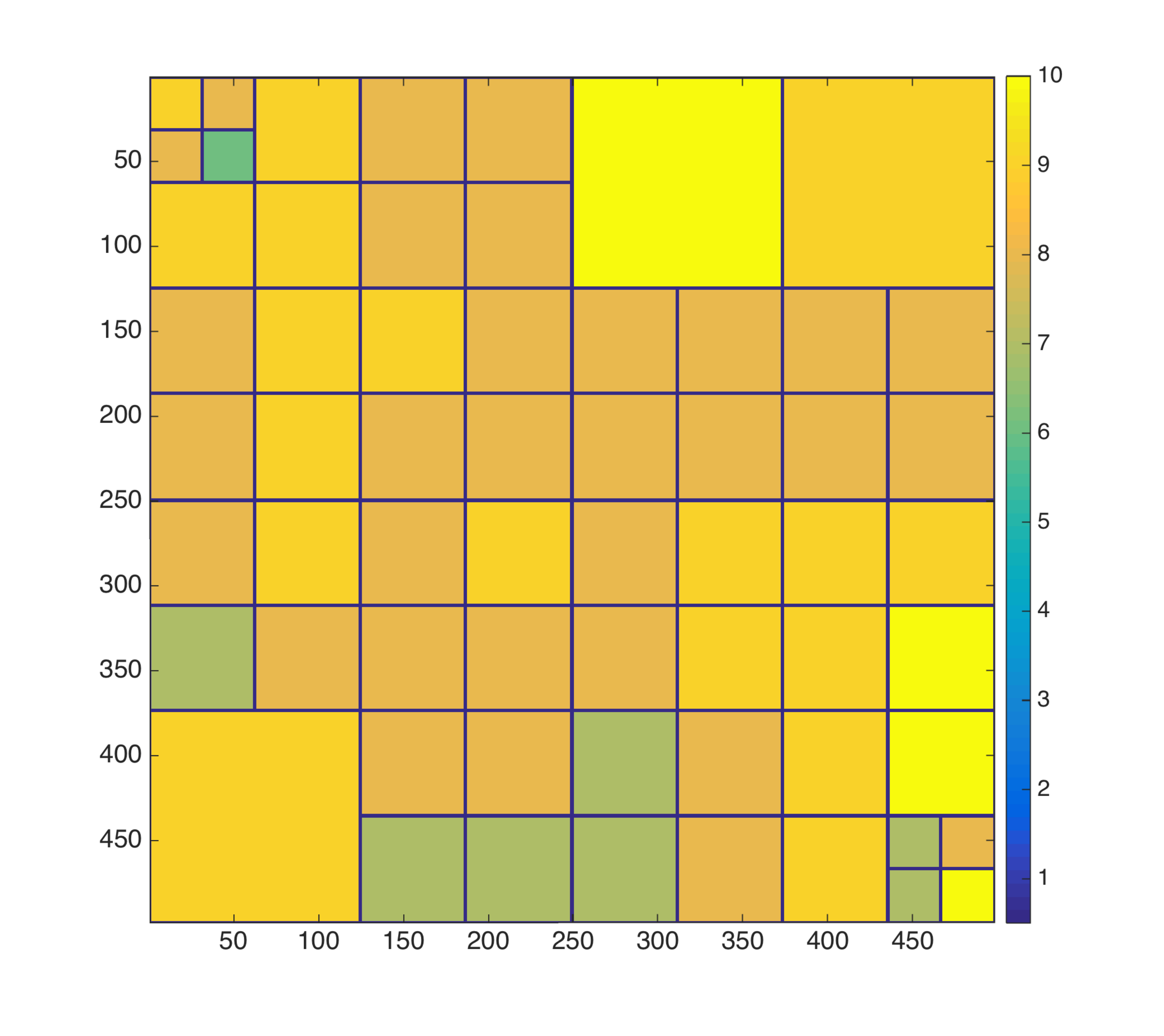}
 \caption{Illustration of compressed Green's matrices in PLR form ($\epsilon \text{-ranks} \leq 10$, $\epsilon = 10^{-9}$). Each color represents a different numerical rank. Left: nearby interactions. Right: remote interactions. }  \label{fig:H_matrices_representation}
 \end{center}
\end{figure}

Alg. \ref{alg:PLR_matvec} yields a fast matrix vector multiplication; however, given its recursive nature the constant for the scaling can become large. This phenomenon is overwhelming if Alg. \ref{alg:PLR_matvec} is implemented in a scripting language such as MATLAB or Python. In the case of compiled languages, recursions tend to not be correctly optimized by the compiler, increasing the constant in front of the asymptotic complexity scaling. To reduce the constants, the PLR matrix-vector multiplication was implemented via a sparse factorization as illustrated by Fig \ref{fig:PLR_2_sparse}. This factorization allows us to take advantage of highly optimized sparse multiplication routines. An extra advantage of using such routines is data locality and optimized cache management when performing several matrix-vector multiplications at the same time, which can be performed as a matrix-matrix multiplication. This kind of sparse factorization is by no means new; we suggest as a reference Section 4 in \cite{Sivaram_Darve:HODLR}.

\begin{figure}[H]
\begin{center}
 \includegraphics[trim = 0mm 0mm 0mm 0mm, clip, width = 140mm]{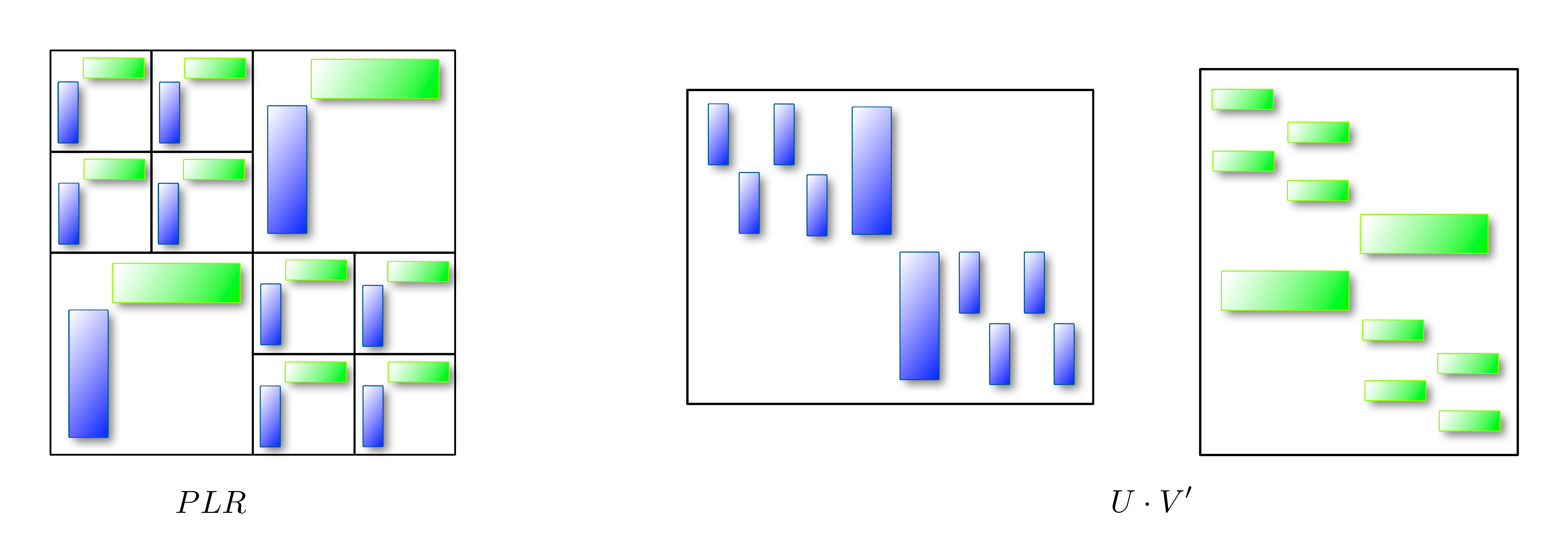}
  \caption{Illustration of the sparse form of a PLR matrix. Left: PLR matrix. Right: its sparse factorization form.  }
  \label{fig:PLR_2_sparse}
 \end{center}
\end{figure}

The maximum local rank of the compression scheme can be seen as a tuning parameter. If it is too small, it will induce small off-diagonal blocks, hindering compression and deteriorating the complexity of the matrix-vector multiplication. On the other hand, a large maximum rank will induce a partition with big dense diagonal blocks that should be further compressed, resulting in the same adverse consequences.

\subsection{Compression scalings} \label{subsection:compression_scalings}

It is difficult to analyze the compressibility of Green's functions without access to an explicit formula for their kernel. In this section we make the assumption of smooth media and single-valued traveltimes, for which a geometrical optics approximation such as
\beq\label{eq:GO}
G(\x,\y; \omega) \simeq a_{\omega}(\x,\y) e^{i\omega \tau(\x,\y)},
\eeq
holds. Here $\tau(\x, \y)$ solves an eikonal equation; and $a_{\omega}(\x,\y)$ is an amplitude factor, smooth except at $\x = \y$, and with a minor\footnote{In the standard geometrical optics asymptotic expansion, $a \sim \sum_{j \geq 0} a_j \omega^{-j}$ is polyhomogeneous with increasingly negative orders in $\omega$.} dependence on $\omega$. Assume that both $a$ and $\tau$ are $C^{\infty}_{\x,\y}$ away from $x=y$, with smoothness constants bounded independently of $\omega$.

The following result is a straightforward generalization of a result in \cite{Candes_Demanet_Ying:A_Fast_Butterfly_Algorithm_for_the_Computation_of_Fourier_Integral_Operators, DemanetYing:FIO} and would be proved in much the same way.

\begin{lemma} (High-frequency admissibility condition)
Consider $G$ as in Eq. \ref{eq:GO}, with $\x \in A$ and $\y \in B$ where $A$ and $B$ are two rectangles. Let $d_A$, $d_B$ be the respective diameters of $A$ and $B$.  If
\[
d_A d_B \leq \frac{\mbox{dist}(A,B)}{\omega},
\]
then the $\epsilon$-rank of the restriction of $G$ to $A \times B$ is, for some $R_{\epsilon} > 0$, bounded by $R_{\epsilon}$ (independent of $\omega$).
\end{lemma}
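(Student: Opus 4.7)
The plan is to carry out a phase-splitting that extracts a rank-one factor from $e^{i\omega\tau}$, and then to argue that the residual factor is a uniformly smooth bivariate function admitting a separable approximation of bounded rank. First I would pick $\x_0, \y_0$ to be the centers of $A, B$ and write
\[
\tau(\x,\y) = \tau(\x,\y_0) + \bigl[\tau(\x_0,\y) - \tau(\x_0,\y_0)\bigr] + R(\x,\y),
\]
so that $R(\x,\y_0) \equiv R(\x_0,\y) \equiv 0$ by construction. The first two pieces of $\tau$ contribute a factor $e^{i\omega\tau(\x,\y_0)}\cdot e^{i\omega[\tau(\x_0,\y)-\tau(\x_0,\y_0)]}$ to $e^{i\omega\tau}$, i.e., a rank-one outer product that multiplies the remaining kernel without affecting its $\epsilon$-rank.

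Next I would control the residual phase $\omega R$ together with the amplitude $a_\omega$ after rescaling. Introducing $\tilde\x = (\x-\x_0)/d_A$ and $\tilde\y = (\y-\y_0)/d_B$, and noting that every Taylor monomial of $R$ at $(\x_0,\y_0)$ must contain both $\x-\x_0$ and $\y-\y_0$ to first order or higher, Taylor's theorem together with the geometric-optics estimate $|\partial_\x^\alpha \partial_\y^\beta \tau(\x,\y)| \lesssim_{\alpha,\beta} |\x-\y|^{1-|\alpha|-|\beta|}$, valid in the single-valued-traveltime regime, yields
\[
\omega\, \bigl|\partial_{\tilde\x}^\alpha \partial_{\tilde\y}^\beta R\bigr| \;\lesssim_{\alpha,\beta}\; \omega\, d_A^{|\alpha|} d_B^{|\beta|}\, d^{1-|\alpha|-|\beta|} \;=\; \frac{\omega\, d_A d_B}{d}\cdot\Bigl(\frac{d_A}{d}\Bigr)^{|\alpha|-1}\Bigl(\frac{d_B}{d}\Bigr)^{|\beta|-1}
\]
for $|\alpha|,|\beta|\geq 1$, where $d = \mbox{dist}(A,B)$. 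The admissibility hypothesis makes the leading factor $\leq 1$, and the customary well-separation assumption $d_A, d_B \lesssim d$ controls the remaining powers. A similar, easier bound holds for $a_\omega$ by the stated smoothness and weak $\omega$-dependence. Consequently, $a_\omega e^{i\omega R}$, viewed as a function of $(\tilde\x,\tilde\y)$ on a fixed compact box, lies in a bounded subset of $C^\infty$ with constants independent of $\omega$.

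I would conclude by invoking the standard approximation-theoretic fact that any uniformly bounded family in $C^\infty$ on a compact box can be approximated in $L^\infty$ to accuracy $\epsilon$ by a tensor-product Chebyshev polynomial of fixed total degree $r_\epsilon$; grouping the $\tilde\x$- and $\tilde\y$-factors produces a separable expansion of rank $R_\epsilon$ depending only on $\epsilon$ and on the uniform smoothness constants, but not on $\omega$. Undoing the rescaling and recombining with the rank-one phase factor extracted in the first step gives the claimed $\epsilon$-rank bound.

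The main obstacle is the derivative estimate for $\tau$ in heterogeneous media. In a uniform background it is an elementary calculation, but in smooth heterogeneous media with single-valued rays one must appeal to classical regularity estimates for solutions of the eikonal equation, which are precisely what the geometric-optics ansatz \ref{eq:GO} tacitly assumes. In the presence of caustics, or of rough heterogeneities that break the ansatz, the lemma either fails or must be replaced by a more refined butterfly-style rank bound as in \cite{Candes_Demanet_Ying:A_Fast_Butterfly_Algorithm_for_the_Computation_of_Fourier_Integral_Operators}.
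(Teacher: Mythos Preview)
Your argument is correct and is precisely the approach taken in the references the paper cites; the paper itself omits the proof, stating only that the result ``is a straightforward generalization of a result in \cite{Candes_Demanet_Ying:A_Fast_Butterfly_Algorithm_for_the_Computation_of_Fourier_Integral_Operators, DemanetYing:FIO} and would be proved in much the same way.'' Your phase-splitting $\tau = \tau(\x,\y_0) + [\tau(\x_0,\y)-\tau(\x_0,\y_0)] + R$, the mixed-derivative control of $R$ via the eikonal homogeneity estimate, and the reduction to a uniformly smooth residual amenable to tensor Chebyshev interpolation are exactly the ingredients of those papers. Your explicit flag that a well-separation condition $d_A,d_B\lesssim d$ is tacitly needed (to keep the factors $(d_A/d)^{|\alpha|-1}$ bounded) is a fair observation: the lemma as written does not state it, but it is always satisfied in the hierarchical partitions of Section~\ref{section:PLR}.
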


The scaling is often tight, except when dealing with 1D geometries and uniform media as in\cite{MartinssonRohklin:a_fast_direct_solver_for_scattering_problems_involving_elongated_structures}.

In our case, we refer to ``nearby interactions" as the case when $\x$ and $\y$ are on the same interface (horizontal edge), and ``remote interactions" when $\x$ and $\y$ are on opposite horizontal edges of a layer $\Omega^{\ell}$.
In both cases, $A$ and $B$ are 1D segments, but the geometry is 2D for the remote interactions.  Our partitioning scheme limits $A$ and $B$ to have the same length $d_A = d_B$, hence the lemma implies that low ranks can in general only occur provided
\[
d_A \leq C \, \frac{1}{\sqrt{\omega}}.
\]
In other words, $d_A$ is at best proportional to the square root of the (representative) spatial wavelength, and even smaller when $A$ and $B$ are close. A square block of the discrete $\mathbf{G}$ with $d_A \sim  \frac{1}{\sqrt{\omega}}$, on a grid with $n$ points per dimension, would be of size $\sim \frac{n}{\sqrt{\omega}} \times \frac{n}{\sqrt{\omega}}$. We call such a block representative; it suffices to understand the complexity scalings under the assumption that all blocks are representative\footnote{The complexity overhead generated by smaller blocks close to the diagonal only increase the overall constant, not the asymptotic rate, much as in the fast multipole method.}.

Let $\omega \sim n^{\rho}$ for some $\rho$ such as $\frac{1}{2}$ or $1$. This implies that the representative block of $\mathbf{G}$ has size $n^{1-\rho/2} \times n^{1-\rho/2}$ (where we drop the constants for simplicity). Given that the $\epsilon$-rank is bounded by $R_{\epsilon}$, this block can be compressed using Alg \ref{alg:PLR_matrix} in two matrices of size $n^{1-\rho/2} \times R_{\epsilon}$ and  $ R_{\epsilon} \times n^{1-\rho/2}$. We have $\cO(n^{\rho})$ such blocks in $\mathbf{G}$.

We can easily compute an estimate for the compression ratio; we need to store $\cO( 2 R_{\epsilon} n^{1+\rho/2})$ complex numbers for the PLR compressed $\mathbf{G}$. Thus, the compression ratio is given by  $\frac{2 R_{\epsilon} n^{1+\rho/2}}{ n^2} \sim n^{ \rho/2 -1}$.

Moreover, multiplying each block by a vector has asymptotic complexity $ 2 R_{\epsilon} n^{1 - \rho/2}$, so that the overall asymptotic complexity of the PLR matrix vector multiplication is given by $ 2 R_{\epsilon} n^{1 + \rho/2}$.

If $\rho = 1/2$ and $N \sim n^2$, we have that the asymptotic complexity of the PLR matrix-vector multiplication is given by $ 2 R_{\epsilon} n^{5/4} \sim N^{5/8}$. If $\rho = 1$, the complexity becomes $\sim N^{3/4}$.

The estimate for the asymptotic complexity relies on a perfect knowledge of the phase functions, which is unrealistic for a finite difference approximation of the Green's functions. We empirically observe a deterioration of these scalings due to the discretization error, as shown in Table \ref{table:compression_scaling}.

One possible practical fix for this problem is to allow the maximum ranks in the compression to grow as $\sqrt{n}$. This small adjustment allows us to reduce the complexity in our numerical experiments; though the theoretical predictions of those scalings (from a completely analogous analysis) are quite unfavorable. The scalings we observe numerically are reported in square brackets in the table. They are \emph{pre-asymptotic} and misleadingly good: if the frequency and $N$ were both increased, higher scaling exponents would be observed\footnote{The same pre-asymptotic phenomenon occurs in the $n$-by-$n$ FFT matrix: a block of size $n/2$ by $n/2$ will look like it has $\epsilon$-rank $\cO(n^{7/8})$, until $n \sim 2^{14}$ and beyond, where the $\epsilon$-rank because very close to $n/4$.}. The correct numbers are without square brackets.

\begin{table}
\begin{center}
\begin{tabular}{|c|c|c|c|c|}
\hline
Step 				&  $\omega \sim \sqrt{n} \;\; \vert \;\; r \sim 1$   & $\omega \sim n \;\; \vert \;\; r \sim 1$   & $\omega \sim \sqrt{n} \;\; \vert \;\; r \sim \sqrt{n} $ & $\omega \sim n \;\; \vert \;\; r \sim \sqrt{n} $ \\
\hline
Analytic 			&  $\cO(N^{5/8})$ 		 				& $\cO(N^{3/4})$   				& $ \left [ \cO(N^{5/8}) \right ]$  $\to \cO(N^{3/4})$    & $ \left [ \cO(N^{7/8})\right ]$  $\to \cO(N)$   \\
\hline
Finite Differences 	&  $\cO(N^{3/4})$    					& $\cO(N^{7/8})$        		& $ \left [ \cO(N^{5/8}) \right ]$     &  $ \left [ \cO(N^{7/8})\right ]$     \\
\hline
\end{tabular}
\caption{Compression scaling for the remote interactions, sampling a typical oscillatory kernel (Analytic) and using the finite differences approximation (FD). The observed pre-asymptotic complexities are in square brackets.}\label{table:compression_scaling}
\end{center}
\end{table}

\section{Computational Complexity} \label{sec:complexity}

The complexities of the various steps of the algorithm were presented in section \ref{sec:intro_complexity} and are summarized in Table \ref{table:complexity}. In this section we provide details about the heuristics and evidence supporting these complexity claims.

\subsection{Computational cost}

  For the five-point discretization of the 2D Laplacian, the sparse LU factorization with nested dissection is known to have $\cO(N^{3/2})$ complexity, and the back-substitution is known to have linear complexity up to logarithmic factors, see \cite{GeorgeNested_dissection,Hoffman:Complexity_Bounds_for_Regular_Finite_Difference_and_Finite_Element_Grids}. Given that each sub-domain has size $\cO(N/L)$, the factorization cost is $\cO((N/L)^{3/2})$. Moreover, given that the factorizations are completely independent, they can be computed simultaneously in $L$ different nodes. We used the stored LU factors to compute, by back-substitution, the local solutions needed for constructing the right-hand side of Eq. \ref{eq:integral_formulation} and the reconstruction of the local solutions, leading to a complexity $\cO(N \log(N)/L)$ per solve. The local solves are independent and can be performed in $L$ different nodes.

  To compute the Green's functions from the LU factors, we need to perform $\cO(n)$ solves per layer. Each solve is completely independent of the others, and they can be carried in parallel in $nL$ different processors. We point out that there is a clear trade-off between the parallel computation of the Green's functions and the communication cost. It is possible to compute all the solves of a layer in one node using shared memory parallelism, which reduces scalability; or compute the solves in several nodes, which increases scalability but increases the communication costs. The best balance between both approaches will heavily depend on the architecture and we leave the details out of this presentation.

  We point out that the current complexity bottleneck is the extraction of the Green's functions. It may be possible to reduce the number of solves to a fractional power of $n$ using randomized algorithms such as \cite{Lin_Ying:Fast_Construction_of_Hierarchical_Matrix_Representation_from_Matrix_vector_Multiplication}); or to $\cO(1)$ solves, if more information of the underlying PDE is available or easy to compute (see \cite{RosalieLaurent:compressed_PML}).

  Once the Green's matrices are computed, we use Alg. \ref{alg:PLR_matrix} to compress them. A simple complexity count indicates that the asymptotic complexity of the recursive adaptive compression is bounded by $\cO(\left( N\log(N) \right))$ for each integral kernel\footnote{Assuming the svds operation is performed with a randomized SVD.}.  The compression of each integral kernel is independent of the others, and given that the Green's functions are local to each node, no communication is needed.

There are two limitations to the compression level of the blocks of $\underline{\underline{\mathbf{M}}}$. One is theoretical, and was explained in section \ref{subsection:compression_scalings}; while the second is numerical. In general, we do not have access to the Green's functions, only to a numerical approximation, and the numerical errors present in those approximations hinder the compressibility of the operators for large $\omega$. Faithfulness of the discretization is the reason why we considered milder scalings of $\omega$ as a function of $n$, such as $\omega \sim \sqrt{n}$, in the previous section
\footnote{To deduce the correct scaling between $\omega$ and $n$, we use the fact that the second order five point stencil scheme has a truncation error dominated by $h^2 (\partial_x^4 + \partial_y^4) u \sim (\omega/c)^4h^2 $. Given that $h\sim 1/n$, we need $\omega \sim \sqrt{n}$ in order to have a bounded error in the approximation of the Green's function. In general, the scaling needed for a p-th order scheme to obtain a bounded truncation error is $\omega \sim n^{\frac{p}{p+2}}$. }.


With the scaling $\omega \sim \sqrt{n}$, we have that the complexity of matrix-vector product is dominated by $\cO(N^{5/8})$ for each block of  $\underline{\underline{\mathbf{M}}}$ (see subsection \ref{subsection:compression_scalings}). Then, the overall complexity of the GMRES iteration is $\cO(LN^{5/8})$, provided that the number of iterations remains bounded.

Within an HPC environment we are allowed to scale $L$, the number of sub-domains, as a small fractional power of $N$. If $L \sim N^{\delta}$, then the overall execution time of the online computation, take away the communication cost, is $\cO(N^{\max (\delta + 5/8, (1-\delta))})$. Hence, if $0 < \delta < 3/8$, then the online algorithm runs in sub-linear time. In particular, if $\delta = 3/16$, then the runtime of the online part of the solver becomes $\cO(N^{13/16})$ plus a sub-linear communication cost.

If the matrix-vector product is $\cO(N^{3/4})$ (a more representative figure in Table \ref{table:compression_scaling}), then the same argument leads to $\delta = 1/8$ and an overall online runtime of $\cO(N^{7/8})$.

\subsection{Communication cost}
We use a commonly-used communication cost model (\cite{BallardDemmel:minimizing_communication_in_numerical_linear_algebra,DemmelGrigori:communication_avoiding_rank_revealing_qr_factorization_with_column_pivoting,Poulson_Demanet:a_parallel_butterfly_algorithm,Thakur:optimization_of_collective_communication_operations_in_mpich})  to perform the  cost analysis. The model assumes that each process is only able to send or receive a single message at a time. When the messages has size $m$, the time to communicate that message is $\alpha + \beta m$. The parameter $\alpha$ is called latency, and represents the minimum time required to send an arbitrary message from one process to another, and is a constant overhead for any communication. The parameter $\beta$ is called inverse bandwidth and represents the time needed to send one unit of data.


We assume that an interface restriction of the Green's functions can be stored in one node. Then, all the operations would need to be performed on distributed arrays, and communication between nodes would be needed at each GMRES iteration. However, in 2D, the data  to be transmitted are only traces, so the overhead of transmitting such small amount of information several times would overshadow the complexity count. We gather the compressed Green's functions in a master node and use them as blocks to form the system $\underline{\underline{\mathbf{M}}}$. Moreover, we reuse the compressed blocks to form the matrices used in the preconditioner ($\mathbf{\underline{D}}^{-1}$  and $\mathbf{\underline{R}}$).

We suppose that the squared slowness model $m$ and the sources $\f$ are already on the nodes, and that the local solutions for each source will remain on the nodes to be gathered in a post-processing step. In the context of seismic inversion this assumption is very natural, given that the model updates are performed locally as well.


Within the offline step, we suppose that the model is already known to the nodes (otherwise we would need to communicate the model to the nodes, incurring a cost $\cO(\alpha + \beta N /L)$ for each layer). The factorization of the local problem, the extraction of the Green's functions and their compression are zero-communication processes. The compressed Green's functions are transmitted to the master node, with a maximum cost of $\cO(\alpha +  4 \beta N)$ for each layer. This cost is in general lower because of the compression of Green's function. In summary, the whole communication cost for the offline computations is dominated by $\cO(L(\alpha +  4 \beta N))$. Using the compression scaling for the Green's matrices in section \ref{subsection:compression_scalings} the communication cost can be reduced. However, it remains bounded from below by $\cO(L \alpha + \beta N)$.

For the online computation, we suppose that the sources are already distributed (otherwise we would incur a cost $\cO(\alpha + \beta N /L)$ for each layer). Once the local solves, which are zero-communication processes, are performed, the traces are sent to the master node, which implies a cost of $\cO(L(\alpha + \beta N^{1/2}))$. The inversion of the discrete integral system is performed and the traces of the solution are sent back to the nodes, incurring another $\cO(L(\alpha + \beta N^{1/2}))$ cost. Finally, from the traces, the local solutions are reconstructed  and saved in memory for the final assembly in a post-processing step not accounted for here (otherwise, if the local solutions are sent back to the master node we would incur a $\cO(\alpha + \beta N /L)$ cost per layer). In summary, the communication cost for the online computation is $\cO(\alpha L + \beta L N^{1/2})$.

Finally, we point out that for the range of 2D numerical experiments performed in this paper, the communication cost was negligible with respect to the floating points operations.

\section{Numerical Experiments}

In this section we show numerical experiments that illustrate the behavior of the proposed algorithm. Any high frequency Helmholtz solver based on domain decomposition should ideally have three properties:
\begin{itemize}
	\item the number of iterations should be independent of the number of the sub-domains,
	\item the number of iterations should be independent of the frequency,
	\item the number of iterations should depend weakly on the roughness of the underlying model.
\end{itemize}
We show that our proposed algorithm satisfies the first two properties. The third property is also verified in cases of interest to geophysicists, though our solver is not expected to scale optimally (or even be accurate) in the case of resonant cavities.

We also show some empirical scalings supporting the sub-linear complexity claims for the GMRES iteration, and the sub-linear run time of the online part of the solver.

The code for the experiments was written in Python. We used the Anaconda implementation of Python 2.7.8 and the Numpy 1.8.2 and Scipy 0.14.0 libraries linked to OpenBLAS.  All the experiments were carried out in 4 quad socket servers with AMD Opteron 6276 at 2.3 GHz and 256 Gbytes of RAM, linked with a gigabit Ethernet connection. All the system were preconditioned by two iterations of the Gauss-Seidel preconditioner.

\subsection{Precomputation}
To extract the Green's functions and to compute the local solutions, a pivoted sparse LU factorization was performed at each slab using UMFPACK \cite{Davis:UMFPACK}, and the LU factors were stored in memory. The LU factors for each slab are independent of the others, so they can be stored in different cluster nodes. The local solves are local to each node, enhancing the granularity of the algorithm. Then the Green's function used to form the discrete integral were computed by solving the local problem with columns of the identity in the right-hand side. The computation of each column of the Green's function is independent of the rest and can be done in parallel (shared or distributed memory).


Once the Green's functions were computed, they were compressed in sparse PLR form, following Alg. \ref{alg:PLR_matrix}.  $\underline{\underline{\mathbf{M}}}$ and $\mathbf{\underline{R}}$ were implemented as a block matrices, in which each block is a PLR matrix in sparse form. The compression was accelerated using the randomized SVD \cite{MartinssonRokhlin:A_randomized_algorithm_for_the_decomposition_of_matrices}. The inversion of $\underline{\mathbf{D}}$ was implemented as a block back-substitution with compressed blocks.

We used a simple GMRES algorithm\footnote{Some authors refer to GMRES algorithm as the Generalized conjugate residual algorithm \cite{Bruno_Turc:Electromagnetic_integral_equations_requiring_small_numbers_of_krylov_subspace_iterations}, \cite{Rokhlin:rapid_solution_of_integral_equations_of_classical_potential_theory} } ( Algorithm 6.9 in \cite{Saad:iterative_methods_for_sparse_linear_systems}). Given that the number of iteration remains, in practice, bounded by ten, neither low rank update to the Hessenberg matrix nor re-start were necessary.


\subsection{Smooth Velocity Model}
For the smooth velocity model, we choose a smoothed Marmousi2 model (see Fig. \ref{fig:Marmousi2_smooth}.) Table   \ref{table:GMRES_iteration_smooth_Marmousi}  and Table \ref{table:runtime_iteration_smooth_Marmousi} were generated by timing 200 randomly generated right-hand-sides. Table \ref{table:GMRES_iteration_smooth_Marmousi} shows the average runtime of one GMRES iteration, and Table \ref{table:runtime_iteration_smooth_Marmousi} shows the average runtime of the online part of the solver.
We can observe that for the smooth case the number of iterations is almost independent of the frequency and the number of sub-domains. In addition, the runtimes scales sub-linearly with respect to the number of volume unknowns.

\begin{figure}[H]
	\begin{center}
	 	\includegraphics[trim = 90mm 5mm 70mm 18mm, clip, width = 160mm]{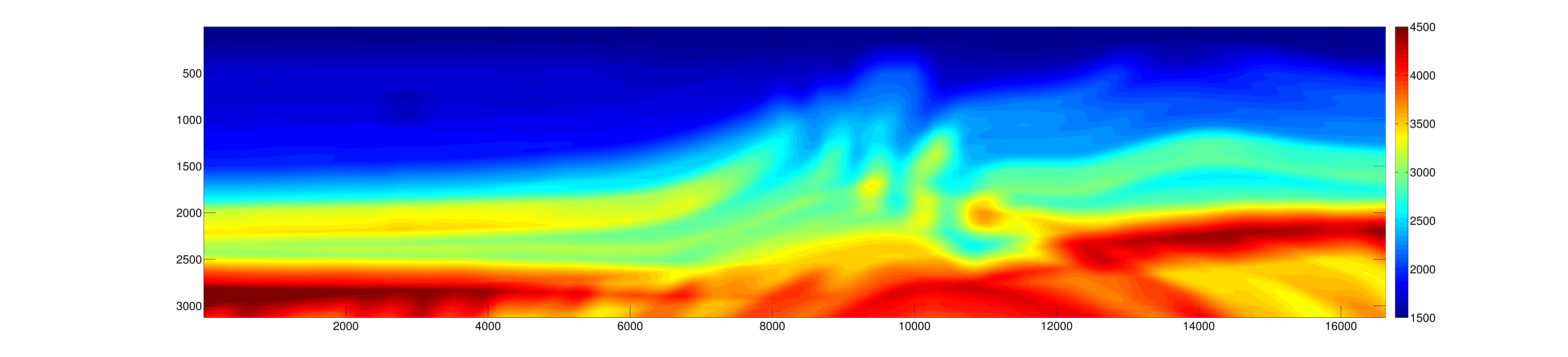}
	 	 \caption{ Smooth version of the Marmousi2 model. The model was smoothed with a box filter of size 375 meters.}
	 	 \label{fig:Marmousi2_smooth}
	 \end{center}
\end{figure}

\begin{table}
    \begin{center}
        \begin{tabular}{|c|c|r|r|r|r|r|}
            \hline
            $N$ & $\omega/2\pi$ [Hz] & $L = 8$ & $L = 16$ & $L= 32$ & $L = 64$ & $L=128$ \\
            \hline
            $195 \times 870$    &  5.57 &  \textbf{(3)} 0.094	&  \textbf{(3)}  0.21   	&    \textbf{(3)} 0.49  &   \textbf{(3-4)} 1.02   	&  \textbf{(3-4)} 2.11  \\
            $396 \times 1720 $  &  7.71 &  \textbf{(3)}   0.14  & \textbf{(3)} 0.32 		&    \textbf{(3)} 0.69 	&   \textbf{(3-4)} 1.55 	&  \textbf{(3-4)} 3.47    \\
            $792 \times 3420 $  &  11.14 &  \textbf{(3)}  0.41	& \textbf{(3)} 0.83   		&    \textbf{(3)} 1.81 	&   \textbf{(3-4)} 3.96		&  \textbf{(3-4)} 9.10     \\
            $1586 \times 6986 $ &  15.86 &  \textbf{(3)}  0.72	& \textbf{(3)} 1.56 		&    \textbf{(3)} 3.19 	&   \textbf{(3-4)} 6.99 	&   - \\
            \hline
        \end{tabular}\caption{Number of GMRES iterations (bold) required to reduce the relative residual to $10^{-7}$, along with average execution time (in seconds) of one GMRES iteration for different $N$ and $L$. The solver is applied to the smooth Marmousi2 model. The frequency is scaled such that $\omega \sim \sqrt{n}$. The matrices are compressed using $\epsilon = 10^{-9}/L$ and $\text{rank}_{\text{max}} \sim \sqrt{n}$. }                                  \label{table:GMRES_iteration_smooth_Marmousi}
    \end{center}
\end{table}

\begin{table}
    \begin{center}
        \begin{tabular}{|c|c|r|r|r|r|r|}
            \hline
            $N$ & $\omega/2\pi$ [Hz] & $L = 8$ & $L = 16$ & $L= 32$ & $L = 64$ & $L=128$ \\
            \hline
            $195 \times 870$    &  5.57  & 0.36	&  0.72	& 1.53	 & 3.15   & 7.05 \\
            $396 \times 1720 $  &  7.71  & 0.79	&  1.26	& 2.35	 & 4.99   & 11.03	 \\
            $792 \times 3420 $  &  11.14 & 2.85	&  3.69	& 6.41	 & 13.11  & 28.87 \\
            $1586 \times 6986 $ &  15.86 & 9.62	&  9.76	& 13.85	 & 25.61  & - \\
            \hline
        \end{tabular}\caption{Average execution time (in seconds) for the online computation, with a GMRES tolerance of $10^{-7}$, for different $N$ and $L$. The solver is applied to the smooth Marmousi2 model. The frequency is scaled such that $\omega \sim \sqrt{n}$. The matrices are compressed using $\epsilon = 10^{-9}/L$ and $\text{rank}_{\text{max}} \sim \sqrt{n}$. }                                  \label{table:runtime_iteration_smooth_Marmousi}
    \end{center}
\end{table}

\subsection{Rough Velocity Model}

In general, iterative solvers are highly sensitive to the roughness of the velocity model.  Sharp transitions generates strong reflections that hinder the efficiency of iterative methods, increasing the number of iterations. Moreover, for large $\omega$, the interaction of high frequency waves with short wavelength structures such as discontinuities, increases the reflections, further deteriorating the convergence rate.

The performance of the method proposed in this paper deteriorates only marginally as a function of the frequency and number of subdomains. In the experiments performed in this section, the layered partitioning was performed along the direction of higher aspect ratio. For geophysical models the discontinuities in the model tend to be orthogonal to the interfaces of the layered decomposition. The decomposition can be performed in the other direction resulting in an increment on the number of iterations for convergence with a marginal deterioration as the frequency and number of subdomains increase.


\begin{figure}[h]
\begin{center}
 \includegraphics[trim = 90mm 5mm 70mm 5mm, clip, width = 160mm]{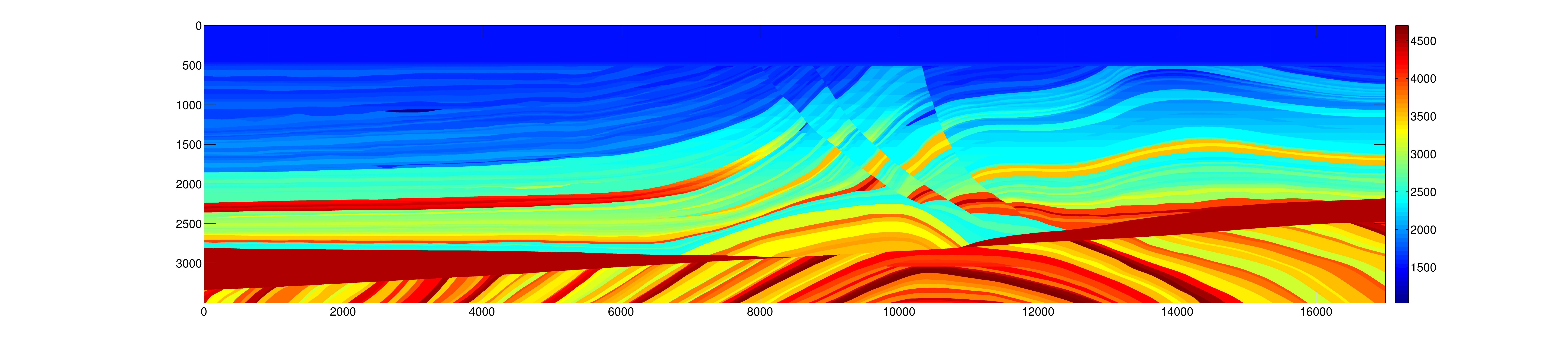}
  \caption{Geophysical benchmark model Marmousi2.}
  \label{fig:Marmousi_2}
 \end{center}
\end{figure}


We use the Marmousi2 model  \cite{Marmousi_2}, and another geophysical community benchmark, the BP 2004 model \cite{BP_model}, depicted in Fig. \ref{fig:Marmousi_2} and Fig. \ref{fig:BP}  respectively.

Tables \ref{table:GMRES_iteration_Marmousi}, \ref{table:runtime_iteration_Marmousi}, \ref{table:GMRES_iteration_BP}, and \ref{table:runtime_iteration_BP}  were generated by running 200 randomly generated right hand sides inside the domain. The number of points for the perfectly matched layers is increased linearly with $n$. From Table \ref{table:GMRES_iteration_smooth_Marmousi} and Table \ref{table:GMRES_iteration_BP} we can observe that, in general, the number of iteration to convergence grows slowly. We obtain slightly worse convergence rates if the number of points of the PML increases slowly or if it remains constant. This observation was already made in  \cite{CStolk_rapidily_converging_domain_decomposition} and \cite{Poulson_Engquist:a_parallel_sweeping_preconditioner_for_heteregeneous_3d_helmholtz}. However, the asymptotic complexity behavior remains identical; only the constant changes.

The runtime for one GMRES iteration exhibits a slightly super-linear growth when $L$, the number of sub-domains, is increased to be a large fraction of $n$. This is explained by the different compression regimes for the interface operators and by the different tolerances for the compression of the blocks. When the interfaces are very close to each other the interactions between the source depths and the target depths increases, producing higher ranks on the diagonal blocks of the remote interactions.

Tables \ref{table:runtime_iteration_Marmousi} and  \ref{table:runtime_iteration_BP} show that the average runtime of each solve scales sublinearly with respect to the volume unknowns.

\begin{figure}[H]
\begin{center}
 \includegraphics[trim = 94mm 0mm 72mm 5mm, clip, width = 160mm]{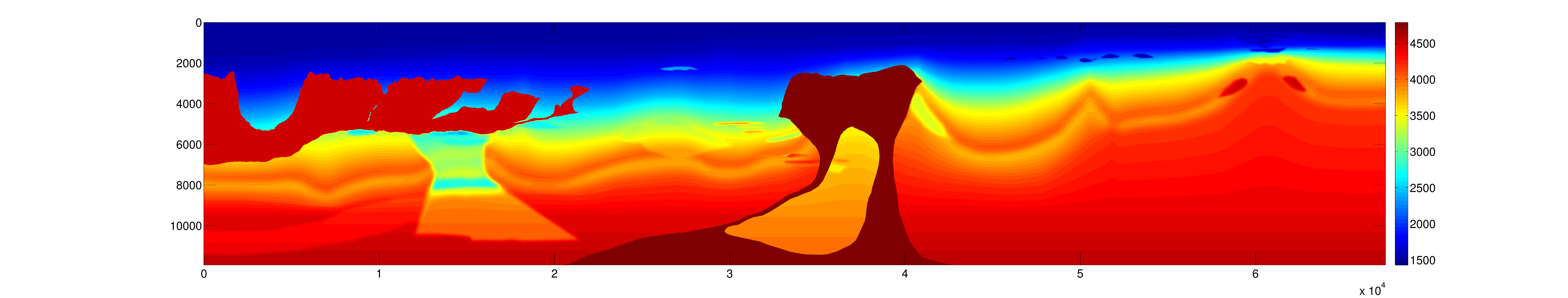}
  \caption{Geophysical benchmark model BP 2004.}
  \label{fig:BP}
 \end{center}
\end{figure}

Table \ref{table:GMRES_iteration_BP_constant_nppwl} and Table \ref{table:GMRES_iteration_marmousi_constant_nppwl} illustrate the maximum and minimum number of iterations and the average runtime of one GMRES iteration in the high frequency regime, i.e. $\omega \sim n$. We can observe that the number of iterations are slightly higher but with a slow growth. Moreover, we can observe the slightly sub-linear behavior of the runtimes, which we called the pre-asymptotic regime in Section \ref{section:PLR}.

Finally, Fig. \ref{fig:complexities_sqrt} summarizes the scalings for the different stages of the computation for fixed $L$. For a small number of unknowns the cost is dominated by the solve of the integral system -- a case in which the online part seems to have a sub-linear complexity. However, after this pre-asymptotic regime, the LU solve dominates the complexity, so it is lower-bounded by the $\cO(N \log(N)/L)$ runtime. The GMRES timing is still in the pre-asymptotic regime, but the complexity would probably deteriorate to $\cO(N^{3/4})$ for frequencies high enough. The complexity of the online part is non-optimal given that we only used a fixed number or layers and processors. (We do not advocate choosing such small $L$.)

\begin{table}
    \begin{center}
        \begin{tabular}{|c|c|r|r|r|r|r|}
            \hline
            $N$ & $\omega/2\pi$ [Hz] & $L = 8$ & $L = 16$ & $L= 32$ & $L = 64$ & $L=128$ \\
            \hline
            $195 \times 870$    &  5.57 &  \textbf{(4-5)} 0.08 	&  \textbf{(5)}  0.18   	&    \textbf{(5)}   0.41    &   \textbf{(5-6)} 0.87   	&  \textbf{(5-6)} 1.83  \\
            $396 \times 1720 $  &  7.71 &  \textbf{(5)}   0.17	& \textbf{(5-6)} 0.41 		&    \textbf{(5-6)} 0.88 	&   \textbf{(5-6)} 2.06 	&  \textbf{(5-7)} 4.57    \\
            $792 \times 3420 $  &  11.14 &  \textbf{(5)}  0.39	& \textbf{(5-6)} 0.85   	&    \textbf{(5-6)} 1.82 	&   \textbf{(5-6)} 4.06		&  \textbf{(6-7)} 9.51     \\
            $1586 \times 6986 $ &  15.86 &  \textbf{(5)}  0.94	& \textbf{(5-6)} 1.89 		&    \textbf{(5-6)} 4.20 	&   \textbf{(6)}   9.41 	&  \textbf{(6-7)} 21.10 \\
            \hline
        \end{tabular}\caption{Number of GMRES iterations (bold) required to reduce the relative residual to $10^{-7}$, along with average execution time (in seconds) of one GMRES iteration for different $N$ and $L$. The solver is applied to the Marmousi2 model. The frequency is scaled such that $\omega \sim \sqrt{n}$. The matrices are compressed using $\epsilon = 10^{-9}/L$ and $\text{rank}_{\text{max}} \sim \sqrt{n}$. }                                  \label{table:GMRES_iteration_Marmousi}
    \end{center}
\end{table}

\begin{table}
    \begin{center}
        \begin{tabular}{|c|c|r|r|r|r|r|}
            \hline
            $N$ & $\omega/2\pi$ [Hz] & $L = 8$ & $L = 16$ & $L= 32$ & $L = 64$ & $L=128$ \\
            \hline
            $195 \times 870$    &  5.57  & 0.47    	& 0.99    	& 2.15   &  4.53     & 10.94   \\
            $396 \times 1720 $  &  7.71  & 1.30		& 2.36    	& 5.04   &  12.66    & 29.26   \\
            $792 \times 3420 $  &  11.14 & 3.82		& 5.62 		& 11.45  &	25.78	 & 64.01   \\
            $1586 \times 6986 $ &  15.86 & 13.68 	& 16.08	    & 28.78  &	62.45	 & 145.98  \\
            \hline
        \end{tabular}\caption{Average execution time (in seconds) for the online computation, with a tolerance on the GMRES of $10^{-7}$, for different $N$ and $L$. The solver is applied to the Marmousi2 model. The frequency is scaled such that $\omega \sim \sqrt{n}$. The matrices are compressed using $\epsilon = 10^{-9}/L$ and $\text{rank}_{\text{max}} \sim \sqrt{n}$. }                                  \label{table:runtime_iteration_Marmousi}
    \end{center}
\end{table}

\begin{table}
    \begin{center}
        \begin{tabular}{|c|c|r|r|r|r|r|}
            \hline
            $N$ & $\omega/2\pi$ & $L = 8$ & $L = 16$ & $L= 32$ & $L = 64$ & $L=128$ \\
            \hline
            $136 \times 354$    &  1.18  & \textbf{(5-6)} 0.07 &	\textbf{(6)}    0.18 &	\textbf{(7)}    0.38 &	\textbf{(7)}    0.80 &	\textbf{(7-8)}  1.58	 \\
            $269 \times 705 $   &  1.78  & \textbf{(6)}   0.10 & 	\textbf{(6)}    0.22 &  \textbf{(6-7)}	0.50 &  \textbf{(7-8)}	1.07 &  \textbf{(7-8)}	2.22	 \\
            $540 \times 1411 $  &  2.50  & \textbf{(6)}	  0.22 & 	\textbf{(6-7)}	0.52 &	\textbf{(6-7)}	1.22 &  \textbf{(7)}	2.80 &  \textbf{(8-9)}	5.97	 \\
            $1081 \times 2823 $ &  3.56  & \textbf{(6-7)} 0.38 &	\textbf{(6-7)}  0.87 &	\textbf{(7-8)}  1.93 &	\textbf{(7-8)}  4.33 &	\textbf{(8-9)} 10.07    \\
            \hline
        \end{tabular}\caption{Number of GMRES iterations (bold) required to reduce the relative residual to $10^{-7}$, along with average execution time (in seconds) of one GMRES iteration for different $N$ and $L$. The solver is applied to the BP 2004 model.  The frequency is scaled such that $\omega \sim \sqrt{n}$. The matrices are compressed using $\epsilon = 10^{-9}/L$ and $\text{rank}_{\text{max}} \sim \sqrt{n}$. }                                  \label{table:GMRES_iteration_BP}
    \end{center}
\end{table}

\begin{figure}[H]
\begin{center}
 \includegraphics[trim = 0mm 0mm 0mm 0mm, clip, width = 100mm]{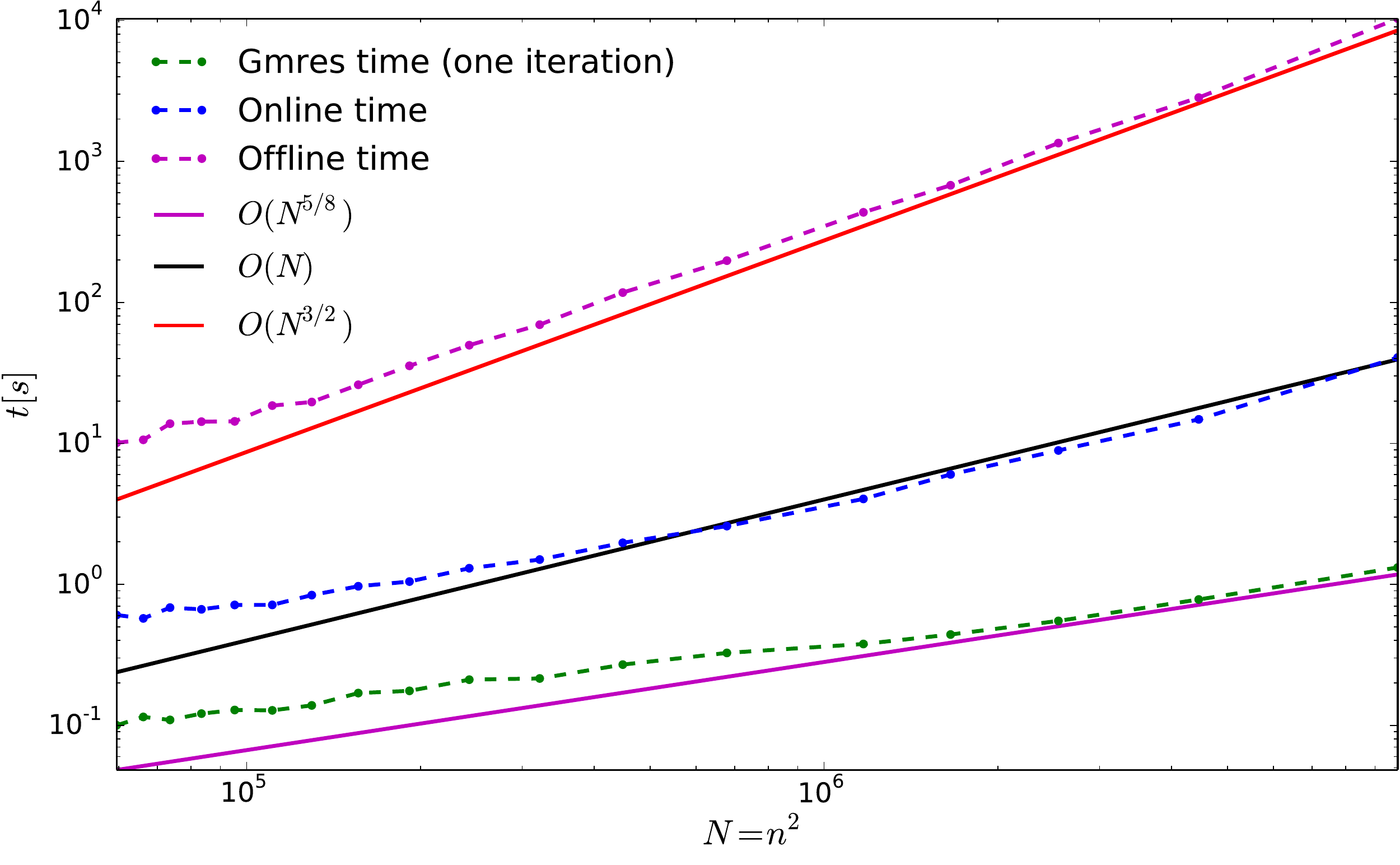}
  \caption{Run-time with their empirical complexities, for the Marmousi2 model with $L=3$ and $\omega \sim \sqrt{n}$ and $\text{max}_{\text{rank}} \sim \sqrt{n} $.}
  \label{fig:complexities_sqrt}
 \end{center}
\end{figure}

\begin{figure}[H]
\begin{center}
 \includegraphics[trim = 0mm 0mm 0mm 0mm, clip, width = 100mm]{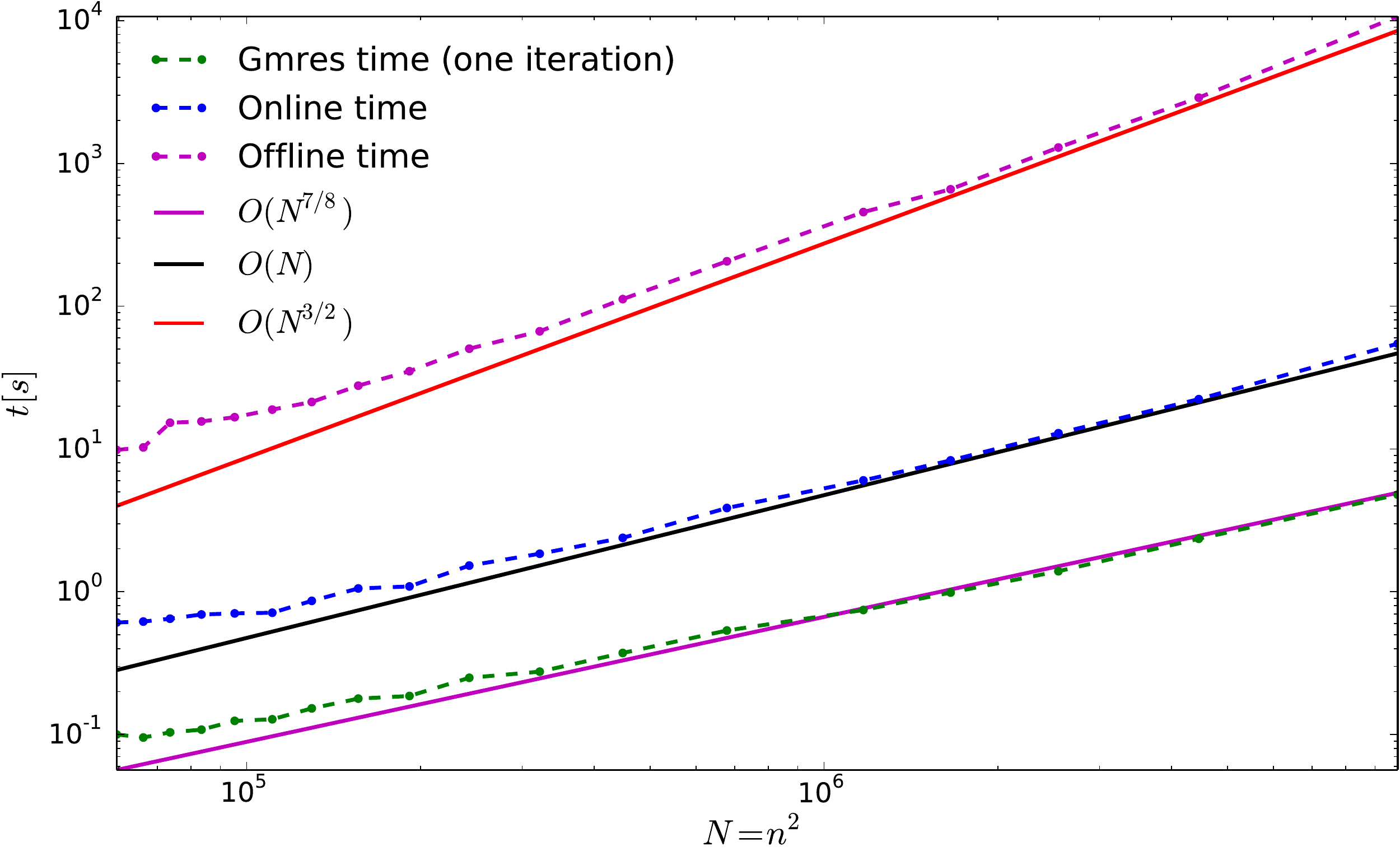}
  \caption{Run-times and empirical complexities, for the Marmousi2 model with $L=3$ and $\omega \sim n$  and $\text{max}_{\text{rank}} \sim \sqrt{n} $.}
  \label{fig:complexities_linear}
 \end{center}
\end{figure}

\begin{table}
    \begin{center}
        \begin{tabular}{|c|c|r|r|r|r|r|}
            \hline
            $N$ & $\omega/2\pi$  & $L = 8$ & $L = 16$ & $L= 32$ & $L = 64$ & $L=128$ \\
            \hline
            $136 \times 354$    &  1.18 & 0.45 & 1.09 & 2.69  & 5.67  & 12.44 \\
            $269 \times 705 $   &  1.78 & 0.70 & 1.44 & 3.45  & 7.86  & 17.72 \\
            $540 \times 1411 $  &  2.50 & 1.85 & 3.74 & 8.83  & 20.09 & 48.86 \\
            $1081 \times 2823 $ &  3.56 & 4.93 & 7.87 & 15.91 & 35.69 & 83.63 \\
            \hline
        \end{tabular}\caption{Average execution time (in seconds) for the online computation, with a tolerance on the GMRES of $10^{-7}$, for different $N$ and $L$. The solver is applied to the BP 2004 model. The frequency is scaled such that $\omega \sim \sqrt{n}$. The matrices are compressed using $\epsilon = 10^{-9}/L$ and $\text{rank}_{\text{max}} \sim \sqrt{n}$. }                                  \label{table:runtime_iteration_BP}
    \end{center}
\end{table}

\begin{table}
    \begin{center}
        \begin{tabular}{|c|c|r|r|r|r|r|}
            \hline
            $N$ & $\omega/2\pi $ [Hz] & $L = 8$ & $L = 16$ & $L= 32$ & $L = 64$ & $L=128$ \\
            \hline
            $195 \times 870$    &   7.2 & \textbf{(4-5)}  0.08	& \textbf{(5)}     0.19  &  \textbf{(5-6)}	    0.40   	& \textbf{(5-6)} 0.83  & \textbf{(6)}	1.74 \\
            $396 \times 1720 $  &   15 & \textbf{(5)}    0.23	& \textbf{(5-6)}   0.49  &  \textbf{(5-6}		0.98   	& \textbf{(5-6)} 2.17  & \textbf{(6-7)}	5.13	 \\
            $792 \times 3420 $  &   30 & \textbf{(5-6)}  0.69	& \textbf{(5-6)}   1.54  &  \textbf{(6-7)}	    2.48    & \textbf{(6-7)} 4.93  & \textbf{(7-8)}	10.64 \\
            $1586 \times 6986 $ &   60 & \textbf{(5-6)}  2.29	& \textbf{(5-6)}   5.04  &  \textbf{(6-7)}  	8.47    & \textbf{(6-8)} 14.94  &    - \\
            \hline
        \end{tabular}\caption{Number of GMRES iterations (bold) required to reduce the relative residual to $10^{-7}$, along with average execution time (in seconds) of one GMRES iteration for different $N$ and $L$. The solver is applied to the Marmousi2 model. The frequency is scaled such that $\omega \sim n$. The matrices are compressed using $\epsilon = 10^{-9}/L$ and $\text{rank}_{\text{max}} \sim \sqrt{n}$. }                                  \label{table:GMRES_iteration_marmousi_constant_nppwl}
    \end{center}
\end{table}

\begin{table}
    \begin{center}
        \begin{tabular}{|c|c|r|r|r|r|r|}
            \hline
            $N$ & $\omega/2\pi $ [Hz] & $L = 8$ & $L = 16$ & $L= 32$ & $L = 64$ & $L=128$ \\
            \hline
            $136 \times 354$    &      1.4 & \textbf{(6)}    0.073 & \textbf{(6-7)}  0.18  & \textbf{(7-8)} 0.37 & \textbf{(7-8)}  0.84  & \textbf{(8-9)}   1.52    \\
            $269 \times 705 $   &      2.7 & \textbf{(6-7)}  0.10  & \textbf{(6-7)}  0.23  & \textbf{(7)}   0.50 & \textbf{(8-9)}  1.10  & \textbf{(8-9)}   2.14    \\
            $540 \times 1411 $  &      5.5 & \textbf{(7)}    0.32  & \textbf{(7-8)}  0.64  & \textbf{(8-9)} 1.30 & \textbf{(9)}    3.06  & \textbf{(10-12)} 6.22    \\
            $1081 \times 2823 $ &     11.2 & \textbf{(7)}    0.87  &	\textbf{(7-8)}  1.46  &	\textbf{(8-9)} 2.78 & \textbf{(9-10)} 5.75  & \textbf{(10-12)} 12.65   \\
            \hline
        \end{tabular}\caption{Number of GMRES iterations (bold) required to reduce the relative residual to $10^{-7}$, along with average execution time (in seconds) of one GMRES iteration for different $N$ and $L$. The solver is applied to the BP 2004 model. The frequency is scaled such that $\omega \sim n$. The matrices are compressed using $\epsilon = 10^{-9}/L$ and $\text{rank}_{\text{max}} \sim \sqrt{n}$. }                                  \label{table:GMRES_iteration_BP_constant_nppwl}
    \end{center}
\end{table}

\subsection{Cavities}

As stated in the introduction, the complexity of the algorithm degrades when the underlying model contains sharp interfaces and resonant cavities. In general, the increase in complexity is mainly due to the increased number of iterations needed for convergence. This fact can be easily understood by looking at the spectrum of the preconditioned integral system.

The model in Fig. \ref{fig:resonator} (left) is used for the numerical experiments. The shape is kept constant, with an adjustable $c_{\text{red}}$ and with a smooth background speed $c_{\text{blue}}(x,y) = 1+0.1x +0.1y$ for $(x,y)\in [0,1]^2$, such that the problem can not be reduced to an integral equations posed on the interfaces.

\begin{figure}[H]
    \begin{center}
        \includegraphics[trim = 27mm 25mm 10mm 15mm, height = 7cm, clip]{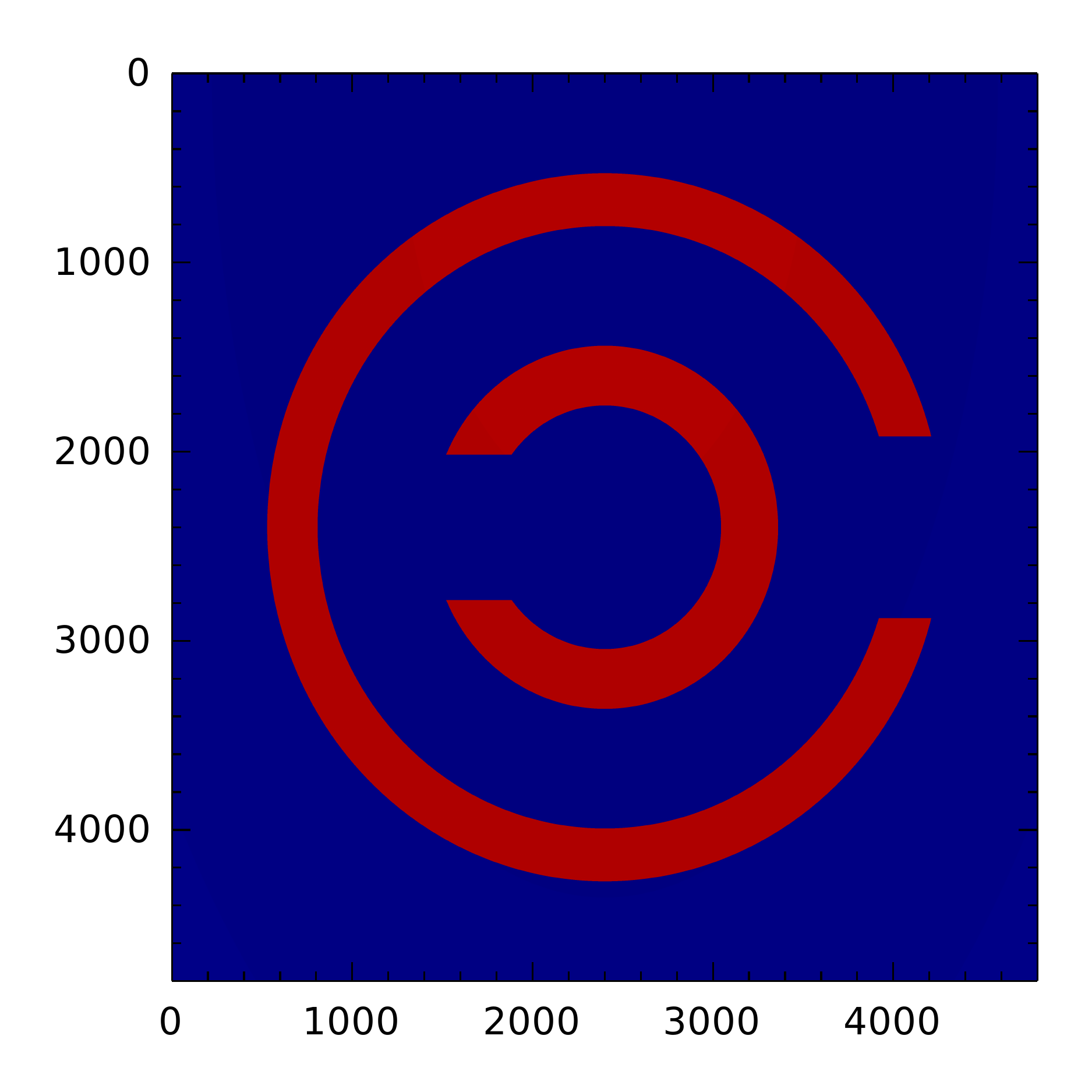} \hspace{0.5cm}
        \includegraphics[trim = 45mm 37mm 35mm 26mm, height = 7cm, clip]{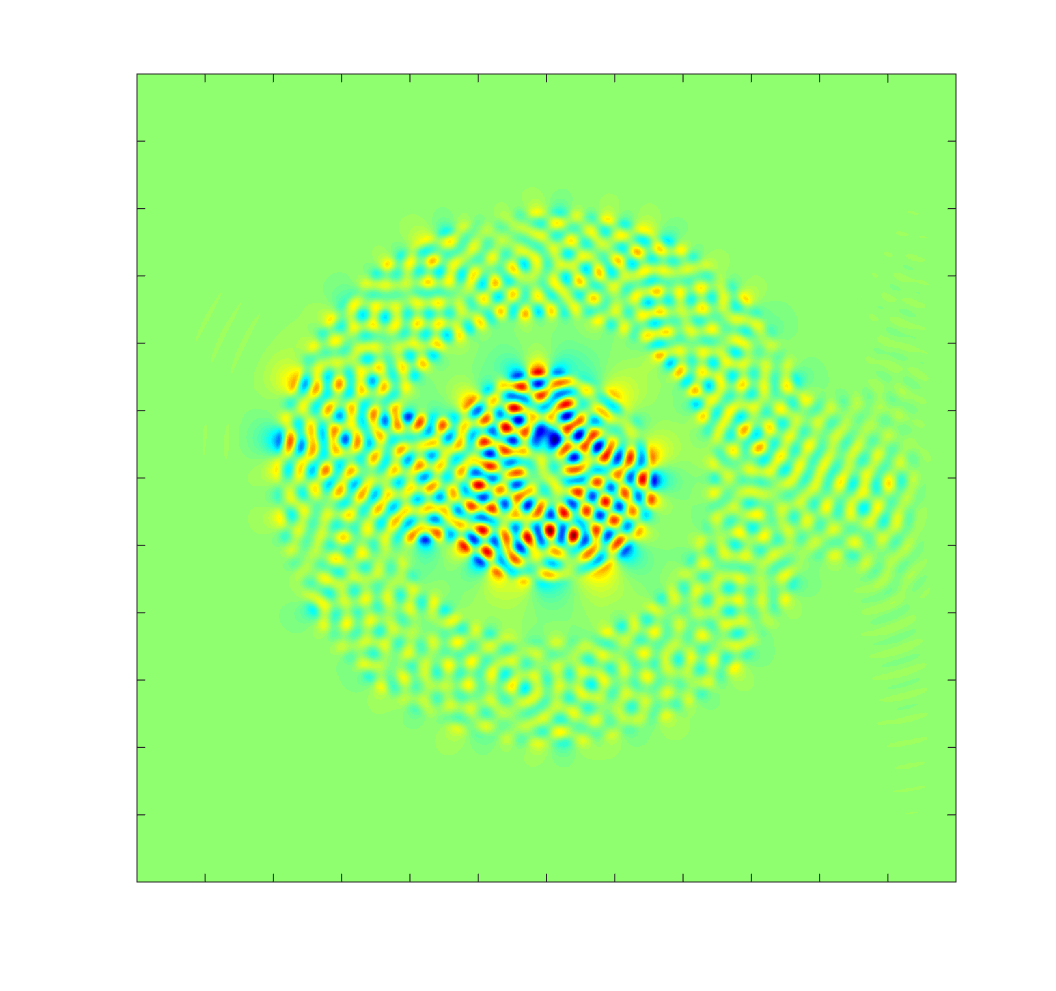}
    \end{center}
    \caption{Left: Resonant wave-guide inspired in the Comedy Central logo; right: real part of a typical solution.} \label{fig:resonator}
\end{figure}

Fig. \ref{fig:eigen_cavity} shows the eigenvalues of the preconditioned polarized system, using $L=4$ layers, for different frequencies. We can observe that the eigenvalues are spread in the unit disk centered at one as stated in Section \ref{section:gmres}. Moreover, as the frequency increases, the eigenvalues tend to be less clustered around 1, in particular, we can observe some eigenvalues close to zero. In such circumstances, GMRES is known to experience slow convergence. Finally, we point out that the number of eigenvalues far from 1 grows as $\omega$, which can be expected from the number of resonant modes in function of the frequency in 2D. We point out the radically different behavior from the spectra shown in Figs. \ref{fig:eigen_marmousi} and \ref{fig:eigen_homogeneous}, in which all the eigenvalues are tightly clustered around 1, far from zero.

\begin{figure}[H]
    \begin{center}
        \includegraphics[trim = 19mm 5mm 20mm 5mm, height = 5.1cm, clip]{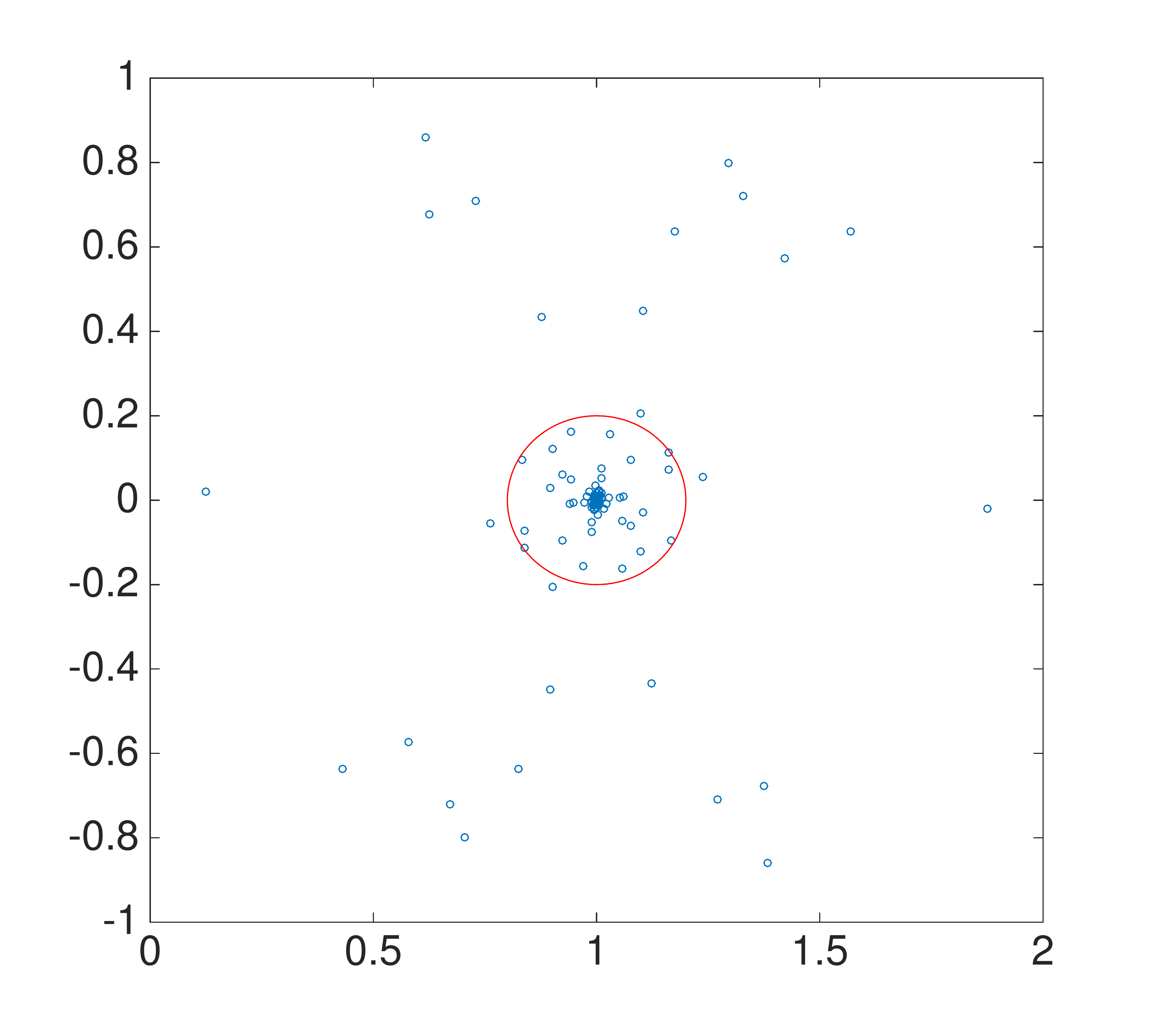}
        \includegraphics[trim = 19mm 5mm 20mm 5mm, height = 5.1cm, clip]{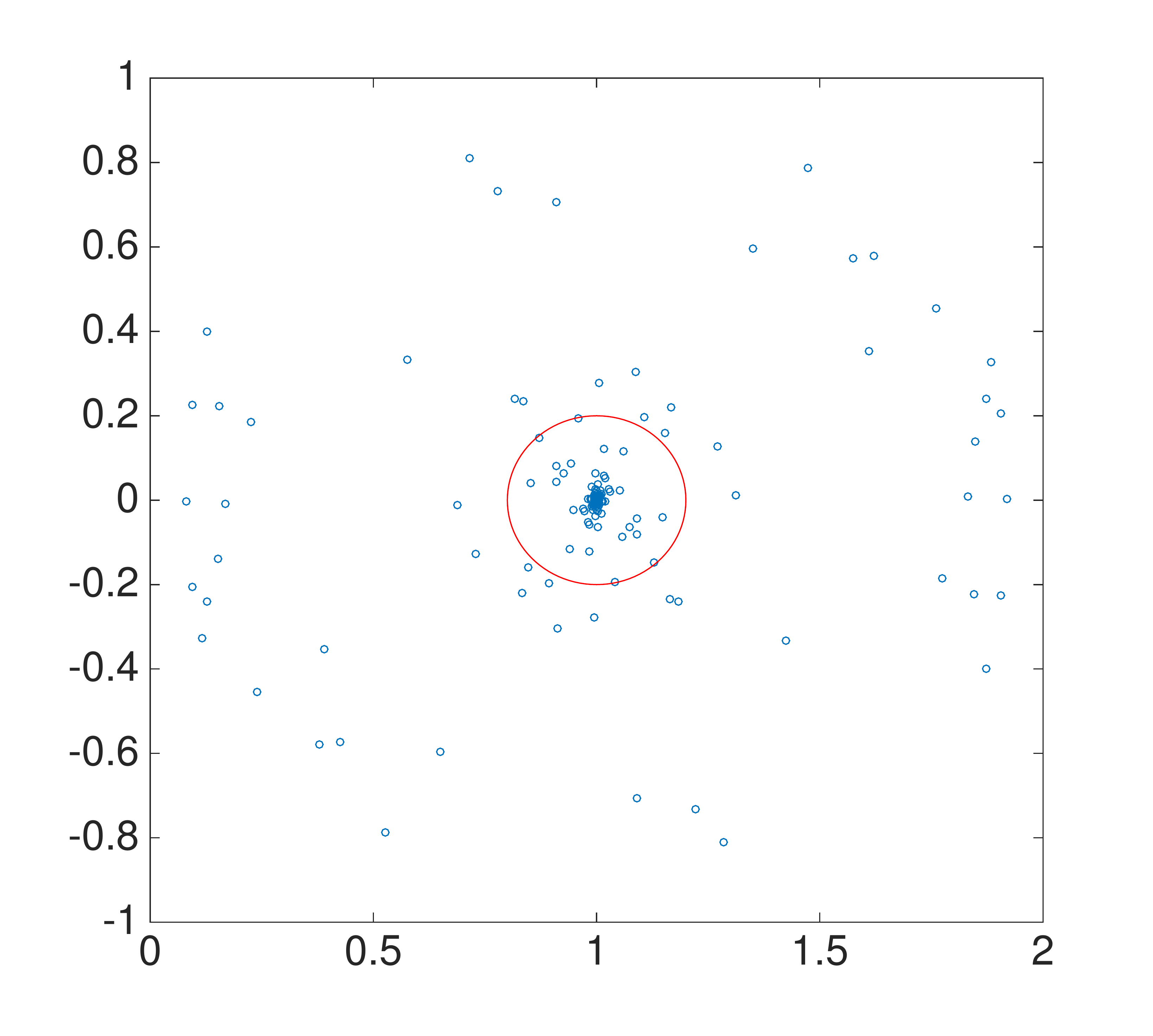}
        \includegraphics[trim = 19mm 5mm 20mm 5mm, height = 5.1cm, clip]{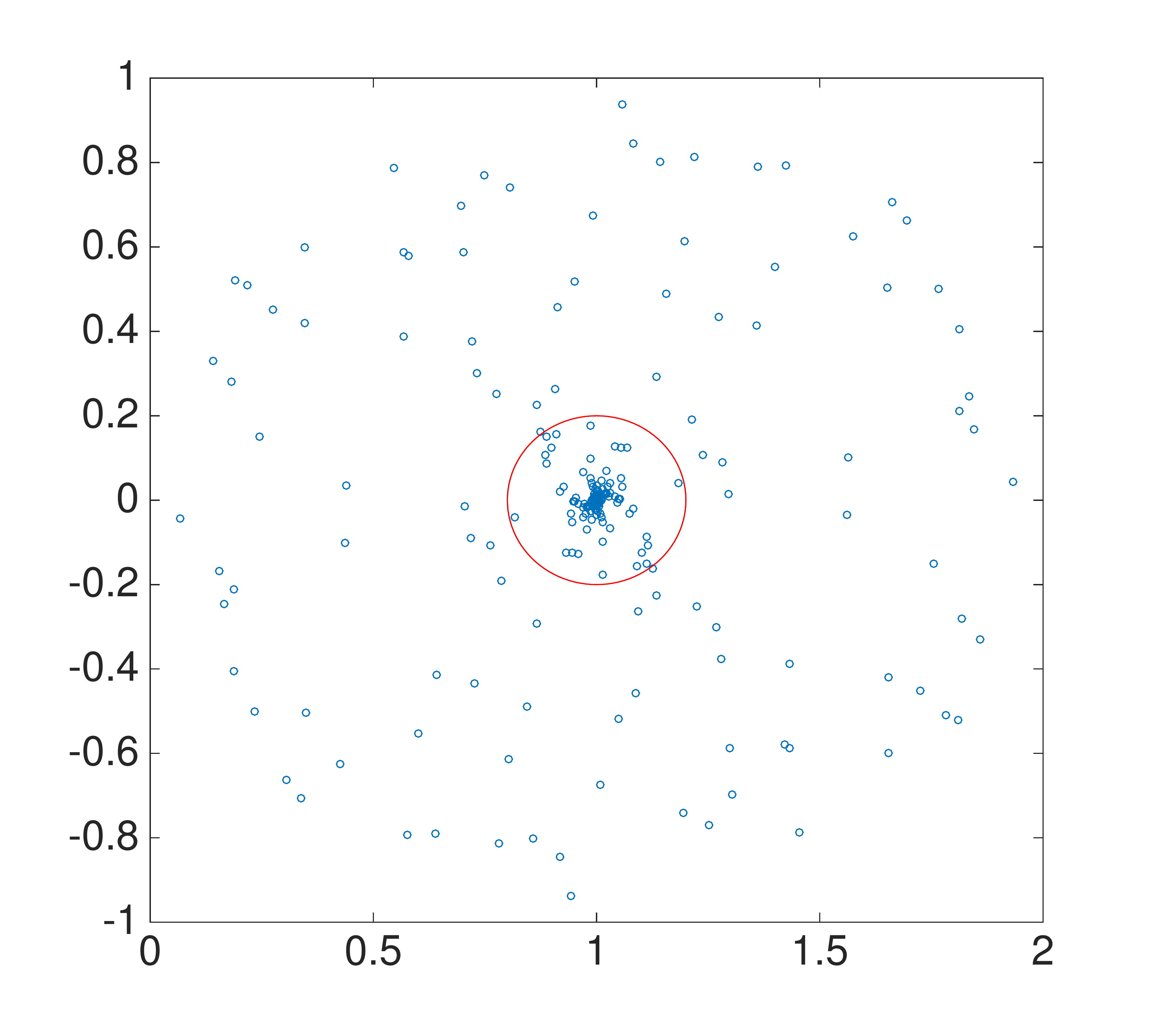}
    \end{center}
    \caption{Eigenvalues in the complex plane of $(\mathbf{\underline{D}}\,)^{-1} \; \mathbf{\underline{P}} \; \underline{\underline{\mathbf{M}}}$, for the resonant cavity in Fig. \ref{fig:resonator} with $ c_{\text{red}}= 5$, for $\omega = 8$ (left), $\omega = 16$ (center); and $\omega = 32$ (right). There are $26$ (left), $58$ (center); and $108$ (right) eigenvalues outside the red circle of radius $0.2$ centered at $1$.} \label{fig:eigen_cavity}
\end{figure}

\section{Acknowledgments}
The authors thank Thomas Gillis and Adrien Scheuer for their help with the Python code used to generate the complexity plots; Russell Hewett, Paul Childs and Chris Stolk for insightful discussions; Alex Townsend for his valuable remarks on earlier versions of the manuscript; Cris Cecka for insightful discussions concerning the multilevel compression algorithms; and the anonymous reviewers for their helpful insight.

The authors thank Total SA for supporting this research. LD is also supported by AFOSR, ONR, NSF, and the Alfred P. Sloan Foundation.

\appendix
\section{Discretization} \label{appendix:quadratures}

\subsection*{Computations for Eq. \ref{eq:discrete_GRF} in 1D}

The stencils for the derivatives of $G$ and $u$ are given by the discrete Green's representation formula. We present an example in 1D, which is easily generalized to higher dimension.
Let $u = \{ u_i\}_{i = 0}^{n+1}$ and  $v= \{ v_i\}_{i = 0}^{n+1}$. Define $\Omega = \{ 1, ... n\}$ with the discrete inner product
\[
	\langle u,v \rangle_{\Omega} = \sum_{i = 1}^n u_i v_i.
\]
Let $\triangle^h u$ be the three-point stencil approximation of the second derivative. I.e.
\[
	\left(  \triangle^h u \right)_i =  u_{i-1} - 2 u_i + u_{i+1}.
\]
We can use summation by parts to obtain the expression
\begin{equation}
	\langle \triangle^h u, v \rangle_{\Omega} = \langle  u, \triangle^h v \rangle_{\Omega} - v_0( u_1 - u_0) + u_0( v_1 - v_0) + v_{n+1}(u_{n+1} - u_n) - u_{n+1}(v_{n+1} - v_n).
\end{equation}
The formula above provides the differentiation rule to compute the traces and the derivatives of $u$ and $G$ at the interfaces. For example, let $G^{k}_i$ be the solution to 1D discrete Helmholtz operator defined by $ H G^{k} =  (- \triangle^h - m\omega^2) G^{k} = \delta^{k}_i$, for $j \in [1,...,n]$. Then
\begin{equation}
	u_k = \langle H G^{k}, u \rangle_{\Omega} = \langle  G^{k}, H u \rangle_{\Omega} - G^{k}_0 (u_1 - u_0) + u_0 (G^{k}_1 - G^{k}_0)+ G^{k}_{n+1} (u_{n+1} - u_n) - u_{n+1}(G^{k}_{n+1} - G^{k}_n).
\end{equation}
To simplify the notations we define
\begin{equation}
	\partial^{+} G_0 = G_1 - G_0, \qquad \partial^{-} G_{n+1} = G_{n+1} - G_n,
\end{equation}
which are upwind derivatives with respect to $\Omega$, i.e. they look for information inside $\Omega$.
Let us consider a partition of $\Omega = \Omega^1 \cup \Omega^2$, where $\Omega^1 = \{ 1, n^1\}$ and $\Omega^2 = \{n^1+1, n^2\}$. We can define local inner products between $u$ and $v$ analogously,
\[ \langle u,v \rangle_{\Omega^1} = \sum_{i = 1}^{n^1} u_i v_i, \qquad  \langle u,v \rangle_{\Omega^2} = \sum_{i = n^1+1}^{n^2} u_i v_i, \]
in such a way that
\[ \langle u,v \rangle_{\Omega^1} +  \langle u,v \rangle_{\Omega^2}  =  \langle u,v \rangle_{\Omega}. \]
We can use summation by parts in each sub-domain, obtaining
\begin{align}
	\langle H G, u \rangle_{\Omega^1} &= \langle  G, H u \rangle_{\Omega^1} - G_0 \partial^{+}u_0 + u_0 \partial^{+} G_0 + G_{n^1+1} \partial^{-}u_{n^1+1} - u_{n^1+1}\partial^{-} G_{n^1+1},  \label{eq:quadrature} \\
	\langle H G, u \rangle_{\Omega^2} &= \langle  G, H u \rangle_{\Omega^2} - G_{n^1} \partial^{+}u_{n^1} + u_{n^1} \partial^{+} G_{n^1} + G_{n^2+1} \partial^{-}u_{n^2+1} - u_{n^2+1}\partial^{-} G_{n^2+1},
\end{align}
which are the 1D version of Eq. \ref{eq:discrete_GRF}.

\subsection*{Computations for Eq. \ref{eq:discrete_GRF} in 2D}

The 2D case is more complex given that we have to consider the PML in the $x$ direction when integrating by parts. To simplify the proofs, we need to introduce the symmetric formulation of the PML's for the Helmholtz equation. We show in this section  that the symmetric and unsymmetric formulations lead to the same Green's representation formula. In addition, we present Eq. \ref{eq:equivalence_symmstric_unsymmetric}, which links the Green's functions of both formulations. In Appendix  \ref{appendix:green_representation} we use the symmetric formulation in all of the proofs; however, owing to Eq. \ref{eq:equivalence_symmstric_unsymmetric} the proofs are valid for the unsymmetric formulation as well.

Following \cite{EngquistYing:Sweeping_H} we define the symmetrized Helmholtz equation with PML's given by
\begin{equation}\label{eq:Hemholtz_symmetric}
- \left ( \partial_x \frac{\alpha_x(\x)}{\alpha_z(\x)} \partial_x  +  \partial_z \frac{\alpha_z(\x)}{\alpha_x(\x)} \partial_z \right ) u - \frac{m \omega^2}{\alpha_x(\x) \alpha_z(\x)} u = \frac{f} {\alpha_x(\x) \alpha_z(\x)},
\end{equation}
which is obtained by dividing Eq. \ref{eq:Helmholtz_pml} by $\alpha_x(\x) \alpha_z(\x)$ and using the fact that $\alpha_x$ is independent of $z$, and $\alpha_z$ is independent of $x$.
We can use the same discretization used in Eq. \ref{eq:pml_partial_z} to obtain the system
\begin{align} \notag
	\left( \mathbf{H^s} \u \right )_{i,j} =	& -\frac{1}{h^2} 	\left ( \frac{\alpha_x(\x_{i+1/2,j})}{ \alpha_z(\x_{i+1/2,j})} (\u_{i+1,j} - \u_{i,j}) - \frac{\alpha_x(\x_{i-1/2,j})}{ \alpha_z(\x_{i-1/2,j}) } ( \u_{i,j} - \u_{i-1,j} ) \right)  \\
											& - \frac{ 1}{h^2}  \left ( \frac{\alpha_z(\x_{i,j+1/2})}{ \alpha_x(\x_{i,j+1/2})} (\u_{i,j+1} - \u_{i,j}) - \frac{\alpha_z(\x_{i,j-1/2})}{ \alpha_x(\x_{i,j-1/2}) }( \u_{i,j} - \u_{i,j-1} ) \right) \notag \\
											& -  \frac{m(\x_{i,j}) \omega^2 }{\alpha_x(\x_{i,j})\alpha_z(\x_{i,j})} \u_{i,j}  = \mathbf{f}_{i,j}, \label{eq:Helmholtz_pml_symmetric}
\end{align}
in which we used the fact that the support of $\mathbf{f}$ is included in the physical domain, where $\alpha_x$ and $\alpha_z$ are equal to $1$.  Moreover, from Eq. \ref{eq:Hemholtz_symmetric} and Eq. \ref{eq:def_local_Green_function}, it is easy to prove that the Green's function associated to $\mathbf{H^s}$ satisfies
\begin{equation} \label{eq:equivalence_symmstric_unsymmetric}
	\mathbf{G}(\x,\y) \alpha_x(\y) \alpha_z(\y) = \mathbf{G^s}(\x,\y).
\end{equation}
Given that $\mathbf{f}$ is supported inside the physical domain we have that $\alpha_x(\y) \alpha_z(\y) \mathbf{f}(\y) = \mathbf{f}(\y)$, then applying $\mathbf{G}$ and $\mathbf{G^s}$ to $\mathbf{f}$ yield the same answer, i.e., $\u$ and $\u^s$ (the solution to the symmetrized system) are identical.

To deduce Eq. \ref{eq:discrete_GRF}, we follow the same computation as in the 1D case. We use the discrete $\ell^2$ inner product, and we integrate by parts to obtain
\begin{align} \notag
\langle \mathbf{H^s}\u, \v \rangle  = 	& \sum_{i,j= 1}^{n_x,n_z} \left( \mathbf{H^s}\u \right)_{i,j} \v_{i,j}, \\ \notag
									=	& \,\, \langle \u, \mathbf{H^s} \v \rangle \\ \notag
										& - \sum_{j=1}^{n_z} \bigg[ \frac{\alpha_x(\x_{1/2,j})}{\alpha_z(\x_{1/2,j})} \left (  \u_{0,j} \partial^+_x \v_{0,j} - \v_{0,j} \partial^+_x \u_{0,j} \right)  \\ \notag
										& +  \frac{\alpha_x(\x_{n_x + 1/2,j})}{\alpha_z(\x_{n_x + 1/2,j})} \left (  \u_{n_x +1,j} \partial^-_x \v_{n_x +1,j} - \v_{n_x +1,j} \partial^-_x \u_{n_x +1,j} \right)        \bigg]\\ \notag
										& - \sum_{i=1}^{n_x} \bigg[ \frac{\alpha_z(\x_{i,1/2})}{\alpha_x(\x_{i,1/2})} \left (  \u_{i,0} \partial^+_z \v_{i,0} - \v_{i,0} \partial^+_z \u_{i,0} \right)  \\ \notag
										& +  \frac{\alpha_z(\x_{i,n_z + 1/2})}{\alpha_x(\x_{i,n_z + 1/2})} \left (  \u_{i,n_z +1} \partial^-_z \v_{i,n_z +1} - \v_{i,n_z +1} \partial^-_z \u_{i,n_z +1} \right)        \bigg].
\end{align}
This is the general formula for the GRF. Moreover, given the nature of the layered partition, we can further simplify this expression. In each subdomain we have
\begin{align} \notag
\langle \mathbf{H^s}\u, \v \rangle_{\Omega^{\ell}}  = 	&\sum_{i = -n_{\text{pml}}+1}^{n_x+ n_{\text{pml}}}  \sum_{j= 1}^{n_z} \left( \mathbf{H^s}\u \right)_{i,j} \v_{i,j}, \\ \notag
									=	& \,\, \langle \u, \mathbf{H^s} \v \rangle _{\Omega^{\ell}}  \\ \notag
										& - \sum_{j=1}^{n_z} \bigg[ \frac{\alpha_x(\x_{1/2-n_{\text{pml}}),j})}{\alpha_z(\x_{1/2-n_{\text{pml}}),j})} \left (  \u_{-n_{\text{pml}},j} \partial^+_x \v_{-n_{\text{pml}},j} - \v_{-n_{\text{pml}},j} \partial^+_x \u_{-n_{\text{pml}},j} \right)  \\ \notag
										& +  \frac{\alpha_x(\x_{n_x +n_{\text{pml}}+ 1/2,j})}{\alpha_z(\x_{n_x + n_{\text{pml}} + 1/2,j})} \left (  \u_{n_x+n_{\text{pml}} +1,j} \partial^-_x \v_{n_x+n_{\text{pml}} +1,j} - \v_{n_x +n_{\text{pml}}+1,j} \partial^-_x \u_{n_x+n_{\text{pml}} +1,j} \right)        \bigg]\\ \notag
										& - \sum_{i=-n_{\text{npml}}+1}^{n_x+n_{\text{npml}}} \bigg[ \frac{\alpha_z(\x_{i,1/2})}{\alpha_x(\x_{i,1/2})} \left (  \u_{i,0} \partial^+_z \v_{i,0} - \v_{i,0} \partial^+_z \u_{i,0} \right)  \\ \notag
										& +  \frac{\alpha_z(\x_{i,n_z + 1/2})}{\alpha_x(\x_{i,n_z + 1/2})} \left (  \u_{i,n_z +1} \partial^-_z \v_{i,n_z +1} - \v_{i,n_z +1} \partial^-_z \u_{i,n_z +1} \right)        \bigg].
\end{align}
In each subdomain this expression is never evaluated inside the PML for $z$, hence $ \alpha_z(\x) = 1$. Moreover, if $\u$ and $\v$ both satisfy homogeneous Dirichlet boundary conditions at the vertical boundaries ($ i = -n_{\text{pml}} $ and $i = n_x+n_{\text{pml}}+1 $, which are imposed in the formulation of the PML), we obtain
\begin{align*}
\langle \mathbf{H^s}\u, \v \rangle_{\Omega^{\ell}}  =	& \,\, \langle \u, \mathbf{H^s} \v \rangle_{\Omega^{\ell}}   \\
										& - \sum_{i=-n_{\text{pml}} + 1}^{n_x + n_{\text{pml}} } \bigg[ \frac{1}{\alpha_x(\x_{i,1/2})} \left (  \u_{i,0} \partial^+_z \v_{i,0} - \v_{i,0} \partial^+_z \u_{i,0} \right)   \\
										& -  \frac{1}{\alpha_x(\x_{i,n_z + 1/2})} \left (  \u_{i,n_z +1} \partial^-_z \v_{i,n_z +1} - \v_{i,n_z +1} \partial^-_z \u_{i,n_z +1} \right)        \bigg] .
\end{align*}
We can then replace $\u$ and $\v$ by $\u^s$, the solution to $\H^s \u^s = \mathbf{f}$, and $\mathbf{G^s}$. By construction both satisfy homogeneous Dirichlet boundary conditions at $ i = -n_{\text{pml}}$ and $i = n_x+1 + n_{\text{pml}}$, therefore, we obtain
\begin{align} \notag
\langle \mathbf{H^s}\mathbf{G^s}, \u^s \rangle_{\Omega^{\ell}}  =	& \,\, \mathbf{u}^s \\
										= & \,\, \langle \mathbf{G^s}, \mathbf{f} \rangle_{\Omega^{\ell}}   - \sum_{i=-n_{\text{pml}} + 1}^{n_x + n_{\text{pml}} }  \bigg[ \frac{1}{\alpha_x(\x_{i,1/2})} \left (  \mathbf{G^s}_{i,0} \partial^+_z \u^s_{i,0} - \u^s_{i,0} \partial^+_z \mathbf{G^s}_{i,0} \right)  \\
										& -  \frac{1}{\alpha_x(\x_{i,n_z + 1/2})} \left (  \mathbf{G^s}_{i,n_z +1} \partial^-_z \u^s_{i,n_z +1} - \u^s_{i,n_z +1} \partial^-_z \mathbf{G^s}_{i,n_z +1} \right)        \bigg].
\end{align}
Moreover, using the relation between both Green's functions, the independence of $\alpha_x$ with respect to z, the point-wise equality between $\u$ and $\u^s$, and the properties of the support of $\mathbf{f}$; we obtain the Green's representation formula for the layered partition as
\begin{align} \label{eq:GRF_from_symmetric}
 \mathbf{u} & = \,\, \langle \mathbf{G}, \mathbf{f} \rangle   + \sum_{i=-n_{\text{pml}} + 1}^{n_x + n_{\text{pml}} }  \bigg[  \left ( - \mathbf{G}_{i,0} \partial^+_z \u_{i,0} + \u_{i,0} \partial^+_z \mathbf{G}_{i,0} \right) + \left (  \mathbf{G}_{i,n_z +1} \partial^-_z \u_{i,n_z +1} - \u_{i,n_z +1} \partial^-_z \mathbf{G}_{i,n_z +1} \right)        \bigg].
\end{align}
It is the discrete version of Eq. \ref{eq:GRF_slab}.

\section{Triangular and block triangular matrices} \label{appendix:general_facts}

In this section we introduce the necessary tools for the proofs in Appendix \ref{appendix:green_representation}. They are by no means original.

Let $H$ be the symmetric discretized Helmholtz operator in 1D, where
	\begin{equation}
		H = \left [  \begin{array}{ccccc}
				b_{1}   & c_{1} 	& 0  		& \hdots 	& 0  	 \\
				a_{1} 	& b_{2} 	&  c_{2}    & \hdots 	& 0  	\\
				0 		& \ddots 	&  \ddots  	& \ddots 	& 0  	 \\
				0 		& \ddots 	&  a_{n-2}  &  b_{n-1}	& c_{n-1} \\
				0 		& \hdots 	&  0  		& a_{n-1}   & b_{n}  		 \\
				\end{array} \right],
 	\end{equation}
 	in which $a = c$ by symmetry. We denote by $G$ the inverse of $H$.

 	We follow \cite{Usmani:inversion_of_jacobi_tridiagonal_matrix} in writing a simple recurrence formula based on the minors
 	\begin{align}
 		\theta_{-1} &= 0, &\theta_0    &= 1,  &\theta_i &=  b_{i} \theta_{i-1} - a_{i} c_{i-1} \theta_{i-2}, \\
 		\phi_{n+2}  &= 0,  &\phi_{n+1} &= 1,  &\phi_{i} &= b_{i} \phi_{i+1} - c_{i} a_{i+1} \phi_{i+2}.
 	\end{align}

 \begin{lemma} [ Lemma 2 in \cite{Usmani:inversion_of_jacobi_tridiagonal_matrix}  ] \label{lemma:casorati_1D}
 	We have the following identity:
 	\begin{equation}
 		\theta_i \phi_{i+1} - a_{i+1}c_i \theta_{i-1} \phi_{i+2} = \theta_{n}, \qquad \forall  1\leq i \leq n.
 	\end{equation}
 \end{lemma}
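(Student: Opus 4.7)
The statement is a classical Casoratian-type identity for two second-order linear recurrences with matching coefficients, so my plan is to show that the quantity
\[
W_i \;:=\; \theta_i \phi_{i+1} - a_{i+1} c_i \, \theta_{i-1} \phi_{i+2}
\]
is independent of $i$, and then evaluate it at one convenient index.

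For the invariance step, I would compute $W_{i-1}$ and use the $\phi$-recurrence to expand $\phi_i = b_i \phi_{i+1} - c_i a_{i+1} \phi_{i+2}$. Substituting,
\[
W_{i-1} = \theta_{i-1}\bigl(b_i \phi_{i+1} - c_i a_{i+1} \phi_{i+2}\bigr) - a_i c_{i-1} \theta_{i-2} \phi_{i+1},
\]
and regrouping the $\phi_{i+1}$ terms gives $(b_i \theta_{i-1} - a_i c_{i-1} \theta_{i-2})\phi_{i+1} - a_{i+1} c_i \theta_{i-1} \phi_{i+2}$. The $\theta$-recurrence recognizes the first factor as $\theta_i$, so $W_{i-1} = \theta_i \phi_{i+1} - a_{i+1} c_i \theta_{i-1} \phi_{i+2} = W_i$. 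Hence $W_i$ is constant for $1 \le i \le n$.

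For the evaluation step, I would take $i = n$, where the boundary conditions $\phi_{n+1} = 1$ and $\phi_{n+2} = 0$ give immediately
\[
W_n = \theta_n \cdot 1 - a_{n+1} c_n \theta_{n-1} \cdot 0 = \theta_n,
\]
and the identity follows for all admissible $i$. (Here the symbols $a_{n+1}$ and $c_n$ need not even be defined in the matrix, since the term they multiply vanishes; alternatively, one can evaluate at $i = 1$, using $\theta_0 = 1$ and $\theta_{-1} = 0$, and invoke the product $\theta_1 \phi_2 - a_2 c_1 \phi_3 = b_1 \phi_2 - a_2 c_1 \phi_3 = \phi_1$ in conjunction with the symmetric derivation that $W_i$ equals both $\theta_n$ and $\phi_1$.)

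There is essentially no obstacle here: the proof is a one-line telescoping argument, and the symmetry $a = c$ plays no role in the algebraic manipulation. The only thing to be careful about is bookkeeping the shifted indices in the two recurrences so that the coefficient $a_{i+1} c_i$ appears with the right offset, which is exactly what makes $W_i$ and $W_{i-1}$ match.
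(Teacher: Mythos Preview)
Your proof is correct. The paper itself does not prove this lemma; it is stated with a citation to Usmani and used as a black box in the proof of the rank-one property that follows. Your argument---showing that the discrete Wronskian $W_i$ is constant via the two recurrences and then evaluating at $i=n$---is exactly the standard proof and is what one finds in the cited reference.
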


\begin{theorem}[ Theorem 1 in \cite{Usmani:inversion_of_jacobi_tridiagonal_matrix} ] \label{thm:inverse_1D}
	If $H$ is nonsingular, then its inverse is given by
	\begin{equation}
		(H^{-1})_{i,j} = \left \{  \begin{array}{cr}
							(-1)^{i+j} \left( \Pi_{k=i}^{\ell-1} c_k \right) \frac{\theta_{i-1} \phi_{j+1}}{\theta_n} 	& \text{ if } i<j, \\
							\frac{\theta_{i-1}\phi_{i+1}}{\theta_n}     											& \text{ if } i=j, \\
							(-1)^{i+j} \left(\Pi_{k=j+1}^{i} a_k \right) \frac{\theta_{j-1} \phi_{i+1}}{\theta_n}	& \text{ if } i>j. \\
							\end{array}    \right.
	\end{equation}
\end{theorem}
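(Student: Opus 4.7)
The plan is to derive the formula via the classical cofactor expansion of the inverse:
$$(H^{-1})_{i,j} = \frac{(-1)^{i+j} M_{j,i}}{\det H},$$
where $M_{j,i}$ denotes the $(j,i)$-minor, i.e., the determinant of the submatrix obtained by deleting row $j$ and column $i$ of $H$. Since $\det H = \theta_n$ by the very definition of $\theta_n$ (expand the determinant along the last row and observe that the recurrence for $\theta_i$ is precisely the tridiagonal determinant recurrence), the task reduces to evaluating $M_{j,i}$ in each of the three cases.

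For the diagonal case $i = j$, deleting row $i$ and column $i$ of the tridiagonal matrix $H$ yields a block-diagonal matrix whose two blocks are the leading tridiagonal $(i-1)\times(i-1)$ submatrix and the trailing tridiagonal $(n-i)\times(n-i)$ submatrix; their determinants are $\theta_{i-1}$ and $\phi_{i+1}$, respectively, so $M_{i,i} = \theta_{i-1}\phi_{i+1}$ and the formula follows. For the off-diagonal case $i < j$, I would label the rows and columns of the reduced matrix by their original indices and observe that it inherits a block upper-triangular structure with three natural pieces: a leading $(i-1)\times(i-1)$ block equal to the leading principal submatrix of $H$ (determinant $\theta_{i-1}$); a middle block occupying original rows $i, i+1, \ldots, j-1$ and original columns $i+1, i+2, \ldots, j$ which, thanks to the vanishing of $H_{k,\ell}$ for $\ell > k+1$, is upper triangular with diagonal entries $c_i, c_{i+1}, \ldots, c_{j-1}$; and a trailing $(n-j)\times(n-j)$ block equal to the trailing principal submatrix starting at index $j+1$ (determinant $\phi_{j+1}$). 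Block-triangular Laplace expansion then gives
$$M_{j,i} = \theta_{i-1}\left(\prod_{k=i}^{j-1} c_k\right)\phi_{j+1},$$
and multiplying by $(-1)^{i+j}/\theta_n$ reproduces the claimed formula. The case $i > j$ is entirely symmetric, using $a_{j+1}, a_{j+2}, \ldots, a_i$ on the diagonal of the analogous lower triangular middle block.

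The main obstacle is not conceptual but purely a matter of careful bookkeeping: after deleting row $j$ and column $i$, one must correctly identify which original rows and columns land in the middle block, verify its triangularity by reading off the sparsity pattern of $H$, and read the correct products of $c_k$ (or $a_k$) off its diagonal. The sign $(-1)^{i+j}$ comes directly from the cofactor formula and does not require separate justification beyond noting that the three blocks align cleanly along a common permutation. No additional ingredient is needed; in particular, Lemma~\ref{lemma:casorati_1D} (the Casorati-type identity) is not used here, although it provides a useful consistency check in that the off-diagonal formulas for $i < j$ and $i > j$ agree with the diagonal formula in the limit $i \to j$ only after accounting for that identity.
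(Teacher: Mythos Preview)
The paper does not supply its own proof of this theorem; it is quoted from Usmani's paper and used as a black box. Your cofactor-expansion argument is the standard route and is essentially correct, so there is nothing to compare against.

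One small slip worth flagging: the triangularity goes the other way. After deleting row $j$ and column $i$ with $i<j$, the middle block (original rows $i,\ldots,j-1$ against original columns $i+1,\ldots,j$) has its $(r,s)$ entry equal to $H_{i+r-1,\,i+s}$; the tridiagonal sparsity $H_{k,\ell}=0$ for $\ell>k+1$ kills the entries with $s>r$, so the block is \emph{lower} triangular with diagonal $c_i,\ldots,c_{j-1}$. Likewise the full $3\times 3$ block structure is block \emph{lower} triangular (for instance the entry from the trailing block row $j+1$ into the middle block column $j$ is $a_j$, generally nonzero, so block upper-triangularity fails). Since the determinant of a (block) triangular matrix is the product of the (block) diagonal determinants regardless of orientation, your final formula
\[
M_{j,i}=\theta_{i-1}\Bigl(\prod_{k=i}^{j-1}c_k\Bigr)\phi_{j+1}
\]
is unaffected. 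The case $i>j$ is symmetric, with a lower-triangular middle block carrying $a_{j+1},\ldots,a_i$ on its diagonal becoming upper triangular instead. With that orientation corrected, your argument is complete.
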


\begin{proposition} (Rank-one property).  We have
 \begin{align}
	\left ( H^{-1}_{i+1,i+1} \right)^{-1}  H^{-1}_{i+1,i}   H^{-1}_{i+1,k}  &=  H^{-1}_{i,k}, \qquad &\text{for } i < k, \label{eq:extrapolator_1D_up} \\
	\left ( H^{-1}_{i-1,i-1} \right)^{-1}  H^{-1}_{i-1,i}   H^{-1}_{i-1,k}  &=  H^{-1}_{i,k}, \qquad &\text{for } i > k. \label{eq:extrapolator_1D_down}
\end{align}
Moreover
\begin{align}
	\left [ \left ( H^{-1}_{i+1,i+1} \right)^{-1}  H^{-1}_{i+1,i} \right ] H^{-1}_{i+1,i}  = H^{-1}_{i,i}  + c_{i}^{-1}  \left [ \left ( H^{-1}_{i+1,i+1} \right)^{-1}  H^{-1}_{i+1,i} \right ],\label{eq:extrapolator_jump_1D_up} \\
	\left [ \left ( H^{-1}_{i-1,i-1} \right)^{-1}  H^{-1}_{i-1,i} \right ] H^{-1}_{i-1,i}  = H^{-1}_{i,i}  + a_{i}^{-1}  \left [ \left ( H^{-1}_{i-1,i-1} \right)^{-1}  H^{-1}_{i-1,i} \right ].\label{eq:extrapolator_jump_1D_down}
\end{align}
\end{proposition}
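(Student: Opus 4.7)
The plan is to derive all four identities by direct substitution of the closed-form expressions for $H^{-1}$ provided by Theorem \ref{thm:inverse_1D}, reducing each claim to an algebraic identity among the recursion sequences $\theta_i$ and $\phi_j$.

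For the rank-one identity Eq.~\ref{eq:extrapolator_1D_up} I would first read off from Theorem \ref{thm:inverse_1D} the three relevant entries:
\[
H^{-1}_{i+1,i+1} = \frac{\theta_i\phi_{i+2}}{\theta_n},\qquad H^{-1}_{i+1,i} = -a_i\frac{\theta_{i-1}\phi_{i+2}}{\theta_n},\qquad H^{-1}_{i+1,k} = (-1)^{i+1+k}\Bigl(\prod_{\ell=i+1}^{k-1}c_\ell\Bigr)\frac{\theta_i\phi_{k+1}}{\theta_n}.
\]
The extrapolator combination $E := (H^{-1}_{i+1,i+1})^{-1}H^{-1}_{i+1,i} = -a_i\,\theta_{i-1}/\theta_i$ is thus manifestly independent of $k$ and of $\phi_{i+2}$. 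Multiplying $E$ by $H^{-1}_{i+1,k}$ cancels the $\theta_i$, and the resulting $-a_i$ combines with $\prod_{\ell=i+1}^{k-1}c_\ell$ to restore, via the symmetry $a_i = c_i$, exactly the product $\prod_{\ell=i}^{k-1}c_\ell$ and the factor $\theta_{i-1}\phi_{k+1}$ that appear in $H^{-1}_{i,k}$. The case $k=i+1$ (empty product) reduces to $a_i = c_i$ and is immediate. Eq.~\ref{eq:extrapolator_1D_down} is obtained from the mirror argument with $\theta \leftrightarrow \phi$ and $a \leftrightarrow c$.

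For the jump identity Eq.~\ref{eq:extrapolator_jump_1D_up} I would plug the same closed forms into each of the four terms. After clearing the common factor $\theta_{i-1}/(\theta_i\theta_n)$, the claim reduces to the single scalar equation
\[
a_i^2\,\theta_{i-1}\phi_{i+2} = \theta_i\phi_{i+1} - \theta_n.
\]
Using $a_i = c_i$ and, crucially, the constancy of the off-diagonals of $H$ (which holds because $H$ comes from the uniform-grid second-order stencil for the 1D Helmholtz operator, where $a_i=c_i=-1/h^2$), one has $a_i^2 = a_{i+1}c_i$; the above is then exactly the Casorati identity of Lemma \ref{lemma:casorati_1D} at index $i$. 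Eq.~\ref{eq:extrapolator_jump_1D_down} follows by the same calculation applied at index $i-1$, with the roles of $\theta$ and $\phi$ (and of $a$ and $c$) swapped.

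The main obstacle is largely bookkeeping: the extrapolator identity relies on the observation that passing from row $i$ to row $i+1$ in $H^{-1}_{i,k}$ (with $i<k$) replaces $\theta_{i-1}$ by $\theta_i$ and peels off exactly one factor of $-c_i$, a structure not immediately visible from the matrix itself. The subtler point is that the jump identity genuinely requires constancy of the off-diagonals: without the equality $a_{i+1}c_i = a_i^2$, the reduction to Lemma \ref{lemma:casorati_1D} picks up an extra correction term. This is not a generic feature of symmetric tridiagonal matrices, but it is automatic in the setting of interest because the discretized Helmholtz stencil produces constant $a_j = c_j = -1/h^2$.
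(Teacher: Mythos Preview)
Your approach is essentially the paper's: both substitute the closed form from Theorem~\ref{thm:inverse_1D} to obtain the extrapolator $-c_i\theta_{i-1}/\theta_i$ (your $a_i$ versus the theorem's $a_{i+1}$ is harmless here since all off-diagonals equal $-1/h^2$), and both reduce the jump identity to the Casorati relation of Lemma~\ref{lemma:casorati_1D}. Your explicit remark that constancy of the off-diagonals is what makes $a_i^2=a_{i+1}c_i$ is correct and is in fact also implicitly used in the paper's own computation.
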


\begin{proof}
Eq. \ref{eq:extrapolator_1D_up} and Eq. \ref{eq:extrapolator_1D_down} are direct application of the expression for the inverse of $H$ given by Thm. \ref{thm:inverse_1D}.

We only prove Eq. \ref{eq:extrapolator_jump_1D_up} -- the proof of Eq. \ref{eq:extrapolator_jump_1D_down} is analogous. Using the expression of the inverse given by Thm. \ref{thm:inverse_1D} we have
\begin{equation}
	\left ( H^{-1}_{i+1,i+1} \right)^{-1}  H^{-1}_{i+1,i} = -c_i\frac{\theta_{i-1}}{\theta_i}.
\end{equation}
Thus,
\begin{align*}
	\left [ \left ( H^{-1}_{i+1,i+1} \right)^{-1}  H^{-1}_{i+1,i} \right ] H^{-1}_{i+1,i} 	& = c_i  \frac{\theta_{i-1}}{\theta_{i}}  a_{i+1} \frac{\theta_{i-1} \phi_{i+2}}{\theta_n} \\
																							& = \frac{\theta_{i-1}\phi_{i+1}}{\theta_n} - \frac{\theta_{i-1}}{ \theta_{i} } \left ( \frac{    \theta_i \phi_{i+1} - a_{i+1} c_i \theta_{i-1}\phi_{i+1} }{\theta_{n}} \right ) \\
																							& =  \frac{\theta_{i-1}\phi_{i+1}}{\theta_n} - \frac{\theta_{i-1}}{ \theta_{i} }\\
																							& = H^{-1}_{i,i}  + c_{i}^{-1}  \left [ \left ( H^{-1}_{i+1,i+1} \right)^{-1}  H^{-1}_{i+1,i} \right ].
\end{align*}
Above, we used Lemma \ref{lemma:casorati_1D} and the expression for the inverse given by Thm. \ref{thm:inverse_1D}.
\end{proof}

For the 2D case, we introduce the same ordering as in \cite{EngquistYing:Sweeping_H}, where we increase the index in $x$ first,
\begin{equation}
	\u = (u_{1,1}, u_{2,1}, ..., u_{n_x,1}, u_{1,2}, ... , u_{n_x,n_z}).
\end{equation}
For simplicity of exposition, and only for this section, we do not take into account the degrees of freedom within the PML.
Let $\mathbf{H}$ be the discrete symmetric Helmholtz operator in 2D (Eq. \ref{eq:Hemholtz_symmetric}), which we rewrite as
\begin{equation} \label{eq:def_H_appendix}
		\mathbf{H} = \left [\begin{array}{ccccc}
								\mathbf{H}_{1}   & \mathbf{C}_{1} 	& 0  		& \hdots 	& 0  	 \\
								\mathbf{C}_{1} 	& \mathbf{H}_{2} 	&  \mathbf{C}_{2}    & \hdots 	& 0  	\\
								0 		& \ddots 	&  \ddots  	& \ddots 	& 0  	 \\
								0 		& \ddots 	&  \mathbf{C}_{n_z-2}  &  \mathbf{H}_{n_z-1}	& \mathbf{C}_{n_z-1} \\
								0 		& \hdots 	&  0  		& \mathbf{C}_{n_z-1}   & \mathbf{H}_{n_z}  		 \\
							\end{array} \right],
 	\end{equation}
in which each sub-block is a matrix in $\CC^{n_x \times n_x}$ matrix (or a  matrix in $\CC^{n_x+2n_{\text{pml}},n_x+2n_{\text{pml}}}$  if we count the degrees of freedom within the PML). Each $\mathbf{C}_i$ is a constant times the identity, and each $\mathbf{H}_{i}$ is a tridiagonal symmetric matrix. The ordering of the unknowns implies that every block in $\mathbf{H}$ correspond to degrees of freedom with fixed depth (fixed z).

In order to prove the equivalent of the rank-one property in the 2D case we follow \cite{Bevilacqua:parallel_solution_of_block_tridiagonal_linear_systems} and \cite{Meurant:a_review_on_the_inverse_of_tridiagonal_matrices}.

\begin{definition} \label{def:recurrence_delta_sigma}
	Let $\Delta_i$ and $\Sigma_i$ be defined by the following recurrence relations
	\begin{align}
		\Delta_1 	 &= \mathbf{H}_{1}, 	\qquad &\Delta_{i} &= \mathbf{H}_i - \mathbf{C}_i \left (\Delta_{i-1} \right )^{-1} \mathbf{C}_i^t;  \\
		\Sigma_{n_z} &= \mathbf{H}_{n_z}, 	\qquad &\Sigma_{i} &= \mathbf{H}_i - \mathbf{C}_{i+1}^t \left (\Sigma_{i+1} \right )^{-1} \mathbf{C}_{i+1}.
	\end{align}
\end{definition}

\begin{proposition} \label{prop:H_inverted_UV_construction}
$\mathbf{H}$ is proper\footnote{A block Hessenberg matrix, with invertible upper and lower diagonal blocks. See Def. 2.2 in \cite{Bevilacqua:parallel_solution_of_block_tridiagonal_linear_systems}.}, and its inverse is given by
\begin{equation} \label{eq:inverse_2D_v1}
		\mathbf{H}^{-1}_{j,k} = \left \{  \begin{array}{cr}
							\mathbf{U}_j \mathbf{V}^t_k  	& \text{ if } j\leq k, \\
							\mathbf{V}_j \mathbf{U}^{t}_k 	& \text{ if } j\geq k. \\
							\end{array}    \right.
	\end{equation}
	where $ \mathbf{U}_j = \mathbf{C}_j^{-t} \Delta_{j-1}... \mathbf{C}_2^{-t} \Delta_1$ and $\mathbf{V}^t_j = \Sigma_1^{-1} \mathbf{C}_2^t ... \mathbf{C}_j \Sigma^{-1}_j$.
\end{proposition}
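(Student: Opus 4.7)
The properness claim is almost immediate: in the discretization the coupling between two adjacent depth slices comes only from the five--point stencil in $z$, so each $\mathbf{C}_i$ is a (nonzero) scalar multiple of the identity on $\CC^{n_x}$ (accounting for PML nodes, on $\CC^{n_x+2n_{\text{pml}}}$). In particular every $\mathbf{C}_i$ is invertible, which is the properness condition of \cite{Bevilacqua:parallel_solution_of_block_tridiagonal_linear_systems}. For the inverse formula, my plan is to derive two complementary block $\mathbf{L}\mathbf{D}\mathbf{L}^t$ factorizations of $\mathbf{H}$: a forward one driven by the Schur complements $\Delta_i$, and a backward one driven by $\Sigma_i$. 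The symmetry of $\mathbf{H}$ (hence of each $\Delta_i$ and $\Sigma_i$) makes both factorizations legitimate and is what ultimately forces the transposed pairing $\mathbf{U}_j\mathbf{V}_k^t$ vs.\ $\mathbf{V}_j\mathbf{U}_k^t$ above and below the diagonal.

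Concretely, I would first verify by block Gaussian elimination that $\mathbf{H}=\mathbf{L}\mathbf{D}\mathbf{L}^t$ with $\mathbf{D}=\operatorname{diag}(\Delta_1,\dots,\Delta_{n_z})$ and $\mathbf{L}$ unit block lower bidiagonal with subdiagonal entries $\mathbf{L}_{j,j-1}=\mathbf{C}_j\Delta_{j-1}^{-1}$; this is exactly the content of the forward recurrence in Def.~\ref{def:recurrence_delta_sigma}, and each $\Delta_i$ is invertible as a leading principal Schur complement of the (assumed nonsingular) matrix $\mathbf{H}$. The bidiagonal structure of $\mathbf{L}$ gives a telescoping closed form for $\mathbf{L}^{-1}$, namely $(\mathbf{L}^{-1})_{i,k}=(-1)^{i-k}\mathbf{C}_i\Delta_{i-1}^{-1}\mathbf{C}_{i-1}\Delta_{i-2}^{-1}\cdots\mathbf{C}_{k+1}\Delta_k^{-1}$ for $i\geq k$ and $I$ for $i=k$. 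Substituting in $\mathbf{H}^{-1}=\mathbf{L}^{-t}\mathbf{D}^{-1}\mathbf{L}^{-1}$ and collecting the terms that survive in position $(j,k)$ shows that, for $j\leq k$, the block $(\mathbf{H}^{-1})_{j,k}$ factors as a product of a matrix that depends only on indices up to $j$ times a matrix that depends only on indices from $k$ onward. A parallel computation with the backward factorization $\mathbf{H}=\tilde{\mathbf{L}}^t\tilde{\mathbf{D}}\tilde{\mathbf{L}}$, $\tilde{\mathbf{D}}=\operatorname{diag}(\Sigma_1,\dots,\Sigma_{n_z})$, produces the complementary identification that lets me read off the ``right factor'' in terms of the $\Sigma_i$'s rather than tails of the $\Delta_i$'s.

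Matching the two factorizations at the indices $j$ and $k$ is where the identification with the stated $\mathbf{U}_j$ and $\mathbf{V}_k^t$ becomes concrete. I would verify it by induction on $k-j$ (holding $j$ fixed), using the block equation $\mathbf{C}_{k}\mathbf{X}_{k-1}+\mathbf{H}_{k}\mathbf{X}_{k}+\mathbf{C}_{k+1}^t\mathbf{X}_{k+1}=0$ satisfied by the $j$-th block row of $\mathbf{H}^{-1}$ for $k\neq j$, together with the defining recurrence for $\Sigma_k$. The transposed formula for $j\geq k$ then follows from $(\mathbf{H}^{-1})^t=\mathbf{H}^{-1}$. The only genuine obstacle I anticipate is bookkeeping: tracking the alternating products $\mathbf{C}_i^{-t}\Delta_{i-1}$ in $\mathbf{U}_j$ against $\mathbf{C}_i^t\Sigma_i^{-1}$ in $\mathbf{V}_j^t$, matching signs coming from the $(-1)^{i-k}$ factors in $\mathbf{L}^{-1}$, and checking consistency at $j=k$, where both pieces of the formula must collapse to the same $\mathbf{U}_j\mathbf{V}_j^t=\mathbf{V}_j\mathbf{U}_j^t$; this last check is the two-sided analogue of Lemma~\ref{lemma:casorati_1D} and, because each $\mathbf{C}_i$ is scalar, reduces to a scalar Casoratian identity between the forward and backward Schur complements, which is exactly the 2D generalization advertised just before the proposition.
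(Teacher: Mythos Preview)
Your plan is sound in outline, but note that the paper's own proof is a one-liner: properness follows from the invertibility of the $\mathbf{C}_j$ (exactly as you say), and the inverse formula is obtained by direct citation of Theorem~3.4 in Meurant's review \cite{Meurant:a_review_on_the_inverse_of_tridiagonal_matrices}. What you propose is, in effect, a self-contained reconstruction of that theorem via the forward and backward block $LDL^t$ factorizations. That is a perfectly legitimate route, and it has the advantage of being self-contained; the paper simply outsources the argument.

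One point in your sketch deserves tightening. From the forward factorization $\mathbf{H}^{-1}=\mathbf{L}^{-t}\mathbf{D}^{-1}\mathbf{L}^{-1}$ alone, the $(j,k)$ block with $j\le k$ is a \emph{sum} $\sum_{m\ge k}(\mathbf{L}^{-t})_{j,m}\Delta_m^{-1}(\mathbf{L}^{-1})_{m,k}$, not an immediate product. The telescoping structure of $\mathbf{L}^{-1}$ does let you write $(\mathbf{L}^{-t})_{j,m}=A_j B_m$ for suitable $A_j,B_m$, so the block becomes $A_j\bigl[\sum_{m\ge k}B_m\Delta_m^{-1}B_m^t\bigr]A_k^t$; this already exhibits the semiseparable form, but the bracketed tail sum is \emph{not} expressed in terms of the $\Sigma_i$'s. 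Identifying that tail with $\Sigma_1^{-1}\mathbf{C}_2^t\cdots\mathbf{C}_k\Sigma_k^{-1}\,A_k^{-t}$ is precisely where the backward factorization (or, equivalently, the three-term recurrence for block columns of $\mathbf{H}^{-1}$) must be invoked, and it is the substantive step rather than mere bookkeeping. Once you make that identification explicit, your induction on $k-j$ and the symmetry argument for $j\ge k$ go through, and the consistency at $j=k$ follows from the block Casoratian identity just as you anticipate.
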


\begin{proof}
$\mathbf{H}$  is proper because $\mathbf{C}_j$ are invertible; Eq. \ref{eq:inverse_2D_v1} is a direct application of Theorem 3.4 in \cite{Meurant:a_review_on_the_inverse_of_tridiagonal_matrices}.
\end{proof}

\begin{proposition} \label{proposition:extrapolation_matrix}
$ \left ( \mathbf{H}^{-1}_{j+1,j+1} \right)^{-1}  \mathbf{H}^{-1}_{j+1,j}$ is symmetric and we have
\begin{equation} \label{eq:extrapolation_matrix_form_up}
	\left ( \mathbf{H}^{-1}_{j+1,j+1} \right)^{-1}  \mathbf{H}^{-1}_{j+1,j}   \mathbf{H}^{-1}_{j+1,k}  =  \mathbf{H}^{-1}_{j,k}, \qquad \text{for } j < k.
\end{equation}
Moreover,
\begin{equation} \label{eq:extrapolation_matrix_form_down}
	\left ( \mathbf{H}^{-1}_{j-1,j-1} \right)^{-1}  \mathbf{H}^{-1}_{j-1,j}   \mathbf{H}^{-1}_{j-1,k}  =  \mathbf{H}^{-1}_{j,k}, \qquad \text{for } j > k.
\end{equation}

\end{proposition}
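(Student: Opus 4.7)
The strategy is to substitute the factorization $\mathbf{H}^{-1}_{j,k} = \mathbf{U}_j \mathbf{V}_k^t$ (for $j\le k$) and $\mathbf{V}_j \mathbf{U}_k^t$ (for $j\ge k$) from Proposition \ref{prop:H_inverted_UV_construction} into the four blocks appearing in Eq.\ \ref{eq:extrapolation_matrix_form_up} and to reduce the claim to a single algebraic identity about the $\mathbf{U}_j$'s. After substitution, the LHS becomes $\mathbf{V}_{j+1}^{-t}\mathbf{U}_{j+1}^{-1}\mathbf{V}_{j+1}\mathbf{U}_j^t\mathbf{U}_{j+1}\mathbf{V}_k^t$ and the RHS is $\mathbf{U}_j\mathbf{V}_k^t$. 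Cancelling the invertible trailing factor $\mathbf{V}_k^t$, and using the identity $\mathbf{U}_{j+1}\mathbf{V}_{j+1}^t=\mathbf{V}_{j+1}\mathbf{U}_{j+1}^t$ (which is just the symmetry of $\mathbf{H}^{-1}_{j+1,j+1}$, itself inherited from the symmetry of $\mathbf{H}$), the claim collapses to $\mathbf{U}_j^t\mathbf{U}_{j+1}=\mathbf{U}_{j+1}^t\mathbf{U}_j$, i.e.\ the statement that $\mathbf{U}_j^t\mathbf{U}_{j+1}$ is symmetric.

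To prove this last identity, I would use the telescoping recurrence $\mathbf{U}_{j+1}=\mathbf{C}_{j+1}^{-1}\Delta_j\mathbf{U}_j$, which is immediate from the explicit product in Proposition \ref{prop:H_inverted_UV_construction} together with the fact that each $\mathbf{C}_i$ is symmetric (a scalar multiple of the identity). This gives $\mathbf{U}_j^t\mathbf{U}_{j+1}=\mathbf{U}_j^t\mathbf{C}_{j+1}^{-1}\Delta_j\mathbf{U}_j$, and since $\mathbf{C}_{j+1}$ commutes with every matrix, this conjugation is symmetric precisely when $\Delta_j$ is symmetric. Symmetry of $\Delta_j$ is immediate by induction on $j$ from Definition \ref{def:recurrence_delta_sigma}: $\Delta_1=\mathbf{H}_1$ is symmetric, and $\Delta_i=\mathbf{H}_i-\mathbf{C}_i\Delta_{i-1}^{-1}\mathbf{C}_i^t$ preserves symmetry because $\mathbf{H}_i$ and $\mathbf{C}_i$ are symmetric. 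The first assertion of the proposition, namely symmetry of $(\mathbf{H}^{-1}_{j+1,j+1})^{-1}\mathbf{H}^{-1}_{j+1,j}$, unwinds (after transposing and using symmetry of $\mathbf{H}^{-1}_{j+1,j+1}$) to exactly the same condition $\mathbf{U}_j^t\mathbf{U}_{j+1}=\mathbf{U}_{j+1}^t\mathbf{U}_j$, so it is established by the same step.

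The companion identity Eq.\ \ref{eq:extrapolation_matrix_form_down} is proved by exactly the same template with the roles of $\mathbf{U}_\bullet$ and $\mathbf{V}_\bullet$ interchanged: after substituting the factorization and cancelling, the statement reduces to $\mathbf{V}_{j-1}^t\mathbf{V}_j=\mathbf{V}_j^t\mathbf{V}_{j-1}$, which follows from the analogous recurrence $\mathbf{V}_{j-1}^t=\mathbf{V}_j^t\mathbf{C}_j\Sigma_{j-1}^{-1}$ and the symmetry of $\Sigma_{j-1}$, proved by downward induction from the $\Sigma$ half of Definition \ref{def:recurrence_delta_sigma}. I do not expect a serious obstacle here: the entire argument is bookkeeping once one observes that (i) the UV factorization separates the $j$- and $k$-dependence, (ii) the $\mathbf{C}_i$ commute with everything, and (iii) the Schur-complement recurrences preserve symmetry. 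The only point to verify carefully is invertibility of $\mathbf{V}_{j+1}$ and of $\mathbf{V}_k^t$ when cancelled, which is guaranteed since both are products of invertible factors (the $\mathbf{C}_i$'s and the $\Sigma_i$'s) under the assumption that $\mathbf{H}$ is proper, as in Proposition \ref{prop:H_inverted_UV_construction}.
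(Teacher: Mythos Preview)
Your proposal is correct and follows essentially the same approach as the paper: both arguments rest on the $\mathbf{U}\mathbf{V}$ factorization of Proposition~\ref{prop:H_inverted_UV_construction} together with the inductive symmetry of $\Delta_j$ (and of $\Sigma_j$ for the companion identity). The only difference is organizational: the paper uses the representation $\mathbf{H}^{-1}_{j+1,j+1}=\mathbf{V}_{j+1}\mathbf{U}_{j+1}^t$ for the diagonal block so that $\mathbf{V}_{j+1}$ cancels immediately, yielding the closed form $(\mathbf{H}^{-1}_{j+1,j+1})^{-1}\mathbf{H}^{-1}_{j+1,j}=\mathbf{U}_{j+1}^{-t}\mathbf{U}_j^t=\mathbf{C}_{j+1}\Delta_j^{-1}$, from which both symmetry and Eq.~\eqref{eq:extrapolation_matrix_form_up} are one-line checks; you instead reduce both claims to the equivalent condition that $\mathbf{U}_j^t\mathbf{U}_{j+1}$ be symmetric and verify it via the same telescoping recurrence.
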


\begin{proof}
	First, it is easy to prove that $\Delta_j$ are symmetric matrices using an inductive argument and Def. \ref{def:recurrence_delta_sigma}. Then, using Eq. \ref{eq:inverse_2D_v1} we have
	\begin{align}
		\left ( \mathbf{H}^{-1}_{j+1,j+1} \right)^{-1}  \mathbf{H}^{-1}_{j+1,j} 	& = \left( \mathbf{V}_{j+1} \mathbf{U}_{j+1}^t \right)^{-1}  \mathbf{V}_{j+1} \mathbf{U}_{j}^t,  \notag \\
															& = \mathbf{U}_{j+1}^{-t} \mathbf{U}_{j}^t, \notag \\
															& = \left (\mathbf{C}_{j+1}^{-t} \Delta_{j}... \mathbf{C}^{-t}_2 \Delta_1 \right)^{-t} \left( \mathbf{C}_j^{-t} \Delta_{j-1}... \mathbf{C}^{-t}_2 \Delta_1 \right )^t, \notag \\
															& = \mathbf{C}_{j+1} \Delta_{j}^{-t}, \notag \\
															& = \mathbf{C}_{j+1} \Delta_{j}^{-1}.
	\end{align}
Using the symmetry of $\Delta_j$ and the fact that $\mathbf{C}_{j+1}$ is an identity times a constant, we obtain the desired symmetry property.

	Finally, a simple computation using Proposition \ref{prop:H_inverted_UV_construction} leads to Eq. \ref{eq:extrapolation_matrix_form_up}. The proof of Eq. \ref{eq:extrapolation_matrix_form_down} is analogous.
\end{proof}

\begin{definition}
Let $\mathbf{D}_j$ and $\mathbf{E}_j$ be defined by the following recurrences
\begin{align}
\mathbf{D}_1 		&= \mathbf{I}, &\mathbf{D}_2       &= - \mathbf{C}^{-1}_1\mathbf{H}_1, 			 &\mathbf{D}_j &= -\left( \mathbf{D}_{j-2} \mathbf{C}_{j-1} + \mathbf{D}_{j-1} \mathbf{H}_{j-1}  \right) \mathbf{C}_{j}^{-1} \,\,\, j = 3,...,n_z;     \\
\mathbf{E}_{n_z} 	&= \mathbf{j}, &\mathbf{E}_{n_z-1} &= - \mathbf{H}_{n_z}\mathbf{C}^{-1}_{n_z-1}, &\mathbf{E}_j &= -  \mathbf{C}_{j}^{-1} \left(  \mathbf{H}_{j+1} \mathbf{E}_{j+1} +  \mathbf{C}_{j+1} \mathbf{E}_{j+2} \right) \,\,\, j = n_z-2,...,1,
\end{align}
and define the generalized Casorati determinant by
\begin{equation}
	 \mathbf{R}_{j} = \mathbf{C}_{j-1} \left( \mathbf{D}_j \mathbf{E}_{j-1} - \mathbf{D}_{j-1} \mathbf{E}_j \right ), \qquad j=2,...,n_z; \qquad \mathbf{R}:=\mathbf{R}_{n_z}.
\end{equation}
\end{definition}

\begin{remark} \label{remark:invertibility}
We note that $\mathbf{D}_j$ and $\mathbf{E}_j$ are invertible. Indeed, we can see that $\mathbf{D}_j$ has $n$ linearly independent solutions to the three-term recurrence. Then the determinant of $\mathbf{D}_j$ is always different from zero. The same is true for $\mathbf{E}_j$ but using the  backwards recurrence.
\end{remark}

\begin{proposition}
For the sequence of matrices $\mathbf{D}_j$ and $\mathbf{E}_j$, its generalized Casorati determinant is constant.
\end{proposition}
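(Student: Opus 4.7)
The plan is to prove $\mathbf{R}_{j+1}=\mathbf{R}_j$ for every admissible index $j$ by the classical Wronskian/Casorati-invariance argument, adapted to the matrix setting. The crucial structural fact is that each $\mathbf{C}_i$ is a scalar multiple of the identity, hence commutes with every other block; this lets me slide the $\mathbf{C}_i$ factors past $\mathbf{D}_k$'s and $\mathbf{E}_k$'s at will, which is what turns the non-commutative three-term recurrences into a genuinely scalar-like computation.

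First, I would unwind the two defining recurrences into a symmetric three-term form. Multiplying the $\mathbf{D}$-recurrence through by $\mathbf{C}_j$ on the right gives
\begin{equation*}
\mathbf{D}_{j-1}\mathbf{C}_j + \mathbf{D}_j\mathbf{H}_j + \mathbf{D}_{j+1}\mathbf{C}_{j+1}=0,
\end{equation*}
and multiplying the $\mathbf{E}$-recurrence through by $\mathbf{C}_{j-1}$ on the left gives
\begin{equation*}
\mathbf{C}_{j-1}\mathbf{E}_{j-1} + \mathbf{H}_j\mathbf{E}_j + \mathbf{C}_j\mathbf{E}_{j+1}=0.
\end{equation*}
These are the natural matrix analogues of the scalar three-term relations behind Lemma \ref{lemma:casorati_1D} in 1D.

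Next, the Casorati identity itself falls out of a single cancellation. I would multiply the first relation on the right by $\mathbf{E}_j$, multiply the second on the left by $\mathbf{D}_j$, and subtract. The middle terms $\mathbf{D}_j\mathbf{H}_j\mathbf{E}_j$ cancel, leaving
\begin{equation*}
\mathbf{D}_{j+1}\mathbf{C}_{j+1}\mathbf{E}_j - \mathbf{D}_j\mathbf{C}_j\mathbf{E}_{j+1} \;=\; \mathbf{D}_j\mathbf{C}_{j-1}\mathbf{E}_{j-1} - \mathbf{D}_{j-1}\mathbf{C}_j\mathbf{E}_j.
\end{equation*}
Using scalar commutativity of the $\mathbf{C}_i$ to pull each $\mathbf{C}$-factor to the front, this is exactly the equality $\mathbf{R}_{j+1}=\mathbf{R}_j$ in the definition of $\mathbf{R}_j$. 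Combined with an invertibility remark (Remark \ref{remark:invertibility}, so the manipulations above are well-defined) and a direct check of the base case at $j=2$ from the initial conditions $\mathbf{D}_1=\mathbf{I},\mathbf{D}_2=-\mathbf{C}_1^{-1}\mathbf{H}_1$ and at $j=n_z$ from $\mathbf{E}_{n_z}=\mathbf{I},\mathbf{E}_{n_z-1}=-\mathbf{H}_{n_z}\mathbf{C}_{n_z-1}^{-1}$, this gives the invariance on the whole admissible range and justifies defining $\mathbf{R}:=\mathbf{R}_{n_z}$.

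The main obstacle is strictly bookkeeping: in the matrix setting one has to be careful about the order of factors when substituting the recurrences, because although the $\mathbf{C}_i$ commute, none of $\mathbf{D}_k$, $\mathbf{E}_k$, $\mathbf{H}_k$ do. Writing out the subtraction as above, with the $\mathbf{C}_i$ sandwiched between $\mathbf{D}_\ast$ and $\mathbf{E}_\ast$, makes the cancellation unambiguous and avoids needing to commute any non-scalar blocks.
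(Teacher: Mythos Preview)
Your proof is correct and is essentially the same argument as the paper's: both rewrite the $\mathbf{D}$- and $\mathbf{E}$-recurrences in symmetric three-term form, use that each $\mathbf{C}_i$ is a scalar multiple of the identity to commute it past the other blocks, and cancel the common $\mathbf{D}_j\mathbf{H}_j\mathbf{E}_j$ term to obtain $\mathbf{R}_{j+1}=\mathbf{R}_j$. The only cosmetic difference is that the paper computes $\mathbf{R}_j-\mathbf{R}_{j+1}$ directly and substitutes, whereas you multiply the two recurrences by $\mathbf{E}_j$ and $\mathbf{D}_j$ and subtract; your appeals to invertibility and to ``base cases'' are unnecessary here, since the identity is proved for each $j$ by a purely algebraic cancellation with no inversions and no induction.
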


\begin{proof}
We compute
\begin{align}
	 \mathbf{R}_{j} -  \mathbf{R}_{j+1} &=  \mathbf{C}_{j-1} \left( \mathbf{D}_j \mathbf{E}_{j-1} - \mathbf{D}_{j-1} \mathbf{E}_j \right )  - \mathbf{C}_{j} \left( \mathbf{D}_{j+1} \mathbf{E}_{j} - \mathbf{D}_{j} \mathbf{E}_{j+1} \right ), \\ \notag
 										&=  \mathbf{D}_j \left( \mathbf{C}_{j-1}\mathbf{E}_{j-1} + \mathbf{C}_{j}\mathbf{E}_{j+1} \right) -   \left( \mathbf{D}_{j+1}\mathbf{C}_{j} +  \mathbf{D}_{j-1} \mathbf{C}_{j-1} \right) \mathbf{E}_{j},   \\ \notag
 										&=  \mathbf{D}_j  \mathbf{H}_j  \mathbf{E}_j - \mathbf{D}_j  \mathbf{H}_j  \mathbf{E}_j, \\ \notag
 										&= 0,
\end{align}
where we used the fact that the $\mathbf{C}_j$ are a constant times the identity (then they commute with all the matrices) and the recurrences satisfied by $\mathbf{D}_j $ and $ \mathbf{E}_j$.
\end{proof}

\begin{proposition}
The inverse of $ \mathbf{H}$ is given by
	\begin{equation} \label{eq:inverse_matrix_form_v2}
		\mathbf{H}^{-1}_{j,k} = \left \{  \begin{array}{cr}
							-\mathbf{D}_j^t \mathbf{R}^{-t}   \mathbf{E}^t_k  	& \text{ if } j\leq k, \\
							-\mathbf{E}_j   \mathbf{R}^{-1}   \mathbf{D}_k 		& \text{ if } j\geq k. \\
							\end{array}    \right.
	\end{equation}
\end{proposition}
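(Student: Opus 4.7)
My plan is to verify by direct block computation that the matrix $G$ defined by the right-hand side of \eqref{eq:inverse_matrix_form_v2} satisfies $\mathbf{H} G = \mathbf{I}$. Using the block tridiagonal structure \eqref{eq:def_H_appendix}, this reduces to showing
\[
\mathbf{C}_{i-1} G_{i-1,j} + \mathbf{H}_i G_{i,j} + \mathbf{C}_i G_{i+1,j} = \delta_{ij}\,\mathbf{I}
\]
for each pair $(i,j)$, with the natural convention that terms involving indices $0$ or $n_z+1$ are dropped. The essential observation is that the sequences $\mathbf{D}_j^t$ and $\mathbf{E}_j$, viewed as block columns indexed by $j$, were constructed precisely so as to be homogeneous solutions of $\mathbf{H} X = 0$ at every interior index. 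This follows by transposing the three-term recurrences in the definition of $\mathbf{D}_j$ and $\mathbf{E}_j$ and using the symmetry of $\mathbf{H}_j$ and $\mathbf{C}_j$, together with the fact that each $\mathbf{C}_i$ is a scalar multiple of the identity (so it commutes freely and the various index variants of $\mathbf{C}$ coincide).

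\textbf{Step 1 (off-diagonal blocks, $i \neq j$).} When $|i-j|\geq 1$, the indices $i-1, i, i+1$ all lie on the same side of $j$, so a single branch of \eqref{eq:inverse_matrix_form_v2} applies to every term. Factoring out the common right factor $\mathbf{R}^{-t}\mathbf{E}_j^t$ (if $i<j$) or $\mathbf{R}^{-1}\mathbf{D}_j$ (if $i>j$), what remains is exactly the recurrence that defines $\mathbf{D}$ or $\mathbf{E}$, so the three-term combination vanishes. The boundary rows $i=1$ and $i=n_z$ are handled by the initial/terminal values $\mathbf{D}_1 = \mathbf{I}$, $\mathbf{D}_2 = -\mathbf{C}_1^{-1}\mathbf{H}_1$, $\mathbf{E}_{n_z} = \mathbf{I}$ (correcting the apparent typo ``$\mathbf{j}$''), and $\mathbf{E}_{n_z-1} = -\mathbf{H}_{n_z}\mathbf{C}_{n_z-1}^{-1}$, which enforce the same algebraic cancellation in these truncated rows.

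\textbf{Step 2 (diagonal blocks, $i=j$).} Here $G_{i-1,i}$ uses the first branch of \eqref{eq:inverse_matrix_form_v2} while $G_{i+1,i}$ uses the second, and the middle $G_{i,i}$ admits either form. Writing $G_{i,i}$ in the first form and applying the $\mathbf{D}$-recurrence to collapse $\mathbf{C}_{i-1}\mathbf{D}_{i-1}^t + \mathbf{H}_i\mathbf{D}_i^t$ into $-\mathbf{C}_i\mathbf{D}_{i+1}^t$, the diagonal sum telescopes to
\[
\mathbf{C}_i\big(\mathbf{D}_{i+1}^t\mathbf{R}^{-t}\mathbf{E}_i^t - \mathbf{E}_{i+1}\mathbf{R}^{-1}\mathbf{D}_i\big).
\]
By the previously established constancy of the generalized Casorati determinant, $\mathbf{R} = \mathbf{R}_{i+1} = \mathbf{C}_i(\mathbf{D}_{i+1}\mathbf{E}_i - \mathbf{D}_i\mathbf{E}_{i+1})$; using the commutativity of $\mathbf{C}_i$ the bracket collapses to $\mathbf{I}$, closing the verification.

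The main obstacle will be the transpose bookkeeping: the two branches of \eqref{eq:inverse_matrix_form_v2} differ by a transpose, and the diagonal computation mixes a transposed term with a non-transposed term. The cleanest way around this is to first show that the two branches agree at $j=k$ --- i.e., that $\mathbf{E}_j\mathbf{R}^{-1}\mathbf{D}_j$ is symmetric --- by an inductive argument using the symmetry of $\mathbf{H}$ and the constancy of $\mathbf{R}_j$. An alternative route that bypasses this bookkeeping entirely is to appeal to Proposition \ref{prop:H_inverted_UV_construction} already proved in the appendix: match $\mathbf{U}_j \leftrightarrow -\mathbf{E}_j\mathbf{R}^{-1}$ and $\mathbf{V}_k^t \leftrightarrow \mathbf{D}_k$ in the region $j\geq k$ by comparing the defining recurrences for $\mathbf{U}_j, \mathbf{V}_k, \Delta_j, \Sigma_j$ against those for $\mathbf{D}_j, \mathbf{E}_k, \mathbf{R}$, and then invoke uniqueness of $\mathbf{H}^{-1}$.
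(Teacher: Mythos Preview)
Your approach is genuinely different from the paper's: the paper's entire proof is a one-line citation of Proposition~2.4 in Bevilacqua et al.\ (reference \cite{Bevilacqua:parallel_solution_of_block_tridiagonal_linear_systems}), whereas you carry out a direct block verification of $\mathbf{H}G=\mathbf{I}$. Your route is self-contained and makes transparent exactly how the three-term recurrences for $\mathbf{D}_j$ and $\mathbf{E}_j$ enter, at the cost of the transpose/index bookkeeping you flag. The paper's route is shorter but outsources all the content to the cited reference; your alternative suggestion of matching against Proposition~\ref{prop:H_inverted_UV_construction} is in the same spirit.

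One point to tighten: your claim that ``the various index variants of $\mathbf{C}$ coincide'' is stronger than what the appendix actually assumes. The text only says each $\mathbf{C}_i$ is a constant times the identity, not that the constants are equal across $i$. If you transpose the paper's $\mathbf{D}$-recurrence $\mathbf{D}_{j-2}\mathbf{C}_{j-1}+\mathbf{D}_{j-1}\mathbf{H}_{j-1}+\mathbf{D}_{j}\mathbf{C}_{j}=0$ and compare with what row $j-1$ of $\mathbf{H}G$ demands, namely $\mathbf{C}_{j-2}\mathbf{D}_{j-2}^t+\mathbf{H}_{j-1}\mathbf{D}_{j-1}^t+\mathbf{C}_{j-1}\mathbf{D}_{j}^t=0$, the subscripts on $\mathbf{C}$ are shifted by one. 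In the specific discretization at hand this is harmless (inside a slab and away from the $z$-PML one has $\mathbf{C}_k=-h^{-2}\mathbf{I}$ for all relevant $k$, as noted later in the appendix), so your argument goes through; but if you want the proof to stand at the generality of the appendix statement, you should either (i) note explicitly that the paper's indexing of the $\mathbf{D}$-recurrence appears shifted and work with the corrected version, or (ii) fall back on your second plan of matching against Proposition~\ref{prop:H_inverted_UV_construction}, which sidesteps the issue. The diagonal step (Step~2) and the Casorati-constancy argument are fine once this is settled.
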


\begin{proof}
It is a direct application of Proposition 2.4 in \cite{Bevilacqua:parallel_solution_of_block_tridiagonal_linear_systems}.
\end{proof}

\begin{proposition} \label{proposition:jump_condition}
We have
\begin{equation} \label{eq:jump_condition_proposition_up}
	\left [ \left ( \mathbf{H}^{-1}_{j+1,j+1} \right)^{-1}  \mathbf{H}^{-1}_{j+1,j} \right ] \mathbf{H}^{-1}_{j+1,j}  = \mathbf{H}^{-1}_{j,j}  + \mathbf{C}_{j+1}^{-1}  \left [ \left ( \mathbf{H}^{-1}_{j+1,j+1} \right)^{-1}  \mathbf{H}^{-1}_{j+1,j} \right ],
\end{equation}
and
\begin{equation} \label{eq:jump_condition_proposition_down}
	\left [ \left ( \mathbf{H}^{-1}_{j-1,j-1} \right)^{-1}  \mathbf{H}^{-1}_{j-1,j} \right ] \mathbf{H}^{-1}_{j-1,j}  = \mathbf{H}^{-1}_{j,j}  + \mathbf{C}_{j-1}^{-1}  \left [ \left ( \mathbf{H}^{-1}_{j-1,j-1} \right)^{-1}  \mathbf{H}^{-1}_{j-1,j} \right ].
\end{equation}

\end{proposition}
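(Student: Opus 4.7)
My plan is to mirror the 1D argument from the proof of Eq. \ref{eq:extrapolator_jump_1D_up}, with the explicit inverse formula from Eq. \ref{eq:inverse_matrix_form_v2} playing the role of Thm. \ref{thm:inverse_1D}, and with the constancy of the generalized Casorati $\mathbf{R}_j \equiv \mathbf{R}$ (just established in the preceding proposition) playing the role of Lemma \ref{lemma:casorati_1D}. Both identities are pure algebra once these two ingredients are in hand, so I will sketch the first and note that the second is completely analogous.

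First I would unfold the quotient on the left of Eq. \ref{eq:jump_condition_proposition_up}. From Eq. \ref{eq:inverse_matrix_form_v2},
\begin{equation*}
\mathbf{H}^{-1}_{j+1,j+1} = -\mathbf{E}_{j+1}\mathbf{R}^{-1}\mathbf{D}_{j+1},
\qquad
\mathbf{H}^{-1}_{j+1,j} = -\mathbf{E}_{j+1}\mathbf{R}^{-1}\mathbf{D}_{j},
\end{equation*}
and by Remark \ref{remark:invertibility} each factor is invertible, so a direct cancellation gives
\begin{equation*}
\bigl(\mathbf{H}^{-1}_{j+1,j+1}\bigr)^{-1}\mathbf{H}^{-1}_{j+1,j} = \mathbf{D}_{j+1}^{-1}\mathbf{D}_j.
\end{equation*}
Multiplying once more by $\mathbf{H}^{-1}_{j+1,j}$ produces an expression containing the product $\mathbf{D}_j\mathbf{E}_{j+1}$, which is exactly the cross-term that the Casorati identity controls.

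Next I would apply the constancy of the Casorati: the identity $\mathbf{R}_{j+1}=\mathbf{R}$ rewrites as
\begin{equation*}
\mathbf{D}_j\mathbf{E}_{j+1} = \mathbf{D}_{j+1}\mathbf{E}_j - \mathbf{C}_j^{-1}\mathbf{R}.
\end{equation*}
Substituting this into the product above and using that each $\mathbf{C}_k$ is a scalar multiple of the identity (so it commutes freely with the $\mathbf{D}$'s and $\mathbf{E}$'s), the term $\mathbf{D}_{j+1}\mathbf{E}_j$ collapses to $-\mathbf{H}^{-1}_{j,j}=\mathbf{E}_j\mathbf{R}^{-1}\mathbf{D}_j$, while the $\mathbf{C}_j^{-1}\mathbf{R}$ term leaves behind exactly a scalar multiple of $\mathbf{D}_{j+1}^{-1}\mathbf{D}_j$, which we have already identified as $(\mathbf{H}^{-1}_{j+1,j+1})^{-1}\mathbf{H}^{-1}_{j+1,j}$. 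This recovers the right-hand side of Eq. \ref{eq:jump_condition_proposition_up}. The second identity Eq. \ref{eq:jump_condition_proposition_down} follows the same three-step pattern after switching to the representation $\mathbf{H}^{-1}_{j-1,j} = -\mathbf{D}_{j-1}^t\mathbf{R}^{-t}\mathbf{E}_j^t$ valid for $j-1\le j$, reducing the cross-term to $\mathbf{E}_j\mathbf{D}_{j-1}$ and invoking the transpose of $\mathbf{R}_j=\mathbf{R}$.

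The main obstacle I anticipate is purely bookkeeping: tracking which of the two equivalent formulas in Eq. \ref{eq:inverse_matrix_form_v2} to use on each factor (the symmetric case $j=k$ sits on the boundary and must be handled consistently), and verifying that the scalar-identity nature of $\mathbf{C}_j$ is what permits the final commutation step that exchanges $\mathbf{D}_{j+1}^{-1}\mathbf{C}_j^{-1}$ with $\mathbf{C}_j^{-1}\mathbf{D}_{j+1}^{-1}$. No deeper fact is needed beyond what is already established in Appendix \ref{appendix:general_facts}.
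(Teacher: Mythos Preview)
Your proposal is correct and follows essentially the same route as the paper: both compute $(\mathbf{H}^{-1}_{j+1,j+1})^{-1}\mathbf{H}^{-1}_{j+1,j}=\mathbf{D}_{j+1}^{-1}\mathbf{D}_j$ from Eq.~\ref{eq:inverse_matrix_form_v2}, then multiply by $\mathbf{H}^{-1}_{j+1,j}=-\mathbf{E}_{j+1}\mathbf{R}^{-1}\mathbf{D}_j$ and invoke the constancy of the generalized Casorati determinant $\mathbf{R}_{j+1}=\mathbf{R}$ together with the scalar-identity nature of $\mathbf{C}_j$ to isolate $\mathbf{H}^{-1}_{j,j}$ and the residual $\mathbf{C}_j^{-1}\mathbf{D}_{j+1}^{-1}\mathbf{D}_j$. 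Note that both your computation and the paper's own proof yield $\mathbf{C}_j^{-1}$ rather than the $\mathbf{C}_{j+1}^{-1}$ printed in the statement, so the index in Eq.~\ref{eq:jump_condition_proposition_up} is a typo.
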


\begin{proof}
From Eq. \ref{eq:inverse_matrix_form_v2} we have
\begin{equation}
	\left ( \mathbf{H}^{-1}_{j+1,j+1} \right)^{-1}  \mathbf{H}^{-1}_{j+1,j} = \mathbf{D}_{j+1}^{-1} \mathbf{D}_i.
\end{equation}
Then we can compute
\begin{align}
 	\left [ \left ( \mathbf{H}^{-1}_{j+1,j+1} \right)^{-1}  \mathbf{H}^{-1}_{j+1,j} \right ] \mathbf{H}^{-1}_{j+1,j}  	&= -\mathbf{D}_{j+1}^{-1} \mathbf{D}_j \mathbf{E}_{j+1} \mathbf{R}^{-1} \mathbf{D}_{j}, \\
 																														&= -\mathbf{E}_j \mathbf{R}^{-1} \mathbf{D}_j -  \mathbf{D}_{j+1}^{-1} \mathbf{D}_j \left (  \mathbf{E}_{j+1} - \mathbf{D}_j ^{-1}\mathbf{D}_{j+1}\mathbf{E}_j\right)\mathbf{R}^{-1} \mathbf{D}_{j}, \\
 																														&= -\mathbf{E}_j \mathbf{R}^{-1} \mathbf{D}_j -  \mathbf{D}_{j+1}^{-1} \mathbf{D}_j \mathbf{D}_j ^{-1} \left (\mathbf{D}_j\mathbf{E}_{j+1} - \mathbf{D}_{j+1}\mathbf{E}_j\right)\mathbf{R}^{-1} \mathbf{D}_{j}, \\
 																														& = -\mathbf{E}_j \mathbf{R}^{-1} \mathbf{D}_j + \mathbf{D}_{j+1}^{-1} \mathbf{D}_j \mathbf{D}_j ^{-1} \mathbf{C}_{j}^{-1} \mathbf{R}_{j+1} \mathbf{R}^{-1} \mathbf{D}_{j},\\
 																														& = -\mathbf{E}_j \mathbf{R}^{-1} \mathbf{D}_j + \mathbf{C}_{j}^{-1} \mathbf{D}_{j+1}^{-1} \mathbf{D}_j, \\
 																														& = \mathbf{H}^{-1}_{j,j}  + \mathbf{C}_{j}^{-1}  \left [ \left ( \mathbf{H}^{-1}_{j+1,j+1} \right)^{-1}  \mathbf{H}^{-1}_{j+1,j} \right ],
 \end{align}
in which we used the fact that the generalized Casorati determinand is constant.
\end{proof}

\section{Properties of the Discrete Green's representation formula} \label{appendix:green_representation}

\subsection*{Proof of Lemma \ref{lemma:discrete_GRF} }

\begin{proof} We carry out the proof in 1D. The extension to the 2D case is trivial because of the Eq. \ref{eq:GRF_from_symmetric}, which is the discrete Green's representation formula in 2D, that takes in account the boundary conditions of $\u$ and $\mathbf{G}^{\ell}$.
Let $H$ be the discrete Helmholtz operator in 1D, and let $u$ be the solution of
	\[ \left( H u \right)_i = f_i, \qquad \text{for } i \in \ZZ, \]
Let $\Omega = \{ 1, ..., n\}$. We define the discrete inner product as
	\[ \langle u, v \rangle_{\Omega} = \sum_{i =1}^{n} u_i v_i, \]
and the Green's function $G_i^k$, such that
	\[ \left( H G^k \right)_i = \delta_i^k, \qquad \text{for } i \in \ZZ. \]

Following the discretization given in Eq \ref{eq:quadrature}, we can write
\begin{equation} \label{eq:discrete_GRF_1D_appendix}
	\langle u, HG^k \rangle_{\Omega} - \langle G^k, Hu \rangle_{\Omega} = \cG^{\downarrow}_k(u_0, u_{1}) + \cG^{\uparrow}_k(u_n, u_{n+1}),\qquad \text{if } k \in \ZZ.
\end{equation}
Applying the properties of $G_k$ and the fact that $u$ is the solution, we have
\begin{equation}
	\langle u, \delta^k \rangle_{\Omega} - \langle G^k, f \rangle_{\Omega} = \cG^{\downarrow}_k(u_0, u_{1}) + \cG^{\uparrow}_k(u_n, u_{n+1}) ,\qquad \text{if } k \in \ZZ.
\end{equation}
Following Def. \ref{def:Newton_potential} and Def. \ref{def:incomplete_green}, we have
\begin{equation} \label{eq:discrete_GRF_1D_proof_1}
	u_k = \cG^{\downarrow}_k(u_0, u_{1})  +\cG^{\uparrow}_k(u_n, u_{n+1}) + \cN_k f,\qquad \text{if } 1\leq k \leq n .
\end{equation}
If $k < 1$ or $k > n$, the formula given by Eq. \ref{eq:discrete_GRF_1D_proof_1} is still valid. However, we have
\[\langle u, \delta^k \rangle_{\Omega} = 0,\]
because the support of the Dirac's delta is outside the integration domain. We obtain
\begin{equation}
	 - \langle G^k, f \rangle_{\Omega} = \cG^{\uparrow}_k(u_n, u_{n+1}) + \cG^{\downarrow}_k(u_0, u_{1}),\qquad \text{if } k<1 \text{ or } k>n,
\end{equation}
thus,
\begin{equation}
	0 = \cG^{\uparrow}_k(u_n, u_{n+1}) + \cG^{\downarrow}_k(u_0, u_{1}) + \cN_k f,\qquad \text{if } k<1 \text{ or } k>n.
\end{equation}

Finally, the only property that we used from $G_k$ is that is should satisfy
\[ \left( H G^k \right)_i = \delta_i^k, \qquad \text{for } 0 \leq l \leq n+1. \]
We can then replace the global Green's function by local ones, and the results still hold, i.e.,
\begin{align}
	u_k &= \cG^{\uparrow,1}_k(u_n, u_{n+1}) + \cG^{\downarrow,1}_k(u_0, u_{1}) + \cN_k f,\qquad \text{if } 1\leq k \leq n ,\\
	0   &= \cG^{\uparrow,1}_k(u_n, u_{n+1}) + \cG^{\downarrow,1}_k(u_0, u_{1}) + \cN_k f,\qquad \text{if } k<1 \text{ or } k>n,
\end{align}
which finishes the proof.

\end{proof}

\subsection*{Proof of Lemma \ref{lemma:jump_condition} }

\begin{proof}
	We carry out the proof for $k=0$, the case for $k = n^{\ell}+1$ is analogous.
	Let us fix $\ell$. By definition
	\begin{equation}
		\cG^{\downarrow, \ell}_{1}(\u^{\ell}_{0}, \u^{\ell}_{1} ) 	+ \cG^{\uparrow, \ell}_{1}(\u^{\ell}_{n^{\ell}}, \u^{\ell}_{n^{\ell}+1} )  	+  \cN^{\ell}_{1}  \mathbf{f}^{\ell}  	= \u^{\ell}_{1},
	\end{equation}
	to which we apply the extrapolator, obtaining
	\begin{equation}
		\cE^{\uparrow}_{\ell-1,\ell} \cG^{\downarrow, \ell}_{1}(\u^{\ell}_{0}, \u^{\ell}_{1} ) 	+ \cE^{\uparrow}_{\ell-1,\ell}  \cG^{\uparrow, \ell}_{1}(\u^{\ell}_{n^{\ell}}, \u^{\ell}_{n^{\ell}+1} )  	+  \cE^{\uparrow}_{\ell-1,\ell}  \cN^{\ell}_{1}  \mathbf{f}^{\ell}  	=  \cE^{\uparrow}_{\ell-1,\ell} \u^{\ell}_{1}.
	\end{equation}
	We compute each component of the last equation to show that it is equivalent to Eq. \ref{eq:lemma_jump}.
	Indeed, we use the definition of the extrapolator (Def. \ref{def:extrapolator}) and Lemma \ref{lemma:extrapolator_def} to show that,
	\begin{equation}
		\cE^{\uparrow}_{\ell-1,\ell}  \cG^{\uparrow, \ell}_{1}(\u^{\ell}_{n^{\ell}}, \u^{\ell}_{n^{\ell}+1} ) = \cG^{\uparrow, \ell}_{0}(\u^{\ell}_{n^{\ell}}, \u^{\ell}_{n^{\ell}+1} ), \text{ and} \qquad   \cE^{\uparrow}_{\ell-1,\ell}  \cN^{\ell}_{1}  \mathbf{f}^{\ell}  = \cN^{\ell}_{0}  \mathbf{f}^{\ell}.
	\end{equation}
	We compute the last term left using the matrix form of $\cG^{\ell}$ (Eq. \ref{eq:matrix_form_cG}) which results in
	 \begin{equation}
	\cE^{\uparrow}_{\ell-1,\ell} \cG^{\downarrow, \ell}_{1}(\u^{\ell}_{0}, \u^{\ell}_{1} ) = \frac{\cE^{\uparrow}_{\ell-1,\ell}}{h} 	\left [ \begin{array}{cc}
																																	\mathbf{G}^{\ell} (z_1,z_1)  & - \mathbf{G}^{\ell}(z_1,z_0)
																																\end{array}
																	  													\right]
																	  													\left ( \begin{array}{c}
																	  																\u^{\ell}_{0} \\
																	  																\u^{\ell}_{1}
																	  												   			\end{array}
																	  													\right).
	 \end{equation}
	Moreover, by direct application of  Lemma \ref{lemma:extrapolator_def} we have that
	\begin{equation}
	\cE^{\uparrow}_{\ell-1,\ell}  \mathbf{G}^{\ell} (z_1,z_1) = \mathbf{G}^{\ell} (z_0,z_1), \qquad
	\cE^{\uparrow}_{\ell-1,\ell} \mathbf{G}^{\ell}(z_1,z_0)  = \mathbf{G}^{\ell}(z_0,z_0)  - h\cE^{\uparrow}_{\ell-1,\ell};
	\end{equation}
	thus
	\begin{equation} \label{eq:jump_condition_in_u}
		\cE^{\uparrow}_{\ell-1,\ell} \cG^{\downarrow, \ell}_{1}(\u^{\ell}_{0}, \u^{\ell}_{1} )  = \cG^{\downarrow, \ell}_{0}(\u^{\ell}_{0}, \u^{\ell}_{1} ) - \cE^{\uparrow}_{\ell-1,\ell}\u^{\ell}_{1}.
	\end{equation}

	 Putting everything together we have that
	\begin{equation}
		\cG^{\downarrow, \ell}_{0}(\u^{\ell}_{0}, \u^{\ell}_{1} ) + \cG^{\uparrow, \ell}_{0}(\u^{\ell}_{n^{\ell}}, \u^{\ell}_{n^{\ell}+1} )  + \cE^{\uparrow}_{\ell-1,\ell} \u^{\ell}_{1}	+  \cN^{\ell}_{0}  \mathbf{f}^{\ell}  	=  \cE^{\uparrow}_{\ell-1,\ell} \u^{\ell}_{1},
	\end{equation}
	or
	\begin{equation}
		\cG^{\downarrow, \ell}_{0}(\u^{\ell}_{0}, \u^{\ell}_{1} ) + \cG^{\uparrow, \ell}_{0}(\u^{\ell}_{n^{\ell}}, \u^{\ell}_{n^{\ell}+1} ) +  \cN^{\ell}_{0}  \mathbf{f}^{\ell}  	= 0 ,
	\end{equation}
	which concludes the proof.
\end{proof}

\subsection*{Proof of Lemma \ref{lemma:extrapolator} }

\begin{proof}
    The proof is a direct application of the nullity theorem \cite{Strang:the_interplay_of_ranks_of_submatrices}. If $\u^{\uparrow}$ and $\v^{\uparrow}$ are in the kernel of $\cA^{\uparrow}_{j,j+1}$, then the proof is reduced to showing that $\mathbf{G}^{\ell+1}(z_1,z_1) $ is invertible. Without loss of generality we can reorder the entries of the matrix $\mathbf{G}^{\ell+1}$ such that $\mathbf{G}^{\ell+1}(z_1,z_1)$ is a square diagonal block located at the left upper corner of $\mathbf{G}^{\ell+1}$. Then the nullity of $\mathbf{G}^{\ell+1}(z_1,z_1)$ is equal to the nullity of the complementary block of the inverse, but the inverse is just the Helmholtz matrix reordered with some entries out. Such block is trivially full rank, i.e. nullity equals to zero. Then the nullity of $\mathbf{G}^{\ell+1}(z_1,z_1)$ is zero; therefore,  $\mathbf{G}^{\ell+1}(z_1,z_1)$ is an invertible matrix.
\end{proof}

\subsection*{Proof of Lemma \ref{lemma:extrapolator_def} }

\begin{proof}
	We note that by the definition of the local Green's functions in Eq. \ref{eq:definition_local_helmholtz_problem}, we have that
	\begin{equation}
		 \mathbf{G}^{\ell}_{i,j,i',j'} = \frac{1}{h^2} \left ( \mathbf{H}^{\ell} \right )^{-1}_{i,j,i',j'}.
	\end{equation}
	Then by Def. \ref{def:layer_2_layer_Green_fucntion} and Eq. \ref{eq:def_H_appendix} we have that
	\begin{equation} \label{eq:equivalence_G_H_inv}
		\mathbf{G}^{\ell}_{j,k} = \frac{1}{h} \left ( \mathbf{H}^{\ell} \right )^{-1}_{j,k},
	\end{equation}
	where $\mathbf{G}^{\ell}_{j,k}$ is the layer to layer Green's function. Using the definition of the extrapolator (Def. \ref{def:extrapolator}) and Proposition \ref{proposition:extrapolation_matrix}, in particular Eq. \ref{eq:extrapolation_matrix_form_down} applied to $ j = 0 $ we obtain
	\begin{equation}
		\mathbf{G}^{\ell}(z_0,z_k) = \cE^{\uparrow}_{\ell-1,\ell} \mathbf{G}^{\ell}(z_1,z_k), \qquad \text{for } 0 < k,
	\end{equation}
	and Eq. \ref{eq:extrapolation_matrix_form_up} applied to $j = n^{\ell}+1$
	\begin{equation}
		\mathbf{G}^{\ell}(z_{n^{\ell}+1},z_k) = \cE^{\downarrow}_{\ell,\ell+1} \mathbf{G}^{\ell}(z_{n^{\ell}},z_k), \qquad \text{for }  k < n^{\ell}+1.
	\end{equation}

	We can divide Eq. \ref{eq:jump_condition_proposition_down} by $h$, and we use the definitions of the extrapolator (Def. \ref{def:extrapolator}) and the Green's functions (Eq. \ref{eq:equivalence_G_H_inv}) to obtain
	\begin{equation} \label{eq:jump_proof_appendix}
		\mathbf{G}^{\ell+1}(z_0,z_0) + \frac{\mathbf{C}^{-1}_2 \cE^{\uparrow}_{\ell,\ell+1}}{h}  =    \cE^{\uparrow}_{\ell,\ell+1}   \mathbf{G}^{\ell+1}(z_1,z_0).
	\end{equation}
	However, following the notation of Eq. \ref{eq:def_H_appendix}, we note that if $k$ is such that it corresponds to a $z_k$ that is in the physical domain, then $\mathbf{C}_k$ is just $-I/h^2$. This observation is independent of the formulation, and it is due to the particular ordering of the unknowns that Eq. \ref{eq:def_H_appendix} assumes. Then we can further simplify Eq. \ref{eq:jump_proof_appendix} and obtain
	\begin{equation}
		\mathbf{G}^{\ell+1}(z_0,z_0) - h\cE^{\uparrow}_{\ell,\ell+1}  =    \cE^{\uparrow}_{\ell,\ell+1}   \mathbf{G}^{\ell+1}(z_1,z_0).
	\end{equation}
	We can follow the same reasoning to obtain from Eq. \ref{eq:jump_condition_proposition_up} that
	\begin{equation}
		\mathbf{G}^{\ell}(z_{n^{\ell}+1},z_{n^{\ell}+1}) - h \cE^{\downarrow}_{\ell,\ell+1}  = \cE^{\downarrow}_{\ell,\ell+1}  \mathbf{G}^{\ell}(z_{n^{\ell}},z_{n^{\ell}+1}),
	\end{equation}
	which concludes the proof.

\end{proof}

\subsection*{Proof of Lemma \ref{lemma:annihilator_relations_volume} }

\begin{proof}
We have that
\begin{equation}
	 \cG^{\downarrow, \ell+1}_{1}(\u^{\uparrow}_0, \u^{\uparrow}_1 ) = 0
\end{equation}
is, by definition, equivalent to
\begin{equation} \label{eq:equivalent }
	 \mathbf{G}^{\ell+1}(z_1,z_1) \u^{\uparrow}_0 = \mathbf{G}^{\ell+1}(z_1,z_0)  \u^{\uparrow}.
\end{equation}
The proof is by induction, at each stage we use the extrapolator to shift the evaluation index. We left multiply Eq. \ref{eq:equivalent } by $ \left [ \mathbf{G}^{\ell+1}(z_1,z_1) \right ]^{-1} \mathbf{G}^{\ell+1}(z_1,z_2) $ and follow Remark \ref{remark:extrapolator}, to obtain
\begin{equation}
	\mathbf{G}^{\ell+1}(z_2,z_1) \u^{\uparrow}_0 = \mathbf{G}^{\ell+1}(z_2,z_0)  \u^{\uparrow},
\end{equation}
which can be left multiplied by the matrix $ \left [ \mathbf{G}^{\ell+1}(z_2,z_2) \right ]^{-1} \mathbf{G}^{\ell+1}(z_2,z_3) $ to obtain
\begin{equation}
	\mathbf{G}^{\ell+1}(z_3,z_1) \u^{\uparrow}_0 = \mathbf{G}^{\ell+1}(z_3,z_0)  \u^{\uparrow}.
\end{equation}
Then by induction we obtain the result.
\end{proof}

\subsection*{Proof of Lemma \ref{lemma:extrapolator_M_0} }
\begin{proof}
We give the proof for the case when $j = 0$ -- for $j = n^{\ell}+1$ the proof is analogous.

Given that $\underline{\underline{\u}}$ is solution of the system in Def. \ref{def:polarized_out_going}, we have

\begin{equation}
	\cG^{\uparrow, \ell}_{1}(\u^{\ell, \uparrow}_{n^{\ell}}, \u^{\ell, \uparrow}_{n^{\ell}+1} ) + \cG^{\downarrow, \ell}_{1}(\u^{\ell,\downarrow}_{0}, \u^{\ell, \downarrow}_{1} ) 	+ \cG^{\uparrow, \ell}_{1}(\u^{\ell, \downarrow}_{n^{\ell}}, \u^{\ell, \downarrow}_{n^{\ell}+1} ) 	+  \cN^{\ell}_{1}  \mathbf{f}^{\ell}  = \u^{\ell, \uparrow}_{1} + \u^{\ell, \downarrow}_{1},
\end{equation}
which can be left-multiplied by the extrapolator $\cE^{\uparrow}_{\ell, \ell+1}$. Using Lemma \ref{lemma:extrapolator_def} we have
\begin{equation}
	\cG^{\uparrow, \ell}_{0}(\u^{\ell, \uparrow}_{n^{\ell}}, \u^{\ell, \uparrow}_{n^{\ell}+1} ) +  \cE^{\uparrow}_{\ell, \ell+1} \cG^{\downarrow, \ell}_{1}(\u^{\ell,\downarrow}_{0}, \u^{\ell, \downarrow}_{1} ) 	+ \cG^{\uparrow, \ell}_{0}(\u^{\ell, \downarrow}_{n^{\ell}}, \u^{\ell, \downarrow}_{n^{\ell}+1} ) 	+  \cN^{\ell}_{1}  \mathbf{f}^{\ell}  = \cE^{\uparrow}_{\ell, \ell+1} \u^{\ell, \uparrow}_{1} + \cE^{\uparrow}_{\ell, \ell+1} \u^{\ell, \downarrow}_{1},
\end{equation}
and following the same computation performed in Lemma \ref{lemma:jump_condition}  (Eq. \ref{eq:jump_condition_in_u}) we have
\begin{equation}
	\cG^{\uparrow, \ell}_{0}(\u^{\ell, \uparrow}_{n^{\ell}}, \u^{\ell, \uparrow}_{n^{\ell}+1} ) +   \cG^{\downarrow, \ell}_{0}(\u^{\ell,\downarrow}_{0}, \u^{\ell, \downarrow}_{1} ) + \cE^{\uparrow}_{\ell, \ell+1} \u^{\ell,\downarrow}_{1}	+ \cG^{\uparrow, \ell}_{0}(\u^{\ell, \downarrow}_{n^{\ell}}, \u^{\ell, \downarrow}_{n^{\ell}+1} ) 	+  \cN^{\ell}_{0}  \mathbf{f}^{\ell}  = \cE^{\uparrow}_{\ell, \ell+1} \u^{\ell, \uparrow}_{1} + \cE^{\uparrow}_{\ell, \ell+1} \u^{\ell, \downarrow}_{1}.
\end{equation}
Finally, from the fact that $\cE^{\uparrow}_{\ell, \ell+1} \u^{\ell, \uparrow}_{1} =  \u^{\ell, \uparrow}_{0}$ we obtain that
\begin{equation}
	\cG^{\uparrow, \ell}_{0}(\u^{\ell, \uparrow}_{n^{\ell}}, \u^{\ell, \uparrow}_{n^{\ell}+1} ) +   \cG^{\downarrow, \ell}_{0}(\u^{\ell,\downarrow}_{0}, \u^{\ell, \downarrow}_{1} ) + \cG^{\uparrow, \ell}_{0}(\u^{\ell, \downarrow}_{n^{\ell}}, \u^{\ell, \downarrow}_{n^{\ell}+1} ) 	+  \cN^{\ell}_{0}  \mathbf{f}^{\ell}  = \u^{\ell, \uparrow}_{0}.
\end{equation}
\end{proof}

\subsection*{Proof of Proposition \ref{lemma:equivalence_formulations} }
\begin{proof}
  The sufficient condition is given by Lemma \ref{lemma:extrapolator_M_0}, so we focus on the necessary condition.
The proof can be reduced to showing that
\begin{equation}
	\cG^{\uparrow, \ell}_{0}(\u^{\ell, \uparrow}_{n^{\ell}}, \u^{\ell, \uparrow}_{n^{\ell}+1} ) +   \cG^{\downarrow, \ell}_{0}(\u^{\ell,\downarrow}_{0}, \u^{\ell, \downarrow}_{1} ) + \cG^{\uparrow, \ell}_{0}(\u^{\ell, \downarrow}_{n^{\ell}}, \u^{\ell, \downarrow}_{n^{\ell}+1} ) 	+  \cN^{\ell}_{0}  \mathbf{f}^{\ell}  = \u^{\ell, \uparrow}_{0},
\end{equation}
implies $\cE^{\uparrow}_{\ell, \ell+1} \u^{\ell, \uparrow}_{1} =  \u^{\ell, \uparrow}_{0}$.
Indeed, we have
\begin{equation}
	\cG^{\uparrow, \ell}_{0}(\u^{\ell, \uparrow}_{n^{\ell}}, \u^{\ell, \uparrow}_{n^{\ell}+1} ) +   \cG^{\downarrow, \ell}_{0}(\u^{\ell,\downarrow}_{0}, \u^{\ell, \downarrow}_{1} )  + \cE^{\uparrow}_{\ell, \ell+1} \u^{\ell,\downarrow}_{1} + \cG^{\uparrow, \ell}_{0}(\u^{\ell, \downarrow}_{n^{\ell}}, \u^{\ell, \downarrow}_{n^{\ell}+1} ) 	+  \cN^{\ell}_{0}  \mathbf{f}^{\ell}  = \u^{\ell, \uparrow}_{0}  + \cE^{\uparrow}_{\ell, \ell+1} \u^{\ell,\downarrow}_{1}.
\end{equation}
Given that the extrapolator is invertible (Proposition \ref{proposition:jump_condition} and Remark \ref{remark:invertibility}) we can multiply the equation above on the left by $\left( \cE^{\uparrow}_{\ell, \ell+1}\right)^{-1}$. Using the same computations performed in Lemma \ref{lemma:jump_condition} we have that
\begin{equation}
	\cG^{\uparrow, \ell}_{1}(\u^{\ell, \uparrow}_{n^{\ell}}, \u^{\ell, \uparrow}_{n^{\ell}+1} ) + \cG^{\downarrow, \ell}_{1}(\u^{\ell,\downarrow}_{0}, \u^{\ell, \downarrow}_{1} ) 	+ \cG^{\uparrow, \ell}_{1}(\u^{\ell, \downarrow}_{n^{\ell}}, \u^{\ell, \downarrow}_{n^{\ell}+1} ) 	+  \cN^{\ell}_{1}  \mathbf{f}^{\ell}  = \left( \cE^{\uparrow}_{\ell, \ell+1}\right)^{-1} \u^{\ell, \uparrow}_{0}  + \u^{\ell, \downarrow}_{1}.
\end{equation}
Moreover, by hypothesis $\underline{\underline{\u}}$ satisfies
\begin{equation}
	\cG^{\uparrow, \ell}_{1}(\u^{\ell, \uparrow}_{n^{\ell}}, \u^{\ell, \uparrow}_{n^{\ell}+1} ) + \cG^{\downarrow, \ell}_{1}(\u^{\ell,\downarrow}_{0}, \u^{\ell, \downarrow}_{1} ) 	+ \cG^{\uparrow, \ell}_{1}(\u^{\ell, \downarrow}_{n^{\ell}}, \u^{\ell, \downarrow}_{n^{\ell}+1} ) 	+  \cN^{\ell}_{1}  \mathbf{f}^{\ell}  = \u^{\ell, \uparrow}_{1} + \u^{\ell, \downarrow}_{1},
\end{equation}
which simplifies to
\begin{equation}
	\u^{\ell, \uparrow}_{1} = \left( \cE^{\uparrow}_{\ell, \ell+1}\right)^{-1} \u^{\ell, \uparrow}_{0}.
\end{equation}

Finally, using the fact that the extrapolator is invertible, we obtain the desired result. The proof for $i = n^{\ell}$ is analogous.

\end{proof}

\subsection*{Proof of Theorem \ref{thm:equivalence_integral_formulation} }
\begin{proof}
	Once again we start by proving the statement in 1D.
	One side of the equivalence is already given by Lemma \ref{lemma:discrete_GRF}.
	To show the other side, we need to show that the concatenated solution satisfy the solution at every point.

	Let $\underline{\u}$ be the solution to the discrete integral equation. We can then reconstruct the local solution at each subdomain by
	\begin{equation} \label{eq:discrete_GRF_1D}
		u^{\ell}_{k} =	\cG^{\uparrow, \ell}_{k}(u^{\ell}_{n^{\ell}}, u^{\ell}_{n^{\ell}+1} )  + \cG^{\downarrow, \ell}_{k}( u^{\ell}_{0}, u^{\ell}_{1} ) + \cN^{\ell}_{k}  f^{\ell}.
	\end{equation}
	We have
	\[ \left( H u^{\ell} \right)_i = f^{\ell}_i,  \]
	for $1<l<N^{\ell}$.
	To conclude we need to prove that the difference equation is satisfied at $i=1$ and at $i=n^{\ell}$ with the information coming from the neighboring sub-domains.
	We remark that Eq. \ref{eq:discrete_GRF_1D} is equivalent to solving
	\begin{equation} \label{eq:discrete_local_PDE}
		\left (H u^{\ell} \right )_i = f^{\ell}_i - \delta_{0,l}(\partial^{+}_x u^{\ell}_0 )+ (\partial^{+}_x \delta_{0,l} )u^{\ell}_0  + \delta_{n^{\ell}+1,i}(\partial^{+}_x u^{\ell}_{n^{\ell}+1}) - (\partial^{-}_x \delta_{N^{\ell}+1,i}) u^{\ell}_{n^{\ell}+1}
	\end{equation}
	where
	\begin{equation}
	 	\delta_{i,j} = \left \{ \begin{array}{rl}
	 							\frac{1}{h} & \text{ if } i = j  \\
	 							0 			& \text{ if } i \neq j
	 							\end{array} \right .
	 \end{equation}
	 and the up and down-wind derivatives were defined earlier.
	 Using the fact that $u^{\ell}_i = 0$ if $i = 0$ or $i = n^{\ell}$ (Eq. \ref{eq:lemma_jump}), and the equivalence between the Green's representation formula and the problem stated in Eq. \ref{eq:discrete_local_PDE}, we can apply $H$ to $u^{\ell}$ and evaluate it at $i = 1$ obtaining,
	 \begin{equation}
		\left (H u^{\ell} \right )_1 =  \frac{ 2u^{\ell}_1 - u^{\ell}_2 }{h^2} - m^{\ell}_1\omega^2 u^{\ell}_1 =  f^{\ell}_1  +  \frac{\delta_{1,l}}{h} u^{\ell}_0 = f^{\ell}_1  +  \frac{u^{\ell}_0}{h^2}.
	\end{equation}
	In other words,
	\begin{equation}
		 \frac{ -u^{\ell}_0 + 2u^{\ell}_1 - u^{\ell}_2 }{h^2} - m^{\ell}_1\omega^2 u^{\ell}_1  =  f^{\ell}_1,
	\end{equation}
	and by construction $u^{\ell}_0 = u^{\ell-1}_{n^{\ell-1}}$.
	This procedure can be replicated for $i=n^{\ell}$, yielding
	\begin{equation}
		 \frac{ -u^{\ell}_{n^{\ell}-1} + 2u^{\ell}_{n^{\ell}} - u^{\ell}_{n^{\ell}+1} }{h^2} - m^{\ell}_{n^{\ell}}\omega^2 u^{\ell}_{n^{\ell}} =  f^{\ell}_{n^{\ell}},
	\end{equation}
	in which we have $u^{\ell}_{n^{\ell}+1} = u^{\ell+1}_1$.
	This means that the concatenated solution satisfies the equation at the interfaces; therefore, it satisfies the difference equation everywhere. By construction it satisfies the boundary conditions; therefore, by uniqueness it is the solution to the difference equation.

	In 2D, we need to be especially careful with the PML.
	Using the same proof method as before, we have that
	\begin{equation} \label{eq:grf_local_appendix}
		\u^{\ell}_{k} =	\cG^{\uparrow, \ell}_{k}(\u^{\ell}_{n^{\ell}}, \u^{\ell}_{n^{\ell}+1} )  + \cG^{\downarrow, \ell}_{k}( \u^{\ell}_{0}, \u^{\ell}_{1} ) + \cN^{\ell}_{k}  \mathbf{f}^{\ell},
	\end{equation}
	satisfies the equation
	\[ \left( \mathbf{H} \u^{\ell} \right)_{i,j} = \mathbf{f}^{\ell}_{i,j}, \qquad \text{for } 1 < j < n^{\ell} \, \text{and} -n_{\text{pml}} + 1  < i < n^{\ell} +  n_{\text{pml}}  \]
	where $i$ and $j$ are local indices.
	Following the same reasoning as in the 1D case we have
	\begin{equation} \label{eq:append_global_pde}
	\left( \mathbf{H} \u^{\ell} \right)_{i,1} = \mathbf{f}^{\ell}_{i,1} - \frac{1}{h^2} \u^{\ell}_{i,0},  \qquad \text{for }  \text{and} -n_{\text{pml}} + 1  < i < n^{\ell} +  n_{\text{pml}}.
	\end{equation}
	To prove Eq. \ref{eq:append_global_pde} we use the fact that $\u^{\ell}_k$ is defined by Eq. \ref{eq:grf_local_appendix}, and by Lemma \ref{lemma:jump_condition}, $\u^{\ell}_k = 0 $ for  $k=0$ and $k = n^{\ell}+1$. Then, if we apply the global finite differences operator, $\mathbf{H}$, to the local $\u^{\ell}_k$, and to evaluate it at $k = 1$, we obtain
	\begin{align} \notag
	 	\left( \mathbf{H} \u^{\ell} \right)_{i,1} =	&  -\alpha_x(\x_{i,1})\frac{ \alpha_x(\x_{i+1/2,1})(\u_{i+1,1}^{\ell} - \u_{i,1}^{\ell}) - \alpha_x(\x_{i-1/2,1})( \u_{i,1}^{\ell} - \u_{i-1,1}^{\ell} )   }{h^2}\\
	 											&   +  \frac{1}{h^2} \left ( 2\u_{i,1}^{\ell} -\u_{i,2}^{\ell} \right) - \omega^2 m(\x_{i,1}) \\
	 										 =	& \, 	\mathbf{f}^{\ell}_{i,1} + \frac{1}{h^2} \u^{\ell}_{i,0}.
	 \end{align}
	 It is clear that the right-hand side has a similar form as in the 1D case. The concatenated solution satisfies the discretized PDE at the interfaces. Moreover, we can observe that by construction $\u^{\ell}$ satisfies the homogeneous Dirichlet boundary conditions, because the Green's functions satisfy the same boundary conditions. Furthermore, the traces at the interface also satisfy the zero boundary conditions at the endpoints. Then, the concatenated solution satisfy the finite difference equation inside the domain and the boundary conditions; therefore, by uniqueness it is solution to the discrete PDE.
\end{proof}

\bibliography{GRF_integral_formulations.bib}

\end{document}